\theoremstyle{plain}
\newtheorem{theorem}{Theorem}
\newtheorem*{theorem*}{Theorem}
\newtheorem{theoremalpha}{Theorem}
\newtheorem{corollary}[theorem]{Corollary}
\newtheorem{corollaryalpha}[theoremalpha]{Corollary}
\newtheorem*{corollary*}{Corollary}
\newtheorem*{question}{Question}
\newtheorem{lemma}[theorem]{Lemma}
\newtheorem{proposition}[theorem]{Proposition}
\newtheorem{example}[theorem]{Example}
\newtheorem{definition}[theorem]{Definition}
\newtheorem{notation}[theorem]{Notation}
\theoremstyle{remark}
\newtheorem{remark}[theorem]{Remark}
\numberwithin{theorem}{section}
\numberwithin{figure}{section}
\numberwithin{equation}{section}
\newcommand{\N}{\mathbb{N}}
\newcommand{\Z}{\mathbb{Z}}
\newcommand{\R}{\mathbb{R}}
\newcommand{\C}{\mathbb{C}}
\newcommand{\set}[1]{\left\{ #1 \right\}}
\newcommand{\ff}{\mathbb{F}}
\newcommand{\cat}{\mathsf{C}}
\newcommand{\catD}{\mathsf{D}}
\newcommand{\X}{\mathcal {X}}
\newcommand{\Y}{\mathcal {Y}}
\newcommand{\Ltwo}[1]{\text{L}^2(#1)}
\newcommand{\tens}[1]{\mathbin{\mathop{\otimes}\limits_{{#1}}}}
\newcommand{\fuse}[1]{\mathbin{\mathop{\boxtimes}_{#1}}}
\newcommand{\ev}[1]{\mathbin{\mathop{\text{ev}_{#1}}}}
\newcommand{\coev}[1]{\mathbin{\mathop{\text{coev}_{#1}}}}
\newcommand{\cH}{\mathcal{H}}
\newcommand{\cS}{\mathcal{S}}
\newcommand{\cY}{\mathcal{Y}}
\DeclareMathOperator{\id}{\text{id}}
\DeclareMathOperator{\one}{\mathbbm{1}}
\DeclareMathOperator{\Tr}{Tr}
\DeclareMathOperator{\tr}{tr}
\DeclareMathOperator{\Bim}{\mathsf{Bim}}
\DeclareMathOperator{\vect}{\mathsf{Vec}}
\DeclareMathOperator{\Gr}{Gr}
\DeclareMathOperator{\Ind}{Ind}
\DeclareMathOperator{\spann}{span}
\DeclareMathOperator{\End}{End}
\definecolor{OhioRed}{rgb}{0.73047, 0, 0} 
\title{Realizations of rigid C*-tensor categories as bimodules over GJS C*-algebras}
\author{Michael Hartglass}
\author{Roberto Hern\'{a}ndez Palomares }
\date{\today}
\begin{document}

\maketitle
\begin{abstract}
    Given an arbitrary countably generated rigid C*-tensor category, we construct a fully-faithful bi-involutive strong monoidal functor onto a subcategory of finitely generated projective bimodules over a simple, exact, separable, unital C*-algebra with unique trace. The C*-algebras involved are built from the category  using the GJS-construction introduced in \cite{GJS11} and further studied in \cite{BHP12} and \cite{HaPeII}. Out of this category of Hilbert C*-bimodules, we construct a fully-faithful bi-involutive strong monoidal functor into the category of bi-finite spherical bimodules over an interpolated free group factor. The composite of these two functors recovers the functor constructed in \cite{BHP12}.\\
\end{abstract}

\tableofcontents{}

\section{Introduction}
		Since the advent of modern subfactor theory by Jones, there has been considerable interest in aximotizing the standard invariant 
	resulting from a finite-index subfactor $N \subset M.$  The standard invariant has been axiomitized by Jones' \underline{planar algebras}, 
	Ocneanu's \underline{paragroups} (for finite depth subfactors), and Popa's \underline{$\lambda$-lattices}.

		A natural question is given a standard invariant, can one construct a subfactor with that standard invariant?  This question was answered in 
	the affirmative by Popa in \cite{MR1278111} where given a $\lambda$-lattice, a finite-index II$_{1}$ subfactor $N \subset M$ was constructed whose standard invariant was the given $\lambda$-lattice.

		Popa's theorem was reproven utilizing a diagrammatic argument by Guionnet, Jones, and Shlyakhtenko \cite{GJS10}.  The first author and Penneys utilized the construction
	 therein to state an analogous result in a C*-algebraic setting:
	\cite{HaPeII} \textit{Given a standard invariant,} $\mathcal{P}_{\bullet}^{\pm}$, \textit{there is an inclusion } $B_0\subset B_1$ \textit{ with finite Watatani index of simple, unital C*-algebras with 
	unique trace satisfying:}
		 $\mathcal{P}_{n}^{+} = B_{0} \cap B_{n}$ \textit{and} $\mathcal{P}_{n}^{-} = B_{1} \cap B_{n+1}.$ \textit{Here} $(B_n)$ \textit{is the Jones-Watatani tower
	for} $B_0\subset B_1.$

		As the $N-N$ and $M-M$ bimodules generated by $\Ltwo{M}$ as an $N-M$ bimodule form rigid C$^{*}$-tensor categories, it is natural to ask to what degree an arbitrary 
	rigid C*-tensor category can be represented by bimodules over a II$_{1}$ factor.  The first author, along with Brothier and Penneys \cite{BHP12} proved that every rigid C*-tensor category is equivalent to a 
	subcategory of bimodules of $L(\mathbb{F}_{\infty})$. (See Example \ref{BifiniteBimodules}.)

	\begin{theorem*}\cite{BHP12}
		Given a countably generated rigid C*-tensor category, $\cat$, there is a  a full and faithful bi-ivolutive strong monoidal functor 
		$$
			\mathbb G: \cat \hookrightarrow\Bim_{\mathsf{bf}}^{\mathsf{sp}}L(\mathbb{F}_{\infty}).
		$$
	\end{theorem*}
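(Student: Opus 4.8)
The plan is to pass from the abstract category $\cat$ to a concrete tracial von Neumann algebra via the Guionnet--Jones--Shlyakhtenko (GJS) construction, and then to manufacture the required bimodules out of the morphism spaces of $\cat$ directly. First I would fix a countable generating set of objects and assemble it, together with the duals, into a single (reducible, self-dual) generator $X$, so that it suffices to work with one object. Following the GJS recipe I would build a graded $*$-algebra whose $n$-th piece is a direct sum of morphism spaces of the form $\operatorname{Hom}_\cat(\one, X^{\otimes n})$ (i.e.\ diagrams with $n$ boundary strands), with multiplication given by the diagrammatic stacking-with-cups of the GJS moment map and involution given by the categorical dagger. The spherical structure of $\cat$ supplies a trace: one closes up a diagram using the (co)evaluations to produce the categorical scalar trace. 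Positivity and faithfulness of this trace follow from the C*-structure (positivity of $\langle f,g\rangle=\operatorname{tr}(g^*f)$ on morphism spaces) and sphericity, so the GNS completion yields a $\mathrm{II}_1$ factor $M$ playing the role of $L(\mathbb{F}_\infty)$.

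Next, for each object $Y\in\cat$ I would define $\mathbb{G}(Y)$ as the completion of $\bigoplus_{m,n}\operatorname{Hom}_\cat(X^{\otimes m},\,Y\otimes X^{\otimes n})$ (diagrams carrying a distinguished $Y$-strand) in the inner product induced by the trace, with the left and right $M$-actions given by stacking on either side of the $Y$-strand. A morphism $f\colon Y\to Z$ induces a bimodule map $\mathbb{G}(f)$ by insertion of $f$ on the $Y$-strand, which defines $\mathbb{G}$ on morphisms.

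I would then verify the four required properties. First, bi-finiteness and sphericity of each $\mathbb{G}(Y)$, which follow from rigidity of $X$ and $Y$ and land the image inside $\Bim_{\mathsf{bf}}^{\mathsf{sp}}L(\mathbb{F}_\infty)$. Second, full faithfulness, i.e.\ $\operatorname{Hom}_\cat(Y,Z)\cong\operatorname{Hom}_{M\text{-}M}(\mathbb{G}(Y),\mathbb{G}(Z))$; this reduces to showing that every bounded bimodule intertwiner is implemented by a categorical morphism, for which faithfulness of the trace together with density of the morphism spaces in the bimodules are the key inputs. Third, strong monoidality $\mathbb{G}(Y\otimes Z)\cong\mathbb{G}(Y)\tens{M}\mathbb{G}(Z)$, proved by exhibiting an explicit unitary from the relative tensor product onto the completed diagram space that merges the $Y$- and $Z$-strands, and then checking the associativity and unit coherences of the resulting natural isomorphisms. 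Fourth, compatibility with the involutive structures: the dagger $*$ on morphisms matches the adjoint of intertwiners, the conjugate $\overline{Y}$ maps to the contragredient bimodule $\overline{\mathbb{G}(Y)}$, and solutions to the conjugate equations are carried to the bimodule (co)evaluations.

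The hard part will be identifying $M$ with an interpolated free group factor $L(\mathbb{F}_\infty)$. This is not formal and is the free-probabilistic heart of the argument: one realizes the generators of the graded algebra as a creation--annihilation (free semicircular) system on a suitable Fock-type space, computes the relevant free dimension, and invokes the interpolation and addition formulas for free group factors in the spirit of Dykema and of \cite{GJS10,GJS11}, together with the simplicity and unique-trace results for the associated GJS C*-algebras from \cite{HaPeII}, to conclude $M\cong L(\mathbb{F}_\infty)$. Once the algebra and its faithful trace are in place, everything else --- functoriality, monoidality, and the involutive compatibilities --- is essentially diagrammatic bookkeeping.
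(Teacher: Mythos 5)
Your outline is essentially a sketch of the original argument of \cite{BHP12}, which is a genuinely different route from how this paper obtains $\mathbb{G}$: here the theorem is quoted as background, and $\mathbb{G}$ is recovered as the composite of the C*-algebraic functor $\ff:\cat_x\hookrightarrow\Bim_{\mathsf{fgp}}^{\mathsf{tr}}(B_0)$ with the Hilbertification functor $_{M_0}(-\fuse{B_0}\Ltwo{B_0})_{M_0}$ (Proposition \ref{prop::CommTrian}). Notably, the paper \emph{uses} the fullness of $\mathbb{G}$ from \cite{BHP12} to deduce fullness of $\ff$ by a finite-dimensional dimension count, so its composite construction is not an independent proof of the quoted statement. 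Your direct von Neumann-algebraic route is the self-contained one; the paper's factorization through $B_0$ buys the extra C*-invariants (simplicity, unique trace, $K_0$) but still leans on \cite{BHP12} at exactly the point you are trying to prove.

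Two steps in your sketch deserve a warning. First, full faithfulness does not follow from ``faithfulness of the trace together with density of the morphism spaces in the bimodules'': density only shows an intertwiner is determined by its values on diagrams, not that it is implemented by a categorical morphism. The actual input is the relative commutant computation $i_n[M_0]'\cap M_n=\Gr_{0,n}\cong\cat(x^{\otimes n})$ (cf.\ Proposition \ref{RelCommutants}, imported here from \cite{GJS10}), which is itself part of the free-probabilistic core of the argument, on par with identifying $M_0$ as an interpolated free group factor; your plan relegates all of the hard analysis to the isomorphism type of $M$ and none to fullness. Second, for a \emph{countably} generated category you cannot assemble the generators into a single self-dual object $X$, since an infinite direct sum does not exist in a rigid category (it would have infinite dimension); one must instead run the construction with countably many string colors indexed by words in the generators, as in Remark \ref{remark:countable}. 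With those two points supplied, the remaining items in your plan (positivity via the Fock-space realization, bifiniteness and sphericity from rigidity, the merging unitary for strong monoidality, the contragredient matching the conjugate) are the standard diagrammatic bookkeeping and are correct.
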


		It is natural to ask what sort of reconstruction theorems can be obtained in the context of projective Hilbert C*-bimodules over C*-algebras. 
 	One immediate difficulty is that due to K-theoretic obstructions, it is impossible to find a single separable unital C*-algebra $B$ 
	so that every countably generated rigid C*-tensor category is fully realized as a subcategory of the projective C*-bimodules over $B$ (see the discussion before Corollary \ref{cor::universalFusion}).  
	This observation leads to the following natural question:
	\begin{question}
		Given a rigid C*-tensor category, $\cat$, is there a separable, simple, unital C*-algebra $B$ (depending on $\cat$)
		 so that $\cat$ is realized as a full subcategory of the projective C*-bimodules of $B$?
	\end{question}

	In \cite{Yu19}, Yuan answered the above question in the affirmative under the weaker assumption of not requiring norm separability on the C*-algebra, even in the case where $\cat$ is countably generated. 
	 In this article, we answer the above question in the affirmative: 

	\begin{theoremalpha}\label{thm::A}
		Given a countably generated rigid C*-tensor category with simple unit, $\cat$, there is a  unital, simple, separable, exact C*-algebra, $B_0$ with unique trace, 
		and a fully-faithful bi-involutive strong monoidal functor 
		$$
			\ff:\cat\hookrightarrow \Bim_{\mathsf{fgp}}^{\mathsf{tr}}(B_0),
		$$	
		 into the rigid C*-tensor category of finitely generated projective  $B_0$-bimodules compatible with the (unique) trace of $B_0$. 
		(See Example \ref{HappyBims}, Equation \ref{eqn::ComTrace}  and \cite[Definition 5.7]{MR1624182})
		Moreover, the $K_{0}$ group of $B_0$ is the free abelian group on the classes of simple objects in $\cat$, 
		and the image of the simple objects of $\cat$ are precisely the canonical generators of $K_{0}(B_0).$
	\end{theoremalpha}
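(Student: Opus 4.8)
The plan is to build $B_0$ and its realizing bimodules simultaneously through the C*-algebraic GJS construction, and then to read off the analytic invariants. First I would fix a countable set of generators $\{X_i\}$ of $\cat$ together with a choice of weights, and form the associated GJS graded $*$-algebra $\mathcal{A}$ in the spirit of \cite{GJS11} and \cite{HaPeII}: its homogeneous components are categorical morphism spaces of the form $\cat(\one, X_{i_1}\otimes\cdots\otimes X_{i_n})$, the product is the diagrammatic concatenation of strands, and the trace is given by closing up a diagram using the pivotal (categorical) trace of $\cat$. Since $\cat$ has simple unit, this trace is well defined and faithful. Completing $\mathcal{A}$ in the C*-norm from the GNS representation of the trace yields $B_0$, and its weak closure $M := B_0''$ is the GJS factor. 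For each object $c\in\cat$ I would form the analogous \emph{$c$-colored} graded space $\mathcal{A}(c)$, carrying commuting left and right $B_0$-actions and two $B_0$-valued inner products supplied by the trace; its completion is a Hilbert $B_0$-$B_0$-bimodule $\ff(c)$, and $\ff$ sends a morphism $f\in\cat(c,d)$ to post-composition by $f$. A solution to the conjugate equations for $c$ (rigidity of $\cat$) provides a finite basis for $\ff(c)$ on each side, so that $\ff(c)$ is finitely generated projective and trace-compatible, i.e.\ an object of $\Bim_{\mathsf{fgp}}^{\mathsf{tr}}(B_0)$.

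Next I would verify the categorical structure of $\ff$. Taking adjoints of intertwiners realizes the dagger structure, so $\ff$ is a $*$-functor. Diagrammatic gluing of $c$- and $d$-colored strands produces natural unitary isomorphisms $\ff(c)\fuse{B_0}\ff(d)\cong\ff(c\otimes d)$, and with $\ff(\one)=B_0$ the coherence axioms make $\ff$ strong monoidal, the monoidal product of $\cat$ matching Connes fusion $\fuse{B_0}$. Rigidity then matches conjugate objects to contragredient bimodules, $\ff(\overline{c})\cong\overline{\ff(c)}$, giving the bi-involutive structure with compatible duality data.

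The crux is full faithfulness: I must show that every bimodule intertwiner $\ff(c)\to\ff(d)$ comes from a unique morphism in $\cat(c,d)$. The inclusion $\cat(c,d)\hookrightarrow\Bim_{\mathsf{fgp}}^{\mathsf{tr}}(B_0)\bigl(\ff(c),\ff(d)\bigr)$ is immediate; for surjectivity I would run the GJS closing-up argument, using that an intertwiner commutes with both $B_0$-actions and that the categorical trace projects it onto its lowest-degree categorical part. Equivalently, I would compare with the von Neumann level: the completion functor to $M$-$M$ bimodules should carry $\ff(c)$ to the BHP bimodule $\gg(c)$, so that full faithfulness of $\gg$ from the cited theorem of \cite{BHP12}, together with a trace-density argument showing intertwiners are neither created nor destroyed on completion, both yields full faithfulness of $\ff$ and exhibits $\gg$ as the composite promised in the abstract. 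For the remaining properties of $B_0$ I would draw on the free-probabilistic technology of \cite{HaPeII}: $B_0$ is unital (its unit coming from $\one$), separable (by countable generation), simple with a unique trace (using simplicity of the unit), and exact, the last because $B_0$ is assembled from reduced free products of exact---indeed nuclear---building blocks.

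Finally, for the $K_0$ statement I would identify $B_0$ up to Morita equivalence with a graph-type C*-algebra whose vertices are the isomorphism classes $\mathrm{Irr}(\cat)$ of simple objects, so that the right modules indexed by simple objects form a basis and no relation forces a $K_0$-collapse (the trace separates their classes). This should give $K_0(B_0)$ free abelian on $\{[\ff(X)] : X\in\mathrm{Irr}(\cat)\}$, with these the canonical generators, consistent with the K-theoretic obstruction that forces $B_0$ to depend on $\cat$. I expect the main obstacle to be the simultaneous control of finite projectivity and full faithfulness at the C*-level: proving the GJS bimodules are genuinely finitely generated projective and that their C*-intertwiner spaces coincide with $\cat(c,d)$ without collapsing under completion is exactly where rigidity, the pivotal trace, and the free-probabilistic control of $B_0$ must all be combined.
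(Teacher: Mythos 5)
Your construction follows the same architecture as the paper's: the diagrammatic GJS algebra with the Voiculescu trace, the colored graded spaces completed into Hilbert C*-bimodules over $B_0$, the comparison with the von Neumann-level functor $\mathbb G$ of \cite{BHP12} to obtain fullness, and the free-probabilistic results of \cite{HaPeII} for simplicity, unique trace, exactness, and $K_0$. Your second route to fullness (transport along the Hilbertification functor plus the observation that intertwiner spaces can neither grow nor shrink) is exactly the paper's Proposition \ref{prop::CommTrian}, which realizes it as a finite-dimension sandwich: fullness and faithfulness of $\mathbb G$ pin $\dim\cat(x^{\otimes n}\to x^{\otimes m})$, and faithfulness of both $\ff$ and $-\fuse{B_0}\Ltwo{B_0}$ traps $\dim\Bim_{\mathsf{fgp}}^{\mathsf{tr}}(B_0)({_0B_n}\to{_0B_m})$ between two copies of that number.

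The genuine gap is your treatment of finite generation and projectivity. You assert that ``a solution to the conjugate equations for $c$ (rigidity of $\cat$) provides a finite basis for $\ff(c)$ on each side.'' This does not work as stated: $\ff(c)$ is the completion of an infinite-dimensional graded space (all diagrams with arbitrarily many bottom strings), while the categorical coevaluation of $c$ is a single vector sitting in one graded piece; it does not produce a finite family $\{v_j\}\subset\ff(c)$ with $\sum_j v_j^*\wedge v_j = 1_{B_1}$, which is what a right or left $B_0$-basis amounts to here. Worse, the evaluation and coevaluation morphisms of $\ff(c)$ inside $\Bim_{\mathsf{fgp}}(B_0)$ are \emph{defined} in terms of such a basis, so deducing fgp-ness ``from rigidity of $\cat$'' is circular. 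The paper's Proposition \ref{prop:bases} instead derives the basis from an analytic input: the two-sided ideal ${_1B_0}\cdot{_0B_1}\subseteq B_1$ is dense, and since $B_1$ is unital and \emph{simple} (Lemma \ref{lemma::simplicity}, which rests on $\delta_x>1$ and the free-probability analysis of \cite{HaPeII}), it equals $B_1$; writing $1_{B_1}$ as a finite sum $\sum_j v_j^*\wedge v_j$ then yields the basis, and Lemma 1.7 of \cite{MR1624182} symmetrizes it. Because fgp-ness feeds into everything downstream — the index and minimality computations, the well-definedness of the Hilbertification functor, and the finite-dimensionality of hom spaces used in your own fullness argument — this step must be supplied by the simplicity of the corner algebras rather than by the conjugate equations of $\cat$.
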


		Our proof of this theorem can be outlined as follows:

			$\clubsuit$ We take off from Chapter \ref{sec::diagAlgs} by choosing a \textit{symmetrically self-dual} object $x\in\cat$
		 (Definition \ref{dfn::SSD}) of \textit{quantum dimension} $\delta_x > 1,$ and use it to
		 construct a graded \textit{ground }C*-\textit{algebra}  $A_\infty$ (Equation \ref{eqn::GroundAlg}) from the endomorphism C*-algebras $\cat(x^{\otimes n})$. We then build		
		 an $A_\infty$-module $X_1$ and consider the \textit{creation and annihilation operators by real vectors} inside the C*-algebra of \textit{adjointable} 
		endomorphisms of $\mathcal{F}({X_1}_{A_\infty}),$ the \textit{full Fock space of} ${X_1}_{A_\infty}.$ 
		(See Definition \ref{dfn::Fock}, the discussion following Corollary \ref{cor:fockspace} and \cite{MR1426840}.) These operators together with $A_\infty$
		generate the simple C*-algebra $B_\infty$  (\textit{a.k.a. Voiculescu semi-circular system}), which comes equipped with a faithful conditional expectation
		onto $A_\infty$ and tracial weight $\Tr$. We then consider certain corners $B_n\subset B_\infty$(Notation \ref{notation::corners}), for $n\geq 0,$ which are unital, separable, simple and
		have a unique trace, given by $ \tr_{B_n} := \Tr|_{B_n}.$ 

			$\clubsuit\clubsuit$ The algebra $B_0$ is of central importance to us, since it acts on the \textit{off-corners} $\{_lB_r\}_{l,r\in\N\cup 0}$ of $B_\infty$ by means of
		 \textit{multiplication in the ambient} C*-\textit{algebra} $B_\infty$. In this framework all our actions are automatically bounded.
		 We then construct a rigid C*-tensor category of \textit{finitely generated projective, minimal and normalized} Hilbert C*-bimodules over $B_0$, denoted 
		$\Bim_{\mathsf{fgp}}(B_0)$ (see Definitions \ref{index} and \ref{minimal}), which includes the bimodules $_0B_r,$ for $r\geq 0.$ The simplicity of each $B_n$ allows us to 
		describe the \textit{Connes' fusion} relative to $B_0$ of these bimodules in simple terms (Lemma \ref{lemma::isomorphism}) as:
		 $$_{B_0}({_0B_n}\fuse{B_0}{_0B_m})_{B_0} \cong _{B_0}({_0B_{n+m}})_{B_0}.$$ 
		This construction then induces the functor 
		\begin{equation*}
		        \begin{tikzpicture}[scale=1/13 pt, thick,baseline={([yshift=-\the\dimexpr\fontdimen22\textfont2\relax] current bounding box.center)}] 	        
		        \node at(-30,17){$\ff:\cat_x\hookrightarrow \Bim_{\mathsf{fgp}}(B_0)$};    
			\node at (-75,4){$x^{\otimes n}\longmapsto {_{B_0}\left({_0B_n}\right)} _{B_0},$};
			\node at(-15,4){$\cat( x^{\otimes n}\longrightarrow x^{\otimes m})\owns f \longmapsto \ \ \ff(f):$};
				            
		            \draw[rounded corners, ultra thick] (20,-1) rectangle (30,9);
		            \node at (25,9){$\bullet$};
		                \draw[ultra thick] (29,9) --(29,16);
		                    \node at(31,14){$n$};
		                \draw[densely dashed] (21,9) --(21,16);
		                \draw[ultra thick] (25,-1) -- (25,-6);

		            \node at (35,4){$\longmapsto$};

		            \draw[rounded corners, ultra thick] (41,-4) rectangle (49,4);
		            \node at (45,4){$\bullet$};
		                \draw[ultra thick] (47,4) --(47,8);
		                    \node at(49,6){$n$};
		                \draw[densely dashed] (43,4) --(43,19);
		                \draw[ultra thick] (45,-4) -- (45,-8);

		            \draw[rounded corners, ultra thick] (44.5,8) rectangle (49.5,14);
		            \node at (47,11){$f$};
		                \draw[ultra thick] (47,14) -- (47,19);
		                    \node at(49.5,17.5){$m$};
		        \end{tikzpicture}.
	    	\end{equation*}
		Here, $\cat_x$ is the full subcategory of $\cat$ generated by tensor powers of $x$ .
		By construction, $\ff$ will be a faithful bi-involutive strong monoidal functor, as will be shown in Chapter \ref{Sec::Representing}.
		 However, showing that it is full takes some more work, and the proof is delayed to the last chapter.

     		     $\clubsuit\clubsuit\clubsuit$ The next step of the proof consists of turning Hilbert C*-bimodules into \textit{bifinite and spherical/extremal bimodules over an interpolated} $\rm{II}_1$\textit{-factor} $M_0$.
		This is done and explained in Chapter \ref{section:Hilbert}.
		The factor $M_0$ is in fact isomorphic to the \textit{von Neumann algebra generated by} $B_0$, and we shall find the tools to compute it over several different representations. 
		(Lemma \ref {lemma::Hilbertifying})
		In order to do so, one has to further restrict $\Bim_{\mathsf{fgp}}(B_0)$ to those bimodules which are \textit{compatible
		 with the trace} $\tr_{B_0}$, and we denote this rigid C*-tensor subcategory by $\Bim_{\mathsf{fgp}}^{\mathsf{tr}}({B_0}).$
		From $\Y\in\Bim_{\mathsf{fgp}}^{\mathsf{tr}}({B_0})$ we obtain an \textit{honest} Hilbert space  by fusing it with $\Ltwo{B_0}$, obtaining $\Y\fuse{B_0}\Ltwo{B_0}.$
		 Afterwards, we extend both left and right $B_0$-actions on  $\Y$ to normal left and right actions of $M_0$. This process in fact defines a full and faithful bi-involutive strong monoidal functor
		(shown in Propositions \ref{prop::HilbBiInv} and \ref{prop::propertiesHilbertify})
		$$
			_{M_0}(-\fuse{B_0}\Ltwo{B_0})_{M_0}: \Bim_{\mathsf{fgp}}^{\mathsf{tr}}(B_0)\hookrightarrow \Bim_{\mathsf{bf}}^{\mathsf{sp}}(M_0)
		$$
		\begin{equation*}
		    	\Y\longmapsto\ _{M_0}(\Y\fuse{B_0}\Ltwo{B_0})_{M_0}, \hspace{1cm}
		   \	\Bim_{\mathsf{fgp}}^{\mathsf{tr}}(B_0) (\Y\rightarrow \mathcal{Z})\owns f\longmapsto f\boxtimes\id_{\Ltwo{B_0}}.
		\end{equation*}

		$\clubsuit\clubsuit\clubsuit\clubsuit$ Finally, we recover the functor $\mathbb G$ from \cite{BHP12} via the commuting 2-cell
		\begin{equation*}\label{eqn::triangle}
	           	\begin{tikzcd}[column sep=large, row sep=large]
	        	& \cat_x \arrow[dl, "\ff"', hook] \arrow[dr,hook, two heads, "\mathbb G"{name=G}]  & \\
	        	|[alias=B]|\Bim_{\mathsf{fgp}}^{\mathsf{tr}}(B_0) \arrow["_{M_0}(-\fuse{B_0}\Ltwo{B_0})_{M_0}"', hook]{rr}\arrow[Rightarrow, from = B, to = G, "T", "\cong" below, shorten >=7mm, shorten <=7mm]  & & \Bim_{\mathsf{bf}}^{\mathsf{sp}}(M_0), 
	           	\end{tikzcd}
	      \end{equation*} 
	     	where $T$ is a monoidal unitary natural isomorphism.  It is preciselly this commuting 2-cell that grants us the grounds for showing that $\ff$ is full, and this is indeed proven by a simple finite-dimensional
		linear algebraic trick. (See Proposition \ref{prop::CommTrian}.) We consider this step the most important part of the proof, since it provides a \textbf{conceptual and concrete connection between the realization
		 functors} $\ff$ and $\mathbb G.$

		As a consequence of Theorem \ref{thm::A} (see also Remark \ref{remark:countable}), we found a way around the aforementioned K-theoretical obstruction to the existence of
	a  C*-algebra $B$ such that every rigid C*-tensor category can be realized into $\Bim_{\mathsf{fgp}}(B),$ by restricting our scope to unitary fusion categories: (Corollary \ref{cor::universalFusion})
	\begin{corollaryalpha}
		There exists a unital simple exact separable C*-algebra $B$ with unique trace over which we can full and faithfully realize every unitary
		 fusion category $\cat$ into $\Bim_{\mathsf{fgp}}(B),$ in the spirit of Theorem \ref{thm::A}.
	\end{corollaryalpha}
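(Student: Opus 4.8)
The plan is to leverage Theorem \ref{thm::A} together with the fact that, up to unitary monoidal equivalence, there are only \emph{countably many} unitary fusion categories, and then to assemble all of them into a single countably generated ambient category to which Theorem \ref{thm::A} applies. Throughout I write $\cat'$ for an arbitrary unitary fusion category (the object to be realized) and reserve $\cat$ for the ambient category I will build.

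\textbf{Step 1 (Countability).} First I would recall that a unitary fusion category is pinned down by a finite amount of data: a fusion ring of finite rank together with unitary associativity constraints solving the pentagon identity. There are only countably many based rings of finite rank with nonnegative integer structure constants, and by Ocneanu rigidity (Etingof--Nikshych--Ostrik) only finitely many fusion categories realize a given fusion ring. Hence there is a countable family $\{\cat_i\}_{i\in\N}$ containing a representative of every unitary monoidal equivalence class of unitary fusion categories (cf.\ Remark \ref{remark:countable}).

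\textbf{Step 2 (A universal ambient category).} Next I would produce a single countably generated rigid C*-tensor category $\cat$ with simple unit into which each $\cat_i$ embeds by a fully faithful bi-involutive strong monoidal functor $\iota_i$. The natural candidate is the free product $\cat := \cat_1 \ast \cat_2 \ast \cdots$ of rigid C*-tensor categories: its objects are reduced alternating words in the objects of the factors, duals are formed by reversing the word and dualizing each letter, and the tensor unit is the common unit $\one$, so that $\End_{\cat}(\one)=\C$ and $\cat$ has simple unit. Since each $\cat_i$ is generated by its finitely many simple objects, $\cat$ is generated by the countable union of these generating sets and is therefore countably generated; and by the defining universal property of the free product, each canonical inclusion $\iota_i\colon\cat_i\hookrightarrow\cat$ is fully faithful and strong monoidal, and compatible with the dagger and (spherical) dual structures. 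I expect \emph{this} to be the main obstacle: one must verify that the free product of C*-tensor categories is again a genuine rigid C*-tensor category (existence of all duals, positivity of the C*-structure, and sphericality/bi-involutivity) and that the inclusions $\iota_i$ really are full on morphism spaces. Any alternative construction -- a suitable Deligne-type product, or a filtered colimit -- would have to be checked just as carefully to preserve simplicity of the unit and countable generation.

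\textbf{Step 3 (Apply Theorem A and compose).} Finally I would apply Theorem \ref{thm::A} to $\cat$, obtaining a unital, simple, separable, exact C*-algebra $B := B_0(\cat)$ with unique trace and a fully faithful bi-involutive strong monoidal functor $\F\colon\cat\hookrightarrow\Bim_{\mathsf{fgp}}^{\mathsf{tr}}(B)\subseteq\Bim_{\mathsf{fgp}}(B)$. For an arbitrary unitary fusion category $\cat'$, choosing $i$ with $\cat'\simeq\cat_i$ and forming the composite
\[
\F\circ\iota_i\colon\cat_i\hookrightarrow\cat\hookrightarrow\Bim_{\mathsf{fgp}}(B)
\]
yields a fully faithful bi-involutive strong monoidal functor, since each factor is one and all of these properties are closed under composition. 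This realizes every unitary fusion category inside $\Bim_{\mathsf{fgp}}(B)$ over the \emph{single} algebra $B$, in the spirit of Theorem \ref{thm::A}. It circumvents the K-theoretic obstruction precisely because $\cat$ is countably generated: $K_0(B)$ is merely the free abelian group of countable rank on the simple objects of $\cat$, so no contradiction with separability arises, whereas the uncountable range of possible quantum dimensions prevents the same trick for \emph{all} countably generated rigid C*-tensor categories at once.
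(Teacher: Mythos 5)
Your proposal is correct and follows essentially the same route as the paper: enumerate the countably many unitary fusion categories up to equivalence, form their free product to obtain a single countably generated RC*TC with simple unit, and apply Theorem \ref{thm::A} (via the countably generated extension in Remark \ref{remark:countable}) to that free product. The only difference is one of emphasis --- you explicitly flag the need to verify that the free product is a genuine RC*TC with fully faithful inclusions, a point the paper's proof asserts without elaboration.
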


\section*{Aknowledgements}
	The authors would like to thank Corey Jones and David Penneys for a number of useful conversations and their intense sharing of ideas and comments.
	Hern\'{a}ndez Palomares was partially supported by CONACyT.
\section{Background}
\subsection{Rigid C*-tensor categories}\ \\
We shall provide a brief description\index{description} and relevant examples of rigid C*-tensor categories (RC*TC). All categories in this article are assumed to be essentially small (isomorphism classes of objects form a set) and $\vect$ enriched (not necessarily finite dimensional). We let ($\cat$, $\otimes,$ $\alpha,$  $\one,$ $\lambda,$ $\rho$) denote an arbitrary $\vect$ enriched semi-simple tensor category, where $-\otimes-:\cat\otimes\cat\longrightarrow\cat$ is a bilinear functor, $\alpha$ is the \textit{associator}, $\one$ is the \textit{monoidal/tensor unit}, and $\lambda$ and $\rho$ are the \textit{left and right unitors}, respectively. We refer to the monoidal category simply as $\cat$. Furthermore, \textbf{we assume the monoidal unit is \textit{simple}}; i.e. its endomorphism space is one dimensional: $\cat(\one)\cong \C.$ This assumption together with semi-simplicity imply that hom spaces are finite dimensional. (For a more detailed description of (monoidal) categories see \cite{EGNO15} and \cite{Mac71}.) All the categories in this paper are assumed to admit direct sums and contain sub-objects; i.e. they are \textit{Cauchy-complete}.

We further endow $\cat$ with more structure:
\begin{definition}\label{daggerCat}\cite{MR3663592}, \cite{GLR85}
   We say that {\upshape $\cat$} is a \textbf{dagger category} if and only if for each $a,b\in$ {\upshape $\cat$} there is an anti-linear map 
   $$\dagger:\mathrm{\cat}(a\rightarrow b)\rightarrow\cat(b\rightarrow a)$$ 
   such that
   \begin{itemize}
       \item The map $\dagger$ is an involution; i.e. for each $a,b\in\cat$ and every 
       $f\in \cat(a\rightarrow b)$ we have $(f^\dagger)^\dagger = f.$
       \item For composable morphisms $f$ and $g$ in $\cat,$ we have $(f\circ g)^\dagger = g^\dagger\circ f^\dagger.$ 
    \item Moreover, we have the identity $(f\otimes g)^\dagger = f^\dagger\otimes g^\dagger.$
   \end{itemize}
   Furthermore, we say that $f\in\cat(a\rightarrow b)$ is unitary if it is invertible with $f^{-1} = f^\dagger.$
\end{definition}

Having a dagger structure on $\cat$ allows us to introduce important analytical properties, as one could ask whether endomorphism $*$-algebras in $\cat$ are C*-algebras. We then introduce the following definition:

\begin{definition}\label{def::C*Cat}\cite{MR3663592}, \cite{GLR85}  
    We say a dagger category is a \textbf{C*-category} if and only if the following conditions hold for each $a,b\in\cat$:
    \begin{itemize}
        \item For every morphism $f\in\cat(a\rightarrow b)$ there exists an endomorphism $g\in \cat(a)$ such that $f^\dagger\circ f = g^\dagger\circ g.$
        \item The map $||\cdot||: \cat(a\rightarrow b)\longrightarrow [0,\infty]$ defined by
        $$||g||^2 := \text{sup}\big\{|\lambda|\geq 0\ |\ (g^\dagger\circ g -\lambda\cdot\id_a) \not\in \text{GL}(\cat(a))\big\}$$ defines a (submultiplicative) C*-norm. This is, the map defines a submultiplicative norm and for each morphism $f$ as above we have $||f^\dagger\circ f || = ||f||^2.$
    \end{itemize}
    Notice that being C* is a property of a dagger category and not extra structure.
\end{definition}

\begin{remark}
    Theorem 1.2 in \cite{HaYam00} asserts that any C*-tensor category is equivalent to a strict one. (In a strict category the associator and unitors are all identities.)  We can therefore \textbf{assume that} $\cat$ \textbf{is strict}, with no loss of generality.
    In lights of this result, we will often omit the associator and the unitors in our computations.
\end{remark}

\begin{definition}\label{rigidCat}\cite{EGNO15}  
	   We say that $\cat$ is a \textbf{rigid category} if each object has a dual and a predual. This is, if for each $c\in\cat$ there is an object $c^\vee,$ the dual of $c$, and evaluation and coevaluation maps 
	   $$\ev{c}:c^{\vee}\otimes c \longrightarrow \one \text{ and } \coev{c}:   \one\longrightarrow c\otimes c^{\vee},$$
	   which we draw as follows:
	   \begin{equation}
	        \begin{tikzpicture}[scale=1/10 pt, thick,baseline={([yshift=-\the\dimexpr\fontdimen22\textfont2\relax] current bounding box.center)}] 
	        \node at (-42,8){$\ev{c} = \ $};
	            \draw (-20,0) arc(0:180:6);
	            \draw[dashed] (-26,6) -- (-26,16);
	            \node at (-35,2){$c^\vee$};
	            \node at (-17,2){$c$};
	            \node at (-24,14){$\one$};
	            
	            \node at (-8,9){$\text{ and }$};
	        \node at (8,8){$\coev{c}= \ $};
	            \draw (20,16) arc (180:360:6);
	            \draw[dashed] (26,10) -- (26,0);
	            \node at (18,15){$c$};
	            \node at (34,15){$c^{\vee}$};
	            \node at (28,2){$\one$};
	       \end{tikzpicture}
	   \end{equation}
	   (From this point on, we relax the graphical calculus notation, so that we will not necessarily indicate the presence of the unit
	   object $\one$ nor the dashed strand $\id_{\one}$.) For each object $c\in\cat$, the evaluation and coevaluation maps satisfy \textbf{the Zig-Zag equations} (a.k.a. \textit{the conjugate equations} in \cite{LR97}, Sec. 2):
	   \begin{equation}\label{eqn::zz}
		        \begin{tikzpicture}[scale=1/10 pt, thick,baseline={([yshift=-\the\dimexpr\fontdimen22\textfont2\relax] current bounding box.center)}] 
			            \draw (-60,-10) -- (-60,10);
			            \node at (-63,-8){$c^\vee$};
			            \node at (-63,8){$c^{\vee}$};
			        \node at (-48,0){$=$};
			            \draw (-40,-10) -- (-40,0);
			            \node at (-43,-8){$c^{\vee}$};
			            \draw (-30,0) arc (0:180:5);
			            \node at (-32,0){$c$};
			            \draw (-30,0) arc(180:360:5);
			            \draw (-20,0) -- (-20,10);
			            \node at (-17,8){$c^{\vee}$};
			            
			            \node at (0,0){$\text{ and }$};
			            
			            \draw (60,-10) -- (60,10);
			            \node at (58,-8){$c$};
			            \node at (58,8){$c$};
			        \node at (48,0){$=$};
			            \draw (40,-10) -- (40,0);
			            \node at (17,8){$c$};
			            \draw (40,0) arc (0:180:5);
			            \node at (27,0){$c^{\vee}$};
			            \draw (20,0) arc(180:360:5);
			            \draw (20,0) -- (20,10);
			            \node at (37,-8){$c$};
		        \end{tikzpicture}
	   \end{equation}
	   Which more succinctly can be equivalently stated as
	   \begin{equation*}
		       \id_{c^\vee} = (\ev{c}\otimes\id_{c^\vee})\circ (\id_{c^\vee}\otimes\coev{c})\\
		       \text{ and }\ \
		       \id_{c} = (\id_{c}\otimes\ev{c})\circ (\coev{c}\otimes\id_{c}).
	   \end{equation*}
	\noindent We moreover require that there exists a predual object to $c$, denoted by $c_\vee,$ such that $(c_v)^\vee \cong c.$	
	   
	  For arbitrary $c,b\in\cat$, \textit{the dual of a map} $f\in \cat(a\rightarrow b)$ can be computed by composing with the evaluation and coevaluation maps as follows:
	   \begin{equation}\label{eqn::dualmap}
	        \begin{tikzpicture}[scale=1/15 pt, thick,baseline={([yshift=-\the\dimexpr\fontdimen22\textfont2\relax] current bounding box.center)}] 
		        \node at (-33,0){$f^\vee $};
	       \end{tikzpicture}
	       =
	       \begin{tikzpicture}[scale=1/15 pt, thick,baseline={([yshift=-\the\dimexpr\fontdimen22\textfont2\relax] current bounding box.center)}] 
		            \node at (-20,0){$f$};
		            \draw[rounded corners, ultra thick](-25,-5) rectangle (-15,5);
		            \draw[ultra thick] (-20,7)--(-20,5);
		            \draw[ultra thick] (-20,7)arc(0:180:5);
		            \draw[ultra thick] (-30,7) -- (-30,-15);
		            \node at (-34,-13){$b^\vee$};
		            \draw (-20,-7)--(-20,-5);
		            
		            \draw (-20,-7)arc(180:360:5);
		            \draw (-10,-7)--(-10,15);
		            \node at (-6,13){$a^\vee$};
	        \end{tikzpicture}
	   \end{equation}
	   where $f^\vee\in\cat (b^\vee\rightarrow a^\vee)$ is given by $(\ev{b}\otimes \id_{a^\vee})\circ(\id_{b^\vee}\otimes f\otimes \id_{a^\vee})\circ(\id_{b^\vee}\otimes \coev{a}).$ As a result, these choices of duals for objects, can be arranged into a strong-monoidal \textbf{dual functor} $$(\bullet)^\vee:\cat\longrightarrow\cat^{\rm{mop}},$$ where $\cat^{\rm{mop}}$ is $\cat$ considered with reversed arrows and tensor product. The associated \textit{tensorator} is given by the natural isomorphism whose components are given by
	    \begin{equation}
	        \nu_{a,b}
	        =
	        \begin{tikzpicture}[scale=1/15 pt, thick,baseline={([yshift=-\the\dimexpr\fontdimen22\textfont2\relax] current bounding box.center)}] 
	            \node at (-24,-8){$b^\vee$};
	            \node at (-7,-8){$a^\vee$};
	            \draw (-20,-10) -- (-20,0) arc (180:0:10) arc (180:360:5) -- (10,10);
	            \draw (-11,-10) -- (-11,0) arc (180:0:5) arc(180:360:6) -- (11,10);
	            \node at (21,8){$(a\otimes b)^\vee$};
	        \end{tikzpicture}
	        =
	        (\ev{b}\otimes\id_{(a\otimes b)^\vee}) \circ (\id_{b^\vee}\otimes \ev{a}\otimes \id_{(a\otimes b)^\vee}) \circ (\id_{b^\vee}\otimes \id_{a^\vee}\otimes \coev{a\otimes b}).
	    \end{equation}
\end{definition}

\begin{remark}\label{rmk::BalancedDual}
    Neither a dual functor nor its tensorator need to be unitary; i.e., $(f^\dagger)^\vee \neq (f^\vee)^\dagger$, nor $\nu^{-1} = \nu^{\dagger}$. However, for a RC*TC with simple unit, there exists a \textbf{balanced dual} for each object (see Lemma 3.9 in \cite{MR2091457} or Prop. 3.24 in \cite{Pen18}); i.e., for each $c\in \cat,$ there exists a \textit{choice} of dual $(\bar {c},\ \ev c,\coev c)$ for which the Zig-Zag equations hold and moreover satisfies \textbf{the balancing condition}: for an arbitrary endomorphism $f\in\cat(a\rightarrow a),$ its \textbf{left and right traces} match; i.e
       \begin{equation}\label{eqn::balancing}
       \ev{a}\circ(\id_{\overline{a}}\otimes f)\circ{\ev{a}}^\dagger 
       =
           \begin{tikzpicture}[scale=1/15 pt, thick,baseline={([yshift=-\the\dimexpr\fontdimen22\textfont2\relax] current bounding box.center)}] 
                \draw[rounded corners, ultra thick] (-5,-5) rectangle (5,5);
                \node at(0,0){$f$};
                \draw[ultra thick] (0,5) arc(0:180:5);
                    \node at (-13,5){\footnotesize$\overline a$};
                \draw[ultra thick] (-10,5) -- (-10,-5);
                \draw[ultra thick] (-10,-5) arc(180:360:5);
           \end{tikzpicture}
        =
            \begin{tikzpicture}[scale=1/15 pt,             thick,baseline={([yshift=-\the\dimexpr\fontdimen22\textfont2\relax] current bounding box.center)}] 
                \draw[rounded corners, ultra thick](-5,-5) rectangle (5,5);
                    \node at (0,0){$f$};
                    \draw[ultra thick] (10,5) -- (10,-5);
                    \draw[ultra thick] (10,5) arc(0:180:5);
                        \node at(13,5){\footnotesize$\overline a$};
                    \draw[ultra thick] (0,-5) arc(180:360:5);
            \end{tikzpicture}
        =
        {\coev{a}}^\dagger\circ(f\otimes \id_{\overline{a}})\circ \coev{a}
            \in \C.
       \end{equation}
   	This choice of balanced dual assembles into a \textit{unitary dual functor} for which for every morphism $f$ we obtain ${f^\dagger}^\vee = {f^\vee}^\dagger$, and $\nu$ is unitary.
	 We remark that this choice of dual functor is unique up to a unique natural isomorphism.
   
   	Associated to the \textit{balanced dual functor} there is a \textbf{canonical pivotal structure} $$\varphi:\id_\cat\Rightarrow (\bullet)^{\vee\vee},$$ which is a \textbf{natural monoidal unitary} \textit{spherical} (i.e. Eqn. \ref{eqn::balancing} holds) isomorphism whose components are given by
    \begin{equation}
    (\coev{c}^\dagger \otimes\id_{{c^\vee}^\vee})\circ (\id_c \otimes\coev{c^\vee})
    =
        \begin{tikzpicture}[scale=1/15 pt,             thick,baseline={([yshift=-\the\dimexpr\fontdimen22\textfont2\relax] current bounding box.center)}] 
        \node at (-12,-8){\footnotesize$c$};
        \node at (-3,-2){\footnotesize$c^\vee$};
        \node at (15,8){\footnotesize${c^\vee}^\vee$};
              \draw (-10,-10) -- (-10,0) arc(180:0:5) arc(180:360:5) --(10,10);   
        \end{tikzpicture}
    =
         \varphi_c
    =
        \begin{tikzpicture}[scale=1/15 pt,             thick,baseline={([yshift=-\the\dimexpr\fontdimen22\textfont2\relax] current bounding box.center)}] 
            \node at (13,-8){\footnotesize$c$};
            \node at (3,-2){\footnotesize$c^\vee$};
            \node at (-15,8){\footnotesize${c^\vee}^\vee$};   
            \draw (-10,10) -- (-10,0) arc (180:360:5) arc (180:0:5) --(10,-10);
        \end{tikzpicture}
    =  
    (\id_{{c^\vee}^\vee}\otimes \ev{c})\circ(\ev{c^\vee}^\dagger\otimes\id_c).
    \end{equation}
\end{remark}  
   
\begin{remark}
	    Using the unitary balanced dual functor in $\cat$, the naturality of $\varphi$ and the canonical spherical pivotal structure, one can show that the mirrored dual of a morphism matches Eqn. \ref{eqn::dualmap}: 
	    (see \cite{BHP12}, pp. 8-10)
	\begin{equation}\label{eqn::dualmap}
		        \begin{tikzpicture}[scale=1/15 pt, thick,baseline={([yshift=-\the\dimexpr\fontdimen22\textfont2\relax] current bounding box.center)}] 
			            \node at (-20,0){$f$};
			            \draw[rounded corners, ultra thick](-25,-5) rectangle (-15,5);
			            \draw[ultra thick] (-20,7)--(-20,5);
			            \draw[ultra thick] (-20,7)arc(0:180:5);
			            \draw[ultra thick] (-30,7) -- (-30,-15);
			            \node at (-34,-13){$b^\vee$};
			            \draw (-20,-7)--(-20,-5);
			            
			            \draw (-20,-7)arc(180:360:5);
			            \draw (-10,-7)--(-10,15);
			            \node at (-6,13){$a^\vee$};
		       \end{tikzpicture}
		        =
		        f^\vee 
		        = 
		        \begin{tikzpicture}[scale=1/15 pt, thick,baseline={([yshift=-\the\dimexpr\fontdimen22\textfont2\relax] current bounding box.center)}] 
			            \node at (-20,0){$f$};
			            \draw[rounded corners, ultra thick](-25,-5) rectangle (-15,5);
			            \draw[ultra thick] (-20,7)--(-20,5);
			            \draw[ultra thick] (-10,7)arc(0:180:5);
			            \draw[ultra thick] (-10,7) -- (-10,-15);
			            \node at (-6,-13){$b^\vee$};
			            \draw (-20,-7)--(-20,-5);
			            
			            \draw (-30,-7)arc(180:360:5);
			            \draw (-30,-7)--(-30,15);
			            \node at (-34,13){$a^\vee$};
		        \end{tikzpicture}
		        = (\id_{a^\vee}\otimes \coev{b}^\dagger) \circ (\id_{a^\vee}\otimes f\otimes \id_{b^\vee}) \circ (\ev{a}^\dagger \otimes \id_{b^\vee}) .
	    \end{equation}
	
	 Notice that, in light of the naturality of $\varphi,$ \textit{rotation by} $2\pi$ in $\cat$ satisfies the identity ${f^\vee}^\vee\circ\varphi_a = \varphi_b\circ f,$ for an arbitrary $f\in\cat(a\rightarrow b).$
\end{remark}
   
\begin{definition}\label{def::involution}\cite{JP17}, \cite{Pen18}, \cite{MR3663592} 
	    Using the dagger structure $\dagger$ and the unique unitary (balanced) dual functor from Remark \ref{rmk::BalancedDual} $(\bullet)^\vee:\cat \longrightarrow \cat^{\rm{mop}},$ 
	we can define an \textbf{involutive structure} $((\overline{\, \cdot \,}, \nu), \varphi, r)$ on $\cat$ as follows:
	\begin{align*}
	        \overline{\, \cdot \,}:\cat \longrightarrow\ &\cat^{\rm{mp}}\\
	        c\mapsto \bar c\ (= c^\vee), \hspace{5mm} f\mapsto &\overline f := (f^\dagger)^\vee\ (= (f^\vee)^\dagger).
	\end{align*}
	Here, $\cat^{\rm{mp}}$ is the monoidal category $\cat$ considered with the reversed tensor product. Notice that if $f\in\cat(a\rightarrow b),$ then $\overline f\in \cat(\overline a \rightarrow \overline b).$ 
	The \textbf{involution} $\overline{\, \cdot \,}$ is a conjugate-linear strong-monoidal functor, which comes equipped with the canonical unitary pivotal structure $\{\varphi_c: c\xrightarrow[]{\sim} \overline{\overline c}\}_{c\in \cat}$ 
	satisfying $\varphi_{\overline c} = \overline{\varphi_c}.$ The monoidal structure is given by natural unitaries $\{\nu_{a,b}: \overline a \otimes \overline b\xrightarrow[]{\sim} \overline{a\otimes b}\},$
	 and an isomorphism $r:\one\xrightarrow[]{\sim} \overline{\one}$ given by $\lambda_{\overline {\one}} \circ \coev{\one}.$ Here $\lambda$ is the \textit{left unitor}. 
	We limit ourselves to remark there are associativity and unitality axioms these maps must satisfy, and a more detailed view can be found in \cite[Definition 2.4]{JP17}.
	    
	    A tensor category with an involution is called \textbf{involutive}. And a tensor category is called \textbf{bi-involutive} (\cite[Definition 2.3]{MR3663592} and \cite[Definition 3.35]{Pen18}) if it is a dagger category with an involution and moreover, 
	the involution $\overline{\, \cdot \,}$ is a dagger functor (i.e. for every $f\in\cat(a\rightarrow b)$, we have $\overline{f^\dagger} = {\overline f}^\dagger$) with unitary structure isomorphisms $\varphi,\nu$ and $r$.
\end{definition}

\begin{remark}\label{rmk::biinv}
	Notice that the \textbf{bi-involutive structure} described above is \textit{canonical}. (The interested reader can go to \cite{JP17}, Example 2.12 and the references therein for an expanded description of the structure isomorphisms.)
	It was shown in \cite{Pen18} that this bi-involutive structure is canonical and independent of the choice of unitary dual functor.
\end{remark}

The following definition will be of importance in the last section, so we include them for the convenience of the reader.

\begin{definition}\label{dfn::BiInvolutive} \cite{Pen18}
	    A \textbf{bi-involutive functor} between bi-involutive RC*TCs is a dagger tensor functor $(\ff, \mu): \cat
	    \longrightarrow \catD$ together with a natural unitary isomorphism $\chi_c: \ff(\overline c)\longrightarrow
	    \overline{\ff(c)}$ which is \textit{monoidal} with respect to $\mu, \nu^{\cat}$ and $\nu^{\catD},$ and \textit{involutive} with respect to $\varphi^{\cat}$ and $\varphi^{\catD}.$ 
	    These properties are captured by the following commutative diagrams:
	    \begin{equation}\label{eqn::Chi}
	        \begin{tikzcd}[column sep=large, row sep=large]
	            \ff(\overline a)\boxtimes\ff(\overline b) \arrow["\chi_a \boxtimes\chi_b"']{d} \arrow["\mu_{\overline a, \overline b}"]{r}
	            & \ff(\overline a\otimes\overline b)  \arrow["\ff(\nu_{a,b})"]{r}  & \ff(\overline{b\otimes a}) \arrow["\chi_{b,a}"]{d}\\
	            \overline{\ff(a)}\boxtimes\overline{\ff(b)} \arrow["\nu_{\ff(a),\ff(b)}"']{r}
	            & \overline{\ff(b)\boxtimes \ff(a)} \arrow["\mu_{a,b}"']{r} &\overline{\ff(b\otimes a)}
	        \end{tikzcd}
	        \hspace{2cm}
	        \begin{tikzcd}[column sep=large, row sep=large]
	            \ff(c) \arrow["\ff(\varphi_c)"]{r} \arrow["\varphi_{\ff(c)}"']{d} &\ff(\overline{\overline c}) \arrow["\chi_{\overline a}"]{d}\\
	             \overline{\overline{\ff(c)}}  &\overline{\ff(\overline c)} \arrow["\overline{\chi_c}"']{l}
	        \end{tikzcd}
	    \end{equation}
\end{definition}

We now introduce the tensor category of \textit{bimodules over a }$\rm{II}_1$ \textit{factor}. (See \cite{PopaII1} for basic facts about $\rm{II}_1$ factors.) Beyond its great significance in the theory of subfactors, this example is of central importance to us as it will serve as a grounds to construct a more general RC*TC of bimodules over a C*-algebra. 

\begin{example}\label{BifiniteBimodules}
	For this example we closely follow \cite{Bisch} and  
	\cite{JP17}. For a type $\rm{II}_1$ factor $(N,\tau),$ consider the (W*-)\textbf{tensor category of 
	bimodules over }$N$. The tensor product is given by the \textit{Connes fusion relative tensor product}, denoted $-\fuse{N}-$. By restricting to the full subcategory of bifinite bimodules $\Bim_{\mathsf{bf}}(N)$, we obtain a 
	RC*TC whose structure we now describe. 
	Let $\Omega\in \text{L}^2(N,\tau)$ be a cyclic vector in the GNS-construction. 
	Given $H\in \Bim_\mathsf{bf}(N)$, we say that $\xi\in H$ is (left/right-)-bounded if and only if the map $N\Omega\rightarrow H$ given by
	 $n\Omega\mapsto \xi\lhd n,$ extends to a bounded map $L_\xi:\Ltwo{N}\rightarrow H.$ The set of \textbf{bounded vectors} in
	 $H$ is denoted by $H^\circ,$ and defines a dense subset of $H$. Bounded vectors help us define an $N$-valued inner product; i.e., for $\eta,\xi\in H^\circ,$ we have
	 $\langle \eta\ |\ \xi\rangle_N := L_\eta^*L_\xi$. This product is indeed $N$-valued, as $L_\eta^*L_\xi$ commutes with the right $N$-action.
	
	\textbf{The dual} of $H\in\Bim_\mathsf{bf}(N)$ is given by $\overline {H},$ as a Hilbert space. To refer to an element in $\overline{H}$ we write 
	$\overline{\xi}\in \overline H$. Right and left $N$-actions are given by 
	$n\rhd\overline{\xi}\lhd m := \overline{m^*\rhd\xi\lhd n^*},$ and for $\overline \xi, \overline \eta\in \overline H$ 
	their inner product is given by $\langle \overline\xi\ |\ \overline\eta\rangle_N := \langle \eta\ |\ \xi \rangle_N.$
	 We are now able to define an evaluation map
	$$\ev{H}:\overline{H}\fuse{N}H\rightarrow \Ltwo{N} \text{ given on } \overline {H^\circ}\tens{N} H^\circ \text{ by} $$
	$$\overline{\eta}\boxtimes \xi\mapsto \langle \eta\ |\ \xi \rangle_N,$$ which is $N$-bilinear and bounded.
	
	To define coevaluation maps, we need the notion of a (finite) right $N$-basis, which we introduce in a more general context in Definition
	 \ref{dfn::basis}. By \cite{MR561983}, such a finite set exists and we denote it by $\set{\beta}\subset H^\circ.$ We then have that for every
	 $\xi\in H^\circ,$  $\xi = \sum \beta\lhd\langle \beta\ |\ \xi\rangle_N.$ For a chosen basis $\set{\beta}\subset H^\circ,$ we then have the map
	$$\coev{H}: \Ltwo{N} \longrightarrow  H\fuse{N}\overline{H} \text{ given by}$$
	   $$ n\Omega \mapsto \sum (n\rhd\beta)\boxtimes\overline\beta,$$
	which can be seen not to depend on the choice of basis. The maps $\ev{H}$ and $\coev{H}$ satisfy the Zig-Zag Equations (\ref{eqn::zz}). Thus
	 $\Bim_{\mathsf{bf}}(N)$ is indeed a RC*TC, where our choice of evaluation and coevaluation maps is usually referred to as the \textbf{tracial evaluation
	 and coevaluation}. One may need to renormalize these maps on irreducible bimodules so that they satisfy the balancing condition
	 described in Equation \ref{eqn::balancing}. We further restrict to the full rigid C* tensor subcategory of \textit{spherical/extremal} bimodules, 
	denoted $\Bim_{\mathsf{bf}}^{\mathsf{sp}}(N),$ so that the tracial dual matches the canonical unitary dual functor.
\end{example}

\subsection{Hilbert C*-bimodules}\label{sec::Bimodules}  \ \\
We recall some useful facts about Hilbert C*-bimodules that we will use in the following sections. Throughout this section, we assume that $C$ and $D$ are unital C*-algebras. 
\begin{definition}\label{dfn::bimodule}\cite{MR1624182, HaPeI}\\
   A $C$-$D$ \textbf{Hilbert C*-bimodule} $\Y$ is a vector space together with commuting left and right actions denoted by $-\rhd-: C\times\Y\rightarrow \Y$ and $-\lhd-: \Y\times D\rightarrow \Y$. Moreover, we have the following structures:
   \begin{itemize}
       \item A $C$-valued form $_C\langle-,\ \cdot\ \rangle$ which is $C$-linear on the left and conjugate linear on the right, and
       \item a $D$-valued  form $\langle\ \cdot\ | - \rangle_D$ which is $D$-linear on the right and conjugate linear on the left.
       \item Compatibility of inner products with the involution: $_C\langle\xi,\eta \rangle = (_C\langle\eta,\xi \rangle)^*$, and $\langle\xi\ |\ \eta \rangle_D = (\langle\eta\ |\ \xi \rangle_D)^*$;
   \end{itemize}
   These forms provide canonical norms defined as follows:\\ for each $y\in \Y,$ $$_C||y||^2 :=\ ||_C\langle y, y\rangle||_C,  \text{ and } ||y||_D^2 :=   || \langle y\ |\  y\rangle_D||_D.$$
   We also require the following identities and properties to hold for every $\xi,\eta\in\Y$:
   \begin{itemize}
        \item The two norms above are complete and equivalent;
	  \item The two forms are \textit{positive definite}; i.e. $_C\langle \xi,\ \xi\rangle \geq 0$ on $\Y$ and $_C\langle \xi,\ \xi\rangle = 0$ if and only if $\xi = 0$, and similarly for the $D$-valued form.
       \item For each $c\in C$ and each $d\in D$, we have $_C\langle \xi\lhd d,\eta \rangle =\ _C\langle\xi,\eta\lhd d^* \rangle$, and $\langle c\rhd\xi\ |\ \eta \rangle_D = \langle\xi\ |\ c^*\rhd\eta \rangle_D.$
   \end{itemize}
 For Hilbert $C^*$-bimodules $_C\Y_D$ and $_C\mathcal {Z}_D$, denote by $\mathcal B(\Y, \mathcal Z)$ the Banach space of bounded $C$-$D$ bilinear operators. We denote the set of \textbf{left-adjointable operators} by $^*\mathcal B(\Y,\mathcal Z)$, and $\mathcal B^*(\Y, \mathcal Z)$ denotes the set of \textbf{right-adjointable operators.} We notice that under this notation, the last bullet points amounts to saying that there is an embedding of the left $C$-action into the Banach space of right-adjointable operators and also that there exists an embedding of the right $D$-action into the space of left-adjointable operators. 
\end{definition}

Notice that the left adjoint of a given operator need not match its right adjoint. Often times, it will be sufficient for our purposes to restrict to those operators for which these notions match, denoted \textbf{bi-adjointable operators.} In the following definitions we seek for sufficient conditions for our bimodules to form a RC*TC. We will closely follow \cite{MR1624182} for the description of such categorical/analytic structures and properties.

\begin{definition}\label{dfn::basis}\cite{MR1624182}  
   Given a Hilbert $C^*$-bimodule $_C\Y_D$, we say that $\{u_i\}_{i = 1}^{m}\subset \Y$ is a \textbf{right $D$-basis} if and only if for each $\xi\in\Y$ we have
                    $$\xi = \sum_{i=1}^{m} u_i \lhd\langle u_i\ |\ \xi\rangle_D.$$
    Similarly, we say $\{v_j\}_{j = 1}^{n}\subset \Y$ is a \textbf{left $C$-basis} if and only if for each $\xi\in\Y$ we have
                    $$\xi = \sum_{j=1}^{n}\ _C\langle \xi, v_j\rangle\rhd v_j.$$
\end{definition}

\noindent If a bimodule $\Y$ has both a left and a right basis, we say it is \textbf{finitely generated projective} (fgp). 

\begin{definition}\label{index} \cite[Definitions 1.14 and 1.16]{MR1624182}   
  	 Define the \textbf{right index} the and \textbf{left index} of a fgp $C$-$D$ Hilbert C*-bimodule $\Y$ as 
	$$
		\text{r-Ind}(\Y):= \sum_{i =1}^{m}\ _C\langle u_i,u_i\rangle, \text{ and } \text{l-Ind}(\Y) = \sum_{j=1}^{n} \langle v_j\ |\ v_j\rangle_D.
	$$
	By Proposition 1.13 in \cite{MR1624182}, these sums do not depend on the choices of bases, and moreover that $\text{r-Ind}\in \text{Z}(C)^+$
	 and $\text{l-Ind}\in\text{Z}(D)^+,$ define positive central elements in their respective C*-algebras. Thus, if either $C$ or $D$ have trivial center, the
	 corresponding index is then a positive real number. Moreover, we define \textbf{the index} of $\Y$ as 
	$$\text{Ind}(\Y) = \text{l-Ind}(\Y) \cdot \text{r-Ind}(\Y).$$
		We say the bimodule $\Y$ is \textbf{normalized} if $\text{r-Ind}(\Y) = \text{l-Ind}(\Y).$ By \cite[Lemma 1.15 ]{MR1624182}
	 fgp bimodules over C*-algebras with trivial centers can always be normalized and therefore we will only consider these, without loss of generality.
\end{definition}

We shall next introduce another property for an fgp bimodule $_C\Y_D$ that will become of crucial importance when defining a tensor structure for bimodules, obtaining a multiplicative notion of (Jones') index.

\begin{definition}\label{minimal}  \cite[Proposition 3.3 and Definition 3.5]{MR1624182}   
	  Let $_C\Y_D$ be as above, and further assume that Z$(C) = \C =\ $Z$(D)$. 
	For $K := \frac{\text{r-Ind}(\Y)}{\text{l-Ind}(|y)} > 0$ and each $T\in \text{End}(_C\Y_D)$ we have
		      $$\sum_i\ _C\langle Tu_i , u_i\rangle = K\cdot \sum_j \langle v_j| Tv_j\rangle_D  \in \C,$$
	 we then say that $\Y$ is a \textbf{minimal bimodule}.\\
\end{definition}

\begin{example}\label{HappyBims}	
	We now describe a RC*TC, which is a C*-algebraic analog of the category $\Bim_\mathsf{bf}^{\mathsf{sp}}(N)$ of spherical bifinite bimodules over a
	 $\rm{II}_1$-factor $N$, outlined earlier in Example \ref{BifiniteBimodules}. In describing this example, we rely heavily on the results 
	in \cite{MR1624182}. Let $B$ be a fixed unital C*-algebra with trivial center. Consider the category of fgp normalized and minimal
	 Hilbert C*-bimodules over $B$, denoted by $\Bim_\mathsf{fgp}(B).$ Neither minimality nor being normalized are \textit{extra properties} 
	of a bimodule, since one can always \textit{remetrize} to simultaneously force minimality and normality. 
	For further details, see \cite[Lemma 3.6]{MR1624182}. Since there is no loss of generality by doing so, from this point on, 
	we will only consider \textbf{fgp normalized and minimal bimodules}. The greater advantage of reducing the category to 
	normalized and minimal bimodules is that the index is well behaved with respect to sums and products in the category, 
	which we shall next define, and that we can structure $\Bim_\mathsf{fgp}(B)$ as a RC*TC.

	There is a canonical way to direct sum objects in $\Bim_\mathsf{fgp}(B)$, 
	where we emphasize that the resulting object belongs to $\Bim_{\mathsf{fgp}}(B),$ and the index behaves additively
	 (Lemma 3.9 \cite{MR1624182}). The tensor structure is given by $B$-\textbf{fusion} $-\fuse{B}-$, where if
	 $\Y,\mathcal Z\in \Bim_{\mathsf{fgp}}(B)$ then $\Y\fuse{B}\mathcal Z \in \Bim_{fgp}(B)$ without completion
	 (Proposition 1.23 in \cite{MR1624182}). We remark that Ind$(\Y\fuse{B}\mathcal Z)$ = Ind$(\Y)\cdot$ Ind$(\mathcal{Z})$ holds true on
	 $\Bim_{\mathsf{fgp}}$ (Proposition 1.30, \cite{MR1624182}),  and that fusion is distributive with respect to sums (Lemma 1.24, \cite{MR1624182}).

	For $\Y,\mathcal Z \in \Bim_{\mathsf{fgp}}(B),$ by \cite[Lemma 1.10]{MR1624182}, any right (resp. left) $B$-module map
	 $T: \Y\rightarrow \mathcal Z$ is automatically a finite rank operator, is bounded with respect to the right (resp. left) $B$-norm
	 (and hence both norms) and is	 \textit{adjointable with respect to the right (resp. left) inner product},
	 with right-adjoint $T^*$ (resp. left-adjoint $^*T$). 
	Moreover, minimality together with Proposition 3.14 in \cite{MR1624182} ensure that if $T$ is also
	 \textit{adjointable with respect to the left inner product}, it satisfies $^*T = T^*.$ 
	 Therefore $\Bim_{\mathsf{fgp}}(B)(\Y\rightarrow \mathcal Z)$ consists of \textbf{bi-adjointable bounded} 	
	$B$-bimodule maps, with common adjoint denoted $T^*$. This defines a \textbf{dagger structure} on $\Bim_{\mathsf{fgp}}(B)$.


	We now define the \textbf{conjugate bimodule} of $\Y$ denoted $\overline \Y$, where $\Y = \overline \Y$ as sets. 
	The bimodule structure is as follows: if $\overline{y}, \overline {y'}\in\overline{\Y}$ and $a, b\in B$ the left and right actions are given by
	 $a\rhd \overline{y}\lhd b := \overline{b^* \rhd y \lhd a^*},$  $_B\langle \overline{y}, \overline{y'}\rangle := \langle y\ |\ y' \rangle_B$ 
	and $\langle \overline{y}\ |\ \overline{y'}\rangle_B :=\ _B\langle y,\ y'\rangle.$ Clearly, $\overline \Y\in\Bim_{\mathsf{fgp}}(B)$. We are now in
	 position to define \textbf{evaluation and coevaluation}: for a given finite right $B$-basis $\{u_i \}_{i=1}^m$ we have maps
	\begin{align*}
    		\ev{\Y}:&\overline{\Y}\fuse{B}\Y\longrightarrow\ _BB_B, \hspace{20 mm} \coev{\Y}: {_BB_B}\longrightarrow \Y\fuse{B}\overline{\Y},\\
	    &\overline{\eta}\boxtimes\xi\mapsto  \langle\eta\ |\ \xi \rangle_B \hspace{30mm} b\mapsto b\rhd\sum_{i=1}^{m} u_i\boxtimes \overline{u_i}
	\end{align*}
	One must check these maps belong to our category. For example, $\ev{\Y}$ is manifestly right $B$-linear so it is a bounded right-adjointable
	 $B$-bimodule map. By direct computation, $\ev{\Y}^*(b) = (\sum_{j=1}^{n}\overline{v_j}\boxtimes v_j) \lhd b$ and moreover
	$^*\ev{\Y}(b) = b\rhd\sum_{j=1}^{n}\overline{v_j}\boxtimes v_j,$ where $\{v_j\}_{j=1}^n$ 
	is a finite left $B$-basis and $b\in B$ is arbitrary. Therefore, $ ^*\ev{\Y}$ is indeed the left-adjoint of the evaluation map. Then, necessarily, 
	$^*\ev{\Y}(b) = b\rhd\sum_{j=1}^{n}\overline{v_j}\boxtimes v_j = (\sum_{j=1}^{n}\overline{v_j}\boxtimes v_j)\lhd b = \ev{\Y}^*,$
	 showing that $\ev{\Y}\in\Bim_{\mathsf{fgp}}(B)(\overline{\Y}\boxtimes\Y\rightarrow B)$. Similarly for
	 $\coev{\Y}$, where $\coev{\Y}^*(y\boxtimes\overline{y'}) = {_B\langle y,\ y' \rangle} =\ ^*\coev{\Y}(y\boxtimes\overline{y'}).$
	We now define \textbf{the dual module} $\Y^\vee := \overline{\Y}$ and \textbf{the dual of a morphism} $T^\vee$
	is given by the canonical dual. By Proposition 4.1 in \cite{MR1624182}, under these structural maps, the category of fgp $B$-bimodules
	 $\Bim_{\mathsf{fgp}}(B)$ is a rigid category. In fact, since we restricted to normalized bimodules, $\Bim_{\mathsf{fgp}}(B)$ has the C*-property, 
	by Corollary 4.3 in \cite{MR1624182}. 
	
	This choice of dual functor corresponds to the balanced unitary dual. Indeed the canonical spherical structure on $\Bim_{\textsf{fgp}}(B)$
	 defines an honest pivotal structure so, by Proposition 3.9 in \cite{Pen18}, the result follows. (Remarkably, that $^*T = T^*$ 
	for every biadjointable map is an equivalent condition to the naturality of the pivotal structure.) Thus, we can consider 
	the canonical \textbf{bi-involutive structure} given on objects by conjugation $\overline \Y$ and a morphism
	 $T:\Y\rightarrow \mathcal Z$, its
	 conjugate $\overline T : \overline \Y\rightarrow \overline{\mathcal Z}$ is given by $\overline T := (T^*)^\vee = (T^\vee)^*$
	 and satisfies $\overline T(\overline y) = \overline{T(y)}.$
\end{example}

\section{Diagramatic Algebras}\label{sec::diagAlgs}
\subsection{GJS construction}\label{section:GJS}\ \\
In this section we will assume that $\cat$ is a fixed essentially small RC*TC. We start with an auxiliary lemma, which helps us find a special object $x\in \cat.$ From the morphisms between tensor powers of this object we will construct certain diagrammatic $*$-algebras, and an \textit{ambient} C*-algebra, denoted $B_{\infty}$ (Definition \ref{dfn::BInfty}), which we will heavily rely on. The corners $B_n$ of $B_\infty$ will be separable simple unital C*-algebras with unique traces. Of particular importance will be the corner $B_0$ (Notation \ref{notation::corners}), as we will use it to construct a concrete rigid C*-tensor subcategory of $\Bim_{\mathsf{fgp}}(B_0),$ (Example \ref{HappyBims}) where we \textit{fully and faithfully} represent $\cat.$ 

\begin{notation}\label{notation::delta}
    For an object $c\in\cat,$ we define $\delta_c$ to be the \textbf{quantum dimension} of $c$. This is, $\delta_c := \coev{c}^*\circ\coev{c} = \ev{c}\circ\ev{c}^*,$ is the (complex) value of the $c$-\textit{loop}.
\end{notation}

\begin{definition}\label{dfn::SSD}
    	We say that an object $x\in \cat$ is \textbf{self-dual}  if and only if there exists a (unitary) isomorphism $\psi:x\longrightarrow\overline x$. 
	(This isomorphism can be chosen to be unitary by polar decomposition.)
	Moreover, we say that $x$ is \textbf{symmetrically self-dual} (ssd) (\cite{BHP12}, Definition 2.10) if and only if is self-dual and 
	$\ev{x}\circ(\psi\otimes \id_x) = \coev{x}^*\circ(\id_x \otimes \psi).$ (For more detailed treatments of self-dualities, 
	the reader can go to \cite[Theorem 3.4]{MR1010160}, \cite{LR97}, and \cite{HaPeI}.)
\end{definition}

Intuitively, in the graphical calculus, strings labeled by a ssd object need not be oriented. In the following lemma, we show there are always ssd objects.
We can \textbf{fix a ssd object} $x\in \cat$, since
\begin{lemma}\label{lemma::ssdObject}
    Every RC*TC has a ssd object $x,$ with $\delta_x >1$ and $x = \overline x$.
\end{lemma}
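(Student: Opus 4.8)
The plan is to realize $x$ as a direct sum $c\oplus\overline c$ and to equip it with an explicit \emph{off-diagonal} self-duality whose symmetry is built in from the outset, rather than produced by an abstract isomorphism.

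First I would pick any nonzero object $c\in\cat$ (for instance $c=\one$) and set $x:=c\oplus\overline c$. Writing $\iota_c,\iota_{\overline c}$ for the isometric inclusions of the two summands and $p_c:=\iota_c^*,\ p_{\overline c}:=\iota_{\overline c}^*$ for the corresponding projections, we have $p_c\iota_c=\id_c$, $p_{\overline c}\iota_{\overline c}=\id_{\overline c}$, $p_c\iota_{\overline c}=0=p_{\overline c}\iota_c$ and $\iota_cp_c+\iota_{\overline c}p_{\overline c}=\id_x$. The additivity of the conjugation functor together with the pivotal isomorphism $\varphi_c$ gives $\overline x\cong\overline c\oplus c\cong x$, so $x$ is self-dual; but instead of invoking this isomorphism I would write the duality data down so as to control the symmetry condition.

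The heart of the argument is to define the coevaluation off the diagonal:
\[
\coev{x}:=(\iota_c\otimes\iota_{\overline c})\circ\coev{c}+(\iota_{\overline c}\otimes\iota_c)\circ\ev{c}^*,
\qquad
\ev{x}:=\coev{x}^*=\coev{c}^*\circ(p_c\otimes p_{\overline c})+\ev{c}\circ(p_{\overline c}\otimes p_c),
\]
so that the $c$-summand is paired against the $\overline c$-summand through $\coev{c}$ and the $\overline c$-summand against the $c$-summand through $\ev{c}^*$. Under the identification $\overline x=x$ with $\psi=\id_x$, the symmetric self-duality condition of Definition \ref{dfn::SSD} then reduces to the single equality $\ev{x}=\coev{x}^*$, which holds by construction.

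What remains---and what I expect to be the main obstacle---is to verify that $(\ev{x},\coev{x})$ is a \emph{balanced} solution of the conjugate equations. I would expand $(\ev{x}\otimes\id_x)\circ(\id_x\otimes\coev{x})$ summand by summand: the two ``mixed'' terms vanish because $p_{\overline c}\iota_c=0=p_c\iota_{\overline c}$, while the two ``diagonal'' terms collapse, via the sphericality identity $\coev{c}^*\circ\coev{c}=\ev{c}\circ\ev{c}^*$, to the zig-zag equations for the balanced duality $(\ev{c},\coev{c})$ and for the adjoint duality $(\coev{c}^*,\ev{c}^*)$ exhibiting $c$ as a dual of $\overline c$; the opposite zig-zag is symmetric. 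The same orthogonality bookkeeping gives $\delta_x=\ev{x}\circ\coev{x}=\coev{c}^*\circ\coev{c}+\ev{c}\circ\ev{c}^*=2\delta_c$, and since every nonzero object of a RC*TC has quantum dimension $\delta_c\ge 1$ (with equality precisely when $c$ is invertible), we conclude $\delta_x=2\delta_c\ge 2>1$. Finally, because the balanced solution of a direct sum is the direct sum of the balanced solutions of its summands, $(\ev{x},\coev{x})$ is itself balanced, hence it represents the canonical unitary dual functor and we may legitimately take $\overline x=x$, which completes the proof.
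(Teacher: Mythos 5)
Your proposal is correct and follows essentially the same route as the paper: both take $x = c\oplus\overline c$ and exploit the canonical identification $\overline{x}\cong\overline{c}\oplus c\cong x$ to make symmetric self-duality automatic, the only difference being that you write the off-diagonal duality data explicitly where the paper invokes the unitary $\mathrm{diag}(\varphi_c,\id_{\overline c})$ and properties of the canonical pivotal structure. One small imprecision: sphericality is not what collapses the zig-zag terms (that follows from the conjugate equations for $(\ev{c},\coev{c})$, their adjoints, and the orthogonality of the summands alone); it is what guarantees that your explicit solution is the balanced one and that $\coev{c}^{*}\circ\coev{c}=\ev{c}\circ\ev{c}^{*}=\delta_c$ in the dimension count, which you do address at the end.
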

\begin{proof} 
	Let $\sigma\in\cat$ be an arbitrary object (not isomorphic to the tensor unit), and define $x := \sigma\oplus \overline \sigma.$ Clearly, $\delta_x > 1$. There is a canonical unitary map
	 $\psi:(\sigma\oplus\overline \sigma)\longrightarrow (\overline{\overline \sigma}\oplus \overline \sigma) \cong \overline{x}$ given by the diagonal matrix $\text{diag}(\varphi_\sigma, \id_{\overline \sigma}),$ 
	where $\varphi$ is the canonical unitary pivotal structure on $\cat.$ To establish symmetric self-duality, one simply observes that this condition is equivalent to $\ev{x} = \coev{\overline x}^\dagger\circ(\id_{\overline x}\otimes \varphi_x)$
	 and $\ev{\overline x} \circ(\varphi_x\otimes \id_{\overline{x}}),$ both of which hold since we are using the canonical pivotal structure. Finally, we can choose $x = \overline x$ by using $\psi$ to \textit{unitarily} 
	redefine the balanced dual functor, if need be.
\end{proof}

From this point on, we reserve the symbol $x$ to denote a fixed ssd object in $\cat$ as in the statement of the previous Lemma.

\begin{definition}\label{Grinfty} \cite{GJS11, BHP12, HaPeI, HaPeII}     Given  non-negative integers $l$, $r$, $b$,  we let $V_{b, l, r}$ be $\cat(x^{\otimes b}\rightarrow x^{\otimes l}\otimes x^{\otimes r}).$  We define $\Gr_\infty$ whose underlying vector space is given by the external \textit{algebraic} direct sum
    $$\Gr_\infty := \bigoplus_{l,r,b \geq 0}V_{b, l, r}.$$
    
    An element  $\xi\in V_{b, l, r}$ can be pictorially visualized as follows:
    \begin{equation}
        \begin{tikzpicture}[scale=1/15 pt, thick,baseline={([yshift=-\the\dimexpr\fontdimen22\textfont2\relax] current bounding box.center)}] 
            \draw[rounded corners, ultra thick] (-5,-5) rectangle (5,5);
                \node at (0,0){$\xi$};
                \node at (0,5){$\bullet$};
                \draw[ultra thick] (-3,5) -- (-3,15);
                \draw[ultra thick] (3,5) -- (3,15);
                \draw[ultra thick] (0,-5) -- (0,-15);
                    \node at (-6,12){$l$};
                    \node at (6,12){$r$};
                    \node at (2,-13){$b$};
        \end{tikzpicture}
        \text{   or    }
        \begin{tikzpicture}[scale=1/15 pt, thick,baseline={([yshift=-\the\dimexpr\fontdimen22\textfont2\relax] current bounding box.center)}]
            \draw[rounded corners, densely dashed, blue] (-9,-9) rectangle (9,9);
            \node at (0,9){$\bullet$};
                \draw[ultra thick, rounded corners] (-5,-5) rectangle (5,5);
                    \node at (0,0){$\xi$};
                    \draw[ultra thick] (3,5) -- (3,15);
                        \node at (5,12){$r$};
                    \draw[ultra thick] (-3,5) -- (-3,15);
                        \node at (-5,12){$l$};
                    \draw[ultra thick] (0,-5) -- (0,-15);
                        \node at (2,-12){$b$};
        \end{tikzpicture}.
    \end{equation}
    Here, the dot indicates how the lateral strings split on each side; i.e. there are $l$ strings \textit{on the left} and $r$ strings \textit{on the right}. We warn the reader that the same element $\xi\in V_{b, l, r}$ appears as many times as the number $(l + r)$ can be decomposed as a sum of two non-negative integers. 
\end{definition}

There is an \textbf{involution on} $\Gr_\infty$ denoted by $*$, which is determined by the canonical conjugate structure in $\cat$, given by its \textit{bi-involutive structure} (See Remark \ref{rmk::biinv}):\\  
$$*:V_{b, l, r} \longrightarrow V_{b, r, l}$$
\begin{equation}
    \begin{tikzpicture}[scale=1/15 pt, thick,baseline={([yshift=-\the\dimexpr\fontdimen22\textfont2\relax] current bounding box.center)}] 
        \draw[densely dashed,black,rounded corners, blue] (-22,-7) rectangle (-8, 7);
            \draw[rounded corners, ultra thick] (-20,-5) rectangle (-10,5);
            \node at (-15,7){$\bullet$};
                \node at (-15,0){$\xi$};
                    \draw[ultra thick] (-18,5) -- (-18,20);
                        \node at (-21,17){$l$};
                    \draw[ultra thick] (-12,5) -- (-12,20);
                        \node at (-9,17){$r$};
                    \draw[ultra thick] (-15,-5) -- (-15,-20);
                        \node at (-13,-18){$b$};
            \node at (-5,7){$^*$};
            \node at (0,0){$:=$};
            \draw[densely dashed,black,rounded corners, blue] (5,-17) rectangle (30, 12);
            \node at (25,12){$\bullet$}; 
                \draw[rounded corners, ultra thick] (10,-5) rectangle (20,5);
                    \node at (15,0){$\xi^{\dagger}$};
                        \draw[ultra thick] (15,5) -- (15,7);
                        \draw[ultra thick] (15,7) arc(0:180:4);
                        \draw[ultra thick] (7,7) -- (7,-21);
                            \node at (9,-19){$b$};
                        \draw[ultra thick] (12,-5) -- (12,-7);
                        \draw[ultra thick] (12,-7) arc (180:360:8);
                        \draw[ultra thick] (18,-7) arc (180:360:2);
                        \draw[ultra thick] (22,-7) -- (22,20);
                            \node at (30,16){$l$};                            
                        \draw[ultra thick] (18,-5) -- (18,-7);
                        \draw[ultra thick] (28,-7) -- (28,20);
                            \node at (19,16){$r$};
    \end{tikzpicture}
	= (\xi^\dagger)^\vee = (\xi^\vee)^\dagger.
\end{equation}
Notice how the roles of $l$ and $r$ switched.  For notational convenience, we will denote this diagram as 
\begin{equation*}
    \begin{tikzpicture}[scale=1/15 pt, thick,baseline={([yshift=-\the\dimexpr\fontdimen22\textfont2\relax] current bounding box.center)}] 
        \draw[rounded corners, ultra thick] (-5,-5) rectangle (5,5);
        \node at (0,0){$\xi^{*}$};
        \node at (0,5){$\bullet$};
            \draw[ultra thick] (-3,5) -- (-3,15);
                \node at (-6,12){$r$};
            \draw[ultra thick] (3,5) -- (3,15);
                \node at (6,12){$l$};
            \draw[ultra thick] (0,-5) -- (0,-15);
                \node at (2,-13){$b$};
    \end{tikzpicture}.
\end{equation*}

\begin{definition}
    Define $\Gr_{0,\infty}$ by 
    $$
    \Gr_{0,\infty} := \bigoplus_{l \geq 0, r \geq 0} V_{0, l, r}.
    $$ 
    The space $\Gr_{0,\infty}$ becomes an algebra when endowed with the multiplication
    \begin{equation}
        \begin{tikzpicture}[scale=1/11.5 pt, thick,baseline={([yshift=-\the\dimexpr\fontdimen22\textfont2\relax] current bounding box.center)}] 
            \draw[rounded corners, ultra thick] (-35,-5) rectangle (-25,5);
            \node at (-30,5){$\bullet$};
                \node at (-30,0){$\xi$};
                \draw[ultra thick] (-33,5) -- (-33,15);
                \draw[ultra thick] (-27,5) -- (-27,15);
                    \node at (-36,12){$l$};
                    \node at (-24,12){$r$};
                    
           \node at (-20,0){$\wedge$};
            
            \draw[rounded corners, ultra thick] (-15,-5) rectangle (-5,5);
            \node at (-10,5){$\bullet$};
                \node at (-10,0){$\eta$};
                \draw[ultra thick] (-13,5) -- (-13,15);
                \draw[ultra thick] (-6,5) -- (-6,15);
                    \node at (-16,12){$l'$};
                    \node at (-3,12){$r'$};
        \end{tikzpicture}        
         := \delta_{r = l'}\cdot
        \begin{tikzpicture}[scale=1/15 pt, thick,baseline={([yshift=-\the\dimexpr\fontdimen22\textfont2\relax] current bounding box.center)}] 
            \draw[densely dashed, rounded corners, blue] (10,-4) rectangle (40,16);
            \node at (25, 16){$\bullet$};
                \draw[rounded corners, ultra thick] (38,-2) rectangle (28,8);
                \node at (33,8){$\bullet$};
                    \node at (33,3){$\eta$};
                    \draw[ultra thick] (36,8) -- (36,23);
                    \node at (39,20){$r'$};
                    
                \draw[rounded corners, ultra thick] (22,-2) rectangle (12,8);
                \node at (17,8){$\bullet$};
                    \node at (17,3){$\xi$};
                    \draw[ultra thick] (14,8) -- (14,23);
                        \node at (11,20){$l$};
                        \draw[ultra thick] (31,8) arc (0:180:6);
        \end{tikzpicture}
    \end{equation}
    Notice that if $l+r = l'+r'$ with $l \neq l'$, this multiplication makes $V_{b,l,r}$ orthogonal to $V_{b',l',r'}$, and this is also true for elements that are counted more than once in $\Gr_\infty.$
    
    Define $\Gr_{0,n}$ by  
    \begin{equation}
        \Gr_{0,n} := \bigoplus_{n \geq l,  r \geq 0}V_{0, l, r}. 
    \end{equation} 
    
    
    Note that $\Gr_{0,n}$ is a finite dimensional unital $*$-subalgebra of $\Gr_{0,\infty}$ which we describe in the following Proposition whose proof is straightforward:
    \begin{proposition}\label{prop::AFalgs}
        For each $n, m\in\N$, we have an injective $*$-algebra homomorphism (given by \textbf{Frobenius Reciprocity} on $\cat$): 
        \begin{align}
            FR: \cat^{op}(x^{\otimes m}\rightarrow x^{\otimes n})\xrightarrow[]{\sim} \Gr_{0,\text{max}(m,n)}
            \hspace{5mm}\text{given by}\hspace{5mm}
            \begin{tikzpicture}[scale=1/15 pt, thick,baseline={([yshift=-\the\dimexpr\fontdimen22\textfont2\relax] current bounding box.center)}] 
                \draw[ultra thick] (0,5) -- (0,15);
                    \node at (3,12){$n$};
                \draw[rounded corners, ultra thick] (-5,-5) rectangle (5,5);
                    \node at (0,0){$f$};
                \draw[ultra thick] (0,-5) -- (0,-15);            
                    \node at (3,-12){$m$};
            \end{tikzpicture}
            \longmapsto
            \begin{tikzpicture}[scale=1/15 pt, thick,baseline={([yshift=-\the\dimexpr\fontdimen22\textfont2\relax] current bounding box.center)}] 
                \draw[densely dashed, rounded corners, blue] (-15,-14) rectangle(8,10);
                \node at (-5,10){$\bullet$};
                    \draw[ultra thick] (0,5) -- (0,15);
                    \draw[rounded corners, ultra thick] (-5,-5) rectangle (5,5);
                        \node at (0,0){$f$};
                    \draw[ultra thick] (0,-5) -- (0,-7);
                    \draw [ultra thick] (0,-7) arc(360:180:5);
                    \draw[ultra thick] (-10,-7) -- (-10,15);
            \end{tikzpicture}
        \end{align}
    When $n = m,$ the map $FR$ becomes an injective $*$-algebra isomorphism.
    \end{proposition}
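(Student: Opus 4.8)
The plan is to recognize $FR$ as the explicit Frobenius-reciprocity bijection and then verify, directly from the graphical calculus, that it is a well-defined, injective, $*$-preserving and multiplicative map. First I would record the closed formula read off from the picture,
\[
FR(f) = (\id_{x^{\otimes m}}\otimes f)\circ\coev{x^{\otimes m}}\ \in\ \cat(\one\to x^{\otimes m}\otimes x^{\otimes n}) = V_{0,m,n},
\]
obtained by bending the $m$ input legs of $f\colon x^{\otimes m}\to x^{\otimes n}$ around a coevaluation cup (here we use the ssd choice $x=\overline x$ of Lemma \ref{lemma::ssdObject} to suppress all orientations). Since $\max(m,n)\geq m,n$, this lands in the summand $V_{0,m,n}\subseteq\Gr_{0,\max(m,n)}$, and linearity is immediate. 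For injectivity I would produce an explicit left inverse: capping the left $m$ legs of $FR(f)$ with $\ev{x^{\otimes m}}$ and applying the Zig-Zag equations (\ref{eqn::zz}) returns $f$; hence $FR$ is in fact a linear isomorphism onto $V_{0,m,n}$, which is just the assertion that Frobenius reciprocity $\cat(x^{\otimes m}\to x^{\otimes n})\cong\cat(\one\to x^{\otimes m}\otimes x^{\otimes n})$ is a natural bijection.

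Multiplicativity is the main computation. Given morphisms composable in $\cat$, I would stack $FR(f)$ and $FR(g)$ as prescribed by the $\wedge$-product, which caps the right legs of the first factor against the left legs of the second. By construction the right legs of $FR(f)$ are the output legs of $f$ and the left legs of $FR(g)$ are the (bent) input legs of $g$, so the cap together with one application of the Zig-Zag equations realizes precisely the composite $g\circ f$; that is, $FR(f)\wedge FR(g)=FR(g\circ f)$. The $\delta_{r=l'}$ in the definition of the product exactly matches the vanishing of $g\circ f$ when the intermediate powers of $x$ disagree, so nothing is lost. The order reversal introduced by the cap is precisely what is absorbed by passing to $\cat^{\mathrm{op}}$, which turns $FR$ into a genuine homomorphism rather than an anti-homomorphism.

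For $*$-compatibility I would compare $FR(f)^{*}$, computed from the definition $\xi^{*}=(\xi^{\dagger})^{\vee}=(\xi^{\vee})^{\dagger}$ of the involution on $\Gr_\infty$, with $FR(f^{\dagger})\in V_{0,n,m}$; both diagrams are the reflection of $FR(f)$ with the two lateral groups of legs interchanged, and they coincide because we work with the canonical unitary (balanced) dual functor, for which $(f^{\dagger})^{\vee}=(f^{\vee})^{\dagger}$ and $\varphi$ is natural (Remark \ref{rmk::BalancedDual}). Combining these facts, $FR$ is an injective $*$-homomorphism on each hom-space; assembling over $0\le m,n\le N$ gives a $*$-algebra isomorphism onto $\Gr_{0,N}=\bigoplus_{0\le l,r\le N}V_{0,l,r}$, and restricting to the diagonal $m=n$ identifies $\End(x^{\otimes n})^{\mathrm{op}}$ with the $*$-subalgebra $V_{0,n,n}$, as claimed. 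The only delicate point is the bookkeeping of left/right orientations through the cups and caps; because $x$ is symmetrically self-dual this is symmetric and the potential quantum-dimension scalars cancel, which is what makes the verification routine.
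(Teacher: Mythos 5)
Your proof is correct and is exactly the verification the paper has in mind: the authors state this proposition with the remark that its proof is straightforward and give no argument, and your closed formula $FR(f)=(\id_{x^{\otimes m}}\otimes f)\circ\coev{x^{\otimes m}}$, the zig-zag left inverse for injectivity, the cap--cup cancellation for multiplicativity into $\cat^{\mathrm{op}}$, and the unitary balanced dual functor for $*$-compatibility supply precisely the omitted details. (One minor remark: since the zig-zag identities hold on the nose, no quantum-dimension scalars actually appear in any of these manipulations, so there is nothing to cancel.)
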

    Therefore, using Frobenius Reciprocity, we endow $\Gr_{0,n}$ with a norm, giving it the structure of a unital C*-algebra. Note that the inclusion $\Gr_{0, n} \hookrightarrow \Gr_{0, n+1}$ is nonunital but preserves minimal projections. As such, $\Gr_{0,\infty}$ (being the inductive limit of the $\Gr_{0,n}$ ) can be completed into an AF C$^*$-algebra which we shall denote as $A_{\infty}$ with multiplication still denoted by $ \wedge.$ This is, as a vector space
    \begin{equation}\label{eqn::GroundAlg}
        A_\infty := \overline{\lim_n \Gr_{0,n}}.
    \end{equation}
\end{definition}

We shall now complete $\Gr_\infty$ into a right Hilbert C*-module over $A_\infty$ that will become of central importance in the sequel: For any $n,m \in\N$ we have left and right actions of $\Gr_{0,n}$ and $\Gr_{0,m}$ on $\Gr_\infty$ defined as follows: for $a\in \Gr_{0,n}$, $a'\in\Gr_{0,m}$ and $\xi\in\Gr_\infty,$ we have 
\begin{equation}
    \begin{tikzpicture}[scale=1/15 pt, thick,baseline={([yshift=-\the\dimexpr\fontdimen22\textfont2\relax] current bounding box.center)}]
        \draw[ultra thick, rounded corners] (-30,-5) rectangle (-20,5);
        \node at (-25,5){$\bullet$};
            \node at (-25,0){$a$};
                \draw[ultra thick] (-28,5) -- (-28,15);
                    \node at (-31,12){$l$};
                \draw[ultra thick] (-22,5) -- (-22,15);
                    \node at (-20,12){$r$};
        
        \node at (-16,0){$\rhd$};
        
        \draw[ultra thick, rounded corners] (-10,-5) rectangle (0,5);
        \node at (-5,5){$\bullet$};
            \node at (-5,0){$\xi$};
                \draw[ultra thick] (-8,5) -- (-8,15);
                    \node at (-11,12){$l''$};        
                \draw[ultra thick] (-2,5) -- (-2,15);        
                    \node at (2,12){$r''$};
                \draw[ultra thick] (-5,-5) -- (-5,-15);
                    \node at (-2,-13){$b$};
        
        \node at (7,0){$\lhd$};
        
        \draw[ultra thick, rounded corners] (11,-5) rectangle (21,5);
        \node at (16,5){$\bullet$};
            \node at (16,0){$a'$};
                \draw[ultra thick] (19,5) -- (19,15);
                    \node at (23,12){$r'$};
                \draw[ultra thick] (13,5) -- (13,15);
                    \node at (10,12){$l'$};
    \end{tikzpicture}
    :=\ \ \delta_{r = l''}\cdot\delta_{r'' = l'}\cdot
    \begin{tikzpicture}[scale=1/15 pt, thick,baseline={([yshift=-\the\dimexpr\fontdimen22\textfont2\relax] current bounding box.center)}]
        \draw[densely dashed, rounded corners, blue] (-25,-9) rectangle (25,12);
        \node at (0,12){$\bullet$};
            \draw[ultra thick, rounded corners] (-20,-5) rectangle (-10,5);
            \node at (-15,5){$\bullet$};
                \node at (-15,0){$a$};
                    \draw[ultra thick] (-18,5) -- (-18,17);
                        \node at (-21,15){$l$};
            
            \draw[ultra thick, rounded corners] (-5,-5) rectangle (5,5);
            \node at (0,5){$\bullet$};
                \node at (0,0){$\xi$};
                    \draw[ultra thick] (-3,5) arc(0:180:5);
                    \draw[ultra thick] (3,5) arc (180:0:5);        
                    \draw[ultra thick] (0,-5) -- (0,-15);
                        \node at (3,-13){$b$};
            
            \draw[ultra thick, rounded corners] (11,-5) rectangle (21,5);
            \node at (16,5){$\bullet$};
                \node at (16,0){$a'$};
                    \draw[ultra thick] (19,5) -- (19,17);
                        \node at (22,15){$r'$};
    \end{tikzpicture}
\end{equation}
The module $\Gr_{\infty}$ then becomes endowed with an $A_\infty$-valued inner product (taking values in $\Gr_{0,\infty}$) given by the sesquilinear extension of
\begin{equation}
    \langle \xi\ |\ \eta \rangle_{A_\infty} := \delta_{l = l'} \cdot \delta_{b = b'}\cdot\  
    \begin{tikzpicture}[scale=1/15 pt, thick,baseline={([yshift=-\the\dimexpr\fontdimen22\textfont2\relax] current bounding box.center)}]
        \draw[densely dashed, rounded corners, blue] (12,-16) rectangle (42,13);
        \node at (27, 13){$\bullet$};        
            \draw[rounded corners, ultra thick] (24,-5) rectangle (14,5);
            \node at (19,5){$\bullet$};
            \node at (19,0){$\xi^{*}$};    
                \draw[ultra thick] (15,5) -- (15,20);
                    \node at (12,17){$r$};
        
                    \draw[ultra thick] (35,-5) arc(0:-180: 8);
                        \node at (35,-12){$b$};
                    \draw[ultra thick] (33,5) arc (0:180:6);
            
            \draw[rounded corners, ultra thick] (40,-5) rectangle (30,5);
            \node at (35,0){$\eta$};
            \node at (35,5){$\bullet$};
                \draw[ultra thick] (38,5) -- (38,20);
                    \node at (41,17){$r'$};
    \end{tikzpicture}
\end{equation}
for $\xi \in V_{b, l, r}$ and $\eta \in V_{b', l', r'}$.  Note that this inner product is linear in the right variable and conjugate linear in the left.  

It is a straightforward computation to prove the next Proposition, following from the pivotal spherical structure on $\cat$ (see Remark \ref{rmk::biinv}):
\begin{proposition}\label{prop:prehilbert}
    The following statements hold for all diagrams $\xi, \eta \in \Gr_{\infty}$ and $a \in \Gr_{0, \infty}:$
    \begin{itemize}
        \item  Right $A_\infty$-linearity: $\langle \xi\ |\ \eta \lhd a \rangle_{A_\infty} = \langle \xi\ |\ \eta \rangle_{A_\infty} \wedge a;$ 
        
        \item The left action is right-adjointable: $\langle \xi\ |\ a^*\rhd\eta \rangle_{A_\infty} = \langle a\rhd \xi\ |\ \eta \rangle_{A_\infty};$  and 
        
        \item  Compatibility with adjoints: $(\langle \xi\ |\ \eta \rangle_{A_\infty})^{*} = \langle \eta\ |\ \xi \rangle_{A_\infty}.$ 
    \end{itemize}
\end{proposition}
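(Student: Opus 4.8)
The plan is to verify all three identities by direct diagrammatic manipulation, reducing each to the composition and tensor structure of $\cat$ together with the zig-zag equations \eqref{eqn::zz} and the canonical spherical pivotal structure $\varphi$ of Remark~\ref{rmk::biinv}. Throughout fix $\xi\in V_{b,l,r}$, $\eta\in V_{b',l',r'}$ and $a\in\Gr_{0,\infty}$; all the asserted equalities are between elements of $\Gr_{0,\infty}$, and since the Kronecker deltas in the definitions of $\wedge$, $\rhd$, $\lhd$ and $\langle\,\cdot\mid\cdot\,\rangle_{A_\infty}$ force every term to vanish unless the strand counts on either side of each cap agree, it suffices to treat the case in which all the relevant counts match and then compare the surviving pictures strand-for-strand.

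For the right $A_\infty$-linearity I would expand both sides. Writing $a\in V_{0,l_a,r_a}$, the product $\eta\lhd a$ glues the $r_a$ left legs of $a$ onto the $r'=l_a$ right legs of $\eta$, and then $\langle\xi\mid\eta\lhd a\rangle_{A_\infty}$ caps the left and bottom legs of $\xi^*$ against those of $\eta$. On the other side $\langle\xi\mid\eta\rangle_{A_\infty}\in V_{0,r,r'}$ caps the same legs first and only afterwards glues $a$ onto the surviving $r'$ legs via $\wedge$. Because gluing and capping of diagrams are nothing but composition and tensoring of morphisms in $\cat$, these two orders of operations produce literally the same morphism; the equality is thus an instance of the associativity of $\circ$ and $\otimes$, and no manipulation beyond planar regrouping is needed.

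For the adjointability of the left action the key input is the behaviour of $*$ under the two actions, namely $(a\rhd\xi)^* = \xi^*\lhd a^*$, which one reads off from $*=(\,\cdot\,)^{\dagger\vee}$ together with $(fg)^\dagger=g^\dagger f^\dagger$ and the contravariance of $(\,\cdot\,)^\vee$ (a direct check on strand counts confirms $\xi^*\lhd a^*\in V_{b,r,l_a}=(a\rhd\xi)^*$). Substituting this into the defining picture of $\langle a\rhd\xi\mid\eta\rangle_{A_\infty}$ places $a^*$ on exactly the right legs of $\xi^*$ that are capped against the left legs of $\eta$. Passing $a^*$ across that cap by the zig-zag relation \eqref{eqn::zz} transfers it onto the left legs of $\eta$ so that it becomes $a^*\rhd\eta$, yielding $\langle\xi\mid a^*\rhd\eta\rangle_{A_\infty}$; the reflection incurred in crossing the cap is precisely what the $*$ on $a$ absorbs, and one finally uses $a^{**}=a$, valid since $*$ is an involution.

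The conjugate-symmetry identity $(\langle\xi\mid\eta\rangle_{A_\infty})^* = \langle\eta\mid\xi\rangle_{A_\infty}$ is the step I expect to be the main obstacle, because applying $*$ to the whole capped diagram reflects it and forces a careful bookkeeping of the pivotal isomorphisms. Concretely, $*$ interchanges the roles of $\xi^*$ and $\eta$ and replaces them by $\eta^*$ and $\xi^{**}$; one must then invoke $\xi^{**}=\xi$ and the rotation identity ${f^\vee}^\vee\circ\varphi_a=\varphi_b\circ f$ recorded in Remark~\ref{rmk::biinv} to undo the double dual, and crucially the balancing/sphericality condition \eqref{eqn::balancing} to guarantee that the left-hand and right-hand cappings created by the reflection agree rather than differing by a nontrivial pivotal factor. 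Once these rotations are shown to cancel, the reflected picture is exactly the defining diagram of $\langle\eta\mid\xi\rangle_{A_\infty}$. I would therefore organize the write-up so that this final identity is reduced to a single application of sphericality, keeping the first two points as elementary regroupings of planar diagrams.
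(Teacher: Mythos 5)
Your argument is correct and is exactly the computation the paper has in mind: the paper gives no written proof, merely asserting that the proposition is \textquotedblleft a straightforward computation \ldots following from the pivotal spherical structure on $\cat$\textquotedblright, and your diagrammatic verification (planar regrouping for right linearity, $(a\rhd\xi)^*=\xi^*\lhd a^*$ for adjointability, and sphericality of the balanced dual for conjugate symmetry) supplies precisely that computation. No substantive difference in approach.
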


\begin{definition}
    The vector space $\Gr_{\infty}$ can now be endowed with a C*-norm, denoted $\| \cdot \|_{A_\infty},$ given by $\| \xi \|_{A_\infty}^2 := \| \langle \xi\ |\ \xi \rangle_{A_\infty} \|_{A_\infty},$ where the latter norm is the norm in the C$^*$-algebra $A_{\infty}$. (We remind the reader that this norm was introduced previously in Definition \ref{bimodule}.) 
    We therefore define $\mathcal{X}_{\infty}$ to be the completion of $\Gr_{\infty}$ under $\| \cdot \|_{A_\infty};$ this is
    $$\X_\infty := \overline{\Gr_\infty}^{||\,\cdot\,||_{A_\infty}}.$$
    We also extend the $A_\infty$-valued inner product to all of $\X_\infty\times\X_\infty$ in the obvious way. We will keep the same notation for this extended product. 
\end{definition}

To equip $\mathcal{X}_{\infty}$ with a right Hilbert $A_{\infty}$-module structure (see Definition \ref{bimodule} on Hilbert bimodules and Definition 1.3.2 in \cite{MR2125398} for Hilbert  modules), we must first extend the actions to all of $\X_{\infty}$ and all of $A_\infty.$ 
The right $\Gr_{0,\infty}$-action extends to $\X_\infty$ since for each sequence $\xi_n\in \Gr_{\infty}$ converging to $\xi\in \X_\infty$, and for each $a\in\Gr_{0,\infty},$ a simple diagrammatic computation reveals that 
$||\xi_n \lhd a||_{A_\infty}^2 =
\delta_x\cdot ||a^*\wedge \langle \xi_n | \xi_n \rangle_{A_\infty}\wedge a||_{A_\infty} \leq
\delta_x\cdot ||\xi_n||_{A_\infty}^2\cdot ||a||_{A_\infty}^2,$
which is bounded above in $A_\infty$-norm, since the $\xi_n$ are bounded. Using the same inequality we obtain a (bounded) right $A_\infty$-action on $\X_\infty.$ Moreover, observe that the first and last items in Proposition \ref{prop:prehilbert} apply to the extended $A_\infty$-action over the module $\X_\infty$. Thus, $(\X_\infty,\langle \cdot \ |\ \cdot \rangle_{A_\infty})$ is a right Hilbert C*-module over $A_\infty.$ 

We shall now show there is a left action of $A_\infty$ on $\X_\infty$ by right adjointable operators. To do so, we first bring an auxiliary lemma:
\begin{lemma}\label{lemma::norm}
    For $n,m\geq 0$, the collection of maps 
    \begin{align}
        \Gr_{0,n}\longrightarrow \Gr_{0,n+m}
        \hspace{3mm} \text{given by the linear extension of} \hspace{5mm}
        \begin{tikzpicture}[scale=1/15 pt, thick,baseline={([yshift=-\the\dimexpr\fontdimen22\textfont2\relax] current bounding box.center)}]
            \draw[rounded corners, ultra thick](-5,-5) rectangle (5,5);
            \node at (0,5){$\bullet$};
                \node at (0,0){$\xi$};
                \draw[ultra thick] (-3,5) -- (-3,15);
                \draw[ultra thick] (3,5) -- (3,15);                
        \end{tikzpicture}
        \mapsto
        \begin{tikzpicture}[scale=1/15 pt, thick,baseline={([yshift=-\the\dimexpr\fontdimen22\textfont2\relax] current bounding box.center)}]
            \draw [rounded corners, densely dashed, blue] (-10,-16) rectangle (10,10);
            \node at (0,10){$\bullet$};
                \draw[ultra thick] (-7,-7) arc(180:360:7);
                \draw[ultra thick] (-7,-7) -- (-7,15);
                \draw[ultra thick] (7,-7) -- (7,15);
                    \draw[rounded corners, ultra thick](-5,-5) rectangle (5,5);
                    \node at (0,5){$\bullet$};
                        \node at (0,0){$\xi$};
                        \draw[ultra thick] (-3,5) -- (-3,15);
                        \draw[ultra thick] (3,5) -- (3,15);                
        \end{tikzpicture}
    \end{align}
    extends to an isometric C*-algebra homomorphism $A_\infty\longrightarrow A_\infty.$
\end{lemma}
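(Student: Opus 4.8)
The plan is to recognize the displayed assignment, under the Frobenius reciprocity isomorphism $FR$ of Proposition~\ref{prop::AFalgs}, as a categorical amplification, and then to use that an injective $*$-homomorphism between C*-algebras is automatically isometric. Write $\Phi_{n,m}\colon\Gr_{0,n}\to\Gr_{0,n+m}$ for the map in question. Reading it through $FR$, an element $\xi\in V_{0,l,r}$ corresponds to a morphism $\hat\xi\colon x^{\otimes r}\to x^{\otimes l}$, and the nested cup in the picture inserts $\xi$ between the two legs of $m$ coevaluations; hence $\Phi_{n,m}(\xi)$ corresponds to $\id_{x^{\otimes m}}\otimes\hat\xi\colon x^{\otimes(m+r)}\to x^{\otimes(m+l)}$. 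Since this raises both $l$ and $r$ by $m$ uniformly, $\Phi_{n,m}$ respects the orthogonal block decomposition into the $V_{0,l,r}$ and indeed lands in $\Gr_{0,n+m}$.

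First I would verify that $\Phi_{n,m}$ is a $*$-homomorphism, a purely diagrammatic check that is transparent from the description $\Phi_{n,m}=\id_{x^{\otimes m}}\otimes(-)$: since $\otimes$ is a bifunctor, $(\id_{x^{\otimes m}}\otimes\hat\xi)\circ(\id_{x^{\otimes m}}\otimes\hat\eta)=\id_{x^{\otimes m}}\otimes(\hat\xi\circ\hat\eta)$, giving $\Phi_{n,m}(\xi)\wedge\Phi_{n,m}(\eta)=\Phi_{n,m}(\xi\wedge\eta)$, and $(\id_{x^{\otimes m}}\otimes\hat\xi)^{\dagger}=\id_{x^{\otimes m}}\otimes\hat\xi^{\dagger}$ together with monoidality of the dual functor gives $\Phi_{n,m}(\xi^{*})=\Phi_{n,m}(\xi)^{*}$. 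Diagrammatically one sees the same thing by tracing the adjoined strands through the two cups when forming $\Phi_{n,m}(\xi)\wedge\Phi_{n,m}(\eta)$: the $m$ outer legs reconnect into precisely the outer cups of $\Phi_{n,m}(\xi\wedge\eta)$ with no closed loop produced, so that no spurious power of $\delta_x$ appears.

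The crux is isometry, which I obtain from injectivity. Taking the categorical (partial) trace over the $m$ adjoined strands---legitimate by the spherical pivotal structure of Remark~\ref{rmk::biinv}---sends $\id_{x^{\otimes m}}\otimes\hat\xi$ to $\Tr(\id_{x^{\otimes m}})\cdot\hat\xi=\delta_x^{\,m}\,\hat\xi$; since $\delta_x>1$ by Lemma~\ref{lemma::ssdObject}, the scalar $\delta_x^{\,m}$ is nonzero, so $\Phi_{n,m}$ is injective, and being an injective $*$-homomorphism from the finite-dimensional C*-algebra $\Gr_{0,n}$ it is isometric. Finally I assemble over $n$: as $\Phi_{n,m}$ and $\Phi_{n+1,m}$ both merely adjoin the same $m$ outer strands, they agree on $\Gr_{0,n}$ under the connecting inclusion $\Gr_{0,n}\hookrightarrow\Gr_{0,n+1}$, so the family defines one isometric $*$-homomorphism on the dense $*$-subalgebra $\Gr_{0,\infty}=\bigcup_n\Gr_{0,n}$, which extends by continuity to the asserted isometric C*-homomorphism $A_\infty\to A_\infty$. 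The only step needing genuine care is the multiplicativity check, since a stray loop there would introduce an unwanted factor of $\delta_x$ and break the homomorphism property; once that is secured, injectivity and hence isometry follow formally.
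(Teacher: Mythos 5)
Your proposal is correct and follows essentially the same route as the paper: the paper's (very terse) proof likewise observes that the map is an injective $*$-homomorphism between the finite-dimensional C*-algebras $\Gr_{0,n}$ and $\Gr_{0,n+m}$, hence isometric, and then passes to the inductive limit. You have merely supplied the details the paper leaves implicit — the identification with $\id_{x^{\otimes m}}\otimes(-)$ under Frobenius reciprocity, the absence of a stray closed loop in the multiplicativity check, and injectivity via the partial trace — all of which are accurate.
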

\begin{proof}
    This map is clearly a densely defined injective $*$-algebra homomorphism. This map is bounded because it is a monomorphism between the finite dimensional algebras $\Gr_{0,n}$ and $\Gr_{0,n+m}$.
\end{proof}
For arbitrary $\eta\in V_{b,l,r}\subset\Gr_\infty$ and $a\in \Gr_{0,\infty}$ we have the following diagrammatic identity:
\begin{align}
    \langle a\rhd\eta | a\rhd\eta\rangle_{A_\infty}
    = \delta_{l = r'}\cdot
    \begin{tikzpicture}[scale=1/15 pt, thick,baseline={([yshift=-\the\dimexpr\fontdimen22\textfont2\relax] current bounding box.center)}]
        \draw[ultra thick](-14,10) arc(0:180:4);
            \draw[densely dashed, rounded corners, blue] (-37,-15) rectangle (-20,10);
            \node at(-25,10){$\bullet$};
            \draw[ultra thick](-22,10) -- (-22,-7);
                \draw[rounded corners, ultra thick](-35,-5) rectangle (-25,5);
                    \node at(-30,5){$\bullet$};
                    \node at(-30,0){$\eta^*$};
                    \draw[ultra thick](-33,5) -- (-33,23);
                        \node at(-36,20){$r$};
                    \draw[ultra thick](-28,5) -- (-28,11);
                    \draw[ultra thick](-30,-5) -- (-30,-7);
                        \draw[ultra thick] (-22,-7) arc (360:180:4);                    
                            \node at(-32,-8){$b$};
        \draw[ultra thick](-10,10) arc(0:180:9);
            \node at(9,14){$l'$};
        \draw[ultra thick](10,10) arc(180:0:9);
            \node at(-8,14){$l'$};
           \draw[densely dashed, rounded corners, blue] (-18,-17) rectangle (18,11);
           \node at(0,11){$\bullet$};
                \draw[ultra thick](4,5) arc(0:180:4);
                    \draw[rounded corners, ultra thick](-12,-5) rectangle (-2,5);        
                    \node at(-7,5){$\bullet$};
                        \node at(-7,0){$a^*$};
                            \draw[ultra thick](-10,5) -- (-10,10);
                    \draw[rounded corners, ultra thick](2,-5) rectangle (12,5);
                    \node at(7,5){$\bullet$};
                        \node at(7,0){$a$};
                            \draw[ultra thick](10,5) -- (10,10);
            \draw[ultra thick] (-14,10)-- (-14,0) arc(180:360:14) -- (14,10);   
        \draw[ultra thick](14,10) arc(180:0:4);
            \node at(25,10){$\bullet$};
            \draw[densely dashed, rounded corners, blue] (37,-15) rectangle (20,10);
            \draw[ultra thick] (22,10) -- (22,-7);
                \draw[rounded corners, ultra thick](25,-5) rectangle (35,5);        
                \node at(30,5){$\bullet$};
                    \node at(30,0){$\eta$};
                    \draw[ultra thick] (33,5) -- (33,23);
                        \node at(36,20){$r$};
                    \draw[ultra thick] (28,5) -- (28,11);
                    \draw[ultra thick](30,-5) -- (30,-7);
                        \draw[ultra thick] (22,-7) arc (180:360:4);
                            \node at(32,-8){$b$};
    \end{tikzpicture}    
\end{align}
Thus, denoting the rightmost element above as $\xi\in\Gr_{0,\infty}$ we then obtain that $\langle a\rhd\eta\ |\ a\rhd\eta \rangle_{a_\infty} = \xi^*\wedge [(a^*\wedge a)] \wedge \xi.$ Here, $[a^*\wedge a]$ denotes the middle element in the diagram above. By Lemma \ref{lemma::norm} we obtain that $||a\rhd\eta||_{A_\infty}^{2} = ||\langle a\rhd\eta\ |\  a\rhd\eta \rangle_{A_\infty}||_{A_\infty} = ||\ \xi^*\wedge [(a^*\wedge a)] \wedge \xi||_{A_\infty}\leq  ||\xi\ ||_{A_\infty}^{2}\cdot  ||a||_{A_\infty}^2  =||\eta||_{A_\infty}^{2}\cdot ||a||_{A_\infty}^2.$ Similarly as done for the right action, this inequality allows us to define a bounded left action of $A_\infty$ on $\X_\infty$.  Finally, notice that the second item in Proposition \ref{prop:prehilbert} (which extends to the left action of $A_\infty$ on $\X_\infty$ by a density argument), 
implies that the left $A_\infty$-action is given by right-adjointable operators. Therefore, there is a faithful embedding $([A_\infty]\rhd -) \hookrightarrow \mathcal B^*(({\X_\infty})_{A_\infty}).$

There is an $(\N\cup\{0\})$-graded family of right $A_\infty$-modules of importance:
\begin{definition}
    For arbitrary $b\geq 0$, set $\Gr_{b, \infty}$ to be the following subspace of $\Gr_{\infty}:$  $$\Gr_{b,\infty} := \bigoplus_{l \geq 0, r \geq 0} V_{b, l, r},$$ and define $X_{b}$ as its completion in $\X_\infty$. Note that each $X_{b}$ is naturally a right $A_{\infty}$-module with a canonical left $A_{\infty}$ action (by right-adjointable operators). Also note that $X_{0} = A_{\infty}$.
\end{definition}

The treatment below relies heavily on the machinery in section 4 of \cite{HaPeI} and is essentially a translation of that section into our diagrammatic language.  In that article, planar algebras were used as a substitute for the graphical picture used in this article.  Below is a dictionary that the reader can use when parsing the changes in notation and diagrams between \cite{HaPeI} and this article.
\begin{center}
    \begin{tabular}{c|c}
       \cite{HaPeI}  & This manuscript \\
       \hline
        \begin{tikzpicture}[scale=1/17 pt, thick,baseline={([yshift=-\the\dimexpr\fontdimen22\textfont2\relax] current bounding box.center)}] 
            \draw[rounded corners, ultra thick] (-30,-5) rectangle (-20,5);
            \node at (-25,0){$\xi$};
            \draw[ultra thick] (-40,0) -- (-30,0);
            \draw[ultra thick] (-20,0) -- (-10,0);
            \draw[ultra thick] (-25,5) -- (-25,20);
                \node at (-35,-3){$l$};
                \node at (-15,-3){$r$};
                \node at (-23,15){$b$};
        \end{tikzpicture} & \begin{tikzpicture}[scale=1/17 pt, thick,baseline={([yshift=-\the\dimexpr\fontdimen22\textfont2\relax] current bounding box.center)}] 
            \draw[rounded corners,ultra thick] (-30,-5) rectangle (-20,5);
            \node at (-25,0){$\xi$};
            \node at (-25,5){$\bullet$};
                \draw[ultra thick] (-28,5) -- (-28,15);
                \draw[ultra thick] (-22,5) -- (-22,15);
                \draw[ultra thick] (-25,-5) -- (-25,-15);
                    \node at (-31,13){$l$};
                    \node at (-19,13){$r$};
                    \node at (-23,-12){$b$};
        \end{tikzpicture}\\ 
        \hline
            $\mathcal{P}_{l, n, r}$ & $V_{b, l, r}$\\    
        \hline
            $\mathcal{F}(\mathcal{P}_{\bullet})$ & $\mathcal{X}_{\infty}$\\
        \hline
            $B_{n}(\mathcal{P}_{\bullet})$ & $\Gr_{0, n}$\\
        \hline
            $B(\mathcal{P}_{\bullet})$ & $A_{\infty}$\\
        \hline
            $X_{b}$ & $\Gr_{b, \infty}$\\
        \hline
            $\X_{b}$ & $X_{b}$\\
        \hline
            $L(\xi)$ & $L(\xi) (\text{creation operators})$
    \end{tabular}
\end{center}

We have the following proposition, appearing as \cite[Proposition 4.11]{HaPeI}:
\begin{proposition}\label{prop:easytensor}\cite{HaPeI}
    For arbitrary $b,b'\geq 0$, the mapping 
    $$U_{b,b'}: \Gr_{b,\infty}\odot \Gr_{b',\infty}\rightarrow \Gr_{(b+b'),\infty}$$
    defined by
    \begin{equation}
        \begin{tikzpicture}[scale=1/15 pt, thick,baseline={([yshift=-\the\dimexpr\fontdimen22\textfont2\relax] current bounding box.center)}] 
            \draw[rounded corners, ultra thick] (-35,-5) rectangle (-25,5);
            \node at (-30,5){$\bullet$};
                \node at (-30,0){$\xi$};
                \draw[ultra thick] (-33,5) -- (-33,20);
                    \node at (-36,16){$l$};
                \draw[ultra thick] (-27,5) -- (-27,20);
                    \node at (-24,16){$r$};
                \draw[ultra thick] (-30,-5) -- (-30,-20);
                    \node at (-28,-13){$b$};
                
            \node at(-20,0){$\ \ \odot\ \ $};
                
            \draw[rounded corners, ultra thick] (-14,-5) rectangle (-4,5);
            \node at (-9,5){$\bullet$};
                \node at (-9,0){$\eta$};
                \draw[ultra thick] (-12,5) -- (-12,20);
                    \node at (-15,16){$l'$};
                \draw[ultra thick] (-5,5) -- (-5,20);
                    \node at (-2,16){$r'$};
                \draw[ultra thick] (-9,-5) -- (-9,-20);
                    \node at (-6,-13){$b'$};
                    
            \node at (3,0){$\longmapsto$};
            \node at(15,0){$\delta_{r = l'}\cdot \ $};

            \draw[densely dashed, rounded corners, blue] (22,-13) rectangle (52,13);
            \node at (37, 13){$\bullet$};
                \draw[rounded corners, ultra thick] (34,-5) rectangle (24,5);
                \node at (29,5){$\bullet$};    
                    \node at (29,0){$\xi$};
                    \draw[ultra thick] (26,5) -- (26,20);
                        \node at (23,17){$l$};
                \draw[rounded corners, ultra thick] (50,-5) rectangle (40,5);
                \node at (45,5){$\bullet$};
                    \node at (45,0){$\eta$};
                    \draw[ultra thick] (48,5) -- (48,20);
                        \node at (51,17){$r'$};

            \draw[ultra thick] (43,5) arc (0:180:6);
                \draw[ultra thick] (29,-5) .. controls (29,-10) and (35,-9) .. (35,-20);
                \draw[ultra thick] (45,-5) .. controls(45,-10) and (39,-9) .. (39,-20);
                    \node at (50,-18){$(b + b')$};

        \end{tikzpicture}
    \end{equation}
     extends to a unitary isomorphism of right $A_\infty$ Hilbert C*-bimodules:
    $$U_{b,b'}: X_{b} \otimes_{A_\infty} X_{b'} \rightarrow X_{b+b'}.$$
    Here, the relative (algebraic) tensor product $\odot$ is balanced over diagrams with no strings going up; i.e. over $\bigoplus_{b''\geq 0}\Gr_{b'',0,0}$.
\end{proposition}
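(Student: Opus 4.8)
The plan is to treat $U_{b,b'}$ as a densely defined map from the interior tensor product $X_b\otimes_{A_\infty}X_{b'}$ to $X_{b+b'}$, and to verify the three properties that characterize a unitary of right Hilbert $A_\infty$-modules: right $A_\infty$-linearity, preservation of the $A_\infty$-valued inner product, and density of the range. The fact I would use to close the argument is soft: an inner-product-preserving map of Hilbert $C^*$-modules is isometric for the module norm, so its image is automatically complete and hence closed; therefore, once the range is shown to be dense, $U_{b,b'}$ is a surjective isometry, and its inverse, being again inner-product-preserving, is its adjoint, so $U_{b,b'}$ is unitary. The content of the proof is thus entirely in the diagrammatic verification of the remaining properties, together with the observation that the isometry property also forces $U_{b,b'}$ to descend from the algebraic tensor product: a balancing vector $\xi\lhd c\odot\eta-\xi\odot c\rhd\eta$, with $c\in\bigoplus_{b''}\Gr_{b'',0,0}$, has vanishing length and so is sent to $0$.

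Right $A_\infty$-linearity is the easy check and I would dispose of it first. The right module structure on $X_b\otimes_{A_\infty}X_{b'}$ acts through the second leg, $(\xi\odot\eta)\lhd a=\xi\odot(\eta\lhd a)$, and in the defining picture of $U_{b,b'}$ the right legs of $\eta$ are exactly the right legs of the output and are untouched by the cap joining $\xi$ to $\eta$; sliding $a$ along those strands gives $U_{b,b'}(\xi\odot(\eta\lhd a))=U_{b,b'}(\xi\odot\eta)\lhd a$ by isotopy.

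The heart of the matter, and the step I expect to be the main obstacle, is preservation of the $A_\infty$-valued inner product. On the interior tensor product this inner product is $\langle \xi_1\odot\eta_1\mid\xi_2\odot\eta_2\rangle_{A_\infty}=\langle \eta_1\mid\langle\xi_1\mid\xi_2\rangle_{A_\infty}\rhd\eta_2\rangle_{A_\infty}$, and I must show this equals $\langle U_{b,b'}(\xi_1\odot\eta_1)\mid U_{b,b'}(\xi_2\odot\eta_2)\rangle_{A_\infty}$. I would compute the right-hand side directly by stacking $U_{b,b'}(\xi_1\odot\eta_1)^{*}$ on top of $U_{b,b'}(\xi_2\odot\eta_2)$, which, by the definition of the inner product in Proposition \ref{prop:prehilbert}, contracts all $b+b'$ bottom strands together with the $L$ left strands. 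The resulting planar diagram decouples into a $\xi_1^{*}$--$\xi_2$ region in which exactly the $b$ bottom and $L$ left strands are contracted, so that this region is by definition $\langle\xi_1\mid\xi_2\rangle_{A_\infty}\in A_\infty$ wired onto the connecting strands; the two caps introduced by $U_{b,b'}$ and the annihilation caps coming from the inner product then have to be reorganized, using the zig-zag equations \eqref{eqn::zz} and sphericality of the pivotal structure, so that $\langle\xi_1\mid\xi_2\rangle_{A_\infty}$ is inserted precisely as its left action on $\eta_2$ before pairing against $\eta_1$. The delicate point is bookkeeping: one must check that this straightening produces no stray closed loop (which would contribute a spurious factor of $\delta_x$) and that the surviving $r$- and $r'$-strands sit in the correct order. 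This is where I would spend most of the effort, working strand by strand as in the corresponding argument of \cite{HaPeI}. Bilinearity and continuity then promote the identity from diagrams to all of $X_b\otimes_{A_\infty}X_{b'}$.

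Finally, for density of the range it suffices to hit each $V_{b+b',L,R}$, and I would do this by an explicit zig-zag reconstruction. Given $\zeta\in V_{b+b',L,R}$, regarded as a morphism $x^{\otimes b}\otimes x^{\otimes b'}\to x^{\otimes L}\otimes x^{\otimes R}$, bend its second block of $b'$ input strands up to the far right to obtain $\xi\in V_{b,L,R+b'}$, and set $\eta:=\coev{x^{\otimes R}}\otimes\id_{x^{\otimes b'}}\in V_{b',R+b',R}$. A single application of the conjugate equations \eqref{eqn::zz} straightens $U_{b,b'}(\xi\odot\eta)$ back to $\zeta$ (the $\coev{x^{\otimes R}}$ cancels the bent output block and the $\id_{x^{\otimes b'}}$ unbends the input block), so $V_{b+b',L,R}$ lies in the image for every $L,R$. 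Hence the image contains the dense subspace $\Gr_{b+b',\infty}\subset X_{b+b'}$; combined with the isometry property and completeness of the domain, this shows $U_{b,b'}$ extends to the asserted unitary isomorphism of right Hilbert $A_\infty$-modules.
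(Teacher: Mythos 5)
The paper does not actually prove this proposition; it is imported verbatim as \cite[Proposition 4.11]{HaPeI}, so there is no in-paper argument to compare yours against. That said, your outline is essentially the standard proof and I believe it is correct: reduce unitarity to (i) right $A_\infty$-linearity, (ii) preservation of the interior-tensor-product inner product $\langle \xi_1\odot\eta_1\mid\xi_2\odot\eta_2\rangle_{A_\infty}=\langle\eta_1\mid\langle\xi_1\mid\xi_2\rangle_{A_\infty}\rhd\eta_2\rangle_{A_\infty}$, and (iii) density of the range, then invoke the soft fact that an isometric module map with dense range between Hilbert C*-modules is unitary, which also disposes of well-definedness on the balanced tensor product. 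Your diagrammatic computation in (ii) is the right one: contracting the $l$ left strands and the first $b$ bottom strands isolates $\langle\xi_1\mid\xi_2\rangle_{A_\infty}$ wired between the left strands of $\eta_1^*$ and $\eta_2$, no closed loops arise, and the identity holds already at the level of diagrams before extending by continuity.

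The one place you should be more careful is the explicit surjectivity witness. The cap in $U_{b,b'}$ joins the $r$ right strands of $\xi$ to the $l'$ left strands of $\eta$ by \emph{nested} arcs, which reverses the strand order; combined with the reversal coming from bending the last $b'$ input strands of $\zeta$ up and around, the correct $\eta$ is the planar diagram with the $b'$ identity strands on the \emph{left} and the $R$ nested cups on the right, i.e.\ $\id_{x^{\otimes b'}}\otimes\coev{x^{\otimes R}}$ with the dot placed after the $(R+b')$-th top strand, rather than $\coev{x^{\otimes R}}\otimes\id_{x^{\otimes b'}}$. Since $\cat$ is not braided you cannot silently permute these tensor factors, so the order matters; with the corrected ordering a single application of the zig-zag equations \eqref{eqn::zz} does recover $\zeta$, and the rest of your argument goes through. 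This is a bookkeeping slip of exactly the kind you flagged, not a gap in the method.
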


We now introduce a construction due to Pimsner
\begin{definition}\label{dfn::Fock}\cite{MR1426840}
    Given an arbitrary Hilbert C*-module $\Y$ over a C$^{*}$-algebra $A$ with a left action of $A$ as bounded, adjointable operators on $\Y$, one can form the \textbf{full Fock space} $\mathcal{F}(\Y)$ given by 
    $$ \mathcal{F}(\Y) = A \oplus \bigoplus_{n=1}^{\infty} \Y^{\otimes^{n}_{A}}.$$
\end{definition}

An immediate consequence from Proposition \ref{prop:easytensor} and the previous definition gives:
\begin{corollary}\label{cor:fockspace}
    The unitary operators in Proposition \ref{prop:easytensor} induce a unitary
    $$U: \mathcal{F}(X_1) \rightarrow \mathcal{X}_{\infty}.$$
\end{corollary}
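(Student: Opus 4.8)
The plan is to realize both $\mathcal{F}(X_1)$ and $\mathcal{X}_\infty$ as orthogonal direct sums of right Hilbert $A_\infty$-modules indexed by $n\geq 0$ and to match the $n$-th summands using iterated copies of the unitaries $U_{b,b'}$ from Proposition \ref{prop:easytensor}. Concretely, I would first build unitaries $U^{(n)}: X_1^{\otimes^n_{A_\infty}}\to X_n$ by the recursion $U^{(0)} := \id_{A_\infty}$, $U^{(1)} := \id_{X_1}$, and $U^{(n)} := U_{n-1,1}\circ(U^{(n-1)}\otimes\id_{X_1})$ for $n\geq 2$, using the identification $X_1^{\otimes^n_{A_\infty}} = X_1^{\otimes^{n-1}_{A_\infty}}\otimes_{A_\infty} X_1$. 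Each $U^{(n)}$ is a composite of unitaries, since tensoring an adjointable unitary $V$ of right Hilbert $A_\infty$-modules with $\id_{X_1}$ on the internal tensor product again yields a unitary with $(V\otimes\id_{X_1})^* = V^*\otimes\id_{X_1}$ (the standard behavior of induced operators on internal tensor products; cf.\ \cite{MR2125398}). Thus by induction $X_1^{\otimes^n_{A_\infty}}\cong X_n$ for every $n$, with $X_0 = A_\infty$ matching the $n=0$ summand of the Fock space.

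Next I would verify that $\mathcal{X}_\infty$ is the orthogonal direct sum $\overline{\bigoplus_{b\geq 0}X_b}$ of its graded pieces. This is immediate from the $A_\infty$-valued inner product: for $\xi\in V_{b,l,r}$ and $\eta\in V_{b',l',r'}$ the formula carries a Kronecker factor $\delta_{b = b'}$, so $X_b\perp X_{b'}$ whenever $b\neq b'$. Since $\Gr_\infty = \bigoplus_{b\geq 0}\Gr_{b,\infty}$ is dense in $\mathcal{X}_\infty$ and each $X_b$ is by definition the closure of $\Gr_{b,\infty}$, the completion $\mathcal{X}_\infty$ is precisely the Hilbert-module orthogonal direct sum of the $X_b$. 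The Fock space $\mathcal{F}(X_1) = A_\infty\oplus\bigoplus_{n\geq 1}X_1^{\otimes^n_{A_\infty}}$ is likewise an orthogonal direct sum of right Hilbert $A_\infty$-modules by construction (Definition \ref{dfn::Fock}).

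With both sides written as orthogonal direct sums, I would assemble the desired map as $U := \overline{\bigoplus_{n\geq 0}U^{(n)}}: \mathcal{F}(X_1)\to\mathcal{X}_\infty$. A direct sum of unitaries between matching summands is a unitary of the completed direct sums: it is isometric on each summand and hence on their algebraic direct sum, so it extends to an isometry of the completions, and it is surjective because its image contains every $X_b$ and these span a dense subspace of $\mathcal{X}_\infty$. Adjointability together with $U^*U = \id$ and $UU^* = \id$ then follow summand-by-summand.

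The only genuinely non-formal point, and the one I would take care to justify, is the claim that $V\otimes\id_{X_1}$ is unitary whenever $V$ is, together with the well-definedness of $X_1^{\otimes^n_{A_\infty}}$ up to the canonical associativity unitaries for internal tensor products. Both are standard in the theory of Hilbert C*-modules, but since the left $A_\infty$-action on $X_1$ is only by \emph{right-adjointable} operators (as established after Lemma \ref{lemma::norm}), I would briefly check that this is exactly the hypothesis needed for the induced operator $V\otimes_{A_\infty}\id_{X_1}$ to exist and to have adjoint $V^*\otimes_{A_\infty}\id_{X_1}$. Once this is in place, the corollary is a purely formal consequence of Proposition \ref{prop:easytensor} and the orthogonal grading of $\mathcal{X}_\infty$.
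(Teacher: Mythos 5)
Your proposal is correct and fills in exactly the argument the paper treats as immediate: iterate the unitaries $U_{b,b'}$ of Proposition \ref{prop:easytensor} to identify $X_1^{\otimes^n_{A_\infty}}$ with $X_n$, observe that the Kronecker factor $\delta_{b=b'}$ in the $A_\infty$-valued inner product makes $\mathcal{X}_\infty$ the orthogonal direct sum of the $X_b$, and sum the resulting unitaries. The care you take with adjointability of $V\otimes_{A_\infty}\id_{X_1}$ is appropriate but routine, so there is nothing to add.
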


Recall that by \cite{MR1426840}, given $\mathcal{F}(\Y)$ as above, and $\xi \in \Y$, one can form the \textbf{creation operator} $\ell(\xi)$ on $\mathcal{F}(\Y)$ defined as as the linear extension of
\begin{align*}
    \ell(\xi)a &= \xi a &\text {for all } a \in A\\
    \ell(\xi)(\xi_{1} \otimes \cdots \otimes \xi_{n}) &= \xi \otimes \xi_{1} \otimes \cdots \otimes \xi_{n} &\text{for all } \xi_{1}, \dots, \xi_{n} \in \Y
\end{align*}
        the operator $\ell(\xi)$ is bounded and right-adjointable, and its adjoint is given by
\begin{align*}
    \ell(\xi)^{*}a &= 0 &\text {for all } a \in A\\
    \ell(\xi)^{*}(\xi_{1} \otimes \cdots \otimes \xi_{n}) &= \langle\xi\ |\  \xi_{1} \rangle_{A} \xi_{2} \otimes \cdots \otimes \xi_{n} &\text{for all } \xi_{1}, \dots, \xi_{n} \in \Y
\end{align*}

In particular, for $\Y = X_1$ and for an arbitrary $\xi \in V_{1, l, r}$, consider the operator $L(\xi)$ 
on $\Gr_\infty$, which we define using the spaces $V_{b',l',r'}$ by: 
\begin{equation}
    L(\xi): 
    \begin{tikzpicture}[scale=1/15 pt, thick,baseline={([yshift=-\the\dimexpr\fontdimen22\textfont2\relax] current bounding box.center)}] 
    
        \draw[rounded corners, ultra thick] (-14,-5) rectangle (-4,5);
        \node at (-9,0){$\eta$};
        \node at (-9,5){$\bullet$};
        \draw[ultra thick] (-12,5) -- (-12,20);
        \draw[ultra thick] (-6,5) -- (-6,20);
        \draw[ultra thick] (-9,-5) -- (-9,-20);
            \node at (-15,14){$l'$};
            \node at (-3,14){$r'$};
            \node at (-6,-17){$b'$};
    
        \node at (2,0){$\mapsto\ $};
        
        \node at(10,0){$\delta_{r = l'}\cdot \ $};
        
        \draw[densely dashed, rounded corners, blue] (17,-13) rectangle (47,13);
        \node at (32, 13){$\bullet$};
            \draw[rounded corners, ultra thick] (29,-5) rectangle (19,5);
            \node at (24,0){$\xi$};
            \node at (24,5){$\bullet$};
                \draw[ultra thick] (21,5) -- (21,20);
                    \node at (19,17){$l$};
                    
            \draw[rounded corners, ultra thick] (45,-5) rectangle (35,5);
            \node at (40,0){$\eta$};
            \node at (40,5){$\bullet$};
                \draw[ultra thick] (43,5) -- (43,20);
                    \node at (46,17){$r'$};
                
                \draw[ultra thick] (40,-5) .. controls (40,-11) and (34,-12) .. (34,-20);
                \draw (24,-5) .. controls (24,-11) and (29,-12) .. (29,-20);
                    \node at (44,-17){$(1 + b')$};
                \draw[ultra thick] (38,5) arc (0:180:6);
    \end{tikzpicture}
\end{equation}

For $\xi \in \Gr_{1,\infty}$, one defines $L(\xi)$ by linearity.  The following is now immediate:
\begin{proposition}\label{prop::intertwiner}
The unitary operator $U$ in Corollary \ref{cor:fockspace} satisfies $U^{*}L(\xi)U = \ell(\xi)$ for all $\xi\in\Gr_{1,\infty}$. 
\end{proposition}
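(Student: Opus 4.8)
The plan is to verify the intertwining identity $L(\xi)U = U\ell(\xi)$ on a dense subspace and then to extend it to all of $\mathcal{F}(X_1)$ by boundedness. Since both $\xi\mapsto L(\xi)$ and $\xi\mapsto\ell(\xi)$ are linear, I would first assume $\xi\in V_{1,l,r}$ for fixed $l,r$. Recall from Corollary \ref{cor:fockspace} that $U$ is assembled from the fusion unitaries $U_{b,b'}$ of Proposition \ref{prop:easytensor}: on the degree-$n$ summand $X_1^{\otimes^{n}_{A_\infty}}$ of the Fock space it is the iterated composite of the maps $U_{b,1}$, landing in $X_n$, whereas on the degree-$0$ summand it is the identity $A_\infty = X_0$. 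Hence $U$ carries the algebraic Fock space $\Gr_{0,\infty}\oplus\bigoplus_{n\geq 1}\Gr_{1,\infty}^{\odot n}$, which is dense in $\mathcal{F}(X_1)$, into the dense subspace $\Gr_\infty\subset\X_\infty$; concretely it sends a simple tensor of diagrams $\xi_1\odot\cdots\odot\xi_n$ (with $\xi_i\in V_{1,l_i,r_i}$) to the single diagram in $\Gr_{n,\infty}$ obtained by juxtaposing the $\xi_i$ from left to right, capping the right strand of each $\xi_i$ against the left strand of $\xi_{i+1}$ (which vanishes unless $r_i = l_{i+1}$) and fusing the $n$ through-strands at the bottom.

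With this description in hand I would compare the two sides directly on simple tensors. Applying the creation operator first gives $\ell(\xi)(\xi_1\odot\cdots\odot\xi_n) = \xi\odot\xi_1\odot\cdots\odot\xi_n$, whose image under $U$ is the juxtaposition of $\xi,\xi_1,\dots,\xi_n$. Applying $U$ first produces $\eta := U(\xi_1\odot\cdots\odot\xi_n)\in\Gr_{n,\infty}$, to which the diagrammatic definition of $L(\xi)$ attaches $\xi$ on the left, caps the right strand of $\xi$ against the leftmost strand of $\eta$ (vanishing unless $r = l_1$) and records $1+n$ through-strands. On the degree-$0$ summand the check is even more transparent: for $a\in\Gr_{0,\infty}=X_0$ one has $\ell(\xi)a = \xi\lhd a$ while $U$ is the identity in degree one, and $L(\xi)$ applied to $a\in V_{0,l',r'}$ is by definition $\xi$ capped against $a$, which is precisely the right action $\xi\lhd a$; so the two agree there.

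The remaining step, which I expect to be the main obstacle, is to identify the two diagrams produced in the degree-$n$ case: one must check that capping $\xi$ onto the already-fused diagram $\eta$ yields exactly the diagram obtained by juxtaposing $\xi,\xi_1,\dots,\xi_n$ all at once, with the Kronecker deltas $\delta_{r=l_1}$ and $\delta_{r_i=l_{i+1}}$ matching on both sides. This is precisely the coherence (associativity) of the fusion maps $U_{b,b'}$ already underlying the well-definedness of the iterated $U$, so the identity should follow by the same planar bookkeeping used in \cite[Section 4]{HaPeI}; indeed the diagrammatic formula for $L(\xi)$ was designed to be the avatar of $\ell(\xi)$ under $U$. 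To conclude I would invoke that $\ell(\xi)$ is bounded and right-adjointable by Pimsner's construction (\cite{MR1426840}; see the discussion after Definition \ref{dfn::Fock}) and that $U$ is unitary, so $U\ell(\xi)U^{*}$ is a bounded right-adjointable operator on $\X_\infty$ agreeing with $L(\xi)$ on the dense subspace $\Gr_\infty$; therefore $L(\xi)$ extends to this bounded operator and $U^{*}L(\xi)U = \ell(\xi)$, as claimed.
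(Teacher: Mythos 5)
Your proposal is correct and follows essentially the same route as the paper, which simply declares the proposition ``immediate'' from the definitions of $U$, $L(\xi)$, and $\ell(\xi)$: you check the intertwining identity on simple tensors of diagrams (degree $0$ and degree $n$ separately), reduce the degree-$n$ case to the coherence of the fusion unitaries $U_{b,b'}$, and extend by boundedness of $\ell(\xi)$ and unitarity of $U$. Your write-up just makes explicit the diagrammatic bookkeeping the paper leaves to the reader.
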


Since one has $\|L(\xi)\| = \|\xi\|$ for all $\xi \in \Gr_{1,\infty}$  \cite{MR1426840}, it follows immediately that $L(\xi)$ is bounded for all  $\xi \in \Gr_{1,\infty}$, and that one can use continuity to define $L(\xi)$ for all $\xi \in X_{1}$. It follows from the arguments in \cite[Propositions 4.7--4.9]{HaPeI} that the $*$-operation extends continuously on each $X_{n}$.

We now introduce the C*-algebra $B_{\infty}$, a corner of which (the unital simple C*-algebra $B_0$ from Notation \ref{notation::corners}) will be the main C$^{*}$-algebra in this article.

\begin{definition}\label{dfn::BInfty}
    Let $X_{\R}$ be the real subspace $\{\xi \in X_1|\ \xi = \xi^{*}\}$.  Then we define $B_{\infty}$ to be the C*-algebra generated by
            $$A_\infty \cup \{L(\xi) + L(\xi)^{*} |\ \xi \in X_{\R}\} \subset\mathcal B^*(\X_\infty).$$
    We will often denote the (operator) norm on $B_\infty$ by $||\cdot||_{B_\infty}.$       
\end{definition}

There will be a convenient way to diagrammatically realize $B_{\infty}.$ Given $\xi \in V_{b, l, r}$, define $\pi(\xi)$ to be the operator on $\Gr_{\infty}$ given by (the linear extention of): 
\begin{equation}\label{eqn::WalkerProduct}
    \pi(\xi)\eta := \delta_{r = l'}\cdot \sum_{k=0}^{\min\{b, b' \}}
    \begin{tikzpicture}[scale=1/15 pt, thick,baseline={([yshift=-\the\dimexpr\fontdimen22\textfont2\relax] current bounding box.center)}] 
        \draw[blue, densely dashed, rounded corners] (12,13) rectangle (42,-17);
        \node at (27, 13){$\bullet$};
        \draw[ultra thick] (21, -5) arc (180:360: 6);
            \node at (27,-14){$k$};

            \draw[ultra thick] (33,5) arc (0:180:6);
                \draw[rounded corners, ultra thick] (24,-5) rectangle (14,5);
                \node at (19,5){$\bullet$};
                    \node at (19,0){$\xi$};
                    \draw[ultra thick] (16,5) -- (16,20);
                        \node at (14,17){$l$};
            
                \draw[rounded corners, ultra thick] (40,-5) rectangle (30,5);
                \node at (35,5){$\bullet$};
                    \node at (35,0){$\eta$};
                    \draw[ultra thick] (38,5) -- (38,20);
                        \node at (41,17){$r'$};
            
            \draw[ultra thick] (16,-5) .. controls (16,-11) and (24,-17) .. (24,-23);
                \node at (13,-21){$(b - k)$};
            \draw[ultra thick] (38,-5) .. controls (38,-11) and (30,-17) .. (30,-23);  
                \node at (41,-21){$\scriptsize{(b' - k)}$};
    \end{tikzpicture}
\end{equation}

for $\eta \in V_{b', r', l'}$.  We extend the definition of $\pi(\xi)$ linearly in $\xi$ to all of $\Gr_\infty$. By the following Proposition (\cite[Proposition 4.16]{HaPeI}), we obtain that $\pi(\xi)$ is bounded: 

\begin{proposition}
For each $\eta \in \Gr_{\infty}$, $\pi(\eta)$ is a polynomial in $A_{\infty}$ and $\{L(\xi) + L(\xi)^{*} |\ \xi \in X_{\R}\}$.  Consequently, $\pi(\eta)$ is bounded for each $\eta \in \Gr_{\infty}$.  
\end{proposition}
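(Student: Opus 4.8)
The plan is to argue by induction on the ``bottom degree'' $b$ of the diagram. Since $\pi$ is linear and $\Gr_\infty=\bigoplus_b\Gr_{b,\infty}$, it suffices to show that $\pi(\eta)$ is a polynomial in $A_\infty$ and the generators $s(\zeta):=L(\zeta)+L(\zeta)^*$ (for $\zeta\in X_\R$) for every homogeneous $\eta\in\Gr_{b,\infty}$. Two structural facts drive the argument. First, define a product on $\Gr_\infty$ by $\xi\star\eta:=\pi(\xi)\eta$, i.e. ``stack and sum over all partial contractions of bottom strands'' as in \eqref{eqn::WalkerProduct}; once one checks (by a standard diagrammatic manipulation) that $\star$ is associative and $*$-preserving, $\pi$ is the left regular representation of the graded $*$-algebra $(\Gr_\infty,\star)$, so that $\pi(\xi\star\eta)=\pi(\xi)\pi(\eta)$ and $\pi(\xi^*)=\pi(\xi)^*$ hold as operators on $\Gr_\infty$. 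Second, $\|L(\zeta)\|=\|\zeta\|$, so each $s(\zeta)$ is bounded. Note the key degree feature: for $\zeta\in\Gr_{1,\infty}$, the $k=0$ term of $\pi(\zeta)$ raises $b$ by one and coincides with $L(\zeta)$, while the $k=1$ term lowers $b$ by one.

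For the base cases: if $\eta\in\Gr_{0,\infty}$, then $\pi(\eta)$ is left multiplication by $\eta\in A_\infty$, a degree-$0$ coefficient. If $\zeta\in\Gr_{1,\infty}$ is self-adjoint, i.e. $\zeta\in X_\R$, then $\pi(\zeta)=\pi(\zeta)^*$; writing $\pi(\zeta)=L(\zeta)+M$ with $M$ the degree-lowering ($k=1$) part and comparing degree-raising parts of $\pi(\zeta)=L(\zeta)+M$ and $\pi(\zeta)^*=L(\zeta)^*+M^*$ forces $M=L(\zeta)^*$, so $\pi(\zeta)=L(\zeta)+L(\zeta)^*=s(\zeta)$ is a generator. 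A general $\zeta\in\Gr_{1,\infty}$ is the complex combination $\zeta=\tfrac12(\zeta+\zeta^*)-\tfrac{i}{2}\cdot i(\zeta-\zeta^*)$ of the two real vectors $\tfrac12(\zeta+\zeta^*)$ and $\tfrac1{2i}(\zeta-\zeta^*)$, so $\pi(\zeta)$ is a degree-$1$ polynomial in the $s(\cdot)$.

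Now fix $b\ge 2$ and assume the claim for all degrees $<b$. Given $\xi\in\Gr_{b,\infty}$, Proposition \ref{prop:easytensor} (surjectivity of the creation map $U_{1,b-1}$ at the level of diagrams) lets us write $\xi=\sum_j \zeta_j\cdot\eta_j$ with $\zeta_j\in\Gr_{1,\infty}$ and $\eta_j\in\Gr_{b-1,\infty}$, where $\zeta_j\cdot\eta_j$ is precisely the $k=0$ (creation) term of $\zeta_j\star\eta_j$. Since the $\star$-product of a degree-$1$ and a degree-$(b-1)$ diagram has only a $k=0$ part in degree $b$ together with a single $k=1$ contraction $c_j\in\Gr_{b-2,\infty}$, the homomorphism property yields the operator identity $\pi(\zeta_j)\pi(\eta_j)=\pi(\zeta_j\cdot\eta_j)+\pi(c_j)$, whence $\pi(\zeta_j\cdot\eta_j)=\pi(\zeta_j)\pi(\eta_j)-\pi(c_j)$. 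The right-hand side is the product of a degree-$1$ polynomial with a polynomial (by the inductive hypothesis, as $\eta_j$ has degree $b-1$), minus a polynomial (as $c_j$ has degree $b-2$); hence $\pi(\zeta_j\cdot\eta_j)$, and summing over $j$ also $\pi(\xi)$, is a polynomial in $A_\infty$ and the $s(\zeta)$. This closes the induction, and boundedness of $\pi(\eta)$ follows immediately from boundedness of the elements of $A_\infty$ and of each generator $s(\zeta)$.

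The main obstacle is the diagrammatic bookkeeping behind the two structural facts: verifying that the partial-contraction product $\star$ is associative (so that $\pi$ is genuinely a homomorphism and not merely linear), that $\star$ is compatible with the involution, and that the product of a one-strand diagram with a $(b-1)$-strand diagram splits cleanly into exactly one creation term in degree $b$ plus one fixed contraction term $c_j$ in degree $b-2$ --- that is, controlling exactly which strands cap off. This is routine capping and zig-zag manipulation, entirely parallel to \cite[\S 4]{HaPeI} under the dictionary above, but it is where all the genuine content sits; everything categorical (sphericity and the balancing of the evaluation/coevaluation maps) has already been absorbed into the well-definedness of $\pi$, $L$, and $\star$.
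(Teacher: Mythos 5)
Your argument is correct and is essentially the standard proof: the paper itself does not prove this proposition but cites \cite[Proposition 4.16]{HaPeI}, and your induction on the number of bottom strings via the Walker-product homomorphism identity $\pi(\zeta)\pi(\eta)=\pi(\zeta\cdot\eta)+\pi(c)$ is exactly the argument carried out there. Two small points to tighten: the diagram-level factorization $\xi=\sum_j\zeta_j\cdot\eta_j$ comes from Frobenius reciprocity (bending the first bottom string of $\xi$ up and to the left), not literally from Proposition \ref{prop:easytensor}, which only asserts a unitary between completions; and $\pi(\zeta^*)=\pi(\zeta)^*$ does not follow from $\star$ being $*$-preserving alone (the left regular representation of a $*$-algebra need not be a $*$-representation) --- it requires the compatibility $\langle \zeta\star\eta\ |\ \mu\rangle_{A_\infty}=\langle \eta\ |\ \zeta^*\star\mu\rangle_{A_\infty}$, which is where sphericality of $\cat$ actually enters.
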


%
%
%
\begin{remark}
    	We notice that under this representation, $\pi,$ the composition of operators satisfies $\pi(\xi')\circ \pi(\xi) = \pi(\xi'\star\xi),$ where $\xi'\in V_{b',l',r'}$, $\xi\in V_{b,l,r}$ and $-\star-$ 
	is the \textbf{Walker Multiplication}, which diagrammatically is given by Equation \ref{eqn::WalkerProduct} (\cite{JSw10}). We moreover have that $\Gr_\infty$ 
	with the Walker product $-\star-$ is isomorphic to $\Gr_\infty$ with the \textbf{wedge/graded product} $-\wedge-$ as unital (tracial) $*$-algebras. (See \cite[Lemmata 5.2 and 5.3]{JSw10},  for further details on the traces and the isomorphism).
   	It is straightforward to see that if $\xi \in X_{\R}$, then $\pi(\xi) = \xi\star- = L(\xi) + L(\xi)^{*}$. From this observation, we see that $B_{\infty}$ is generated by $A_{\infty}$ and $\{\pi(\eta) |\ \eta \in \Gr_{\infty}\}$.  
\end{remark}

\subsection{A weight and a conditional expectation on $B_{\infty}$} \label{section:weight}\ \\
The AF $*$-algebra $\Gr_{\infty} \cap A_{\infty}$ can be endowed with the following positive linear functional given by the linear extension of
\begin{align}
    \Phi:\Gr_\infty \cap A_\infty \longrightarrow \C
    \hspace{5mm}\text{given by}\hspace{5 mm}
    \begin{tikzpicture}[scale=1/15 pt, thick,baseline={([yshift=-\the\dimexpr\fontdimen22\textfont2\relax] current bounding box.center)}] 
        \draw[rounded corners, ultra thick] (24,0) rectangle (14,10);
        \node at (19,10){$\bullet$};
            \node at (19,5){$a$};
            \draw[ultra thick] (16,10) -- (16,17);
            \draw[ultra thick] (22,10) -- (22,17);
    \end{tikzpicture}
    \longmapsto \delta_{\tiny{l = r}}\cdot
    \begin{tikzpicture}[scale=1/15 pt, thick,baseline={([yshift=-\the\dimexpr\fontdimen22\textfont2\relax] current bounding box.center)}] 
        \draw[rounded corners, ultra thick] (24,0) rectangle (14,10);
        \node at (19,10){$\bullet$};
            \node at (19,5){$a$};
            \draw[ultra thick] (16,10) -- (16,12);
            \draw[ultra thick] (22,10) -- (22,12);
            \draw[ultra thick] (16,12) arc(180:0: 3);
    \end{tikzpicture}.
\end{align}
Proposition \ref{prop::AFalgs}, allows us to see this map is simply the trace on $\cat.$ (See Equation \ref{eqn::balancing}.) Therefore, the map $\Phi$ extends to a faithful, positive, semifinite, tracial weight on $A_{\infty}.$

If we if we let $\Pi_{0}\in \mathcal{B}(\X_\infty)$ be the projection from $\mathcal{X}_{\infty}$ onto $A_{\infty}$, then from (Lemma 2.10 in \cite{MR1704661}), $\Pi_{0}$ induces a faithful \textbf{conditional expectation}, $\mathbb{E}$ from $B_{\infty}$ onto $A_{\infty}$. Diagrammatically, this map is given by
\begin{align}
    \begin{split}
        &\mathbb E:B_\infty \longrightarrow A_\infty,\\
        &b\longmapsto \Pi_{0}\ b\ \Pi_{0}.
    \end{split}
    \hspace{5mm}
    \hspace{5mm}
    \begin{tikzpicture}[scale=1/15 pt, thick,baseline={([yshift=-\the\dimexpr\fontdimen22\textfont2\relax] current bounding box.center)}]
        \draw[rounded corners, ultra thick](-5,-5) rectangle (5,5);
        \node at (0,5){$\bullet$};
            \node at (0,0){$b$};
            \draw[ultra thick] (-3,5) -- (-3,15);
            \draw[ultra thick] (3,5) -- (3,15);                
            \draw[ultra thick] (0,-5) -- (0,-15);
    \end{tikzpicture}
    \longmapsto
    \begin{tikzpicture}[scale=1/15 pt, thick,baseline={([yshift=-\the\dimexpr\fontdimen22\textfont2\relax] current bounding box.center)}]
        \draw[rounded corners, ultra thick](-5,-5) rectangle (5,5);
        \node at (0,5){$\bullet$};
            \node at (0,0){$b_\emptyset$};
            \draw[ultra thick] (-3,5) -- (-3,15);
            \draw[ultra thick] (3,5) -- (3,15);                
            \draw[densely dashed] (0,-5) -- (0,-15);
    \end{tikzpicture}
\end{align}

A straightforward computation shows that if  $x \in \Gr_{\infty},$ then $\Pi_{0}x\Pi_{0} \in A_{\infty}$, so $\mathbb{E}$ is well defined by continuity. From \cite[Lemma 2.10 and Remark 2.13]{MR1704661} we obtain the following proposition: 

\begin{proposition}\label{prop:weights}
	The following assertions hold:    
	 	\begin{itemize}
	      	  	\item The map $\mathbb{E}$ is a faithful conditional expectation from $B_{\infty}$ onto $A_{\infty}$. \cite{HaPeI}
	        	\item The composite map $\Phi \circ \mathbb{E}$ is a faithful, semifinite tracial weight on $B_{\infty},$ which is finite on $\{\pi(\xi) | \xi \in \Gr_{\infty}\}$. \cite{HaPeI}	        
            \end{itemize}
\end{proposition}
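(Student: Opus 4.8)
The plan is to fit our diagrammatic Fock module into the framework of \cite[Lemma 2.10 and Remark 2.13]{MR1704661}, and then to obtain the tracial weight by composing the categorical trace $\Phi$ (already constructed on $A_\infty$) with the conditional expectation $\mathbb{E}$; the only genuinely new ingredient is traciality on all of $B_\infty$. Throughout I would exploit the grading $\X_\infty = \bigoplus_{b \geq 0} X_b$ with $X_0 = A_\infty$, together with the fact that every operator in $B_\infty \subset \mathcal{B}^*(\X_\infty)$ is right $A_\infty$-linear (the left $A_\infty$-action is right-adjointable by the discussion following Lemma \ref{lemma::norm}, and the creation operators and their adjoints are right-module maps).

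First I would verify that $\mathbb{E} = \Pi_{0}(\cdot)\Pi_{0}$ is a conditional expectation. Because each $b \in B_\infty$ is right $A_\infty$-linear, its compression to $X_0 = A_\infty$ is a right $A_\infty$-linear endomorphism, hence left multiplication by $\Pi_{0}(b \cdot \mathbbm 1) \in A_\infty$; this shows $\mathbb{E}$ lands in $A_\infty$. Since the left $A_\infty$-action preserves the grading, $\Pi_{0}$ commutes with $A_\infty$, so $\mathbb{E}$ fixes $A_\infty$, is $A_\infty$-bimodular, contractive, and positive (as $\Pi_{0} b^* b \Pi_{0} = (b\Pi_{0})^*(b\Pi_{0})$). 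These are exactly the hypotheses of \cite[Lemma 2.10]{MR1704661}. For faithfulness, note that $\mathbb{E}(b^* b) = 0$ forces $b\Pi_{0} = 0$, i.e. $b$ annihilates $X_0$; since the vacuum $\mathbbm 1 \in A_\infty$ is cyclic for $B_\infty$ (the $L(\xi)$ together with $A_\infty$ reach every graded component), it is separating by \cite[Remark 2.13]{MR1704661} transcribed to our setting via the dictionary with \cite{HaPeI}, so $b = 0$.

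Next I would assemble the weight. As $\Phi$ is a faithful, positive, semifinite tracial weight on $A_\infty$ and $\mathbb{E}$ is a faithful conditional expectation, $\Phi \circ \mathbb{E}$ is a faithful positive weight on $B_\infty$ (faithfulness is immediate: $(\Phi\circ\mathbb{E})(b^*b)=0$ gives $\mathbb{E}(b^*b)=0$ by faithfulness of $\Phi$, hence $b=0$). For semifiniteness and finiteness on $\{\pi(\xi) : \xi \in \Gr_\infty\}$, I would compute $\mathbb{E}(\pi(\xi))$ straight from the diagram in Equation \ref{eqn::WalkerProduct}: restricting the input to $X_0$ (so $b'=0$) forces $k=0$ and leaves an output of degree $b$, whence $\Pi_{0}\pi(\xi)\Pi_{0}$ vanishes unless $\xi \in \Gr_{0,\infty}$, in which case it is wedge-multiplication by $\xi$. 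Consequently $(\Phi\circ\mathbb{E})(\pi(\xi)^*\pi(\xi)) = (\Phi\circ\mathbb{E})(\pi(\xi^*\star\xi)) = \Phi\big((\xi^*\star\xi)_0\big)$ is finite, where $(\cdot)_0$ denotes the degree-zero part; as the $\pi(\xi)$ span a dense $*$-subalgebra, this yields semifiniteness.

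The step I expect to be the main obstacle is traciality of $\Phi\circ\mathbb{E}$. It suffices to check $(\Phi\circ\mathbb{E})(b_1 b_2) = (\Phi\circ\mathbb{E})(b_2 b_1)$ on the dense subalgebra $\pi(\Gr_\infty)$, where using $\pi(\xi')\pi(\xi) = \pi(\xi'\star\xi)$ it reduces to $\Phi\big((\xi'\star\xi)_0\big) = \Phi\big((\xi\star\xi')_0\big)$ for the Walker product. This is precisely the assertion that the spherical categorical trace (which is what $\Phi$ becomes under Frobenius reciprocity, cf. Proposition \ref{prop::AFalgs} and Equation \ref{eqn::balancing}) is a trace for the Walker multiplication. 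I would deduce it from the traciality of the Walker product in \cite[Lemmata 5.2 and 5.3]{JSw10}, transported along the $*$-algebra isomorphism between $(\Gr_\infty,\star)$ and $(\Gr_\infty,\wedge)$, the latter being manifestly tracial because $\Phi$ is spherical. Together with the faithfulness and semifiniteness above, this gives the stated properties, in agreement with \cite[Lemma 2.10 and Remark 2.13]{MR1704661} and \cite{HaPeI}.
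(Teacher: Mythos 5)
Your proposal is correct and follows essentially the same route as the paper, which simply defines $\mathbb{E} = \Pi_{0}(\cdot)\Pi_{0}$, checks on diagrams that it lands in $A_{\infty}$, and outsources faithfulness, semifiniteness, and traciality to \cite[Lemma 2.10 and Remark 2.13]{MR1704661}, \cite{HaPeI}, and (for the Walker/wedge comparison underlying traciality) \cite{JSw10}; you have just filled in the content of those citations. The one point to polish is that $A_{\infty}$ is nonunital, so ``left multiplication by $\Pi_{0}(b\cdot \mathbbm{1})$'' is not quite available and should be replaced by the diagrammatic computation of $\Pi_{0}\pi(\xi)\Pi_{0}$ on generators followed by continuity --- which is exactly what you do in your next paragraph and what the paper does before stating the proposition.
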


As Hilbert spaces, we have that $L^{2}(B_{\infty}, \Phi \circ \mathbb{E}) \cong \mathcal{X}_{\infty} \otimes_{A_{\infty}} L^{2}(A_{\infty}, \Phi).$ In fact, one can check diagrammatically that this equivalence is realized by the unitary $W$ given by $W(\pi(\xi)) = \xi$ for all $\xi \in \Gr_{\infty}$.  Moreover, $W$ satisfies $W(b\eta) = bW(\eta)$ for all $b \in B_{\infty}$ and each $\eta\in \Gr_\infty$.

\begin{remark}\label{remark::density}
	The above proposition allows us to canonically identify $\Gr_{\infty}$ as a dense subspace of $B_{\infty},$ and we will suppress the $\pi$ in $\pi(x)$ when the context is clear.
\end{remark}

\begin{remark}\label{rmk::GradProd}
	In order to present a more visually appealing picture of the multiplication in $B_{\infty}$ we endow $\Gr_{\infty}$ with the following multiplication and tracial weight 
	(sometimes refered to as the \textit{Voiculescu trace})
	$$
	x \wedge y = 	\begin{tikzpicture}[scale=1/15 pt, thick,baseline={([yshift=-\the\dimexpr\fontdimen22\textfont2\relax] current bounding box.center)}] 
	            \draw[rounded corners, ultra thick] (-35,-5) rectangle (-25,5);
	            \node at (-30,5){$\bullet$};
	                \node at (-30,0){$x$};
	                \draw[ultra thick] (-33,5) -- (-33,15);
	                    \node at (-36,13){$l$};
	                \draw[ultra thick] (-27,5) -- (-27,15);
	                    \node at (-24,13){$r$};
	                \draw[ultra thick] (-30,-5) -- (-30,-15);
	                    \node at (-28,-13){$b$};
	                
	            \node at(-20,0){$\ \ \wedge\ \ $};
	                
	            \draw[rounded corners, ultra thick] (-14,-5) rectangle (-4,5);
	            \node at (-9,5){$\bullet$};
	                \node at (-9,0){$y$};
	                \draw[ultra thick] (-12,5) -- (-12,15);
	                    \node at (-15,13){$l'$};
	                \draw[ultra thick] (-5,5) -- (-5,15);
	                    \node at (-2,13){$r'$};
	                \draw[ultra thick] (-9,-5) -- (-9,-15);
	                    \node at (-6,-13){$b'$};
	                    
	            \node at (3,0){$:=$};
	            \node at(15,0){$\delta_{r = l'}\cdot \ $};
	
	            \draw[densely dashed, rounded corners, blue] (22,-13) rectangle (52,13);
	            \node at (37, 13){$\bullet$};
	                \draw[rounded corners, ultra thick] (34,-5) rectangle (24,5);
	                \node at (29,5){$\bullet$};    
	                    \node at (29,0){$x$};
	                    \draw[ultra thick] (26,5) -- (26,20);
	                        \node at (23,17){$l$};
	                \draw[rounded corners, ultra thick] (50,-5) rectangle (40,5);
	                \node at (45,5){$\bullet$};
	                    \node at (45,0){$y$};
	                    \draw[ultra thick] (48,5) -- (48,20);
	                        \node at (51,17){$r'$};
	
	            \draw[ultra thick] (43,5) arc (0:180:6);
	                \draw[ultra thick] (29,-5) .. controls (29,-10) and (35,-9) .. (35,-20);
	                \draw[ultra thick] (45,-5) .. controls(45,-10) and (39,-9) .. (39,-20);
	                    \node at (50,-18){$(b + b')$};

	        \end{tikzpicture}
	$$
	and
	$$
		\Tr(x) = \begin{tikzpicture}[scale=1/15 pt, thick,baseline={([yshift=-\the\dimexpr\fontdimen22\textfont2\relax] current bounding box.center)}] 
		            \draw[rounded corners, ultra thick] (-35,-5) rectangle (-25,5);
		            \node at (-30,5){$\bullet$};
		                \node at (-30,0){$x$};
		                \draw[ultra thick] (-33,5) -- (-33,15);
		                    \node at (-36,13){$l$};
		                \draw[ultra thick] (-27,5) -- (-27,15);
		                    \node at (-24,13){$r$};
		                \draw[ultra thick] (-30,-5) -- (-30,-15);
		                    \node at (-28,-10){$b$};
		                    \draw[rounded corners, ultra thick] (-40,-25) rectangle (-20,-15);
		                    \node at (-30, -20){$\sum \mathsf{NC}_2$};
		\end{tikzpicture},
	$$
	where $\sum \mathsf{NC}_2$ represents the sum over all non-crossing pairings of the $b$ strings on the bottom of the diagram.
	  It was shown in \cite{JSw10}, that there is a tracial weight preserving
	 $*$-isomoprhism $\Psi$ from $\pi(\Gr_{\infty})$ to $\Gr_{\infty}$ satisfying
	$$
		\Phi (\pi (x)) \circ \mathbb{E} = \Tr(\Psi(\pi(x))) \text{ and } \Psi(\pi(x)\pi(y)) = \Psi(\pi(x)) \wedge \Psi(\pi(y)).
	$$
	This means that the C*-algebra completion of $\Gr_{\infty}$ in the GNS representation of $\Tr$ is also isomorphic to $B_{\infty}$.  
	 We will use this multiplication below to declutter many diagrams.
\end{remark}

\begin{notation}\label{notation::corners}
    	For each $n\in \N$ we let $p_{n}\in \Gr_{0,n,n}$ be the element 
    	\begin{equation*}
	        p_{n} := 
	        \begin{tikzpicture}[baseline= -.1cm]
		            \node at (0.5, 0) {$n$};
		            \draw[ultra thick] (-0.3, 0) -- (-0.3, 0.6);
		            \draw[ultra thick] (0.3, 0) -- (0.3, 0.6);
		            \draw [ultra thick] (-.3, 0) arc(180:360: .3);
		            \draw[densely dashed, rounded corners, blue] (-0.7,-0.5) rectangle (0.7,0.4);
		            \node at (0, 0.4) {$\bullet$};
        	\end{tikzpicture}.
    	\end{equation*}
    	We note that $(\Phi \circ \mathbb{E})$ descends to a positive, faithful linear \textbf{tracial functional} $\Tr_{n}(\bullet) = (\Phi\circ\mathbb{E})(p_n\bullet p_n)$
	 on the unital C*-algebra $$B_n := p_{n}\wedge B_{\infty}\wedge p_{n}\subset B_\infty,$$ which inherits a subalgebra C*-norm, denoted by $||\cdot||_{B_n}.$
    
	By defining also $p_0 := \emptyset\in V_{0,0,0}$ to be the empty diagram, we can consider $\Tr_0$ and the tracial unital (simple) C*-algebra
	 $$
		B_{0} :=p_0\wedge B_\infty\wedge p_0\subset B_\infty,
	$$
	 which is of special interest to us. This subalgebra also inherits a C*-norm, 
	denoted by $||\cdot||_{B_0}.$ Notice that $B_0$ contains all diagrams with no strings on the top; i.e. $l = 0 = r$. Moreover, we observe that
	 $1_{B_0} = \emptyset.$ Finally, for $m,n\geq 0$, we define the $(m,n)$-th \textbf{corner of} $B_\infty$ by $_n B_m := p_n\wedge B_\infty\wedge p_m.$ 
	Observe that $_nB_n = B_n$ for every $n.$ 

	Finally, on each $B_{n}$, we define $\tr_{B_n} := \tr_{n}$ to be $\tr_{n}(b) = \frac{1}{\delta_{x}^{n}}\Tr_{n}(b)$.
	 (We remind the reader that $\delta_x > 1$ was introduced earlier on in Notation \ref{notation::delta}.) 
	Notice that $\tr_{n}(p_{n}) = 1$.
\end{notation}

\begin{notation}\label{notation::vNCorners}
	We denote the von Neuman closure of $B_\infty$ by $M_\infty := B_\infty ''\subset \mathcal B^*(\Ltwo{B_\infty, \Tr})$. Moreover, for $m, n\geq 0$,
	 we denote the corners $_nM_m := p_n\wedge M_\infty\wedge p_m$ 
	and define the Hilbert space $\Ltwo{{_nM_m}} := p_n\rhd\Ltwo{B_\infty}\lhd p_m $.
\end{notation}


The following lemma will be crucial to our analysis.  It requires $\delta_{x} > 1$, which we have assumed:
\begin{lemma}\label{lemma::simplicity}
The following statements hold:
    \begin{itemize}
        \item There exists a separable and infinite-dimensional Hilbert space $\cH$ such that $B_{\infty} \cong B_0 \otimes K(\cH)$ (\cite{HaPeII}, Corollary 4.15)
        \item For each $n\in\N\cup\{0\}$, $B_{\infty}$ is simple as a C*-algebra and consequently, the corner $B_{n}$ is simple too. (See Section 5 in \cite{HaPeII}.)
        \item For every $n\in \N\cup\{0\}$, the unital C*-algebra $B_{n}$ has a unique trace given by $\Tr_n$. (Theorem 5.5 and Corollary 5.7 in \cite{HaPeII}.)
        \item Given $n\in \N \cup\{0\}$, the von Neumann algebra defined by $$M_{n} := B_{n}''\subseteq \mathcal{B}(\Ltwo{B_n},\Tr_n)$$ is 
		an interpolated free group factor $L(\mathbb F_t),$ for $t \in (1, \infty]$ whose unique trace we denote by $\tau_n$. (\cite{GJS11}, \cite{BHP12}, \cite{MR3110503})
        \item For $\cS$ any fixed set of isomorphism classes of simple objects in $\cat_x,$ the full RC*TC generated by $x = \overline x\in \cat,$ we have $K_{0}(B_0) = \Z[\cS]$.  Moreover in this isomorphism, $\one_{\cat} \mapsto B_{0}$. \cite{HaPeI}, \cite{HaPeII}
    \end{itemize}
\end{lemma}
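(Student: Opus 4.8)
The plan is to compute $K_0(B_0)$ by pushing the computation down to the AF core $A_\infty$, where it is transparent, and then transporting the answer along the two structural maps $A_\infty \hookrightarrow B_\infty$ and $B_\infty \cong B_0 \otimes K(\cH)$, keeping careful track of the distinguished projection $p_0 = \emptyset = \one_{B_0}$ throughout.

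First I would compute $K_0(A_\infty)$ directly from its inductive structure. Since $A_\infty = \overline{\varinjlim_n \Gr_{0,n}}$ is AF, we have $K_1(A_\infty) = 0$ and $K_0(A_\infty) = \varinjlim_n K_0(\Gr_{0,n})$. Each $\Gr_{0,n}$ is finite dimensional (Proposition \ref{prop::AFalgs}), so $K_0(\Gr_{0,n}) \cong \Z^{S_n}$ with canonical generators the classes of minimal projections; via Frobenius reciprocity these are indexed by the set $S_n$ of isomorphism classes of simple objects of $\cat_x$ occurring in some tensor power $x^{\otimes l}$ with $l \le n$. The decisive input, recorded just after Proposition \ref{prop::AFalgs}, is that the (nonunital) inclusions $\Gr_{0,n} \hookrightarrow \Gr_{0,n+1}$ preserve minimal projections; labelling minimal projections by the simple they correspond to, this forces the connecting maps $\Z^{S_n} \to \Z^{S_{n+1}}$ to be the coordinate inclusions attached to $S_n \subseteq S_{n+1}$. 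Because $\cat_x$ is generated by $x$, every simple of $\cat_x$ eventually appears, so $\bigcup_n S_n = \cS$ and the limit is the free abelian group $\Z[\cS]$. In particular the tensor unit lives in $\Gr_{0,0} = \cat(\one) = \C$, so the generator attached to $\one_{\cat}$ is exactly the class $[p_0] = [\emptyset]$.

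Next I would show the inclusion $\iota \colon A_\infty \hookrightarrow B_\infty$ induces an isomorphism $\iota_* \colon K_0(A_\infty) \xrightarrow{\sim} K_0(B_\infty)$. This is the free-probabilistic heart of the argument, and the step I expect to be the main obstacle: the self-adjoint generators $L(\xi) + L(\xi)^*$ of $B_\infty$ over $A_\infty$ form an $A_\infty$-valued semicircular family, and one must prove that adjoining such a family leaves $K_0$ unchanged. Following \cite{HaPeI}, the mechanism is that $\iota$ is a $KK$-equivalence. Concretely, $B_\infty$ lives inside Pimsner's construction on the bimodule $X_1$ (Corollary \ref{cor:fockspace}, Proposition \ref{prop::intertwiner}, and \cite{MR1426840}): the canonical embedding of the coefficient algebra into the associated Toeplitz--Pimsner algebra is a $KK$-equivalence, and one checks that the semicircular algebra $B_\infty$ carries the same K-theory, e.g. by comparing the two Toeplitz-type extensions of $A_\infty$ and using the faithful conditional expectation $\mathbb{E} \colon B_\infty \to A_\infty$ of Proposition \ref{prop:weights} to identify the boundary maps. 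The conclusion is $K_0(B_\infty) \cong \Z[\cS]$ with $\iota_*[p_0] = [p_0]$, so the tensor unit still tracks the class of $p_0 = \one_{B_0}$.

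Finally I would transfer the computation to $B_0$ using the stable isomorphism $B_\infty \cong B_0 \otimes K(\cH)$ from the first bullet of this Lemma. Stability of $K_0$ gives $K_0(B_\infty) \cong K_0(B_0)$; and since this isomorphism carries the corner $B_0 = p_0 \wedge B_\infty \wedge p_0$ onto $B_0 \otimes e$ for a rank-one projection $e \in K(\cH)$, it sends $[p_0]$ to $[\one_{B_0}] = [B_0]$. Composing the three isomorphisms yields $K_0(B_0) \cong \Z[\cS]$ under which $\one_{\cat} \mapsto [B_0]$ and the canonical generators correspond to the simples of $\cat_x$; this is the input that underlies the $K_0$ assertion of Theorem \ref{thm::A}. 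The only genuinely hard ingredient is the $KK$-equivalence $A_\infty \simeq B_\infty$; the remainder is standard AF and stability bookkeeping together with the careful tracking of $p_0$.
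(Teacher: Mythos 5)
The statement you are proving is not actually proved in the paper: Lemma \ref{lemma::simplicity} is a citation lemma, aggregating results established in \cite{HaPeII} (stable isomorphism, simplicity, uniqueness of trace, $K_0$), \cite{GJS11}, \cite{BHP12}, \cite{MR3110503} (the interpolated free group factor statement), and \cite{HaPeI}. Your sketch of the $K_0$ bullet does follow essentially the route taken in those sources --- compute $K_0$ of the AF core $A_\infty$ from its Bratteli diagram, show the inclusion $A_\infty\hookrightarrow B_\infty$ is a $K(K)$-equivalence because $B_\infty$ is obtained by adjoining an $A_\infty$-valued semicircular family sitting inside Pimsner's Toeplitz algebra over $X_1$, then pass to the corner $B_0=p_0\wedge B_\infty\wedge p_0$ --- so on that bullet you are aligned with the literature the paper leans on, at the same level of detail (the $KK$-equivalence is asserted by citation in both cases).

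However, as a proof of the Lemma as stated there are genuine gaps. First, you address only the fifth bullet; the simplicity of $B_\infty$ and its corners, the uniqueness of the trace $\Tr_n$, the stable isomorphism $B_\infty\cong B_0\otimes K(\cH)$, and the identification of $M_n$ with an interpolated free group factor are independent assertions that do not follow from the $K_0$ computation and require separate (and in the case of simplicity and uniqueness of trace, genuinely analytic) arguments. Second, your final transfer step \emph{uses} the first bullet as an input, so the argument is circular as a proof of the full Lemma; if you want to avoid that dependence, note that once simplicity of $B_\infty$ is known, $p_0$ is a full projection, so the corner inclusion $B_0\hookrightarrow B_\infty$ already induces an isomorphism on $K_0$ carrying $[p_0]$ to $[\one_{B_0}]$ without invoking the stable isomorphism at all --- but this still presupposes the second bullet. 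Within the fifth bullet itself your AF bookkeeping is correct: the inclusions $\Gr_{0,n}\hookrightarrow\Gr_{0,n+1}$ preserve minimal projections, so the connecting maps on $K_0$ are the coordinate inclusions $\Z^{S_n}\to\Z^{S_{n+1}}$ and the limit is $\Z[\cS]$ with $\one_{\cat}$ tracked by $[p_0]=[\emptyset]$.
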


\begin{remark}\label{rmk::cheat}
	Independently of any other result of the following sections other than Lemma \ref{lemma::Hilbertifying}, one can prove
	 that there is an isomorphism of von Neumann algebras	 $M_n \cong {_nM_n},$ eliminating any possible ambiguity in the definitions. 
	We choose not to bring this result and the required notational remarks here as it would slow down the exposition.
	This result becomes of central importance, as it will allow us to work with all the corners $_nM_m$ as embedded in the same
	 \textit{ambient} von Neumann algebra $M_\infty$. 
\end{remark}


\subsection{The Watatani C*-tower of GJS algebras}\ \\
We now consider
\begin{align}
  	\iota_{n}: \Gr_{\infty} \cap B_{0} \rightarrow \Gr_{\infty} \cap B_{n}  
        \hspace{2mm}\text{by}\hspace{5mm}
  	\iota_{n}
        \left(\begin{tikzpicture}
        [scale=1/15 pt, thick,baseline={([yshift=-\the\dimexpr\fontdimen22\textfont2\relax] current bounding box.center)}]
            	\draw[rounded corners, ultra thick] (-5,-5) rectangle (5,5);
            	\node at (0,5){$\bullet$};
                \node at (0,0){$\xi$};
                \draw[ultra thick] (0, -5)--(0, -10);
                \draw[densely dashed] (3,5) -- (3,10);
                \draw[densely dashed] (-3,5) -- (-3,10);
        \end{tikzpicture}   \right) 
        :=
        \begin{tikzpicture}
        [scale=1/15 pt, thick,baseline={([yshift=-\the\dimexpr\fontdimen22\textfont2\relax] current bounding box.center)}]
            	\draw[rounded corners, densely dashed, blue] (-7,10) rectangle (7,-6);
            	\node at (0, 9) {$\bullet$};
                \draw[rounded corners, ultra thick] (-5,-5) rectangle (5,5);
                    \node at (0,0){$\xi$};
                    \draw[ultra thick] (0, -5) -- (0, -10);
                \draw[ultra thick] (-3, 9) arc(180:360: 3);
                \draw[ultra thick] (-3,9) -- (-3,12);
                \draw[ultra thick] (3,9) -- (3,12);
                \node at (6, 12) {$n$};
        \end{tikzpicture}
\end{align}

We now state some of the basic properties of these maps:
\begin{proposition}
    	For each $n\in\N$ we have that
        \begin{itemize}
            \item  the map $\iota_{n}$ is a $*$-algebra homomorphism, and for all $\xi \in \Gr_{\infty} \cap B_{0}$ we obtain $\tr_{0}(\xi) = \tr_{n}(\iota_{n}(\xi))$, and 
            \item the map $\iota_{n}$ extends to an injective C*-algebra homomorphism 
			$$\iota_n: B_{0}\hookrightarrow B_{n}.$$
        \end{itemize}
\end{proposition}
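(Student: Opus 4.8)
The plan is to establish the two bullets on the dense $*$-subalgebra $\Gr_\infty\cap B_0$ by direct diagrammatic computation, and then to upgrade $\iota_n$ to a bounded (in fact isometric) map using only trace-preservation together with faithfulness of the traces, rather than estimating operator norms by hand. I first record that $\Gr_\infty\cap B_0=p_0\wedge\Gr_\infty\wedge p_0$ is dense in $B_0$ and $\Gr_\infty\cap B_n=p_n\wedge\Gr_\infty\wedge p_n$ is dense in $B_n$ (Remark \ref{remark::density}), so it suffices to work with diagrams and the graded product $\wedge$ of $B_\infty$ from Remark \ref{rmk::GradProd}. For $\xi\in V_{b,0,0}$ the element $\iota_n(\xi)\in V_{b,n,n}$ is the morphism $\coev{x^{\otimes n}}\circ\xi$, i.e. $\xi$ with the cup $p_n$ stacked above it through the (invisible) unit strand.

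Linearity is built into the definition. The identity $\iota_n(\xi^*)=\iota_n(\xi)^*$ follows because $*$ is the diagrammatic reflection and, by symmetric self-duality of $x$, the cup $p_n$ is self-adjoint, $p_n^*=p_n$; reflecting $\iota_n(\xi)$ therefore carries the $\xi$-box to $\xi^*$ and leaves the cup unchanged. For multiplicativity I compute $\iota_n(\xi)\wedge\iota_n(\eta)$ directly: since both factors have $l=r=n$, the product contracts the right $n$ legs of the first cup against the left $n$ legs of the second, and by the zig-zag equations \eqref{eqn::zz} the two cups joined by this contraction straighten into a single cup $p_n$, producing \emph{no} closed loops and hence no spurious factor of $\delta_x$. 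As the bottom legs of $\xi$ and $\eta$ merely concatenate — which is exactly the product $\xi\wedge\eta$ in $B_0$ — one reads off $\iota_n(\xi)\wedge\iota_n(\eta)=\iota_n(\xi\wedge\eta)$; in particular $\iota_n(p_0)=p_n$, so $\iota_n$ is unital. For the trace identity I use $\Phi\circ\mathbb{E}=\Tr$ on $\Gr_\infty$ and $\tr_n=\delta_x^{-n}\Tr_n$ (Notation \ref{notation::corners}): computing $\Tr(\iota_n(\xi))$ closes the $n$ upper-left legs of the cup against its $n$ upper-right legs, which together with the cup itself yields exactly $n$ closed loops contributing $\delta_x^{n}$, while the remaining non-crossing-pairing closure of the $b$ bottom legs of $\xi$ is precisely the one computing $\Tr(\xi)$. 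Thus $\Tr_n(\iota_n(\xi))=\delta_x^{n}\,\Tr(\xi)$, and the normalization cancels the loop factor to give $\tr_n(\iota_n(\xi))=\Tr(\xi)=\tr_0(\xi)$.

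For the extension and injectivity I avoid any norm estimate and instead exploit faithfulness. By Lemma \ref{lemma::simplicity} each $B_n$ is simple with a unique (hence faithful) tracial state $\tr_n$, so for any $a\in B_n$ the distribution of the positive element $a^*a$ has full support in its spectrum, giving the spectral-radius formula $\|a\|_{B_n}^2=\lim_{k}\tr_n\!\big((a^*a)^k\big)^{1/k}$. Applying this to $a=\iota_n(\xi)$ and using that $\iota_n$ is a trace-preserving $*$-homomorphism, $\tr_n\!\big((\iota_n(\xi)^*\iota_n(\xi))^k\big)=\tr_n\!\big(\iota_n((\xi^*\xi)^k)\big)=\tr_0\!\big((\xi^*\xi)^k\big)$, whence $\|\iota_n(\xi)\|_{B_n}=\|\xi\|_{B_0}$ for every diagram $\xi\in\Gr_\infty\cap B_0$. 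Therefore $\iota_n$ is isometric on a dense $*$-subalgebra, so it extends uniquely to an isometric $*$-homomorphism $\iota_n\colon B_0\hookrightarrow B_n$ (continuity passes multiplicativity and $*$ to the completion), and being isometric it is injective; alternatively injectivity is automatic since $B_0$ is simple.

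I expect the only genuinely delicate point to be the bookkeeping in the multiplicativity computation — in particular confirming that merging the two cups contributes neither extra terms nor a power of $\delta_x$ — together with matching the loop count in the trace identity against the normalization $\delta_x^{-n}$. The boundedness, by contrast, is essentially forced once trace-preservation is in hand, since the faithful-trace spectral-radius formula converts the purely algebraic identity $\tr_n\circ\iota_n=\tr_0$ into the metric statement that $\iota_n$ is isometric.
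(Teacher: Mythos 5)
Your proposal is correct and follows essentially the same route as the paper: the first bullet is handled by direct diagrammatic computation (which the paper dismisses as immediate and you spell out correctly, including the loop count $\delta_x^n$ cancelling the normalization), and the second bullet is obtained exactly as in the paper via the spectral-radius formula $\|a\|^2=\lim_k\omega((a^*a)^k)^{1/k}$ for the faithful trace, converting trace-preservation into isometry on the dense subalgebra and extending by continuity.
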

\begin{proof}
	The first bullet point is an immediate diagrammatic calculation.  As for the second bullet point, note that $\tr_{0}$ and $\tr_{n}$ are faithful 
	states on $B_{0}$ and $B_{n}$, respectively.  Using the functional calculus, if $a$ is any self adjoint element in a C$^{*}$-algebra with faithful 
	state $\omega$, it follows that $\displaystyle\|a\| = \lim_{p \rightarrow \infty}[\omega(|a|^{p})]^{1/p}$. Indeed, the map $f: t\mapsto|t|$ is continuous 
	on the spectrum of $a$ and moreover, $f\in\text{L}^p$ for every $p\in[1,\infty].$ Thus, by the Riesz Representation Theorem there exists a positive 
	Radon measure $\mu$ defined on the spectrum of $a$ such that $\int f^p d\mu  = \omega(|a|^p),$ 
	and hence $\omega(|a|^p)^{1/p} = ||f||_p \rightarrow ||f||_\infty = ||a||,$ as $p\rightarrow \infty.$
  	Therefore,
	 for all $\xi \in \Gr_{\infty} \cap B_{0},$
	$$ 
		\|\xi\|^{2}_{B_{0}} = \lim_{p \rightarrow \infty} \tr_{0}((\xi^{*}\wedge\xi)^{p})^{1/p} = \lim_{p \rightarrow \infty} \tr_{n}(\iota_{n}(\xi^{*}\wedge\xi)^{p}))^{1/p} = \|\iota_{n}(\xi)\|^{2}_{B_{n}}.
	$$
	The second bullet point follows.
\end{proof}

 
 The inclusion $i_{n}$ extends to a normal inclusion of von Neumann algebras $M_{0} \hookrightarrow M_{n}$ since it is an isometry in the 2-norm, $\|\cdot \|_{2}$ induced by the respective traces on $M_{0}$ and $M_{n}$.  Indeed, note that in a tracial von Neumann algebra, the 2-norm on bounded sets (in the operator norm) induces the strong operator topology, so it follows that $i_{n}$ is strongly continuous on bounded sets.
 
Here are some useful facts about the inclusion of $B_{0}$ into $B_{n}$ and $M_{0}$ into $M_{n}$
\begin{proposition}\label{RelCommutants}
    The following statements hold for every $n\in\N$:
        \begin{enumerate}
            \item The image set $i_{n}[M_{0}] \subset M_n$ is a subfactor of index $\delta_{x}^{2n}$. \cite{GJS10}
            \item The inclusion $i_{n}[B_{0}] \subset B_{n}$ is of finite type, and has Watatani index $\delta_{x}^{2n}.$ \cite{HaPeII}
            \item Relative commutants (inside the GNS-spaces $\mathcal {B}(\Ltwo{B_n})\cong \mathcal{B}(\Ltwo{M_n})$) are characteized as
            $$(i_{n}[M_{0}])' \cap M_{n} = \Gr_{0, n} = (i_{n}[B_{0}])' \cap B_{n}.$$  
        \end{enumerate}
\end{proposition}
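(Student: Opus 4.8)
The plan is to dispatch (1) and (2) by citation and to concentrate on (3). For (1) I would invoke \cite{GJS10}: along the Jones tower $M_0\subseteq M_1\subseteq\cdots$ each elementary inclusion has index $\delta_x^2$, and multiplicativity of the index gives $[M_n:i_n[M_0]]=\delta_x^{2n}$; (2) is the Watatani index computation of \cite{HaPeII}. The heart of the matter is (3), and my first move is a reduction. Since $i_n$ is normal and $B_0$ is weakly dense in $M_0$, the subalgebra $i_n[B_0]$ is weakly dense in $i_n[M_0]$, so $(i_n[B_0])''=i_n[M_0]$ and the two have the same commutant inside $\mathcal B(\Ltwo{M_n})$. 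Intersecting with $M_n$ yields $(i_n[B_0])'\cap M_n=(i_n[M_0])'\cap M_n$. It therefore suffices to establish the single loop of inclusions
\[
\Gr_{0,n} \;\subseteq\; (i_n[B_0])' \cap B_n \;\subseteq\; (i_n[B_0])' \cap M_n \;=\; (i_n[M_0])' \cap M_n \;\subseteq\; \Gr_{0,n},
\]
since then every inclusion collapses to an equality and both relative commutants coincide with $\Gr_{0,n}$. The middle inclusion is immediate from $B_n\subseteq M_n$ and the equality is the remark just made.

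The inclusion $\Gr_{0,n}\subseteq(i_n[B_0])'\cap B_n$ I would verify diagrammatically. Recall $\Gr_{0,n}=p_n\wedge A_\infty\wedge p_n$ consists of diagrams with no through-strand ($b=0$) and at most $n$ strings on each side of the top, so indeed $\Gr_{0,n}\subseteq B_n$. Fix $c\in\Gr_{0,n}$ and $\xi\in\Gr_\infty\cap B_0$; the element $i_n(\xi)$ has $\xi$ as its only active (through-strand-carrying) part, the top $n$ strings being the passive cup of $p_n$. Because $c$ has no bottom strings, the Walker product has only its $k=0$ term in each order, and both stackings $c\star i_n(\xi)$ and $i_n(\xi)\star c$ produce the same planar diagram up to isotopy; hence $c$ commutes with $i_n(\xi)$. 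As $\Gr_\infty\cap B_0$ is $\|\cdot\|_{B_0}$-dense in $B_0$ and $\Gr_{0,n}$ is finite dimensional, hence norm-closed, $c$ commutes with all of $i_n[B_0]$.

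The remaining inclusion $(i_n[M_0])'\cap M_n\subseteq\Gr_{0,n}$ is where the real work lies. By (1) the subfactor has finite index, so $\mathcal R:=(i_n[M_0])'\cap M_n$ is a finite-dimensional von Neumann algebra, and $\Gr_\infty\cap B_n$ is $\|\cdot\|_2$-dense in $M_n$. I would use the conditional expectation $\mathbb E$ of Proposition \ref{prop:weights}: since $p_n\in A_\infty$ and $\mathbb E$ is $A_\infty$-bimodular, its normal extension restricts to a trace-preserving conditional expectation $E_n:M_n\to p_n\wedge A_\infty\wedge p_n=\Gr_{0,n}$, which is the $\|\cdot\|_2$-orthogonal projection onto the degree-zero ($b=0$) part of the through-strand grading of $B_\infty$ coming from the Fock/semicircular structure (Definition \ref{dfn::Fock}, Corollary \ref{cor:fockspace}). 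The goal is to prove $E_n(T)=T$ for every $T\in\mathcal R$, i.e.\ that $T$ carries no component of positive through-strand degree.

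To obtain $E_n(T)=T$ I would expand $T$ into homogeneous components along the through-strand grading and test the relation $T\,i_n(\eta)=i_n(\eta)\,T$ against the generators of $B_0$, namely the images $\pi(\eta)$ of diagrams $\eta\in V_{b,0,0}$ assembled from the semicircular family $\{L(\cdot)+L(\cdot)^*\}$. The creation part of such a generator raises degree while its annihilation part lowers it, so matching the top-degree terms on the two sides of the commutation relation should force the highest positive-degree component of $T$ to vanish, and one then descends by induction. Concretely this amounts to computing $\|T-E_n(T)\|_2^2$ and telescoping it to zero by tracking how the annihilation operators contract the through-strands in the Walker product. This free-probabilistic estimate—morally the assertion that an element commuting with a full semicircular family and having no degree-zero part must be zero—is the principal obstacle; the honest bookkeeping in the diagrammatic calculus of which strands the annihilation operators close off is the only genuinely technical step of the argument.
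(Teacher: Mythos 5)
Your skeleton is exactly the paper's: items (1) and (2) are dispatched by citation, and (3) is reduced to the loop of inclusions
\[
\Gr_{0,n} \subseteq (i_n[B_0])' \cap B_n \subseteq (i_n[M_0])' \cap M_n \subseteq \Gr_{0,n},
\]
with the first inclusion checked diagrammatically and the middle equality $(i_n[B_0])'=(i_n[M_0])'$ obtained from weak density. (One small point the paper is careful about and you gloss over: the identification $i_n[B_0]''\cong M_0$ inside $\mathcal B(\Ltwo{B_n})$ is not automatic from the definition of $i_n$ on $B_0$; it is justified via Lemma \ref{lemma::Hilbertifying} and Corollary \ref{corollary::representations}, which the paper explicitly flags as a forward reference.)

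The substantive divergence is the last inclusion $(i_n[M_0])'\cap M_n\subseteq\Gr_{0,n}$. The paper does not prove this at all: it quotes it directly from \cite{GJS10}, where the relative commutant of the GJS subfactor is computed. You instead propose to reprove it via the trace-preserving conditional expectation $E_n$ onto $\Gr_{0,n}$ and a degree argument against the semicircular generators, but you stop at the point where the actual estimate would have to be made ("matching the top-degree terms\dots should force the highest positive-degree component of $T$ to vanish"). That step is precisely the content of the GJS relative commutant computation, and it is genuinely delicate — one must control how the annihilation parts of $L(\eta)+L(\eta)^*$ contract through-strands in the Walker product, and the cancellation is not term-by-term in any naive grading. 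As written your argument therefore has a gap at the only hard point. The fix is either to do what the paper does and cite \cite{GJS10} for $(i_n[M_0])'\cap M_n=\Gr_{0,n}$ (which makes your proof coincide with the paper's), or to actually carry out the free-probabilistic estimate you describe, which is a nontrivial piece of work and not something that can be left as "bookkeeping."
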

\begin{proof}
	We need only prove (3) above.  Indeed, from \cite{GJS10}, $(i_{n}[M_{0}])' \cap M_{n} = \Gr_{0, n} $, and a quick diagrammatic computation 
	shows that $\Gr_{0, n} \subset (i_{n}[B_{0}])' \cap B_{n}$.  Therefore, we have:
	$$
		\Gr_{0, n} \subseteq i_{n}(B_{0})' \cap B_{n} = i_{n}(M_{0})' \cap B_{n} \subseteq  i_{n}(M_{0})' \cap M_{n} = \Gr_{0, n} 
	$$
	so every set containment above is an equality.  Note that the first equality holds since $i_{n}(B_{0})$ is weakly dense in $i_{n}(M_{0})$, and hence $i_{n}(B_{0})' = i_{n}(M_{0})'$.
	We should warn the reader that this equality relies on the fact that $\mathcal(\Ltwo{B_0}\supset B_0'' = M_0 \cong \iota_n[B_0]''\subset \mathcal(\Ltwo{B_n})).$ This is 
	rigorously proven in Corollary \ref{corollary::representations} and the discussion before it, using Lemma \ref{lemma::Hilbertifying}, independently of the results between here and there. (See also Remark \ref{rmk::cheat}.)
\end{proof}

\begin{definition}
    For $n\in \N$, define $\mathbb{E}_{n}: M_{n} \rightarrow i_{n}[M_{0}]$ to be the canonical  conditional expectation.  Diagrammatically, we have: 
    \begin{equation}
        \mathbb{E}_{n}
        \left(\begin{tikzpicture}
        [scale=1/15 pt, thick,baseline={([yshift=-\the\dimexpr\fontdimen22\textfont2\relax] current bounding box.center)}]
            \draw[ultra thick, rounded corners] (-5,-5) rectangle (5,5);
            \node at (0,5){$\bullet$};
                \node at (0,0){$m$};
                \draw[ultra thick](0, -5)--(0, -10);
                    \node at (2,-8){$b$};
                \draw[ultra thick] (-3, 5) -- (-3, 10);
                    \node at (6,8){$n$};
                \draw[ultra thick] (3, 5) -- (3, 10);
                    \node at (-6,8){$n$};
        \end{tikzpicture}
        \right) = \frac{1}{\delta_x^{n}} \cdot 
        \begin{tikzpicture}
        [scale=1/15 pt, thick,baseline={([yshift=-\the\dimexpr\fontdimen22\textfont2\relax] current bounding box.center)}]
            \draw[rounded corners, densely dashed, blue] (-8,-7) rectangle (8,12);
            \node at (0,12){$\bullet$};
                \draw[rounded corners, ultra thick] (-5,-5) rectangle (5,5);
                    \node at (0,0){$m$};
                \draw[ultra thick](0, -5)--(0, -12);
                    \node at (3,-10){$b$};
                \draw[ultra thick] (-3, 5) arc(180:0: 3);
                \draw[ultra thick] (-3,18) -- (-3,12);
                    \node at(6,15){$n$};
                \draw[ultra thick] (3,18) -- (3,12);
                    \node at(-6,15){$n$};
                \draw[ultra thick] (-3, 12) arc(180:360: 3);
        \end{tikzpicture}
    \end{equation}
\end{definition}

We immediately see that $\mathbb{E}_{n}$ restricts to a conditional expectation from $B_{n}$ onto $i_{n}[B_{0}]$. Furthermore, Popa's entropic condition for finite index \cite{MR860811} implies that for all positive $b \in B_{n}$ we have $\mathbb{E}_{n}(b) \geq \frac{1}{\delta_x^{n}}b$ .
This means that for each positive $b \in B_{n}$ we get $\|\mathbb{E}_{n}(b)\|_{B_n} \geq \frac{1}{\delta_x^{n}}\|b\|_{B_n}$.

\subsection{Concrete bimodules}\label{section:bimodule}\ \\
Recall that as in Notation \ref{notation::corners}, for $n,m\geq 0$ we denote the corner $p_{m}\wedge {B_\infty}\wedge p_{n}$ by $_m {B}_n$. We shall now describe the $B_0$ bimodules we will use to construct the representation of $\cat.$ 

\begin{definition}\label{bimodule}
	Let $\X =\ _{0}B_{1}$. Then $\X$ is a $B_{0}-B_{0}$ bimodule under the actions
	$$b \rhd \xi \lhd b' = b\wedge\xi \wedge i_{1}(b'),$$
	which diagrammatically respectively places $b$ and $b'$ to the left and right of $\xi.$. We place the following left and right $B_{0}$-valued inner products on $\X$: 
	$$
	    	_{B_0}\langle \xi, \eta \rangle := \xi\wedge\eta^{*}, \text{ and }
	    	\langle \xi\ |\ \eta \rangle_{B_0} := \delta_x\cdot \mathbb{E}_{1}(\xi^{*}\wedge\eta).
	$$
In the right inner product, it is understood that we are canonically identifying $B_{0}$ with $i_{1}[B_{0}]$.
	
	We will diagrammatically depict our $B_{0}$ actions using the Graded product (Remark \ref{rmk::GradProd}).  In terms of diagrams the above inner products are represented as follows: 
	\begin{equation}
		_{B_0}\langle \xi, \eta \rangle = 
	    \begin{tikzpicture}
	    [scale=1/15 pt, thick,baseline={([yshift=-\the\dimexpr\fontdimen22\textfont2\relax] current bounding box.center)}] 
	     		 \node at (27,13){$\bullet$};
	      	 \draw[rounded corners, densely dashed, blue] (12,-10) rectangle (42,13);
	            \draw[rounded corners, ultra thick] (40,-5) rectangle (30,5);
	            \node at (35,0){$\eta^{*}$};
	            \node at (35,5){$\bullet$};
	            \draw[ultra thick] (35,-5) -- (35,-15);
	            \draw[densely dashed] (37,5) -- (37,18);
	            \draw[rounded corners, ultra thick] (24,-5) rectangle (14,5);
	            \node at (19,0){$\xi$};
	            \node at (19,5){$\bullet$};
	            \draw[densely dashed] (17,5) -- (17,18);
	            \draw[ultra thick] (19,-5) -- (19,-15);
	            \draw[ultra thick] (33,5) arc (0:180:6);
	    \end{tikzpicture}
	        \ \ \ \text{   and   }\ \ \
	    \langle \xi\ |\ \eta \rangle_{B_0} =
	    \begin{tikzpicture}
	    [scale=1/15 pt, thick,baseline={([yshift=-\the\dimexpr\fontdimen22\textfont2\relax] current bounding box.center)}] 
	        \node at (27,30){$\bullet$};
	        \draw[rounded corners, densely dashed, blue] (10,-10) rectangle (44,30);
	                \draw[rounded corners, densely dashed, blue] (12,-7) rectangle (42,17);
	                \node at (27,17){$\bullet$};
	                \draw[rounded corners, ultra thick] (40,-5) rectangle (30,5);
	                \node at (35,0){$\eta$};
	                \node at (35,5){$\bullet$};
	                \draw[ultra thick] (35,-5) -- (35,-15);
	                
	                \draw[rounded corners, ultra thick] (24,-5) rectangle (14,5);
	                \node at (19,0){$\xi^{*}$};
	                \node at (19,5){$\bullet$};
	                \draw[ultra thick] (19,-5) -- (19,-15);
	                \draw[ultra thick] (37,15) arc (0:180:10);
	                \draw[ultra thick] (37,5) -- (37,15);
	                \draw[ultra thick] (17,5) -- (17,15);
	                \draw[densely dashed] (33,5) arc (0:180:6);
	                
	                \draw[gray] (19,35) arc(180:360:8);
	                    \node at (37,34){1};
	    \end{tikzpicture}
	\end{equation}
\end{definition}

\begin{proposition}\label{prop:basis}
    	The left and right inner products on $_{B_0}\X_{B_0}$  give uniformly equivalent norms, and $_{B_0}\X_{B_0}$ is complete under both norms. 
\end{proposition}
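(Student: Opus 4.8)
The plan is to show that the \emph{left} inner-product norm on $\X$ is nothing but the operator norm inherited from the ambient C*-algebra $B_{\infty}$, and then to trap the \emph{right} norm between two fixed multiples of it using the two elementary properties of the conditional expectation $\mathbb{E}_{1}$ recorded earlier. First I would identify the left norm. For $\xi \in \X = {_0B_1}$, the graded product agrees with the multiplication of $B_{\infty}$ under the tracial-weight-preserving isomorphism of Remark \ref{rmk::GradProd}, so ${_{B_0}\langle \xi, \xi \rangle} = \xi \wedge \xi^{*} = \xi\xi^{*}$ is a positive element of the C*-subalgebra $B_{0} \subset B_{\infty}$. Since a C*-subalgebra inclusion is isometric and the C*-identity holds in $B_{\infty}$,
\[
{}_{B_0}\|\xi\|^{2} = \|\xi\xi^{*}\|_{B_0} = \|\xi\xi^{*}\|_{B_{\infty}} = \|\xi\|_{B_{\infty}}^{2}.
\]
Thus the left norm is exactly the restriction of the norm of $B_{\infty}$ to the corner $\X$.

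Next I would bound the right norm. For $\xi \in \X$, the element $\xi^{*} \wedge \xi = \xi^{*}\xi$ lies in $p_{1}\wedge B_{\infty}\wedge p_{1} = B_{1}$ and is positive, and $\langle \xi\ |\ \xi \rangle_{B_0} = \delta_{x}\,\mathbb{E}_{1}(\xi^{*}\xi)$. On the one hand $\mathbb{E}_{1}$ is a conditional expectation, hence contractive, so $\|\mathbb{E}_{1}(\xi^{*}\xi)\| \leq \|\xi^{*}\xi\| = \|\xi\|_{B_{\infty}}^{2}$; on the other hand Popa's entropic estimate recorded after the definition of $\mathbb{E}_{n}$ gives, for the positive element $\xi^{*}\xi$, the lower bound $\|\mathbb{E}_{1}(\xi^{*}\xi)\| \geq \tfrac{1}{\delta_{x}}\|\xi^{*}\xi\| = \tfrac{1}{\delta_{x}}\|\xi\|_{B_{\infty}}^{2}$. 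Combining these with the identification of the left norm,
\[
{}_{B_0}\|\xi\|^{2} \;\leq\; \|\xi\|_{B_0}^{2} = \delta_{x}\,\|\mathbb{E}_{1}(\xi^{*}\xi)\| \;\leq\; \delta_{x}\cdot{}_{B_0}\|\xi\|^{2},
\]
so that ${}_{B_0}\|\xi\| \leq \|\xi\|_{B_0} \leq \sqrt{\delta_{x}}\;{}_{B_0}\|\xi\|$, which is the asserted uniform equivalence. Because $\X$ is the whole corner ${_0B_1}$ and the maps $\xi \mapsto \xi\xi^{*}$ and $\xi \mapsto \mathbb{E}_{1}(\xi^{*}\xi)$ are defined and continuous on all of it, these inequalities hold for every $\xi \in \X$ with no density argument needed.

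Finally I would deduce completeness. The corner $\X = p_{0}\wedge B_{\infty}\wedge p_{1}$ is the range of the map $E\colon a \mapsto p_{0}\wedge a \wedge p_{1}$ on $B_{\infty}$; since $p_{0},p_{1}$ are projections, $E$ is idempotent, and since projections have norm at most $1$, $E$ is contractive. The range of a bounded idempotent on a Banach space is closed, so $\X$ is closed in $B_{\infty}$ and therefore complete in the left norm. By the norm equivalence just established, $\X$ is complete in the right norm as well.

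I expect the whole argument to be essentially routine once the left norm has been recognized as the ambient C*-norm. The only steps demanding a little care are the bookkeeping of norms when passing among the subalgebras $B_{0}$, $B_{1}$ and $B_{\infty}$ (which is harmless because C*-subalgebra inclusions and the injective $*$-homomorphism $i_{1}$ are isometric), and making sure that the two estimates for $\mathbb{E}_{1}$ are applied to the correct positive element $\xi^{*}\xi \in B_{1}$.
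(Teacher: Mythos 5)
Your proof is correct and follows essentially the same route as the paper's: identify the left norm with the ambient $B_\infty$-norm via the C*-identity, then sandwich the right norm using contractivity of $\mathbb{E}_1$ on one side and the Pimsner--Popa bound $\mathbb{E}_1(b)\geq\frac{1}{\delta_x}b$ on the other. Your completeness step (the corner $p_0\wedge B_\infty\wedge p_1$ is the range of a contractive idempotent, hence closed) just makes explicit what the paper leaves implicit, and your constants are in fact slightly sharper than the ones printed there.
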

\begin{proof}
    	Let $_{B_0}\|\cdot\|$ be the norm under the left $B_{0}$ inner product and $\|\cdot\|_{B_0}$ be the norm under the right inner product, according to the notation established in Definition \ref{bimodule}.
	 Note that $_{B_0}\|\xi\|$ coincides with the norm of $\xi$ in the C*-algebra $B_{\infty},$ and also that from \cite{MR860811},
    	$$
        	_{B_0}\| \xi \|^{2} =\ \| \xi^{*}\wedge\xi \|_{B_\infty} =\ \| \xi\wedge\xi^{*}\|_{B_\infty} \leq \delta^{2}_x\cdot\|\mathbb{E}_{1}(\xi\wedge\xi^{*})\|_{B_\infty} \leq \delta^{2}_x\cdot {_{B_0}\|\xi\wedge \xi^{*}\|} = \delta^{2}_x\cdot\|\xi^{*}\wedge\xi\| = \delta^{2}_x\cdot {_{B_0}\| \xi \|}^2.
    $$
    As $\|\delta_x\cdot \mathbb{E}_{1}(\xi\wedge\xi^{*})\| = \|\xi\|_{B_0}^{2}$, it follows that 
    $$
        	 \frac{1}{\sqrt{\delta_{x}}} \, _{B_0}\|\xi\| \leq \|\xi\|_{B_0} \leq \sqrt{\delta_x}\cdot _{B_0}\|\xi\|,
    $$
    hence the norms are equivalent. It follows that $_{0}B_{1} = \X$ is complete in both norms.
\end{proof}

\begin{notation}\label{notation::copies}
	More generally, given $n,l,r \geq 0$ satisfying $l + r = n$, one can define the $B_{0}$ Hilbert bimodule 
    	$_{l}B_{r}$ with left and right $B_{0}$ actions given by
    	$$
       		x \rhd \xi \lhd y = i_{l}(x)\wedge \xi\wedge  i_{r}(y),
    	$$
    	where once more, the action is given by the multiplication inside $B_\infty.$
	 As usual, $_lB_r$ is endowed with left and right $B_{0}$ inner products given by:
	\begin{itemize}
	        \item $_{B_0}\langle \xi, \eta \rangle = \delta_x^{l}\cdot\mathbb {E}_{l}(\xi\wedge\eta^{*})$ and
	        \item $\langle \xi| \eta \rangle_{B_0} = \delta_x^{r}\cdot\mathbb{E}_{r}(\xi^{*}\wedge\eta).$
	\end{itemize}
    	Again, the two inner products give equivalent complete norms on $_lB_r$. (See Proposition \ref{prop:basis}.) 
	Up to $B_0$-valued inner product preserving $B_{0}$ Hilbert C*-bimodule isomorphisms, $_lB_r$ 
	\textbf{is independent of the decomposition of} $n$ \textbf{as} $(l + r)$.

	We can also describe sub-objects of the $_lB_r$: if $q_l\leq p_l = 1_{B_l}$ and $q_r\leq p_r = 1_{B_r}$ are orthogonal projections in $B_l$ and $B_r$, respectively, 
	the span of diagrams of the form
	\begin{equation*}
		\begin{tikzpicture}[scale=1/15 pt, thick,baseline={([yshift=-\the\dimexpr\fontdimen22\textfont2\relax] current bounding box.center)}]
		        \draw[ultra thick, rounded corners] (-30,0) rectangle (-20,10);
		        \node at (-25,10){$\bullet$};
		            \node at (-25,5){$b$};
		                \draw[ultra thick] (-25,0) -- (-25,-10);
			 \node at (-25,20){$\bullet$};
			\draw[ultra thick] (-30,20) arc (180:360:5);
		                    		        
		        \node at (-16,5){$\wedge$};
		        
		        \draw[ultra thick, rounded corners] (-12,-5) rectangle (2,5);
		        \node at (-5,5){$\bullet$};
		            \node at (-5,0){$\xi$};
		                \draw[ultra thick] (-10,5) -- (-10,9);
		                	\node at (-13,19){$l$};       
			 	\draw[ultra thick] (-10,13) circle (4);
					\node at (-10,13){$q_l$};
				\draw[ultra thick] (-10,17) -- (-10,22);

		                \draw[ultra thick] (-0,5) -- (-0,9);        
		                	\node at (4,19){$r$};
				\draw[ultra thick] (0,13) circle (4);
					\node at (0,13){$q_r$};
				\draw[ultra thick] (0,17) -- (0,22);

		                \draw[ultra thick] (-5,-5) -- (-5,-10);
		                    \node at (-2,-13){$b$};
		        
		        \node at (7,5){$\wedge$};
		        
		        \draw[ultra thick, rounded corners] (11,0) rectangle (21,10);
		        \node at (16,10){$\bullet$};
		            \node at (16,5){$b'$};
		                \draw[ultra thick] (16,0) -- (16,-10);
			\draw[ultra thick] (11,20) arc (180:360:5);
			 \node at (16,20){$\bullet$};
		\end{tikzpicture}
	\end{equation*}
	defines a pre-Hilbert C* $B_0$-bimodule $(q_l\wedge{_lB_r}\wedge q_r).$ The left and right $B_0$-valued inner products 
	and $B_0$ actions are the obvious ones.
\end{notation}

\begin{notation}\label{remark:changeX}
   	 We shall now describe the \textbf{the conjugate bimodule}, $\overline{\X}.$ 	(See Definition \ref{def::involution}, Example \ref{HappyBims}, and also \cite[p 3443]{MR1624182}.)
    	To do so, we will first show that $\X = {_0B_1}$ is isomorphic to ${_1B_0}$ by defining the map $\phi,$ which \textbf{pushes the dot to the right}:
    	\begin{align}\label{eqn::MoveDot}
        	\phi: \bigoplus_{n = 0}^{\infty} V_{n, 0, 1} \longrightarrow \bigoplus_{n = 0}^{\infty} V_{n, 1, 0} 
	        \hspace{3 mm}\text{ by }\hspace{10 mm} 
	        \phi\left( 
	        \begin{tikzpicture}[scale=1/15 pt, thick,baseline={([yshift=-\the\dimexpr\fontdimen22\textfont2\relax] current bounding box.center)}]
	            \draw[ultra thick, rounded corners] (5,-5) rectangle (-5,5);
	            \node at (0, 5) {$\bullet$};
	                \node at (0,0){$\xi$};
	                \draw[ultra thick] (0,-5) -- (0,-10); 
	                \draw (3, 5) -- (3, 12);
	                \draw[densely dashed] (-3, 5) -- (-3, 12);
	        \end{tikzpicture}\right) 
	        =
	        \begin{tikzpicture}[scale=1/15 pt, thick,baseline={([yshift=-\the\dimexpr\fontdimen22\textfont2\relax] current bounding box.center)}]
	            \draw[rounded corners, ultra thick] (5,-5) rectangle (-5,5);
	            \node at (0, 5) {$\bullet$};
	                \node at (0,0){$\xi$};
	                \draw[ultra thick] (0,-5) -- (0,-10); 
	                \draw (-3, 5) -- (-3, 12);
	                \draw[densely dashed] (3, 5) -- (3, 12);
	        \end{tikzpicture}
    	\end{align}
	 It then follows that for all $\xi, \eta \in _{0}B_{1},$
    	\begin{itemize}
	        \item The inner products are related as $ {_{B_0}\langle\phi(\xi),\ \phi(\eta)\rangle} = i_1\left[_{B_0}\langle \xi,\ \eta \rangle\right]$ and $\langle \xi\ |\ \eta \rangle_{B_0} = i_1\left[\langle\phi(\xi)\ |\ \phi(\eta)\rangle_{B_0}\right],$
        \item the map $ \phi$ extends to a unitary $B_{0}$-bimodule isomorphism of $_{0}B_{1}$ onto $_{1}B_{0}$, and
        \item for any $\xi\in \X,$ we have that $\phi(\xi)^* = \phi^{-1}(\xi^*).$
    \end{itemize}
    
    	More generally, observe that an obvious definitions of $\phi,$ analogous to that in Equation \ref{eqn::MoveDot} will exhibit $B_0$-bilinear
	 unitaries with similar properties realizing
		 $_0B_n\cong\hdots\cong\ _lB_r\ \cong\hdots \cong\ \ _nB_0,$ where $l+r = n,$ which we still denote by $\phi$.
	Thus, for every $n\in \N$, we have 
		$$\phi^n:{_0B_n}\longrightarrow {_nB_0}$$
	is a unitary $B_0$-bimodule isomorphism. Additionally, we introduce the \textbf{congujation map}
	\begin{align*}
		(\,\overline{\, \cdot \,}\,) :\ &{_0B_n}\longrightarrow \overline{_0B_n}\\
		&\xi\mapsto \phi^{-n}(\xi^*) := \overline{\xi}.
	\end{align*} 
	Notice that this defines an $B_0$-conjugate linear unitary isomorphism.
	Using an analogous property to the third bullet point, it follows that $\overline{\overline \xi} = \xi$ on $_0B_n,$ so $_0B_n$ is identical to its 
	double dual as $B_0$-bimodules.
	Finally, the isomorphism 
	\begin{align*}
		\phi^n : \overline {_0B_n}&\longrightarrow {_nB_0}\\
		  &\overline\xi\mapsto \xi^*,
	\end{align*}
	is also a $B_0$-bimodule unitary isomorphism.
\end{notation}


The following propositions will establish the most important algebraic properties of the bimodule $\X:$

\begin{proposition}\label{prop:bases}
    The bimodule $_{B_0}\X_{B_0}$ has finite left and right $B_{0}$-bases; i.e. $\X$ is fgp. (See Definition \ref{dfn::basis}.)
\end{proposition}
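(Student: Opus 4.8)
The plan is to produce finite right and left $B_0$-bases for $\X = {_0B_1}$ in the sense of Definition \ref{dfn::basis}, and the engine driving this is the finiteness of the Watatani index of $i_1[B_0] \subset B_1$ established in Proposition \ref{RelCommutants}, together with the bimodule index/conjugation machinery of \cite{MR1624182}. Concretely, I would first record the structural identities that set up the finite-index data at the level of the corner: $\X^* = {_1B_0}$ (via the map $\phi$ of Notation \ref{remark:changeX}), $\X \wedge \X^* \subseteq B_0$ and $\X^* \wedge \X \subseteq B_1$, and the observation that the right inner product $\langle \xi\ |\ \eta\rangle_{B_0} = \delta_x\, \mathbb{E}_1(\xi^* \wedge \eta)$ is literally the finite-index conditional expectation $\mathbb{E}_1$ transported into the corner. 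This is precisely the data to which the framework of \cite{MR1624182} applies.

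For the right basis I would transport a quasi-basis for $\mathbb{E}_1$ into $\X$. Finiteness of the Watatani index guarantees that $\mathbb{E}_1$ admits a finite quasi-basis, equivalently that $B_1$ is fgp over $B_0$ on the right. I would then exploit semisimplicity of $\cat$: the object $x = \overline x$ decomposes into finitely many simples (hom-spaces in $\cat$ being finite dimensional), so $\X$ splits as a finite orthogonal direct sum of sub-bimodules indexed by these simple summands, each cyclic as a right $B_0$-module. Choosing a single normalized strand-type generator (an isometry from a simple summand of $x$) in each piece and verifying the reconstruction identity $\xi = \sum_i u_i \lhd \langle u_i\ |\ \xi\rangle_{B_0}$ diagrammatically --- using the definition of $\mathbb{E}_1$ (the $\tfrac{1}{\delta_x}$-weighted cap), the zig-zag relations, and the graded product of Remark \ref{rmk::GradProd} --- yields a finite right $B_0$-basis.

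The left basis is governed by the simpler left inner product ${}_{B_0}\langle\xi,\eta\rangle = \xi \wedge \eta^*$ and by plain left multiplication, so the reconstruction formula $\xi = \sum_j {}_{B_0}\langle \xi, v_j\rangle \rhd v_j$ collapses to the requirement that $\sum_j v_j^* \wedge v_j = p_1 = 1_{B_1}$ for a finite family $\{v_j\}\subset \X$. I would establish this in two moves. First, simplicity of $B_\infty$ (Lemma \ref{lemma::simplicity}) forces $\X^* \wedge \X = p_1 \wedge B_\infty \wedge p_0 \wedge B_\infty \wedge p_1 = B_1$, so such a resolution of $p_1$ exists at least on a dense subspace. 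Second, finiteness of the Watatani index upgrades this to an exact finite partition of the identity $p_1$, exactly as in the frame theory of \cite{MR1624182} and the arguments of \cite{HaPeI}.

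The main obstacle is precisely this last upgrade, together with the bookkeeping of the $\delta_x$-normalizations: simplicity of $B_\infty$ alone yields only that the relevant products span a dense subspace, and it is the finite-index hypothesis (through $\delta_x > 1$ and Proposition \ref{RelCommutants}) that forces the reconstruction sums to terminate and to reproduce $\xi$ exactly rather than approximately. Verifying the two reconstruction identities on the nose --- and checking that the chosen generators genuinely lie in the corner $\X = p_0 \wedge B_\infty \wedge p_1$ and behave well with respect to the two equivalent norms of Proposition \ref{prop:basis} --- is where the real work lies; once a finite right (or left) basis is in hand, the fgp conclusion is immediate from Definition \ref{dfn::basis}.
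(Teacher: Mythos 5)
Your overall target is right and part of your plan overlaps with the paper's proof: the left basis does come from writing $p_1 = 1_{B_1}$ as a finite sum $\sum_j v_j^*\wedge v_j$ with $v_j\in\X$. But the step you flag as ``where the real work lies'' --- upgrading ``$\X^*\wedge\X$ spans a dense subspace of $B_1$'' to an exact finite partition of $p_1$, which you propose to do via the finite Watatani index --- is where your argument goes off track, because no such upgrade is needed and finite index is not the mechanism. The set $I := {_1B_0}\cdot{_0B_1} = \spann\{\eta^*\wedge\xi\ :\ \eta,\xi\in\X\}$ is already a two-sided \emph{algebraic} ideal of the \emph{unital} C*-algebra $B_1$. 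A proper ideal in a unital Banach algebra contains no invertibles and hence cannot be dense, so simplicity of $B_1$ (Lemma \ref{lemma::simplicity}) forces $I=B_1$ on the nose, not merely densely; in particular $1_{B_1}$ is a finite algebraic sum $\sum_j w_j^*\wedge w_j'$, and Lemma 1.7 of \cite{MR1624182} lets one replace the two families by a single one. That observation alone produces the finite left basis; Proposition \ref{RelCommutants} and $\delta_x>1$ play no role in this step.

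Your route to the right basis is also shakier than it needs to be. A finite quasi-basis for $\mathbb{E}_1$ consists of elements of $B_1$, not of $\X$, and your proposed fix --- decomposing $x$ into simples and taking one ``isometry'' generator per simple summand --- is unjustified: whether the corresponding piece of $\X$ is singly generated as a right $B_0$-module is a nontrivial question about the ordered $K_0$-group of $B_0$ (a singly generated projective module is a direct summand of $B_0$, and the classes of nontrivial simples in $K_0(B_0)=\Z[\cS]$ need not be dominated by $[\one]$), and it ignores multiplicities. The paper avoids all of this: once the left basis $\{v_j\}$ with $\sum_j v_j^*\wedge v_j = p_1$ is in hand, the right basis is simply $\{u_j := \phi(v_j)^*\}$, obtained by pushing the dot with the unitary $\phi$ of Notation \ref{remark:changeX} and verifying the reconstruction identity diagrammatically.
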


\begin{proof}
    	We first show that $\X$ has a finite left $B_{0}$-basis. 
	Note that for $\xi, v\in \X,$ an expression of the form $_{B_{0}}\langle \xi, v \rangle \rhd v$ is the same as the product 
	$ \xi\wedge v^{*}\wedge v$.  Observe that $\xi\wedge v^{*} \in B_{0}$ and $v^{*}\wedge v \in B_{1}$.

    Consider $I := \, _{1}B_{0}\cdot \, _{0}B_{1} := \spann\{ b'b |\ b' \in{_{1}B_{0}} \text{ and } b \in  {_{0}B_{1}} \}$. Note that $I$ is a two-sided ideal in $B_{1}$  
    containing all diagrams of the form:
        \begin{equation}
            \begin{tikzpicture}[scale=1/15 pt, thick,baseline={([yshift=-\the\dimexpr\fontdimen22\textfont2\relax] current bounding box.center)}] 
                \draw[rounded corners, densely dashed, blue] (-7,-10) rectangle (20,12);
                \node at (6,12){$\bullet$};
                    \draw (-3,5) -- (-3, 15);
                    \draw[densely dashed] (10,5) arc (0:180:4);
                    \draw[rounded corners, ultra thick] (5,-5) rectangle (-5,5);
                    \node at (0,5){$\bullet$};
                        \node at (0,0){$b'$};
                        \draw[ultra thick] (0,-5) ..controls (0,-10) and (4,-11).. (4,-15);
                    
                    \draw[rounded corners, ultra thick] (18,-5) rectangle (8,5);
                    \node at (13,5){$\bullet$};
                        \draw (16,5) -- (16, 15);
                        \node at (13,0){$b$};
                        \draw[ultra thick] (13,-5) .. controls (13,-10) and (9,-11) .. (9,-15);
                    
            \end{tikzpicture}
        \end{equation}
     	If $I$ were proper, then since $B_{1}$ is unital, it follows that the closure of $I$, $\overline{I}$ would also be a proper ideal in $B_{1}$. 
	 However, since $B_1$ is simple (Lemma \ref{lemma::simplicity}), it follows that $\overline{I} = B_1,$ hence $I = B_{1}$.  
	Therefore, there exists finite sets $\{w_j\}_{j = 1}^{n}, \{w'_j\}_{j = 1}^{n}\subset \X$ such that
     	\begin{equation}\label{eqn::one}
        	p_1 = 1_{B_{1}} = \sum_{j=1}^{n} w_{j}^{*}\wedge w'_{j}.
     	\end{equation}
     	From this equation, it follows that for any $\xi \in \X$, 
     	$$
        	\xi = \sum_{j=1}^{n} \xi\wedge w_{j}^{*}\wedge w'_{j} = \sum_{j=1}^{n}\, _{B_{0}}\langle \xi, w_{j} \rangle \rhd w'_{j}.
    	 $$
		By Lemma 1.7 in \cite{MR1624182}, these two sets can be made the same. This is,  there exists  a finite set $\{v_j\}_{j = 1}^{n}\subset \X$ such that  for any $\xi \in \X$, 
     	$$
        	\xi = \sum_{j=1}^{n} \xi\wedge v_{j}^{*}\wedge v_{j} = \sum_{j=1}^{n}\, _{B_{0}}\langle \xi, v_{j} \rangle \rhd v_{j}.
    	 $$
     	showing $\{v_j\}_{j=1}^{n}$ defines a left $B_0$-basis for $\X$. Moreover, one can easily show that this set also satisfies
	\begin{equation}\label{eqn::ONEone}
		\sum_{j=1}^n v_j^*\wedge v_j = p_1 = 1_{B_1}.
	\end{equation}
	      Now it suffices to exhibit a right $B_{0}$-basis for $\X$. To do so, we will first show that $\{\zeta_j := \phi[\phi(v_j)^*]\}_{j =1}^{n}$ is a right basis for ${_1B_0}.$
	 By the last property of the map $\phi$, that \textit{pushes the dot to the right} introduced in Remark \ref{remark:changeX},
	 we obtain that $\zeta_j = v_j^*.$ We then have that
     	$$
		1_{B_1} = \sum_{j=1}^{n}\zeta_j\wedge \zeta_j^*, 
	$$
     	so for an arbitrary $\xi\in \X = {_0B_1}$ we obtain that
     	\begin{equation*}
	         \phi(\xi) 
	         =
	         1_{B_1}\wedge \phi(\xi)
	         =
	         \sum_{j=1}^n \zeta_j \lhd \left\langle \zeta_j\ |\ \phi(\xi) \right\rangle_{B_0}
	        =
	        \sum_{j=1}^n
	        \begin{tikzpicture}[scale=1/15 pt, thick,baseline={([yshift=-\the\dimexpr\fontdimen22\textfont2\relax] current bounding box.center)}]
	            \draw[densely dashed, rounded corners, orange](-18,-13) rectangle (36,16);
	            \node at(9,16){$\bullet$};
	                \draw[rounded corners, ultra thick] (-15,-5) rectangle (-5,5);
	                \node at (-10,5){$\bullet$};
	                    \node at (-10,0){$v_j^*$};
	                    \draw[ultra thick] (-10,-5) -- (-10,-15);
	                    \draw (-13,5) -- (-13,20);
	                    \draw[densely dashed](-7,5) arc(180:0:8);
	                
	                \draw[rounded corners, densely dashed, blue] (5,-10) rectangle (33,12);
	                \node at (19,12){$\bullet$};
	                    \draw[rounded corners, ultra thick] (8,-5) rectangle (18,5);
	                    \node at (13,5){$\bullet$};
	                        \node at(13,0){$v_j$};
	                        \draw[ultra thick] (13,-5) ..controls (13,-11) and (18,-11)..  (18,-15);
	                        
	                \draw (16,5) arc(180:0:3);
	                \draw[rounded corners, ultra thick] (20,-5) rectangle (30,5);
	                    \node at (25,5){$\bullet$};
	                        \node at(25,0){$\xi$};
	                        \draw[ultra thick] (25,-5) ..controls (25,-10) and (20,-11).. (20,-15);
	                        \draw[densely dashed] (28,5) -- (28,20);
	        \end{tikzpicture}
     	\end{equation*}
     	therefore, by the $B_0$-linearity of $\phi^{-1},$ we get
    	\begin{equation*}
	        \xi=
	        \sum_{j=1}^n
	        \begin{tikzpicture}[scale=1/15 pt, thick,baseline={([yshift=-\the\dimexpr\fontdimen22\textfont2\relax] current bounding box.center)}]
	            
	            \draw[ultra thick, rounded corners] (-15,-5) rectangle (-5,5);
	            \node at (-10,5){$\bullet$};
	                \node at (-10,0){$v_j^*$};
	                \draw (-7,5) -- (-7,15);
	                \draw[densely dashed] (-13,5) --(-13,15);
	                \draw[ultra thick] (-10,-5) -- (-10,-15);
	                
	            \node at (0,0){$\wedge$};
	            
	            \draw[rounded corners, densely dashed, blue] (5,-12) rectangle (36,12);    
	            \node at (20,12){$\bullet$};
	            \draw [gray] (16,15) -- (16,14) arc(180:360:4) -- (24,15);
	                \draw[ultra thick, rounded corners] (8,-5) rectangle (18,5);            
	                \node at (13,5){$\bullet$};
	                   \node at (13,0){$v_j$};
	                   \draw[ultra thick] (13,-5) .. controls (13,-10) and (18,-10).. (18,-15); 
	                    
	                \draw (9,5) arc(180:90:4) -- (28,9) arc (90:0:4);
	                \draw[densely dashed] (16,5) arc (180:90:2) -- (23,7) arc(90:0:2);            
	                            
	                \draw[ultra thick, rounded corners] (23,-5) rectangle (33,5);
	                \node at (28,5){$\bullet$};
	                    \node at (28,0){$\xi$};
	                    \draw[ultra thick] (28,-5) .. controls (28,-10) and (23,-10) .. (23,-15);
	        \end{tikzpicture}
	        =\sum_{j=1}^n  \left[ \phi(v_j)^*\right]\lhd \left\langle [ \phi(v_j)^*]\ \large{|}\ \xi \right\rangle_{B_0}.
    	\end{equation*}
   	 Which proves that $\left\{\phi(v_j)^*  := u_j \right\}_{j=1}^{n}$ is a right $B_0$-basis for $\X$.
\end{proof}

\begin{proposition}\label{prop::normalized}
    	The left and right indices of $\X$ as a $B_{0}-B_{0}$ Hilbert bimodule match. More precisely, we have that r-Ind$(\X) = \delta_x = \text{l-Ind}(\X)$. 
	 Therefore, $\X$ is normalized. (See Definition \ref{index} and the following paragraph.)
\end{proposition}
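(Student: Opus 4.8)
The plan is to compute $\text{l-Ind}(\X)$ and $\text{r-Ind}(\X)$ separately from the explicit bases furnished by Proposition \ref{prop:bases}, and to verify that both equal $\delta_x$. Since $B_0$ is simple (Lemma \ref{lemma::simplicity}), its center is trivial, so by Definition \ref{index} both indices are honest positive scalar multiples of $1_{B_0}$; it therefore suffices to identify the scalar in each case as $\delta_x$.

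For the left index I would use the left $B_0$-basis $\{v_j\}_{j=1}^n$, which by Equation \ref{eqn::ONEone} satisfies $\sum_{j} v_j^* \wedge v_j = p_1 = 1_{B_1}$. Substituting into the right inner product of Definition \ref{bimodule} and pulling the conditional expectation $\mathbb{E}_1$ out of the finite sum gives
\[
    \text{l-Ind}(\X) = \sum_{j} \langle v_j \mid v_j\rangle_{B_0} = \delta_x\, \mathbb{E}_1\Big(\sum_{j} v_j^* \wedge v_j\Big) = \delta_x\, \mathbb{E}_1(1_{B_1}) = \delta_x\cdot 1_{B_0},
\]
where the final step uses that $\mathbb{E}_1$ is a unital conditional expectation, so $\mathbb{E}_1(1_{B_1}) = i_1(1_{B_0})$, together with the standing identification $B_0 \cong i_1[B_0]$.

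For the right index I would use the right $B_0$-basis $\{u_j = \phi(v_j)^*\}_{j=1}^n$ together with the left inner product ${}_{B_0}\langle \xi, \eta\rangle = \xi \wedge \eta^*$ on $\X = {}_0B_1$, giving
\[
    \text{r-Ind}(\X) = \sum_{j} {}_{B_0}\langle u_j, u_j\rangle = \sum_{j} u_j \wedge u_j^* = \sum_{j} \phi(v_j)^* \wedge \phi(v_j).
\]
Each summand is exactly the right inner product $\langle \phi(v_j) \mid \phi(v_j)\rangle_{B_0}$ computed in ${}_1B_0$ (Notation \ref{notation::copies} with $r = 0$). Applying the inner-product transport rule $\langle \xi \mid \eta\rangle_{B_0} = i_1\big[\langle \phi(\xi) \mid \phi(\eta)\rangle_{B_0}\big]$ from Notation \ref{remark:changeX} term by term then yields $i_1[\text{r-Ind}(\X)] = \sum_{j}\langle v_j \mid v_j\rangle_{B_0} = \text{l-Ind}(\X)$, so injectivity of $i_1$ forces $\text{r-Ind}(\X) = \delta_x = \text{l-Ind}(\X)$ and $\X$ is normalized.

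The argument is in essence an assembly of ingredients already established—the two bases, the inner-product formulas, and the transport rules for $\phi$—so I expect no genuine obstacle. The one point demanding care is keeping the identification $B_0 \cong i_1[B_0]$ consistent across the right inner product, the conditional expectation, and the transport rule; in particular one must confirm $\mathbb{E}_1(1_{B_1}) = i_1(1_{B_0})$, which holds because a conditional expectation restricts to the identity on its range.
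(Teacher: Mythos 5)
Your proof is correct and follows essentially the same route as the paper: the left index is computed exactly as in the paper's proof, from $\sum_j v_j^{*}\wedge v_j = 1_{B_1}$ together with unitality of $\mathbb{E}_1$ and the identification $B_0\cong i_1[B_0]$. For the right index the paper only says to repeat the computation with the right basis of Proposition \ref{prop:bases}; your transport of that computation through $\phi$ via the identity $\langle \xi\ |\ \eta\rangle_{B_0} = i_1\left[\langle\phi(\xi)\ |\ \phi(\eta)\rangle_{B_0}\right]$ is a clean and correct way to carry out that step.
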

\begin{proof}
    	Let $\{v_{j}\}_{j = 1}^{n}$ be a left $B_{0}$-basis of $\X = {_{0}B_{1}}$.  Using the decomposition $B_1 = {_1B_0}\wedge{_0B_1}$ described in the 
	proof of Proposition \ref{prop:basis}, one can easily see that in $B_{1},$ the $v_{j}$ can be chosen to satisfy $\sum_{j=1}^{n} v_{j}^{*}v_{j} = 1_{B_{1}}$.  This means that 
    	$$
        \sum_{j=1}^{n} \langle v_{j}\ |\ v_{j}\rangle_{B_{0}} =  \delta_x\cdot\sum_{j=1}^{n}  \mathbb{E}_{1}(v_{j}^{*}\wedge v_{j}) = \delta_x \cdot\mathbb{E}_{1}\left( \sum_{j=1}^{n} v_{j}^{*}\wedge v_{j} \right) = \delta_x\cdot \mathbb{E}_{1}(1_{B_{1}}) = \delta_x.
    	$$
    	As for the right index, use the basis constructed in the proof of Proposition \ref{prop:bases} and proceed as above.
\end{proof}

To show that the bimodule $\X$ is minimal (Definition \ref{minimal}) we will need the next proposition:

\begin{proposition}\label{prop::RightCompacts}
    	Considering the right Hilbert $B_0$-bimodule $\X_{B_{0}} = {_0B_1}_{B_0},$ we have $\mathcal{B}^*(\X_{B_{0}}) = \mathcal{K}(\X_{B_{0}}) = B_{1}$ where $B_{1}$
	 acts with its canonical left action. 
	Picturing $\X$ as a left Hilbert $B_{0}$ module, $_{B_{0}}\X$, we have $\mathcal{B}^*(\, _{B{0}}\X) = \mathcal{K}(\, _{B{0}}\X) = B_{1}^{op},$ where $B_{1}$ acts with its canonical right action. 
\end{proposition}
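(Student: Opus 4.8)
The plan is to separate the two equalities. The coincidence $\mathcal{B}^*(\X_{B_0}) = \mathcal{K}(\X_{B_0})$ is a formal consequence of finite generation, while the identification with $B_1$ will come from recognizing rank-one operators as honest multiplications in $B_\infty$, with simplicity of $B_1$ supplying injectivity.

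First, write $\theta_{\xi,\eta}(\zeta) := \xi\lhd\langle\eta\,|\,\zeta\rangle_{B_0}$ for the rank-one operators. Since $\X$ is fgp (Proposition \ref{prop:bases}), fix a finite right $B_0$-basis $\{u_j\}$; the defining reconstruction identity $\zeta = \sum_j u_j\lhd\langle u_j\,|\,\zeta\rangle_{B_0}$ reads exactly $\sum_j\theta_{u_j,u_j} = \id_\X$. Hence $\id_\X\in\mathcal{K}(\X_{B_0})$, so $\mathcal{K}(\X_{B_0})$ is unital; and for any right-adjointable $T$ one has $T = T\,\id_\X = \sum_j\theta_{Tu_j,u_j}\in\mathcal{K}(\X_{B_0})$, giving $\mathcal{B}^*(\X_{B_0}) = \mathcal{K}(\X_{B_0})$. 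The same argument works verbatim for the left module using a left basis.

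To identify the algebra I would transport along the unitary right-$B_0$-module isomorphism $\phi\colon {}_{0}B_{1}\xrightarrow{\sim}{}_{1}B_{0}$ of Notation \ref{remark:changeX}, the point being that on ${}_{1}B_{0}$ (the normalization $l=1$, $r=0$) both structures are honest products in $B_\infty$: the right $B_0$-action is $\zeta\lhd b = \zeta\wedge b$ and the right inner product is $\langle\zeta\,|\,\zeta'\rangle_{B_0} = \zeta^*\wedge\zeta'$ (the factors $i_0$ and $\mathbb{E}_0$ being trivial), while left multiplication by $B_1 = {}_{1}B_{1}$ keeps ${}_{1}B_{0}$ invariant. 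Define $\Lambda\colon B_1\to\mathcal{B}^*({}_{1}B_{0}{}_{B_0})$ by $\Lambda(b)\zeta := b\wedge\zeta$; transported by $\phi$, this is the asserted canonical left action. Right $B_0$-linearity and boundedness of $\Lambda(b)$ are immediate, and the computation $\langle b\wedge\zeta\,|\,\zeta'\rangle_{B_0} = \zeta^*\wedge b^*\wedge\zeta' = \langle\zeta\,|\,b^*\wedge\zeta'\rangle_{B_0}$ shows that $\Lambda$ is a unital $*$-homomorphism with $\Lambda(b)^* = \Lambda(b^*)$.

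Finally I would run injectivity and surjectivity. As $\Lambda(1_{B_1}) = \id$ and $B_1$ is simple (Lemma \ref{lemma::simplicity}), the ideal $\ker\Lambda$ vanishes, so $\Lambda$ is isometric and its image is closed. For surjectivity, compute $\theta_{\zeta,\zeta'}(\omega) = \zeta\wedge(\zeta'^*\wedge\omega) = (\zeta\wedge\zeta'^*)\wedge\omega = \Lambda(\zeta\wedge\zeta'^*)\,\omega$, where $\zeta\wedge\zeta'^*\in{}_{1}B_{0}\cdot{}_{0}B_{1} = B_1$; since these rank-ones densely span $\mathcal{K}({}_{1}B_{0}{}_{B_0}) = \mathcal{B}^*({}_{1}B_{0}{}_{B_0})$ and $\Lambda(B_1)$ is closed, $\Lambda$ is onto. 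Thus $\Lambda$ is a $*$-isomorphism $B_1\cong\mathcal{B}^*(\X_{B_0}) = \mathcal{K}(\X_{B_0})$. The second statement is the mirror computation performed directly on ${}_{0}B_{1}$ viewed as a left Hilbert module: there the left inner product ${}_{B_0}\langle\xi,\eta\rangle = \xi\wedge\eta^*$ is honest, the right multiplication $R_b\zeta := \zeta\wedge b$ preserves ${}_{0}B_{1}$, is left-adjointable with ${}^{*}R_b = R_{b^*}$, and satisfies $R_bR_{b'} = R_{b'b}$, so $b\mapsto R_b$ yields the isomorphism $B_1^{\mathrm{op}}\cong {}^{*}\mathcal{B}({}_{B_0}\X)$. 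I expect the only genuine hazard to be keeping the two inner-product conventions straight, and with them the appearance of $B_1$ against $B_1^{\mathrm{op}}$; the substantive inputs are merely finite generation (for $\mathcal{B}^* = \mathcal{K}$) and simplicity of $B_1$ (for injectivity).
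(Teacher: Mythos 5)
Your proof is correct and follows essentially the same route as the paper's: transport to ${}_{1}B_{0}$ via $\phi$, use finite generation to get $\mathcal{B}^{*}=\mathcal{K}$, identify rank-one operators with left multiplication by elements of ${}_{1}B_{0}\wedge{}_{0}B_{1}$, and invoke the fact (from simplicity of $B_1$, established in the proof of Proposition \ref{prop:bases}) that these products span $B_1$. The only differences are cosmetic: you inline the standard basis argument for $\mathcal{B}^{*}=\mathcal{K}$ where the paper cites Lemma 1.7 of \cite{MR1624182}, and you make explicit the injectivity of the map $B_1\to\mathcal{B}^{*}(\X_{B_0})$, which the paper leaves implicit.
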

\begin{proof}
	Consider $_1B_0 = \phi[\X]$ and arbitary elements $\zeta, \xi, \eta\in\X.$ Recall that the right inner product is given by
	 $\langle \phi(\eta)\ |\ \phi(\zeta)\rangle_{B_0} = \phi(\eta)^*\wedge\phi(\zeta).$
	Since we know by Proposition \ref{prop:bases} that $\X$ is fgp, by Lemma 1.7 in \cite{MR1624182}, it follows that
	 $\mathcal{B}^*(\X_{B_0}) = \mathcal{K}^*(\X_{B_0}),$ which in turn is generated by the rank 1 operators
	 $|\phi(\xi)\rangle\langle \phi(\eta)| (\phi(\zeta)) = \phi(\xi)\lhd \langle \phi(\eta)\ |\ \phi(\zeta)\rangle_{B_0} = \phi(\xi) \wedge \phi(\eta) ^* \wedge \phi(\zeta).$
	This means that $|\phi(\xi)\rangle\langle \phi(\eta)|$ is the multiplication operator $\phi(\xi)\wedge \phi(\eta)^*.$
	 As discussed in the proof of Proposition \ref{prop:bases}, $B_{1}$ is the linear space spanned by these products. 
	The first statement follows, and the second statement follows by a similar argument. 
\end{proof}

\begin{remark}
	 Similar reasoning will show that $\mathcal B^*({_lB_r}) \cong \mathcal B^*({_nB_0}) = B_n,$ whenever $n = (l+r).$  If $\cY$ is a $B_{0}-B_{0}$ submodule of $_nB_0$, then 
	 $$
	 	\Y = p[\, _nB_0]
	 $$ 
for some projection $p \in B_{n} \cap i_{n}[B_{0}]' = \Gr_{0, n}$.	It follows that if $x \in \mathcal{B}^{*}(\Y)$, then $x$ can be extended to be a bounded operator $\tilde{x}$ on $ _nB_0$ satisfying $\tilde{x} = 0$ on $ (1 - p) [{_nB_0}]$.  It is easily seen that $\tilde{x}$ is adjointable, $\tilde{x}^{*} = 0$ on $(1-p) [\,_nB_0]$ and  $\tilde{x}^* = x^{*}$ on $\Y$.  Consequently, it follows that $\tilde{x} \in pB_{n}p$.  Conversely, any operator $z \in pB_{n}p$ is in $\mathcal{B}^{*}(\Y)$; therefore $\mathcal{B}^*(\Y) = pB_{n}p$. 
	 
\end{remark}


\begin{corollary}\label{rem::endomorphisms}
	The endomorphism algebra of $B_0$-bimodule maps on $\X = {_0B_1}$ is characterized as
	$$
		\End_{B_0-B_0}\left(\X\right) \cong B_1\cap \Gr_{0,1}.
	$$
\end{corollary}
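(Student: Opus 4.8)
The plan is to leverage Proposition \ref{prop::RightCompacts}, which already identifies the right-adjointable operators on the right Hilbert module $\X_{B_0}$ with $B_1$ acting by left multiplication, and then to cut this algebra down by imposing the extra requirement of left $B_0$-linearity. First I would observe that any $B_0$-$B_0$ bimodule endomorphism $T$ of $\X$ is in particular a bounded right $B_0$-module map, so by the adjointability built into the fgp structure (Example \ref{HappyBims} and \cite[Lemma 1.10]{MR1624182}) we automatically have $T \in \mathcal B^*(\X_{B_0})$. Hence $\End_{B_0-B_0}(\X)$ is precisely the subalgebra of $\mathcal B^*(\X_{B_0})$ consisting of those operators that additionally commute with the left $B_0$-action; the problem is thus to compute which elements of $\mathcal B^*(\X_{B_0})$ are left $B_0$-linear.

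Next I would transport everything through the unitary $B_0$-bimodule isomorphism $\phi\colon \X = {_0B_1} \to {_1B_0}$ of Notation \ref{remark:changeX}, under which $\mathcal B^*(\X_{B_0}) \cong B_1$ with $b \in B_1$ acting on ${_1B_0}$ by left multiplication (this is exactly the content of Proposition \ref{prop::RightCompacts}). The key bookkeeping step is that $\phi$ intertwines the left $B_0$-actions, while on ${_1B_0}$ the left action is $x \rhd \eta = i_1(x)\wedge \eta$ (Notation \ref{notation::copies} with $l=1$, $r=0$); thus, under the identification, the left $B_0$-action is realized as left multiplication by the subalgebra $i_1[B_0]\subseteq B_1$. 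Because the representation of $B_1$ by left multiplication on ${_1B_0}$ is faithful, an operator $L_b$ commutes with every $L_{i_1(x)}$ if and only if $b\wedge i_1(x) = i_1(x)\wedge b$ for all $x\in B_0$, i.e.\ if and only if $b \in (i_1[B_0])' \cap B_1$.

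Putting these together yields $\End_{B_0-B_0}(\X) \cong (i_1[B_0])' \cap B_1$, and by Proposition \ref{RelCommutants}(3) this relative commutant is exactly $\Gr_{0,1}$ (equivalently $B_1 \cap \Gr_{0,1}$), giving the claim. The main obstacle I anticipate is not analytic but one of careful identification: I must verify that the chain of isomorphisms (the action of $\phi$ on objects, the induced conjugation $T\mapsto \phi T\phi^{-1}$ on operators, and the faithfulness of left multiplication on ${_1B_0}$) sends the abstract left $B_0$-action to left multiplication by $i_1[B_0]$ rather than some twisted version of it, so that the left-linearity condition becomes precisely the relative commutant. Once that bookkeeping is pinned down, the conclusion is immediate from the already established Propositions \ref{prop::RightCompacts} and \ref{RelCommutants}, requiring no further estimates.
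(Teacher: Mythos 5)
Your proposal is correct and follows essentially the same route as the paper: both arguments identify $\mathcal B^*(\X_{B_0})$ with $B_1$ via Proposition \ref{prop::RightCompacts} after transporting through $\phi$ to ${_1B_0}$, and then characterize left $B_0$-linearity as membership in the relative commutant $(i_1[B_0])'\cap B_1 = B_1\cap\Gr_{0,1}$ from Proposition \ref{RelCommutants}. The only cosmetic difference is that the paper verifies the easy inclusion $\Gr_{0,1}\cap B_1\subseteq \End_{B_0-B_0}$ by a separate diagrammatic check, whereas you obtain both inclusions at once from the ``commutes with left multiplication by $i_1[B_0]$ iff lies in the relative commutant'' equivalence.
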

\begin{proof}
	We replace $\X$ by the isomorphic bimodule $\phi[\X] = {_1B_0}.$  Let $\xi\in {_0B_1}$ be arbitrary, and note that if
	 $T \in \Gr_{0,1} \cap B_{1}$, then clearly $T \in \End_{B_0-B_0}\left({_1B_0}\right)$ via the usual action:
	    \begin{equation*}
		        T\left(\phi(\xi)\right)
 			=
			 \begin{tikzpicture}[scale=1/15 pt, thick,baseline={([yshift=-\the\dimexpr\fontdimen22\textfont2\relax] current bounding box.center)}]  
	            		\draw[densely dashed, rounded corners, blue] (12,-9) rectangle (42,13);
	                	\draw[rounded corners, ultra thick] (40,-5) rectangle (30,5);
	                    	\node at (35,0){$\xi$};
		                 \node at (35,5){$\bullet$};
		                 \draw[densely dashed] (38,5) -- (38,20);
		                 \draw[ultra thick] (35,-5) -- (35,-15);

		                 \node at (27, 13){$\bullet$};
	                 	 \draw[rounded corners, ultra thick] (24,-5) rectangle (14,5);
	                    	 \node at (19,0){$T$};
	                    	 \node at (19,5){$\bullet$};
	                    	 \draw (16,5) -- (16,20);
	                     	 \draw[densely dashed] (19,-5) -- (19,-15);
	                    	 \draw (33,5) arc (0:180:6);
	            \end{tikzpicture}.
	    \end{equation*}
	  Conversely, we claim that if $T \in \End_{B_0-B_0}\left({_1B_0}\right),$ then $T \in \Gr_{0, 1} \cap B_{1}$. On the one hand, in particular 
	$T\in \mathcal B^*({_1B_0}_{B_0}),$ so $T\in B_1,$ by Proposition \ref{prop::RightCompacts}. On the other, $T \in \End_{B_0-B_0}\left({_1B_0}\right)$ 
	implies $T \in \mathcal{B}^*({_1B_0}_{B_{0}}) \cap (i_{1}[B_{0}])'.$ 
	As a consequence, $T\in B_1 \cap \left( \mathcal{B}^*({_1B_0}_{B_{0}}) \cap (i_{i}[B_{0}])'  \right) = B_{1} \cap (i_{i}[B_{0}])' = B_{1} \cap \Gr_{0, 1}$, 
	in light of (the proof of) Proposition \ref{RelCommutants}.
\end{proof}

\begin{proposition}\label{prop::Minimality}
    The Hilbert $B_0$-bimodule $\X$ is minimal (See Definition \ref{minimal}).
\end{proposition}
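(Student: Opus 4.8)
The plan is to verify the minimality identity of Definition \ref{minimal} directly. By Proposition \ref{prop::normalized} we have $\text{r-Ind}(\X) = \delta_x = \text{l-Ind}(\X)$, so the constant $K := \text{r-Ind}(\X)/\text{l-Ind}(\X)$ equals $1$, and it suffices to prove that for every $T \in \End_{B_0-B_0}(\X)$,
$$\sum_i {_{B_0}}\langle T u_i, u_i\rangle = \sum_j \langle v_j \mid T v_j\rangle_{B_0}\in \C,$$
for one (equivalently any, by \cite[Proposition 3.3]{MR1624182}) left basis $\{v_j\}$ and right basis $\{u_i\}$. By Corollary \ref{rem::endomorphisms} I identify $T$ with an element $t \in B_1\cap\Gr_{0,1} = (i_1[B_0])'\cap B_1$ (using Proposition \ref{RelCommutants}). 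Since the two sums are basis-independent, I am free to work with the isomorphic bimodule ${_1B_0} = \phi[\X]$, on which $t$ acts by the honest left multiplication $\zeta \mapsto t\wedge\zeta$ (Propositions \ref{prop::RightCompacts} and \ref{rem::endomorphisms}); there I use the right basis $\{\zeta_i = v_i^*\}$ and left basis $\{\phi(v_j)\}$ from Propositions \ref{prop:bases} and \ref{prop::normalized}, normalized so that $\sum_i\zeta_i\wedge\zeta_i^* = p_1 = 1_{B_1}$ (Equation \ref{eqn::ONEone}).

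For the right-hand quantity I would use the left $B_0$-valued inner product ${_{B_0}}\langle\xi,\eta\rangle = \delta_x\,\mathbb{E}_1(\xi\wedge\eta^*)$ on ${_1B_0}$ and the resolution $\sum_i\zeta_i\wedge\zeta_i^* = p_1$ to compute
$$\sum_i {_{B_0}}\langle t\wedge\zeta_i, \zeta_i\rangle = \delta_x\,\mathbb{E}_1\!\Big(t\wedge\sum_i\zeta_i\wedge\zeta_i^*\Big) = \delta_x\,\mathbb{E}_1(t).$$
Because $t$ commutes with $i_1[B_0]$ and $\mathbb{E}_1$ is $i_1[B_0]$-bimodular, $\mathbb{E}_1(t)$ commutes with all of $i_1[B_0]$ and hence lies in $Z(i_1[B_0]) = \C$; thus this quantity is the scalar $\delta_x\,\mathbb{E}_1(t)$, and trace-preservation of $\mathbb{E}_1$ together with the normalizations give $\tr_{B_0}(\delta_x\mathbb{E}_1(t)) = \delta_x\,\tr_{B_1}(t) = \Tr_1(t)$. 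Diagrammatically $\delta_x\,\mathbb{E}_1(t)$ is precisely the diagram obtained by capping off the single through-strand of $t$ on the right, i.e. the \emph{right} categorical trace of the morphism in $\cat$ corresponding to $t$ under Frobenius reciprocity (Proposition \ref{prop::AFalgs}).

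For the left-hand quantity, using the right $B_0$-valued inner product $\langle\xi\mid\eta\rangle_{B_0} = \xi^*\wedge\eta$ on ${_1B_0}$, I obtain
$$\sum_j \langle \phi(v_j)\mid t\wedge\phi(v_j)\rangle_{B_0} = \sum_j \phi(v_j)^*\wedge t\wedge\phi(v_j),$$
which is central in $B_0$, hence scalar, by \cite[Proposition 3.3]{MR1624182}. To identify this scalar I would apply the Voiculescu trace $\Tr$ and use its trace property together with $\Tr|_{B_1} = \Tr_1$:
$$\Tr\Big(\sum_j\phi(v_j)^*\wedge t\wedge\phi(v_j)\Big) = \Tr\Big(t\wedge\sum_j\phi(v_j)\wedge\phi(v_j)^*\Big).$$
The decisive point is that, paired against $t\in\Gr_{0,1}$ under $\Tr$, the positive element $s := \sum_j\phi(v_j)\wedge\phi(v_j)^*$ behaves as $p_1$ (equivalently, the left-inserted sum computes the \emph{left} categorical trace of the morphism attached to $t$). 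By the balancing/spherical condition on $\cat$ (Equation \ref{eqn::balancing} and Remark \ref{rmk::BalancedDual}), the left and right categorical traces of any endomorphism coincide and lie in $\cat(\one)\cong\C$; this is exactly the equality of the two displayed scalars, so both sides equal $\Tr_1(t)\cdot 1_{B_0}$ and $\X$ is minimal.

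I expect the heart of the argument — and its only genuinely nontrivial step — to be the third paragraph: reconciling $\sum_j\phi(v_j)^*\wedge t\wedge\phi(v_j)$ with $\delta_x\,\mathbb{E}_1(t)$. Since $\{\phi(v_j)\}$ is a \emph{left} rather than a right basis, $s$ is not the identity operator, and the identity $\Tr(t\wedge s) = \Tr(t)$ for all $t\in\Gr_{0,1}$ must be read off from a diagrammatic computation whose conceptual content is precisely the sphericity of $\cat$: closing a through-strand on the left versus on the right produces the same scalar. Everything else — the inner-product formulas, the trace-preservation of $\mathbb{E}_1$, and the uniqueness of the trace on $B_1$ (Lemma \ref{lemma::simplicity}) — is routine once this balancing identity is in hand.
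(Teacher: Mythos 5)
Your proposal is correct and takes essentially the same route as the paper's proof: identify $T$ with an element $t\in\Gr_{0,1}\cap B_1$ via Corollary \ref{rem::endomorphisms}, pass to ${_1B_0}=\phi[\X]$ with the bases $\{v_j^*\}$ and $\{\phi(v_j)\}$ from Proposition \ref{prop:bases}, collapse one sum through $\sum_j v_j^*\wedge v_j = p_1$ to the right closure $\delta_x\mathbb{E}_1(t)$, recognize the other sum as the left closure by a diagrammatic computation (which the paper makes rigorous by approximating the $v_j$ by elements of $\Gr_\infty\cap{_0B_1}$, exactly the step you flag as the crux), and finish by the sphericality of $\cat$ via Frobenius reciprocity. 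The only blemish is cosmetic: your labels \emph{left-hand} and \emph{right-hand} quantity are swapped relative to your own displayed minimality identity, though the pairings of inner products with bases are the correct ones throughout.
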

\begin{proof}
    	Let $T \in \End_{B_0-B_0}( \phi[\X] ) = \Gr_{0, 1} \cap B_{1}$, where again $\phi[\X] = \phi[{_0B_1}] = {_1B_0}$.
	Let $\{v_{j}\}_{j = 1}^{n}$ be the left $B_{0}$-basis for ${_0B_1}$ constructed in the proof of Proposition \ref{prop:bases}, and pretending for for 
	the moment that each $v_{i}$ is actually in $\Gr_{\infty}$, we have (formally):
    	\begin{equation*}
	        \sum_{j=1}^{n} \langle \phi(v_{j})\ |\ T (\phi(v_{j})) \rangle_{B_{0}} = \sum_{j=1}^{n} 
	        \begin{tikzpicture}[scale=1/15 pt, thick,baseline={([yshift=-\the\dimexpr\fontdimen22\textfont2\relax] current bounding box.center)}]  
	            \draw[OhioRed,rounded corners, fill=OhioRed!10, ultra thick] (40,-5) rectangle (30,5);
	            \node at (35,5){$\bullet$};
	                \node at (35,0){$v_{j}$};
	                \draw[OhioRed, ultra thick] (35,-5) -- (35,-12);
	            \draw[rounded corners, ultra thick] (24,-5) rectangle (14,5);
	            \node at (19,5){$\bullet$};
	                \node at (19,0){$T$};
	                \draw (33,5) arc (0:180:5);
	                \draw[densely dashed] (19,-5) -- (19,-12);
	            \draw[OhioRed, ultra thick, rounded corners, fill=OhioRed!10] (-2,-5) rectangle (8,5);
	            \node at (3,5){$\bullet$};
	                \node at (3,0){$v_{j}^{*}$};
	                \draw (6,5) arc(180:0:5);
	                \draw[OhioRed, ultra thick] (3,-5) -- (3,-12);
	        \end{tikzpicture}
	        =
	        \sum_{j=1}^{n}  
	        \begin{tikzpicture}[scale=1/15 pt, thick,baseline={([yshift=-\the\dimexpr\fontdimen22\textfont2\relax] current bounding box.center)}]  
	            \draw[OhioRed,ultra thick, rounded corners,fill=OhioRed!10] (40,-5) rectangle (30,5);
	            \node at (35,5){$\bullet$};
	                \node at (35,0){$v_{j}$};
	                \draw[OhioRed, ultra thick] (35,-5) -- (35,-12);
	            \draw[OhioRed, ultra thick, rounded corners, fill=OhioRed!10] (24,-5) rectangle (14,5);
	            \node at (19,5){$\bullet$};
	                \node at (19,0){$v_{j}^{*}$};
	                \draw[OhioRed, ultra thick] (19,-5) -- (19,-12);
	            \draw[ultra thick,rounded corners] (-2,-5) rectangle (8,5);
	            \node at (3,5){$\bullet$};
	                \node at (3,0){$T$};
	                \draw (6,5) arc(180:90:6)-- (26, 11) arc(90:0:6);
	                \draw (0, 5) arc(0: 180: 2) -- (-4, -5) arc(180:270: 2) -- (9, -7) arc(270:360: 2) -- (11, 5)  arc(180:90:3) -- (19,8) arc(90:0:3);
	          \end{tikzpicture}
	          =
	        \begin{tikzpicture}[scale=1/15 pt, thick,baseline={([yshift=-\the\dimexpr\fontdimen22\textfont2\relax] current bounding box.center)}]
	             \draw[ultra thick, rounded corners] (-2,-5) rectangle (8,5);
	             \node at (3,5){$\bullet$};
	                \node at (3,0){$T$};
	                \draw (0, 5) arc(0: 180: 2) -- (-4, -5) arc(180:270: 2) -- (8, -7) arc(270:360: 2) -- (10, 5) arc(0:180:2);
	        \end{tikzpicture}  \in \mathbb{C}.
	\end{equation*}
    	Notice that we are making use of the maps $\phi$, that \textit{push the dot to the right} $\phi: {_lB_r}\longrightarrow {{_{(l+1)}B_{(r-1)}}}$, defined in
	Notation \ref{remark:changeX}. In our particular case and to make this computation rigorous, we will make use of
	\begin{align*}
	        \phi: B_{1} \longrightarrow \, _{2}B_{0}
	        \hspace{5mm}
	        \text{ given by }
	        \hspace{5mm}
	        \begin{tikzpicture}[scale=1/15 pt, thick,baseline={([yshift=-\the\dimexpr\fontdimen22\textfont2\relax] current bounding box.center)}]
	        \draw[rounded corners, ultra thick] (24,-5) rectangle (14,5);
	                \draw (16, 5)--(16, 12);
	                \draw (22, 5)--(22, 12);
	                \draw[ultra thick] (19, -5)--(19, -10);
	                \node at (19, 5) {$\bullet$};
	        \end{tikzpicture}
	            \longmapsto
	        \begin{tikzpicture}[scale=1/15 pt, thick,baseline={([yshift=-\the\dimexpr\fontdimen22\textfont2\relax] current bounding box.center)}]
	        \draw[rounded corners, ultra thick] (24,-5) rectangle (14,5);
	                \draw[densely dashed] (22,5) -- (22,12);
	                \draw (15, 5)--(15, 12);
	                \draw (17, 5)--(17, 12);
	                \draw[ultra thick] (19, -5)--(19, -10);
	                \node at (19, 5) {$\bullet$};
	        \end{tikzpicture}.
	\end{align*}
    Approximate each $v_{j}$ in norm by a sequence $\{v^{k}_{j}\}_{k=1}^{\infty}\subset \Gr_{\infty} \cap\ {_0B_1}$.  If we let $T'$ be the element
    \begin{equation*}
        T'
        :=
        \begin{tikzpicture}[scale=1/15 pt, thick,baseline={([yshift=-\the\dimexpr\fontdimen22\textfont2\relax] current bounding box.center)}]
            \draw[rounded corners, densely dashed, blue] (-10,10) rectangle (14,-11);
                \draw[ultra thick, rounded corners] (-2,-5) rectangle (8,5);
                \node at (3,0){$T$};
                \draw (0, 5) arc(0: 180: 2) -- (-4, -5) arc(180:270: 2) -- (8, -7) arc(270:360: 2) -- (10, 15);
                \draw (3, 5)--(3, 15);
                \node at (-2, 10) {$\bullet$};
       \end{tikzpicture},
    \end{equation*}
    then the above diagrams imply that for each $k\in \N,$ 
	\begin{equation*}
		\sum_{j=1}^{n} \langle \phi(v_{j}^{k})\ |\ T[\phi ( v_{j}^k)] \rangle_{B_{0}}
	        =
        	T'\wedge \phi \left[(v_{j}^{k})^{*}v_{j}^{k}\right].
	\end{equation*}
    	Letting $k \rightarrow \infty$, we obtain
    	\begin{equation}\label{eqn::mini1}
	        \lim_{k \rightarrow \infty}\sum_{j=1}^{n} \langle \phi(v_{j}^{k}) \ |\ T[\phi(v_{j}^{k})] \rangle_{B_{0}}
	        =
	        T'\wedge
	        \begin{tikzpicture}[scale=1/15 pt, thick,baseline={([yshift=-\the\dimexpr\fontdimen22\textfont2\relax] current bounding box.center)}]
	            	\draw[rounded corners, densely dashed, blue] (-5,-7) rectangle (15,2);
	            	\node at (5, 2) {$\bullet$};
			\draw  (-3, 0) arc (180:360:3);
	                \draw (-3,0) -- (-3,5);
	                \draw (3,0) -- (3,5);
	        \end{tikzpicture}
	        =
	        \begin{tikzpicture}[scale=1/15 pt, thick,baseline={([yshift=-\the\dimexpr\fontdimen22\textfont2\relax] current bounding box.center)}]
			\draw[rounded corners, ultra thick] (-2,-5) rectangle (8,5);
	            	\node at(3,5){$\bullet$};
	                \node at (3,0){$T$};
	            	\draw (0, 5) arc(0: 180: 2) -- (-4, -5) arc(180:270: 2) -- (8, -7) arc(270:360: 2) -- (10, 5) arc(0:180:2);
           	\end{tikzpicture}
	\end{equation}
 	Performing a similar computation for the the right $B_0$-basis for $_1B_0$ given by $\{\phi[\phi(v_j)^* = v_j^* ]\}_{j=1}^n$ we obtain that
	\begin{equation}\label{eqn::mini2}
	        \sum_{j=1^n}\ _{B_0}\langle\ T(\phi[\phi(v_j)^*]), (\phi[\phi(v_j)^*])\ \rangle
	        =
	        \begin{tikzpicture}[scale=1/15 pt, thick,baseline={([yshift=-\the\dimexpr\fontdimen22\textfont2\relax] current bounding box.center)}]
	            \draw[rounded corners, ultra thick] (-5,-5) rectangle (5,5);
	                \node at (0,0){$T$};
	                \draw (4,5) arc (0:180:4);
	        \end{tikzpicture}
	\end{equation}
    	To conclude, we must show that the diagrams obtained in Equation \eqref{eqn::mini1} and Equation \eqref{eqn::mini2} are equal. 
	Since $T$ directly comes from sums of diagrams in the category $\cat$, using Frobenius reciprocity (Proposition \ref{prop::AFalgs}) together
	 with the canonical spherical structure of $\cat$ (Remark \ref{rmk::biinv}), we immediatly see that these two diagrams correspond to the left and right traces of $T,$
	 which are identical. This observation completes the proof.
\end{proof}

We now study tensor powers of our bimodule $_{B_0}\X_{B_0}$. In the following lemma we describe \textbf{the bimodule structure of the} $n$\textbf{-fold fusion} $\X^{\boxtimes_{B_0}n}$, which will turn out extremely useful in the sequel. For a more detailed view on this fusion with respect to $B_0$, we refer the reader to Example \ref{HappyBims} above.

\begin{lemma}[\cite{HaPeI}, Proposition 4.11]\label{lemma::isomorphism}
    For each $n\in \N$, there is a $B_0$-bilinear unitary, which on diagrams is given by (See also Proposition 4.11 in \cite{HaPeI}.)
    \begin{equation}
        \begin{tikzpicture}[scale=1/15 pt, thick,baseline={([yshift=-\the\dimexpr\fontdimen22\textfont2\relax] current bounding box.center)}] 
            \node at (0,30){$\Psi_n:\ _{B_0}(\X^{\boxtimes_{B_0}n})_{B_0} \longrightarrow\ _{B_0}(_0B_n)_{B_0}$};
            
            \draw [rounded corners, ultra thick](-67,-5) rectangle (-57,5);
            \node at (-62,5){$\bullet$};
                \node at (-62,0){$\xi_1$};
                \draw (-59,5) -- (-59,15);
                \draw[densely dashed] (-65,5) -- (-65,15);
                \draw [ultra thick] (-62,-5) -- (-62,-15);
                    \node at (-58,-12){$b_1$};
            \node at(-51,0){$\boxtimes_{B_0}$};
            
            \draw [rounded corners, ultra thick](-45,-5) rectangle (-35,5);
            \node at (-40,5){$\bullet$};
                \node at (-40,0){$\xi_2$};
                \draw (-37,5) -- (-37,15);
                \draw[densely dashed] (-43,5) -- (-43,15);
                \draw [ultra thick] (-40,-5) -- (-40,-15);
                    \node at (-36,-12){$b_2$};
            
            \node at (-20,0){$\boxtimes_{B_0}\cdots\boxtimes_{B_0}$};
            
            \draw [rounded corners, ultra thick](-5,-5) rectangle (5,5);
            \node at (0,5){$\bullet$};
                \node at (0,0){$\xi_n$};
                \draw (3,5) -- (3,15);
                \draw[densely dashed] (-3,5) -- (-3,15);
                \draw[ultra thick] (0,-5) -- (0,-15);
                    \node at (4,-12){$b_n$};
            
            \node at (13, 0){$\longmapsto$};
            
            \draw[densely dashed,blue,rounded corners] (23,-15) rectangle (72, 15);
            \node at (40,15){$\bullet$};
                \draw [rounded corners, ultra thick](25,-5) rectangle (35,5);
                \node at (30,5) {$\bullet$};
                    \node at (30,0){$\xi_1$};
                    \draw[densely dashed] (27,5) .. controls (27,10) and (35,15) .. (35,20);
                    \draw (33,5) .. controls (33,10) and (58,15) ..  (58,20);
                    \draw[ultra thick] (30,-5) .. controls (30,-10) and (45,-15) .. (45,-20);
                
                \draw [rounded corners, ultra thick](40,-5) rectangle (50,5);
                    \node at (45,0){$\xi_2$};
                    \draw (48,5) .. controls (48,10) and (60,15) .. (60,20);
                    \draw[ultra thick] (45,-5) .. controls (45,-10) and (47,-15) .. (47,-20);
                        
                \node at(55,0){$\hdots$};
                
                \draw[rounded corners, ultra thick](70,-5) rectangle (60,5);
                    \node at (65,0){$\xi_n$};
                    \draw (68,5) .. controls (68,10) and (63,15) .. (63,20);
                    \draw[ultra thick] (65,-5) .. controls (65,-10) and (57,-15) .. (57,-20);
            
            \node at (68,18){$n$};
            \node at(80,-18){\footnotesize{$(b_1+b_2+\cdots+b_n)$}};
        \end{tikzpicture}.
    \end{equation}
\end{lemma}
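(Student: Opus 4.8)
The plan is to define $\Psi_n$ by the diagrammatic juxtaposition formula in the statement and then to verify, in order, that it is a well-defined $B_0$-bilinear isometric map which is moreover surjective, hence a unitary. Since $\X = {}_0B_1$ is fgp (Proposition \ref{prop:bases}), the $n$-fold fusion $\X^{\boxtimes_{B_0} n}$ requires no completion (Proposition 1.23 of \cite{MR1624182}), so it suffices to work with simple tensors $\xi_1 \boxtimes_{B_0} \cdots \boxtimes_{B_0} \xi_n$ of bounded vectors and extend by $B_0$-linearity. First I would check that $\Psi_n$ descends to the balanced tensor product: recalling that the actions on $\X$ are $b \rhd \xi \lhd b' = b \wedge \xi \wedge \iota_1(b')$, the relation $(\xi_i \lhd b) \boxtimes_{B_0} \xi_{i+1} = \xi_i \boxtimes_{B_0} (b \rhd \xi_{i+1})$ becomes, after juxtaposition, the diagrammatic identity expressing that the capped strand produced by $\iota_1(b)$ on the right of $\xi_i$ may be slid onto the left of $\xi_{i+1}$; this is a routine diagram manipulation. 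The same bookkeeping shows $B_0$-bilinearity, where the left action lands on $\xi_1$ and the right action $\lhd b$ on $\xi_n$ reproduces the action of $\iota_n(b)$ on the resulting element of ${}_0B_n$.

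The heart of the argument is the isometry statement: that $\Psi_n$ intertwines the iterated fusion inner product with the right $B_0$-valued inner product $\langle\,\cdot\,\mid\,\cdot\,\rangle_{B_0} = \delta_x^{\,n}\cdot \mathbb{E}_n((\,\cdot\,)^*\wedge(\,\cdot\,))$ on ${}_0B_n$ (Notation \ref{notation::copies}). Unwinding the Connes-fusion inner product as the nested expression $\langle \xi_n \mid \langle\cdots\mid \langle \xi_1 \mid \eta_1\rangle_{B_0}\rhd\eta_2 \cdots\rangle_{B_0}\rhd\eta_n\rangle_{B_0}$, I would compute both sides diagrammatically: writing $\xi_\bullet = \xi_1\boxtimes_{B_0}\cdots\boxtimes_{B_0}\xi_n$, the product $\Psi_n(\xi_\bullet)^*\wedge\Psi_n(\eta_\bullet)$ caps the juxtaposed diagrams pairwise, and applying $\mathbb{E}_n$ closes off the $n$ through-strands. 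Using the spherical/pivotal structure of $\cat$ (Remark \ref{rmk::biinv}) and the zig-zag equations, this nested sequence of caps is exactly the nested inner product, and the powers of $\delta_x$ match because each contraction contributes one factor. It is convenient to organize this verification inductively, treating the one-step isomorphism $\X \boxtimes_{B_0} {}_0B_{k} \xrightarrow{\sim} {}_0B_{k+1}$ and appealing to associativity of fusion; this reduces the whole computation to the case $n = 2$, which is the $B_0$-bimodule parallel of the $A_\infty$-module statement in Proposition \ref{prop:easytensor}.

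Finally, for surjectivity I would avoid an explicit preimage computation and instead argue via index. Since morphisms between fgp bimodules are automatically adjointable (Lemma 1.10 of \cite{MR1624182}) and $\Psi_n$ is isometric, one has $\Psi_n^*\Psi_n = \id$, and $\Psi_n\Psi_n^* \in \End_{B_0-B_0}({}_0B_n)$ is a projection whose range $\Psi_n(\X^{\boxtimes_{B_0} n})$ is a direct summand of ${}_0B_n$. By multiplicativity of the index under fusion (Proposition 1.30 of \cite{MR1624182}) together with $\text{Ind}(\X) = \delta_x^2$ (Proposition \ref{prop::normalized}) one has $\text{Ind}(\X^{\boxtimes_{B_0} n}) = \delta_x^{2n}$, while the computation of Proposition \ref{prop::normalized} extended to ${}_0B_n$ gives $\text{l-Ind} = \text{r-Ind} = \delta_x^{n}$, so $\text{Ind}({}_0B_n) = \delta_x^{2n}$ as well. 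Additivity of the index over direct sums (Lemma 3.9 of \cite{MR1624182}) then forces the complementary summand to have vanishing index, hence to be zero, so $\Psi_n$ is onto and therefore a $B_0$-bilinear unitary. I expect the main obstacle to be the isometry computation of the second paragraph: correctly matching the iterated Connes-fusion inner product with the single nested diagram and tracking the $\delta_x$ normalizations, which is precisely where the minimality of $\X$ (Proposition \ref{prop::Minimality}) and the sphericity of $\cat$ (Remark \ref{rmk::biinv}) are used.
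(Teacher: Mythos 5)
Your proposal is correct, and the first two thirds of it (well-definedness on the balanced tensor product, $B_0$-bilinearity, and the isometry computation, reduced inductively to the $n=2$ case) follow the same route as the paper, which treats these as direct diagrammatic verifications on the dense subspace $[\Gr_\infty\cap{_0B_1}]^{\odot n}$ and then extends by density. Where you genuinely diverge is surjectivity. The paper argues constructively: simplicity of $B_1$ yields a partition of unity $1_{B_1}=\sum_j v_j^*\wedge v_j$ with $v_j\in\X$ (Equation \ref{eqn::ONEone}), and then for $b\in{_0B_2}$ the element $\sum_j (b\wedge v_j^*)\boxtimes v_j$ is an explicit preimage, with the general case by induction. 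You instead observe that the isometric adjointable map $\Psi_n$ realizes $\X^{\boxtimes_{B_0}n}$ as a direct summand of ${_0B_n}$ and kill the complement by comparing indices, using multiplicativity of $\mathrm{Ind}$ under fusion and additivity of $\mathrm{l}$-$\mathrm{Ind}$ and $\mathrm{r}$-$\mathrm{Ind}$ over direct sums (a nonzero fgp summand has strictly positive right index by positive-definiteness of the left inner product, so index zero forces the summand to vanish). This is a valid alternative, but note two things: first, it does not actually bypass the simplicity input, since to assert that ${_0B_n}$ is fgp with $\mathrm{l}$-$\mathrm{Ind}=\mathrm{r}$-$\mathrm{Ind}=\delta_x^n$ \emph{independently of} $\Psi_n$ (you cannot transport fgp-ness through the map whose surjectivity you are proving) you must rerun the argument of Proposition \ref{prop:bases} for $B_n$ in place of $B_1$, and the partition of unity $1_{B_n}=\sum_k w_k^*\wedge w_k$ you obtain there is exactly what the paper uses to write down the preimage directly — so the explicit construction is arguably the shorter path; second, minimality of $\X$ is not really what the isometry step turns on (it enters via multiplicativity of the index in your surjectivity argument), the isometry being a consequence of the definitions of the inner products in Notation \ref{notation::copies} together with the spherical structure of $\cat$.
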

\begin{proof}
    	It is clear that $\Psi_n$ is well-defined and $B_0$-bilinear on $[\Gr_\infty \cap\ {_0B_1}]^{\odot n} .$ Here, we are balancing our tensor product $\odot$ by $\Gr_\infty \cap B_0.$ 
	From the definitions of the left and right $B_0$-valued inner products on $\X^{\boxtimes_{B_0} n}$ and Notation \ref{notation::copies}, it is easy to see 
	that $\Psi_n$ is isometric on diagrams. To extend $\Psi_n$ to a unitary to all of $\X^{\boxtimes_{B_0}n},$ we observe that $[\Gr_\infty \cap\ {_0B_1}]^{\odot n}\subset \X^{\odot n}$ is dense, 
	and that the latter space is clearly dense in $\X^{\boxtimes_{B_0}n}.$ Middle $B_0$-linearity now automatically follows from $\Gr_\infty \cap B_0$-linearity.
    
    	To conclude the proof, it suffices to show that $\Psi_n$ is surjective. To see this, recall Lemma \ref{lemma::simplicity} asserting that $B_1$ is simple and unital, and moreover, 
	the unit element in $B_1$ is a finite sum $1_{B_1} = p_1 = \sum_{j=1}^n v_j^*\wedge v_j$, where each $v_j\in\X.$ (See Equation \ref{eqn::ONEone}.) We shall now proceed by 
	means of induction. If $n = 2,$ for an arbitrary element $b\in {_0B_2},$ we have (formally):
    	\begin{equation}
        	\begin{tikzpicture}[scale=1/10 pt, thick,baseline={([yshift=-\the\dimexpr\fontdimen22\textfont2\relax] current bounding box.center)}] 
            		\draw[rounded corners, ultra thick] (-5,-5) rectangle (5,5);
            		\node at (0,5){$\bullet$};
		                \node at (0,0){$b$};
		                \draw (2,5) -- (2,15);
		                \draw (4,5) -- (4,15);
		                \draw[densely dashed] (-3,5) --(-3,15);
		                \draw[ultra thick] (0,-5) -- (0,-15);
        	\end{tikzpicture}
        	=
        	\begin{tikzpicture}[scale=1/10 pt, thick,baseline={([yshift=-\the\dimexpr\fontdimen22\textfont2\relax] current bounding box.center)}]
		            \draw[densely dashed, rounded corners, blue] (-7,-9) rectangle (18,13);
		            \node at (0,13){$\bullet$};
		                \draw[rounded corners, ultra thick] (-5,-5) rectangle (5,5);
		                \node at (0,5){$\bullet$};
		                    \node at (0,0){$b$};
		                    \draw (2,5) -- (2,15);
		                    \draw[densely dashed] (-2,5) -- (-2,15);
		                    \draw (4,5) -- (4,8);
		                    \draw (10,8) arc(0:180:3);
		                    \draw[OhioRed] (10,8) arc(180:360:3);
		                    \draw (16,8) -- (16,15);
		                    \draw[ultra thick] (0,-5) -- (0,-15);
	        \end{tikzpicture}
	        =
	        \sum_{j=1}^n
	        \begin{tikzpicture}[scale=1/10 pt, thick,baseline={([yshift=-\the\dimexpr\fontdimen22\textfont2\relax] current bounding box.center)}] 
		            \draw [densely dashed, rounded corners, blue] (-8,-8) rectangle (33,15);
		            \node at (10,15){$\bullet$};
		                \draw[dashed, rounded corners, orange] (-6,-7) rectangle (18,11);
		                \node at (4,11){$\bullet$};
		                    \draw[rounded corners, ultra thick] (-5,-5) rectangle (5,5);
		                    \node at (0,5){$\bullet$};
		                        \node at (0,0){$b$};
		                        \draw[ densely dashed] (-3,5) -- (-3,17);
		                        \draw (2,5) .. controls(2,11) and (16,11).. (17,17);
		                        \draw [ultra thick] (0,-5) -- (0,-12);
		                    \draw[rounded corners, ultra thick, OhioRed, fill=OhioRed!10] (7,-5) rectangle (17,5);
		                    \node[OhioRed] at (13,5){$\bullet$};
		                        \node[OhioRed] at (12,0){$v_j^*$};
		                        \draw[OhioRed] (9,5) arc (0:180:3);
		                        \draw[OhioRed, ultra thick] (12,-5) ..controls (12,-9) and (15,-9).. (15,-12);
		                        \draw[OhioRed, densely dashed] (21.5,5) arc (0:180:3);              
		                \draw[dashed, rounded corners, orange] (19,-7) rectangle (31,10); 
		                \node at(22,10){$\bullet$};
		                    \draw[rounded corners,ultra thick, OhioRed,fill=OhioRed!10] (20,-5) rectangle (30,5);
		                    \node[purple] at (25,5){$\bullet$};
		                        \node[purple] at (25,0){$v_j$};
		                        \draw[OhioRed] (29,5) ..controls (29,7) and (24,7).. (24,10);
		                        \draw (24,10) ..controls (24,14) and (21,15).. (21,17);
		                        \draw[ultra thick, OhioRed] (25,-5) ..controls (25,-9) and (21,-9).. (21,-12);
	        \end{tikzpicture}
   	 \end{equation}
    	Notice that in the last term of the above equation, the elements inside the orange boxes are elements in $\X$. Thus, the element $\sum_{j=1}^{n} (b\wedge v_j^*)\boxtimes v_j \in \X{\fuse{B_0}\X}$ maps to the 
	arbitrarily chosen $b$ under $\Psi_2.$ This proves the claim for $n = 2$. The inductive step of the proof is the same as the base case, with the single difference that now $b$ has $n$ strings to the right. 
	These details are simple and thus not explained here.
\end{proof}

\begin{notation}\label{notation::Ltwo}
    	From the fgp $B_0$-bimodules ${_lB_r}$ we can perform a type of \textbf{GNS-construction} to obtain a Hilbert space. 
	For arbitrary $i, j\geq 0$, we define $\Ltwo{{_lB_r}} := \Ltwo{_0B_{l+r}, \Tr_{l+r}}$ as the completion of the pre-Hilbert space whose underlying vector space 
	is given by $_0B_{l+r}$ and its complex-valued inner product is given by 
    	$$
		\langle \eta\ |\ \xi \rangle_{\Ltwo{{_0B_{l+r}}}} :=  \Tr_{l+r}\left(\langle \eta\ |\ \xi \rangle_{B_0}\right).
	$$
    	For $\eta\in{_lB_r},$ we write $\eta\cdot\Omega,$ to denote its corresponding image in the complete space $\Ltwo{{_lB_r}}.$ 
	Notice that using the maps $\phi$ from Notation \ref{remark:changeX} we can turn this space into a $B_n$-$B_m$ bimodule. 
	In fact, it is easily seen that $_{B_l}(\Ltwo{{_lB_r}})_{B_r} \cong _{B_l}(p_l\rhd\Ltwo{B_\infty}\lhd p_r)_{B_r}.$
	Furthermore, for the case $B_n,$ where the module is an algebra, $\phi$ induces a $B_n$-unitary isomorphism between the
	standard GNS-construction $\Ltwo{B_n,\Tr_n}$ and our $\Ltwo{{_0B_{(n+n)}}}$.
\end{notation}

The purpose for introducing these Hilbert spaces is to \textbf{compare the relative commutants of the C*-algebras} $B_n$ \textbf{over different representations}; i.e. the relation between $M_0 = B_0'' \subset \mathcal {B}(\Ltwo{B_0})$ and $i_n[B_0]''\subset \mathcal{B} (\Ltwo{B_n}).$ (These von Neumann algebras were introduced earlier in Lemma \ref{lemma::simplicity}.) In doing so, we invoke a lemma communicated by Andr\'{e} Henriques which we simply state here:

\begin{lemma}\label{lemma::Hilbertifying}
   	Let $B$ be a unital $*$-algebra acting on the Hilbert spaces $\mathcal {H}$ and $\mathcal {K}$, and $M = B''\subset \mathcal{B}(K).$ 
	Assume there exists a dense subspace $\mathcal{H}_\circ\subset\mathcal H$ such that for every $\eta,\xi\in\mathcal{H}_\circ,$ there exists 
	$x,y\in \mathcal K$ such that $\langle b\rhd\xi\ |\ \eta \rangle_{\mathcal H} = \langle b\rhd x\ |\ y \rangle_{\mathcal K}$ holds for all $b\in B$. 
	Then the representation of $B$ on $\mathcal H$ extends to a normal representation of $M$ on $\mathcal H$.
\end{lemma}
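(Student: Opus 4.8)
The plan is to build the extension $\rho\colon M\to\mathcal B(\mathcal H)$ one matrix coefficient at a time, transporting normality from $\mathcal K$ to $\mathcal H$ through the hypothesis. Recall first that since $B$ is a unital $*$-subalgebra of $\mathcal B(\mathcal K)$, the bicommutant theorem gives $M=B''=\overline{B}^{\,\sigma\text{-weak}}$, so $B$ is $\sigma$-weakly dense in $M$. For $\xi,\eta\in\mathcal H_\circ$ I would introduce the linear functional $\psi_{\xi,\eta}\colon B\to\C$, $\psi_{\xi,\eta}(b):=\langle\eta\ |\ b\rhd\xi\rangle_{\mathcal H}$. The hypothesis rewrites this as $\psi_{\xi,\eta}(b)=\langle y\ |\ b\rhd x\rangle_{\mathcal K}$ for suitable $x,y\in\mathcal K$, i.e.\ as the restriction to $B$ of a vector functional on $\mathcal B(\mathcal K)$, which is $\sigma$-weakly continuous. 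By $\sigma$-weak density of $B$ in $M$, it therefore extends \emph{uniquely} to a normal functional $\widetilde\psi_{\xi,\eta}$ on $M$, given concretely by $\widetilde\psi_{\xi,\eta}(m)=\langle y\ |\ m\rhd x\rangle_{\mathcal K}$. Uniqueness simultaneously shows that $\widetilde\psi_{\xi,\eta}$ does not depend on the chosen $x,y$, and that $(\xi,\eta)\mapsto\widetilde\psi_{\xi,\eta}$ is sesquilinear (adding the defining coefficients on $B$ and taking the unique normal extension).

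The crux of the argument, and the step I expect to be the main obstacle, is the \emph{uniform} estimate $|\widetilde\psi_{\xi,\eta}(m)|\le C\|m\|\,\|\xi\|_{\mathcal H}\|\eta\|_{\mathcal H}$, with $C$ independent of $\xi,\eta$. The difficulty is that the $a~priori$ bound coming from the hypothesis, $|\widetilde\psi_{\xi,\eta}(m)|\le\|m\|\,\|x\|\|y\|$, involves vectors $x,y$ whose norms are not controlled by $\|\xi\|,\|\eta\|$. To fix this I would use positivity: for $\xi=\eta$ one has $\psi_{\xi,\xi}(b^{*}b)=\|b\rhd\xi\|^{2}_{\mathcal H}\ge 0$, so $\psi_{\xi,\xi}$ is positive on $B$; invoking Kaplansky density for the positive part of the unit ball (positive elements of the unit ball of $M$ are $\sigma$-strong limits of positive elements of the unit ball of $B$) together with normality of $\widetilde\psi_{\xi,\xi}$ shows $\widetilde\psi_{\xi,\xi}\ge 0$ on all of $M$. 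Since $B$ is unital and acts unitally, $\widetilde\psi_{\xi,\xi}(1)=\|\xi\|^{2}_{\mathcal H}$, so Cauchy--Schwarz for the positive normal functional $\widetilde\psi_{\xi,\xi}$ gives $|\widetilde\psi_{\xi,\xi}(m)|\le\|m\|\,\|\xi\|^{2}_{\mathcal H}$, and polarization upgrades this to the desired bound for general $\xi,\eta$. Consequently $(\xi,\eta)\mapsto\widetilde\psi_{\xi,\eta}(m)$ is a bounded sesquilinear form on the dense subspace $\mathcal H_\circ$, which extends to all of $\mathcal H$ and defines $\rho(m)\in\mathcal B(\mathcal H)$ with $\langle\eta\ |\ \rho(m)\xi\rangle_{\mathcal H}=\widetilde\psi_{\xi,\eta}(m)$ and $\|\rho(m)\|\le C\|m\|$.

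It then remains to check that $\rho$ is a normal $*$-homomorphism extending $\pi$. By construction $\rho(b)=\pi(b)$ for $b\in B$, and $\rho$ is linear. For normality, each coefficient $m\mapsto\langle\eta\ |\ \rho(m)\xi\rangle_{\mathcal H}=\widetilde\psi_{\xi,\eta}(m)$ is normal when $\xi,\eta\in\mathcal H_\circ$; for general $\xi,\eta$ the estimate $\|\widetilde\psi_{\xi,\eta}\|\le C\|\xi\|\|\eta\|$ exhibits the coefficient as a norm-limit in the predual $M_{*}$ of the normal functionals attached to approximating vectors in $\mathcal H_\circ$, and $M_{*}$ is norm-closed in $M^{*}$, so every coefficient is normal and hence $\rho$ is normal. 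Self-adjointness and multiplicativity I would deduce from the base case on $B$ by a double density argument: $m\mapsto\rho(m^{*})$ and $m\mapsto\rho(m)^{*}$ are respectively normal and anti-normal and agree on the $\sigma$-weakly dense set $B$, hence on $M$; similarly, fixing $n$, the maps $m\mapsto\rho(mn)$ and $m\mapsto\rho(m)\rho(n)$ are both normal in $m$ and agree on $B$ once $\rho(bn)=\pi(b)\rho(n)$ is known, and the latter identity follows in turn by fixing $b$ and running the same normality-plus-density argument in the variable $n$ starting from the trivial case $n\in B$. This produces the sought normal $*$-representation $\rho\colon M\to\mathcal B(\mathcal H)$ with $\rho|_{B}=\pi$, completing the proof.
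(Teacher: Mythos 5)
The paper never proves this lemma: it is attributed to Andr\'e Henriques and ``simply stated'' without argument, so there is nothing of the authors' to compare against, and your proposal has to stand on its own. It does. The architecture --- extend each matrix coefficient $\psi_{\xi,\eta}$ from $B$ to a normal functional on $M$ via the vector functional $\langle y\,|\,\cdot\,x\rangle_{\mathcal K}$ supplied by the hypothesis, get a uniform bound $|\widetilde\psi_{\xi,\eta}(m)|\le C\|m\|\,\|\xi\|\,\|\eta\|$ from positivity of $\psi_{\xi,\xi}$ plus Cauchy--Schwarz and polarization (legitimate since $\mathcal H_\circ$ is a subspace), assemble the bounded sesquilinear forms into $\rho(m)\in\mathcal B(\mathcal H)$, and then propagate the $*$-homomorphism identities from $B$ to $M$ by separate $\sigma$-weak continuity of the adjoint and of multiplication --- is exactly the right one, and you correctly identified the uniform bound as the only nontrivial point.

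One step deserves a repair. You invoke Kaplansky density for the \emph{positive part} of the unit ball of $B$, but that version of the theorem is proved for C*-subalgebras, and $B$ here is only a unital $*$-algebra of bounded operators. Two standard fixes: (i) replace $B$ by its norm closure $\overline{B}\subset\mathcal B(\mathcal K)$, which is a C*-algebra with $\overline{B}''=M$, to which $\psi_{\xi,\xi}$ extends by norm continuity (it is the restriction of the vector functional $\langle y\,|\,\cdot\,x\rangle_{\mathcal K}$, hence bounded by $\|x\|\|y\|\,\|\cdot\|$) and on which it stays positive, since positive elements of $\overline{B}$ are norm limits of elements $b_n^*b_n$ with $b_n\in B$; or (ii) bypass the positive-part version entirely: take a bounded net $b_\lambda\to m$ in the strong-$*$ topology (ordinary Kaplansky), so that $b_\lambda^*b_\lambda\to m^*m$ strongly and hence $\sigma$-weakly on bounded sets, and conclude $\widetilde\psi_{\xi,\xi}(m^*m)=\lim_\lambda\psi_{\xi,\xi}(b_\lambda^*b_\lambda)\ge 0$ by normality. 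You should also flag that the identity $\widetilde\psi_{\xi,\xi}(1_M)=\|\xi\|^2$ uses that both representations of $B$ are unital (nondegenerate), so that $1_B$ acts as the identity on $\mathcal H$ and $1_M=1_{\mathcal B(\mathcal K)}$; this is implicit in the setting where the lemma is applied. With those two clarifications the argument is complete.
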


We now check that the hypotheses of the previous lemma apply to $B_0$ acting on $\Ltwo{B_0}$ and ${B_0}$ acting on $\Ltwo{_0B_n}.$ 
For an arbitrary $n\geq 0$, any $\xi,\eta\in {_0B_n}\cdot\Omega,$ and any arbitrary $b\in B_0$ we have that
\begin{align*}
	    	\langle b\rhd\xi\ |\ \eta\rangle_{\Ltwo{{_0B_n}}} &= \Tr \left( \langle b\rhd \xi\ |\ \eta  \rangle_{B_0} \right)
	   	 =
		\Tr_n\left(
	    \begin{tikzpicture}[scale=1/15 pt, thick,baseline={([yshift=-\the\dimexpr\fontdimen22\textfont2\relax] current bounding box.center)}] 
	        \draw[rounded corners, densely dashed, blue] (-30,-15) rectangle (31,23);
	            \draw[ultra thick] (-5,25) arc (180:360:5); 
	                \node at (0,23){$\bullet$};
	            \draw[rounded corners, densely dashed, cyan] (-28,16) rectangle (-14,9);
	            \node at (-23,16){$\bullet$};
	                \draw[ultra thick] (-23,5) .. controls (-18,24) and (-7,24) .. (-5,15);
	            \draw[rounded corners, densely dashed, cyan] (29,16) rectangle (13,9);
	            \node at (24,16){$\bullet$};
	                \draw[ultra thick] (23,5) .. controls (22,24) and (7,24) .. (5,15);   
	            \draw[rounded corners, densely dashed, orange] (-9,15) rectangle (9,8);
	                \draw[ultra thick, OhioRed] (-5,15) arc (180:360:5);
	                \node[orange] at (0,15){$\bullet$};
	            \draw[rounded corners, ultra thick] (-25,-5) rectangle (-15,5);
	                \node at (-20,5){$\bullet$};
	                \node at(-20,0){$\xi^*$};
	                \draw[ultra thick] (-20,-5) -- (-20,-20);
	                    \draw[dashed, gray] (-17,5) .. controls (-15,10) and (-7,10) .. (-3,0);
	            \draw[rounded corners, ultra thick] (-5,-10) rectangle (5,0);
	                \node at(0,-5){$b^*$};
	                \node at (0,0){$\bullet$};            
	                \draw[ultra thick] (0,-10) -- (0,-20);
	                    \draw[dashed, gray] (17,5) .. controls (15,10) and (7,10) .. (3,0); 
	            \draw[rounded corners, ultra thick] (25,-5) rectangle (15,5);
	                \node at (20,5){$\bullet$};        
	                \node at(20,0){$\eta$};
	                \draw[ultra thick] (20,-5) -- (20,-20);
	    \end{tikzpicture}
	    \right)\\
	    &=
	    \sum_{k=1}^{K}\Tr_n\left(
	    \begin{tikzpicture}[scale=1/15 pt, thick,baseline={([yshift=-\the\dimexpr\fontdimen22\textfont2\relax] current bounding box.center)}] 
		\draw[ultra thick] (-5,35) arc (180:360:5); 
	                \node at (0,33){$\bullet$};
	        \draw[densely dashed, rounded corners, orange] (-27,-7) rectangle (-4,28);
		\node[orange] at(-16,28){$\bullet$};
	            \draw[OhioRed, ultra thick, rounded corners, fill=OhioRed!10](-15,10) rectangle (-5,20);
	                \node at (-10,15){$w_k^*$};
	                \draw[ultra thick, OhioRed] (-10,10) -- (-10,-20);
	            \draw[OhioRed, ultra thick] (-17,5) .. controls (-17,28) and (-12,32) .. (-7,20);
	                 \draw[rounded corners, ultra thick] (-25,-5) rectangle (-15,5);
	                        \node at(-20,0){$\xi^*$};
	                        \draw[ultra thick] (-20,-5) -- (-20,-20);	            
	                        \draw[rounded corners, ultra thick] (-4,-15) rectangle (4,-7);
	                            \node at(0,-11){$b^*$};
	                            \draw[ultra thick] (0,-15) -- (0,-20);
	        \draw[densely dashed, rounded corners, orange] (27,-7) rectangle (4,28);            
		\node[orange] at(16,28){$\bullet$};
	            \draw[OhioRed, ultra thick, rounded corners, fill=OhioRed!10](15,10) rectangle (5,20);        
	                \node at (10,15){$w_k$};        
	            \draw[ultra thick, OhioRed] (10,10) -- (10,-20);
	            \draw[OhioRed, ultra thick] (17,5) .. controls (17,28) and (12,32) .. (7,20);
%
	                    \draw[rounded corners, ultra thick] (25,-5) rectangle (15,5);
	                        \node at(20,0){$\eta$};
	                        \draw[ultra thick] (20,-5) -- (20,-20);	        
	    \end{tikzpicture}\right)
	=
	\sum_{k =1}^K\Tr_n\left(  \langle b\rhd[w_k\wedge \xi]\ |\ [v_k\wedge \eta] \rangle_{B_0} \right)
\end{align*}
Here, $\{w_k\}_{ k= 1}^K$ is a finite left $B_0$-basis for the fgp bimodule $_nB_0 \cong \X^{\boxtimes n},$ giving us that $1_{B_n} = p_n = \sum_k w_k^*\wedge w_k,$ similarly as in Equation \ref{eqn::one}.

We immediately obtain the following:
\begin{corollary}\label{corollary::representations}
	   For $n\geq 0$, we have an isomorphism of von Neumann algebras 
	   $$\mathcal B(\Ltwo{B_0})\supset M_0 \cong B_0''\subset \mathcal B \left(\Ltwo{_0B_n}\right).$$
\end{corollary}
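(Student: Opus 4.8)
The plan is to read this corollary as a direct application of Lemma~\ref{lemma::Hilbertifying} (Henriques' lemma), using the diagrammatic computation carried out immediately above as the verification of that lemma's hypotheses. Concretely, I would set $B = B_0$, take the reference space $\mathcal K = \Ltwo{B_0}$ on which $M_0 = B_0''$ is defined, the target space $\mathcal H = \Ltwo{{_0B_n}}$, and the dense subspace $\mathcal H_\circ = {_0B_n}\cdot\Omega\subset\mathcal H$, which is dense by Notation~\ref{notation::Ltwo}. The computation preceding the statement rewrites, for all $\xi,\eta\in\mathcal H_\circ$ and all $b\in B_0$, the matrix coefficient $\langle b\rhd\xi\ |\ \eta\rangle_{\Ltwo{{_0B_n}}}$ as a \emph{finite} sum $\sum_{k=1}^{K}\langle b\rhd x_k\ |\ y_k\rangle_{\Ltwo{B_0}}$ of matrix coefficients of $b$ acting on $\Ltwo{B_0}$, where $x_k,y_k\in B_0\cdot\Omega$ are assembled from the fixed finite left $B_0$-basis $\{w_k\}_{k=1}^{K}$ of ${_nB_0}\cong\X^{\boxtimes n}$.

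The only bookkeeping point is that this is a sum rather than a single matrix coefficient, so I would apply Lemma~\ref{lemma::Hilbertifying} with the finite amplification $\mathcal K = \Ltwo{B_0}^{\oplus K}$ as reference space, on which $B_0$ acts diagonally. Since the basis $\{w_k\}_{k=1}^{K}$ is fixed independently of $\xi,\eta$, the integer $K$ is uniform, and a sum over the $K$ summands is exactly one coefficient $\langle b\rhd(x_1,\dots,x_K)\ |\ (y_1,\dots,y_K)\rangle_{\mathcal K}$ on the direct sum. The diagonal von Neumann algebra $B_0''\subset\mathcal B(\mathcal K)$ is spatially isomorphic to $M_0$, so the hypotheses of the lemma are met and we obtain a normal representation $\pi$ of $M_0$ on $\Ltwo{{_0B_n}}$ extending the given $B_0$-action.

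It then remains to upgrade the normal $*$-homomorphism $\pi\colon M_0\to\mathcal B(\Ltwo{{_0B_n}})$ to a von Neumann algebra isomorphism onto $B_0''$. For injectivity I would invoke Lemma~\ref{lemma::simplicity}: $M_0$ is an interpolated free group factor, hence a factor, so $\ker\pi$ is a $\sigma$-weakly closed two-sided ideal and is therefore either $0$ or all of $M_0$; since $\pi$ restricts to the identity on $B_0\neq 0$, it is injective. For surjectivity, $\pi(M_0)$ is a von Neumann algebra (the normal image of one), it contains $\pi(B_0)=B_0$ and hence $B_0''$, while normality together with the weak density of $B_0$ in $M_0$ gives $\pi(M_0)=\pi(\overline{B_0}^{w})\subseteq\overline{B_0}^{w}=B_0''$; thus $\pi(M_0)=B_0''$. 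This yields the asserted isomorphism $M_0\cong B_0''\subset\mathcal B(\Ltwo{{_0B_n}})$.

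The genuine content lives entirely in the diagrammatic identity preceding the statement, which is what renders Henriques' lemma applicable; once that is in hand the corollary is essentially formal, which is why it is stated as an immediate consequence. Accordingly I expect no serious obstacle: the only points demanding care are the amplification device used to convert the finite sum into a single matrix coefficient, and the appeal to factoriality of $M_0$ to guarantee that the normal extension $\pi$ is faithful rather than merely a homomorphism.
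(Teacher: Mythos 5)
Your proposal is correct and follows the paper's own route: the paper verifies the hypotheses of Lemma \ref{lemma::Hilbertifying} via exactly the diagrammatic computation you cite and then states the corollary as an immediate consequence. The two points you flag as needing care (amplifying to $\Ltwo{B_0}^{\oplus K}$ to absorb the finite sum into a single matrix coefficient, and using factoriality of $M_0$ plus normality to upgrade the extension to an isomorphism onto $B_0''$) are left implicit in the paper, and your treatment of them is correct.
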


We close this section with a proposition that will be very useful in Chapter 4:
 \begin{proposition}\label{prop:traceinnerproduct}
	 Let $n = l+r$.  On $_lB_{r}$, one has
	 $$
		 \tr(\langle \xi\ |\ \eta \rangle_{B_{0}}) = \tr( _{B_{0}}\langle \eta, \xi \rangle)
	 $$
  \end{proposition}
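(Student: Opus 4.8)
The plan is to unwind both $B_0$-valued inner products through their definitions in Notation \ref{notation::copies} and transport everything to the single tracial weight $\Phi\circ\mathbb E$ on $B_\infty$, where the two sides become $(\Phi\circ\mathbb E)(\xi^*\wedge\eta)$ and $(\Phi\circ\mathbb E)(\eta\wedge\xi^*)$ and hence agree by traciality (Proposition \ref{prop:weights}). Throughout, $\tr=\tr_0=\tr_{B_0}$.

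First I would record three bookkeeping facts, each also valid with $r$ replaced by $l$. (i) The inclusion $i_r\colon B_0\hookrightarrow B_r$ is trace-preserving, so $\tr_0\circ i_r^{-1}=\tr_r$ on $i_r[B_0]$. (ii) The canonical conditional expectation $\mathbb E_r\colon B_r\to i_r[B_0]$ preserves the trace, i.e.\ $\tr_r\circ\mathbb E_r=\tr_r$. (iii) For $b\in B_r=p_r\wedge B_\infty\wedge p_r$ one has $\delta_x^{\,r}\tr_r(b)=\Tr_r(b)=(\Phi\circ\mathbb E)(b)$, since $p_r b p_r=b$ and $\tr_r=\delta_x^{-r}\Tr_r$.

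Next I check the degrees match: since $\xi,\eta\in{}_lB_r$ we have $\xi^*\in{}_rB_l$, whence $\xi^*\wedge\eta\in{}_rB_r=B_r$ and $\eta\wedge\xi^*\in{}_lB_l=B_l$. Using $\langle\xi\mid\eta\rangle_{B_0}=\delta_x^{\,r}\,\mathbb E_r(\xi^*\wedge\eta)$ (under the identification $B_0\cong i_r[B_0]$) together with (i)--(iii),
\[
\tr_0\bigl(\langle\xi\mid\eta\rangle_{B_0}\bigr)
=\delta_x^{\,r}\,\tr_r\bigl(\mathbb E_r(\xi^*\wedge\eta)\bigr)
=\delta_x^{\,r}\,\tr_r(\xi^*\wedge\eta)
=(\Phi\circ\mathbb E)(\xi^*\wedge\eta),
\]
and identically, from ${}_{B_0}\langle\eta,\xi\rangle=\delta_x^{\,l}\,\mathbb E_l(\eta\wedge\xi^*)$ and the $l$-versions,
\[
\tr_0\bigl({}_{B_0}\langle\eta,\xi\rangle\bigr)
=\delta_x^{\,l}\,\tr_l(\eta\wedge\xi^*)
=(\Phi\circ\mathbb E)(\eta\wedge\xi^*).
\]
Since $\Phi\circ\mathbb E$ is a tracial weight on $B_\infty$ and both $\xi^*\wedge\eta$ and $\eta\wedge\xi^*$ lie in corners on which it restricts to the finite traces $\Tr_r$, resp.\ $\Tr_l$, traciality gives $(\Phi\circ\mathbb E)(\xi^*\wedge\eta)=(\Phi\circ\mathbb E)(\eta\wedge\xi^*)$, and the two displays coincide.

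The one genuine input beyond bookkeeping is (ii), the identity $\tr_r\circ\mathbb E_r=\tr_r$, which I expect to be the main point to justify. It can be quoted as the defining property of the canonical (trace-preserving) conditional expectation onto the subfactor $i_r[M_0]\subset M_r$, i.e.\ $\tau_r\circ\mathbb E_r=\tau_r$, restricted to $B_r$ via $\tr_r=\tau_r|_{B_r}$ (using that $\tr_r$ is the unique trace of $B_r$, Lemma \ref{lemma::simplicity}); here the normalization $\delta_x^{-r}$ in the formula for $\mathbb E_r$ is precisely what makes the identity hold on the nose. Alternatively, one may bypass (ii) altogether and argue purely diagrammatically: both $\tr_0(\langle\xi\mid\eta\rangle_{B_0})$ and $\tr_0({}_{B_0}\langle\eta,\xi\rangle)$ are closed diagrams obtained by capping off the diagrams of $\xi$ and $\eta$, and the canonical spherical structure on $\cat$ (Remark \ref{rmk::biinv}) identifies them as the equal left and right traces of a single morphism, exactly as in the proof of Proposition \ref{prop::Minimality}.
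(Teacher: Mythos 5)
Your proof is correct and follows essentially the same route as the paper's: both sides are unwound via the conditional expectations $\mathbb E_r$, $\mathbb E_l$ into the single tracial weight $\Phi\circ\mathbb E$ on $B_\infty$, where the claim reduces to $(\Phi\circ\mathbb E)(\xi^*\wedge\eta)=(\Phi\circ\mathbb E)(\eta\wedge\xi^*)$ by traciality (Proposition \ref{prop:weights}). The only cosmetic difference is that the paper first transports everything to the representative $_nB_0$ of the isomorphism class (so one inner product is literally multiplication in $B_0$), whereas you work on $_lB_r$ directly; your bookkeeping facts (i)--(iii) are exactly the steps the paper uses implicitly.
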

 \begin{proof}
	We will work on $ _nB_0$.  Note that in this case, if $\xi, \eta \in {_nB_0}$, then $\eta^{*}\xi \in B_{0}$ and $\xi\wedge\eta^{*} \in B_{n}$.   
	If we utilize the maps $\Phi$ and $\mathbb{E}$ from the discussion before Proposition \ref{prop:weights}, we have: 
	$$
		 \tr(\langle \xi\ |\ \eta \rangle_{B_{0}}) = \tr(\eta^{*}\wedge\xi) = (\Phi\circ \mathbb{E})(\eta^{*}\wedge\xi) = (\Phi\circ \mathbb{E})(\xi\wedge\eta^{*}) 
		= \delta_{x}^{n} \cdot \tr_{n}(\xi\wedge\eta^{*})) = \delta_{x}^{n} \cdot \tr(\mathbb{E}_{n}(\xi\wedge\eta^{*})) =  \tr_n( _{B_{0}}\langle \eta, \xi \rangle)
	$$
 \end{proof}

\section{Realizing a RC*TC as Hilbert C*-Bimodules}\label{Sec::Representing}
	In this chapter we will show that the constructions of Section \ref{section:bimodule} induce a full  bi-involutive (strong-)monoidal functor of the form 
	$$
		\ff:\mathcal \cat_x\hookrightarrow \Bim_{\mathsf{fgp}}(B_0).
	$$  
	 As a reminder, $B_0$ is a \textbf{separable simple tracial unital exact C*-algebra}, (see Proposition \ref{lemma::simplicity}) as in Notation  
	\ref{notation::corners}, and every $\X\in\Bim_{\mathsf{fgp}}(B_0)$ is a \textbf{minimal} (Definition \ref{minimal} and Proposition \ref{prop::Minimality}) \textbf{normalized} (Definition \ref{index} and 
	Proposition \ref{prop::normalized}) \textbf{finitely generated projective (fgp)} (Definition \ref{prop:basis} and Proposition \ref{prop:bases}) 
	\textbf{Hilbert C*-bimodule} (Definition \ref{bimodule}).
	 We direct the reader to Section \ref{sec::Bimodules} with special attention to Example \ref{HappyBims} for a more 
	detailed viewpoint on the RC*TC of bimodules $\Bim_{\mathsf{fgp}}(B_0)$. We moreover assume that $\cat$ is an \textbf{essentially small strict RC*TC with simple unit and its canonical 
	bi-involutive and spherical structures}. We moreover \textbf{fix a symmetrically self-dual object} $x  = \overline{x}\in \cat$, (Lemma \ref{lemma::ssdObject}) of dimension $\delta_x > 1$ and we denote 
	by $\cat_x$ the full RC*TC of $\cat$, whose objects are all tensor powers of $x.$

 Let us now start constructing the functor $\ff$. For a given $n\in \N$ we define (see Lemma \ref{lemma::isomorphism})
\begin{equation}
    \ff(x^{\otimes n}) :=\ _{B_0}\left(_0B_n\right)_{B_0} (\cong\ _{B_0}(\X^{\boxtimes_{B_0}n})_{B_0}).
\end{equation}

We shall now define $\ff$ on the morphism spaces. To do so, we make use of the embedding of diagrams coming from $\cat$ into the C*-algebra
 $A_\infty$, as described in Proposition $\ref{prop::AFalgs}$. (We remind the reader that the construction of $A_\infty$ was done in the paragraphs
 preceeding Proposition \ref{prop:prehilbert}.) For $n,m\in\N$ and $f\in\cat(x^{\otimes n}\rightarrow x^{\otimes m}),$ we define the action of $\ff(f)$ on 
diagrams by only using the multiplication internal to the C*-algebra $A_\infty \subseteq B_\infty\subseteq \mathcal B(\X_\infty)$. In the sequel, we will often 
simply write $f$ instead of $FR(f)$, directly identifying diagrams in the category with those in $A_\infty.$

For a diagram $\xi\in {_0B_n}\cap\Gr_{\infty}$, we define 
$$\ff(f)(\xi) := \xi\wedge f,$$
corresponding to the (linear extension of the) following diagrammatic computation:

$$\ff(f): (\Gr_{\infty}\cap\ _0B_n) \longrightarrow (\Gr_{\infty}\cap\ _0B_m)$$
\begin{equation}
    \begin{tikzpicture}[scale=1/13 pt, thick,baseline={([yshift=-\the\dimexpr\fontdimen22\textfont2\relax] current bounding box.center)}] 
        \draw[rounded corners, ultra thick] (-40,-5) rectangle (-30,5);
        \node at (-35, 5){$\bullet$};
            \node at(-35, 0){$\xi$};
            \draw[densely dashed, thick] (-38,5) -- (-38, 15);
            \draw[ultra thick] (-32,5) -- (-32, 15);
                \node at (-30, 12){$n$};
            \draw[ultra thick] (-35,-5) -- (-35,-15);
                \node at (-33,-12){$b$};
    \end{tikzpicture}
            \longmapsto
    \begin{tikzpicture}[scale=1/13 pt, thick,baseline={([yshift=-\the\dimexpr\fontdimen22\textfont2\relax] current bounding box.center)}] 
        \node at (-15,10){$\bullet$};
        \draw[densely dashed, rounded corners, blue] (-25,-11) rectangle (-5,10);
            \draw[ultra thick, rounded corners] (-20,-5) rectangle (-10,5);
            \draw[ultra thick] (-12,5) -- (-12,15);
            \node at (-15,0){$\xi$};
            \draw[densely dashed] (-18,5) -- (-18,15);
            \draw[ultra thick] (-15,-5) -- (-15,-15);
        
        \node at (0,0){$\wedge$};
        
        \node at (14,10){$\bullet$};
        \draw [densely dashed, rounded corners, blue] (5,-11) rectangle (25,10);
            \draw[ultra thick, rounded corners] (22,5) rectangle (12,-5);
            \draw[ultra thick] (18,5) -- (18,15);
            \node at (17,0){$f$};
            \draw[ultra thick] (9,-5) arc(180:360:4);
            \draw[ultra thick] (9,-5) -- (9,15);
    \end{tikzpicture}
    =
        \begin{tikzpicture}[scale=1/18 pt, thick,baseline={([yshift=-\the\dimexpr\fontdimen22\textfont2\relax] current bounding box.center)}] 
        \draw[densely dashed, rounded corners, blue] (13,-25) rectangle (40,27);
        \node at (23,27){$\bullet$};
            \draw[ultra thick] (33,20) .. controls (33,23) and (25,27).. (25,34);
            \draw[rounded corners, ultra thick] (28,10) rectangle (38,20);
                \node at (30,30){$m$};
            \node at(33,15){$f$};
                
            \draw[ultra thick] (21,-5) .. controls (22,5) and (32,6) .. (33,10);
                \node at (26,0){$n$};
            
            \draw[rounded corners, ultra thick] (15,-5) rectangle (25,-15);
            \node at (20,-10){$\xi$};
            \draw[ultra thick] (20,-15) .. controls (20,-20) and (27,-27) .. (27,-32);
            \draw[densely dashed] (17,-5) -- (17,33);
            \node at (19,-5) {$\bullet$};
                \node at (30,-30){$b$};
    \end{tikzpicture}.
\end{equation}
The next result will allow us to extend the definition of $\ff(f)$ to all of $_0B_n$.
\begin{lemma}\label{lemma::bound}
   	For every $n,m\in\N,$ we have that
    	\begin{enumerate}
        	\item the subset $\Gr_\infty \cap {_0B_n}\subset\ {_0B_n}$ is dense. 
        	\item Given $f\in \cat(x^{\otimes n}\rightarrow x^{\otimes m}),$ the map $\ff(f):\Gr_{\infty}\cap\ {_0B_n}\rightarrow \Gr_{\infty}\cap\ {_0B_m}$ is bounded and $||f||_{A_\infty} = ||f||_\cat$ (see Definition \ref{def::C*Cat}), and therefore extends to a bounded (right-adjointable) map\\ $\ff(f)\in\Bim_{\mathsf{fgp}}(B_0)({_0B_n}\rightarrow{_0B_m}),$
        	with $\ff(f)^* = \ff(f^\dagger),$ so $\ff$ is a dagger functor.
    	\end{enumerate}
\end{lemma}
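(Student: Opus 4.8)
The plan is to treat the two assertions separately, using throughout the basic identification that on $_0B_k$ the left $B_0$-valued norm coincides with the ambient operator norm. Indeed, since $\mathbb{E}_0=\id_{B_0}$ and $\delta_x^0=1$, the formula in Notation \ref{notation::copies} gives $_{B_0}\langle\xi,\xi\rangle=\xi\wedge\xi^*$, whence $_{B_0}\|\xi\|^2=\|\xi\wedge\xi^*\|_{B_\infty}=\|\xi\|_{B_\infty}^2$ by the C*-identity in $B_\infty$. For (1), I would note that by Remark \ref{remark::density} the diagrams $\Gr_\infty$ are dense in $B_\infty$ in operator norm, and that left/right multiplication by the projections $p_0,p_n$ is operator-norm continuous; hence $p_0\wedge\Gr_\infty\wedge p_n$ is dense in $p_0\wedge B_\infty\wedge p_n={_0B_n}$ for $\|\cdot\|_{B_\infty}$. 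Since $p_0\wedge\xi\wedge p_n$ is again a finite sum of diagrams, this set lies in $\Gr_\infty\cap {_0B_n}$. By the identification above, $\|\cdot\|_{B_\infty}$ restricts to $_{B_0}\|\cdot\|$ on ${_0B_n}$, so density holds for the left $B_0$-norm and, by the norm-equivalence of Proposition \ref{prop:basis} (and Notation \ref{notation::copies}), for both module norms.

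For (2), I first establish $\|f\|_{A_\infty}=\|f\|_\cat$. On endomorphisms $FR\colon\cat(x^{\otimes n})\xrightarrow{\sim}\Gr_{0,n}$ is a $*$-isomorphism of C*-algebras (Proposition \ref{prop::AFalgs}), hence isometric. For general $f\in\cat(x^{\otimes n}\to x^{\otimes m})$, $FR$ is $*$-preserving and multiplicative, so one may write $FR(f)^*\wedge FR(f)=FR(h)$ for an endomorphism $h\in\cat(x^{\otimes m})$ satisfying $\|h\|_\cat=\|f\|_\cat^2$ (by the C*-identity in $\cat$); since $FR$ is isometric on $\cat(x^{\otimes m})\cong\Gr_{0,m}$, the C*-identity in $A_\infty$ yields
$$\|FR(f)\|_{A_\infty}^2=\|FR(h)\|_{A_\infty}=\|h\|_\cat=\|f\|_\cat^2.$$
Boundedness of $\ff(f)$ is then immediate from submultiplicativity: for $\xi\in\Gr_\infty\cap {_0B_n}$,
$${_{B_0}\|\ff(f)\xi\|}=\|\xi\wedge FR(f)\|_{B_\infty}\le\|\xi\|_{B_\infty}\,\|FR(f)\|_{B_\infty}={_{B_0}\|\xi\|}\cdot\|f\|_\cat,$$
where $\|FR(f)\|_{B_\infty}=\|FR(f)\|_{A_\infty}$ because $A_\infty$ is a C*-subalgebra of $B_\infty$ and subalgebra inclusions are isometric. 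Thus $\ff(f)$ is bounded for the left $B_0$-norm (with norm $\le\|f\|_\cat$), hence for both equivalent norms, and by (1) it extends continuously to a bounded map ${_0B_n}\to {_0B_m}$.

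It remains to see that the extension is a $B_0$-bimodule map, is adjointable, and satisfies $\ff(f)^*=\ff(f^\dagger)$. Left $B_0$-linearity is automatic from associativity of $\wedge$, since $\ff(f)(b\wedge\xi)=b\wedge\xi\wedge FR(f)=b\rhd\ff(f)(\xi)$. Right $B_0$-linearity reduces to the diagrammatic intertwining relation $i_n(b)\wedge FR(f)=FR(f)\wedge i_m(b)$ for $b\in B_0$, expressing that $b$ acts on the "base" strings while $f$ acts on the disjoint categorical strings; I would verify this by a direct picture computation on $\Gr_\infty\cap B_0$ and extend by continuity. Being a bounded right $B_0$-module map between fgp bimodules over the centerless algebra $B_0$, $\ff(f)$ is automatically right-adjointable by \cite[Lemma 1.10]{MR1624182} (cf. Example \ref{HappyBims}); and by minimality (Proposition \ref{prop::Minimality}) the right and left adjoints coincide. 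To identify the adjoint, I would verify
$$\langle\ff(f)\xi\mid\eta\rangle_{B_0}=\langle\xi\mid\ff(f^\dagger)\eta\rangle_{B_0}\qquad(\xi\in {_0B_n},\ \eta\in {_0B_m})$$
on the dense set $\Gr_\infty\cap {_0B_n}$; unwinding both sides via $\langle\cdot\mid\cdot\rangle_{B_0}=\delta_x^k\,\mathbb{E}_k(\cdot^*\wedge\cdot)$, using $FR(f)^*=FR(f^\dagger)$ and the spherical/balanced structure of $\cat$ (Remark \ref{rmk::biinv}) to pull the Frobenius caps past the conditional expectation, both sides become the same closed diagram. This gives $\ff(f)^*=\ff(f^\dagger)$, so $\ff$ is a dagger functor.

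The main obstacle I anticipate is exactly this last adjoint identity: the two sides are computed with different conditional expectations ($\mathbb{E}_m$ versus $\mathbb{E}_n$) and different normalizing powers of $\delta_x$, so matching them is not formal but rests essentially on the sphericality of $\cat$ and on the precise interaction of the $\mathbb{E}_k$ with the Frobenius-reciprocity caps (Propositions \ref{prop::AFalgs} and \ref{RelCommutants}). The intertwining relation securing right $B_0$-linearity is the other place where a careful diagrammatic check, rather than a formal argument, is required.
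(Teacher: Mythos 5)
Your proof is correct and follows essentially the same route as the paper's much terser argument: density of diagrams for (1), boundedness of $\ff(f)$ because it is right multiplication by $FR(f)$ inside the C*-algebra $B_\infty$ with $\|FR(f)\|_{A_\infty}=\|f\|_{\cat}$, and the adjoint identity $\ff(f)^*=\ff(f^\dagger)$ by a diagrammatic computation. (One cosmetic slip: $FR(f)^*\wedge FR(f)$ corresponds to $f^\dagger\circ f\in\cat(x^{\otimes n})$ rather than an endomorphism of $x^{\otimes m}$, which does not affect the argument.)
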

\begin{proof}
Statement (1) was proven above. (See Remark \ref{remark::density} and the Proposition preceeding it.) We shall now prove (2). This proof can also be found in Proposition 4.16 in \cite{HaPeI}, but we give it here for the convenience of the reader. By definition, on diagrams we have that $\ff(f)(-) = (-)\wedge (f)$. The bound is now obvious since this map is given by multiplication in the Banach algebra $A_\infty,$ and it moreover corresponds to the operator of right creation by $f$ in the full Fock space picture. (Definition \ref{dfn::Fock} and Proposition \ref{prop::intertwiner}.)
A diagrammatic computation will reveal that $\langle \ff(f)\xi\ |\ \eta \rangle_{B_0} = \langle \xi\ |\ \ff(f^\dagger)\eta \rangle_{B_0}.$ Thus, $\ff(f)$ is right-adjointable with $\ff(f)^* = \ff(f^\dagger).$
\end{proof}

We now describe the \textbf{tensorator and unit} data for $\ff$:
$$\iota^{\ff}: \one_{\Bim_{\mathsf{fgp}}(B_0)} \longrightarrow \ff(\one_{\cat}),$$
given by the identity on $B_0$. For arbitrary $n,\ m\geq 0$ we define
$$\mu^{\ff}_{n,m} : {_0B_n}\fuse{B_0}{_0B_m}\longrightarrow {_0B_{(n+m)}},$$
by extending the following diagramatic composition (and omitting the unitors):
\begin{equation}
    	\begin{tikzpicture}[scale=1/15 pt, thick,baseline={([yshift=-\the\dimexpr\fontdimen22\textfont2\relax] current bounding box.center)}] 
    	        \draw[rounded corners, ultra thick] (22,-5) rectangle (32,-15);
	        \node at (27,-10){$\xi$};
	        \draw[ultra thick] (27,-15) --(27,-22);
	        \draw[densely dashed] (24,-5) -- (24,5);
		\draw[ultra thick] (30,-5) --(30,5);
	        \node at (27,-5) {$\bullet$};
	        \node at (30,-20){$b$};
		\node at (33,3){$n$};
	        \node at (42,-10) {$\boxtimes$};
	        \draw[rounded corners, ultra thick] (52,-5) rectangle (62,-15);
	        \node at (57,-10){$\eta$};
	        \draw[ultra thick] (57,-15) -- (57,-22);
		\draw[ultra thick](60,-5) --(60,5);
	        \draw[densely dashed] (54,-5) -- (54,5);
	        \node at (57,-5) {$\bullet$};
	        \node at (61,-20){$b'$};
		\node at (64,3){$n'$};
	\end{tikzpicture}
             	\longmapsto
		\hspace{0.5cm}
    	\begin{tikzpicture}[scale=1/15 pt, thick,baseline={([yshift=-\the\dimexpr\fontdimen22\textfont2\relax] current bounding box.center)}] 
	             \draw[densely dashed, rounded corners, blue] (-18,-8) rectangle (13,8);
	                        \draw[rounded corners, ultra thick] (-15,-5) rectangle (-5,5); 
	                        \node at(-10,-0){$\xi$};
	                        \node at (-10,8){$\bullet$};
	                        \node at (5,0){$\eta$};
	                        \draw[rounded corners, ultra thick] (0,-5) rectangle (10,5); 
	                        
	                \draw[densely dashed] (-14,5) -- (-14,22);
	                \draw[ultra thick] (-10,-5) .. controls (-10,-10) and (-4,-11) .. (-4,-18);
	                \draw[ultra thick] (5,-5) .. controls (5,-9) and (0,-14) .. (0,-18);
	                \node at (10,-16){$(b+b')$};

	                    \draw[ultra thick] (-7,5) ..controls (-7,15) and (-4,17).. (-4,22);
	                    \draw[ultra thick] (8,5) ..controls (8,12) and (-3,15).. (-2,22);
	                    \node at (9,20){($n+m)$};
	    \end{tikzpicture}.
\end{equation}
Notice that if $f$ and $g$ are morphisms in $\cat,$ then $\mu^{\ff}(f,g) = {\id_{\one}} \otimes f \otimes g$ on diagrams. So far, in defining $\mu^{\ff}_{n,m}$ we have only used the (unitary) associators and 
the unitors in $\Bim_{\mathsf{fgp}}(B_0)$, so naturality in $n$ and $m$ automatically holds, alongside with unitarity. Therefore, by Lemma \ref{lemma::bound}, $\mu^{\ff}_{n, m}$ extends to a unitary isomorphism 
${_0B_n}\fuse{B_0}{_0B_m} \xrightarrow[]{\sim} {_0B_{(n+m)}},$ which is again natural in both $n$ and $m.$ 
Similarly, we can also extend $\mu^{\ff}(f,g)$ from diagrams to all of ${_0B_n}\fuse{B_0}{_0B_m},$ since this map is just a series of vertical and horizontal compositions in $\cat$.
It is clear that bi-adjointability for $\mu^{\ff}(f,g)$ holds on diagrams, so the extension of $\mu^\ff$ is also bi-adjointable. This therefore establishes the following Proposition.
\begin{proposition}
	The functor $(\ff,\mu^{\ff},\iota^{\ff})$ is strong monoidal.
\end{proposition}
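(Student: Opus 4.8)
The plan is to verify the two coherence axioms that upgrade the data $(\ff,\mu^{\ff},\iota^{\ff})$ to a strong monoidal functor, namely the associativity constraint and the left/right unit constraints. Everything else is already in hand: the excerpt has shown that each $\mu^{\ff}_{n,m}\colon {_0B_n}\fuse{B_0}{_0B_m}\to{_0B_{(n+m)}}$ is a unitary isomorphism natural in $n$ and $m$, and that $\iota^{\ff}=\id_{B_0}$ is an isomorphism onto $\ff(\one_{\cat})={_0B_0}=B_0$. Because $\cat$, and hence $\cat_x$, may be taken strict, the associators and unitors on the source side are identities, so both coherence diagrams reduce to identities involving only the structural isomorphisms $\alpha,\lambda,\rho$ of fusion in $\Bim_{\mathsf{fgp}}(B_0)$ together with $\mu^{\ff}$ and $\iota^{\ff}$.

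First I would check associativity: for $n,m,k\geq 0$ I must show that the two unitaries
\[
\mu^{\ff}_{n+m,k}\circ\bigl(\mu^{\ff}_{n,m}\fuse{B_0}\id_{{_0B_k}}\bigr)
=\mu^{\ff}_{n,m+k}\circ\bigl(\id_{{_0B_n}}\fuse{B_0}\mu^{\ff}_{m,k}\bigr)\circ\alpha_{{_0B_n},{_0B_m},{_0B_k}}
\]
coincide as maps from a parenthesization of ${_0B_n}\fuse{B_0}{_0B_m}\fuse{B_0}{_0B_k}$ to ${_0B_{(n+m+k)}}$. Both sides are bounded unitaries, so by the density established in Lemma \ref{lemma::bound} it suffices to test equality on elementary tensors of diagrams $\xi\boxtimes\eta\boxtimes\zeta$ with $\xi,\eta,\zeta\in\Gr_{\infty}$. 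On such diagrams $\mu^{\ff}$ acts by placing the three diagrams side by side and merging their bottom (domain) strings, while $\alpha$ of fusion merely re-brackets the relative tensor product over $B_0$; I would carry out the short diagrammatic computation showing both composites yield the same concatenated diagram in $\Gr_{\infty}\cap{_0B_{(n+m+k)}}$, invoking $\Psi_n$ from Lemma \ref{lemma::isomorphism} to identify $\X^{\boxtimes_{B_0}n}$ with ${_0B_n}$ compatibly with this concatenation.

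Next I would treat the unit axioms. Since $\ff(\one_{\cat})={_0B_0}=B_0=\one_{\Bim_{\mathsf{fgp}}(B_0)}$ and $\iota^{\ff}=\id_{B_0}$, the right-unit axiom asks that $\mu^{\ff}_{n,0}\circ(\id_{{_0B_n}}\fuse{B_0}\iota^{\ff})$ equal the right unitor $\rho_{{_0B_n}}$ of fusion, and symmetrically on the left. From the diagrammatic definition, $\mu^{\ff}_{n,0}$ fuses a diagram of ${_0B_n}$ with one in ${_0B_0}$ (an element carrying no strings on top), so it reduces to the canonical identification ${_0B_n}\fuse{B_0}B_0\cong{_0B_n}$, which is exactly the unitor; this is a one-line diagrammatic identity, and the left case is identical. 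The main obstacle I anticipate is purely bookkeeping in the associativity step: matching the threefold side-by-side string-merging of $\mu^{\ff}$ against the concrete associator of the balanced tensor product over $B_0$ and confirming it is genuinely associative at the level of the fusion rather than merely up to the ambient associator. However, since $\mu^{\ff}$ was built using only the \emph{unitary} associator and unitors of $\Bim_{\mathsf{fgp}}(B_0)$, this is an instance of Mac Lane-type coherence for fusion, and the verification on the dense subspace of diagrams is routine.
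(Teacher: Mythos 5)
Your proposal is correct and follows essentially the same route as the paper: the paper likewise relies on the diagrammatic definition of $\mu^{\ff}$, the fact that it is built only from the unitary associators and unitors of $\Bim_{\mathsf{fgp}}(B_0)$ (so unitarity, naturality, and the coherence identities come for free from concatenation of diagrams plus Mac Lane coherence), and extension by continuity via Lemma \ref{lemma::bound} from the dense subspace of diagrams. The only difference is one of explicitness — the paper asserts the coherence conditions implicitly in the paragraph preceding the proposition, whereas you spell out the associativity and unit triangles and verify them on elementary tensors of diagrams, which is a welcome but routine elaboration of the same argument.
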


\begin{remark}\label{rmk::faithful}
	A direct diagrammatic argument demonstrates the functor $\ff$ is faithful. This, however, is a general fact about strong-monoidal dagger functors between rigid C*-categories with simple unit. 
\end{remark}

We can further unveil more structure on the functor $\ff$. 

\begin{proposition}
	The dagger tensor functor $(\ff,\mu^\ff,\iota^\ff)$ is bi-involutive with structure maps
	\begin{align*}
		\chi_n: \ff(\overline{x^{\otimes n}}) = &{\, _0B_n}\longrightarrow \overline{\ff(x^{\otimes n})} = \overline{{_0B_n}}\\	
		&\eta\longmapsto \phi^{-n}(\eta^*),
	\end{align*}
	for every $n\in\N$.
\end{proposition}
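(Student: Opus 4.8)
The plan is to verify the three conditions packaged into Definition \ref{dfn::BiInvolutive}: that each $\chi_n$ is a natural unitary isomorphism $\ff(\overline{x^{\otimes n}})\to\overline{\ff(x^{\otimes n})}$, and that the family $\{\chi_n\}$ is \emph{monoidal} and \emph{involutive}, i.e.\ makes the two diagrams of Equation \eqref{eqn::Chi} commute. Since $(\ff,\mu^\ff,\iota^\ff)$ has already been shown to be a strong monoidal dagger functor, only the data $\chi_n$ and these compatibilities remain. First I would record two preliminary identifications. Because we fixed $x=\overline x$ (Lemma \ref{lemma::ssdObject}), the conjugate object $\overline{x^{\otimes n}}=(x^{\otimes n})^\vee$ is again $x^{\otimes n}$, so $\ff(\overline{x^{\otimes n}})={}_0B_n=\ff(x^{\otimes n})$ and the target $\overline{\ff(x^{\otimes n})}=\overline{{}_0B_n}$ is the conjugate bimodule of Example \ref{HappyBims}. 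That $\chi_n$ is a well-defined unitary $B_0$-bimodule isomorphism (hence a unitary in the dagger category $\Bim_{\mathsf{fgp}}(B_0)$) is exactly the content of the conjugation map $\xi\mapsto\overline{\phi^{-n}(\xi^*)}$ from Notation \ref{remark:changeX}, where $\phi^n\colon{}_0B_n\to{}_nB_0$ is the dot-pushing unitary; conjugate-linearity of $*$ combined with the conjugate-linearity built into $\overline{{}_0B_n}$ makes $\chi_n$ a genuine ($\C$-linear) morphism. The second, and crucial, observation is that under the Frobenius reciprocity embedding $FR$ of Proposition \ref{prop::AFalgs}, the involution $*$ on $\Gr_\infty$ restricts on the image of $\cat$-morphisms to the categorical conjugation of Definition \ref{def::involution}, both being $g\mapsto(g^\dagger)^\vee=(g^\vee)^\dagger$; thus $FR(\overline f)=FR(f)^*$.

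Naturality would then be the identity $\chi_m\circ\ff(\overline f)=\overline{\ff(f)}\circ\chi_n$ for every $f\in\cat(x^{\otimes n}\to x^{\otimes m})$. Evaluating both sides on $\eta\in{}_0B_n$, using $\ff(g)(\zeta)=\zeta\wedge FR(g)$ and $\overline{\ff(f)}(\overline\zeta)=\overline{\ff(f)(\zeta)}$ from Example \ref{HappyBims}, this reduces (after stripping the outer bar) to the diagrammatic equation
\begin{equation*}
\phi^{-m}\bigl((\eta\wedge FR(\overline f))^*\bigr)=\phi^{-n}(\eta^*)\wedge FR(f).
\end{equation*}
Here the anti-multiplicativity of $*$, the relation $FR(\overline f)^*=FR(f)$ from the previous paragraph, and the compatibility of the dot-pushing maps $\phi$ with left/right multiplication by elements of $A_\infty$ make both sides coincide; I would carry this out as a short picture computation.

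For monoidality (the left diagram of \eqref{eqn::Chi}, with $a=x^{\otimes n}$, $b=x^{\otimes m}$) and involutivity (the right diagram, with $c=x^{\otimes n}$, which collapses using $\overline{\overline\xi}=\xi$ from Notation \ref{remark:changeX} to the single equation $\overline{\chi_n}\circ\chi_n\circ\ff(\varphi_{x^{\otimes n}})=\varphi_{\ff(x^{\otimes n})}$), the strategy is the same: both composites are $B_0$-bimodule maps between the relevant iterated conjugates and fusions, and by Lemma \ref{lemma::isomorphism} together with the explicit forms of $\mu^\ff$, of the tensorators $\nu$ on each side, and of the canonical spherical pivotal structure $\varphi$ (Remark \ref{rmk::biinv}), each reduces to a finite diagrammatic identity that I would check on the dense subspaces $\Gr_\infty\cap{}_0B_k$ and extend by continuity. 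The underlying reason all of these hold is structural: both bi-involutive structures---on $\cat_x$ and on $\Bim_{\mathsf{fgp}}(B_0)$---are the canonical ones manufactured from one and the same datum, the unitary dual functor on $\cat$, which $\ff$ together with $\chi$ transports faithfully.

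I expect the main obstacle to be the monoidal square, because it is where the two order-reversal conventions must be reconciled: the tensorator $\nu^{\cat_x}$ sends $\overline{x^{\otimes n}}\otimes\overline{x^{\otimes m}}$ to $\overline{x^{\otimes m}\otimes x^{\otimes n}}$ (the reversal in $\cat^{\mathrm{mp}}$), whereas on the bimodule side the reversal is encoded both in the passage ${}_0B_\bullet\leftrightarrow{}_\bullet B_0$ effected by the $\phi$'s and in the anti-multiplicativity of $*$. Matching these two reversals---tracking exactly how the $n+m$ strings are permuted and which dots are pushed---is the delicate bookkeeping step; once the pictures are aligned the required equality is forced by sphericity.
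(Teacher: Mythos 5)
Your proposal is correct and follows essentially the same route as the paper, which likewise deduces that each $\chi_n$ is a unitary bi-adjointable bimodule isomorphism from the properties of the dot-pushing maps $\phi$ in Notation \ref{remark:changeX} and then disposes of naturality, monoidality, and the involutivity square of \eqref{eqn::Chi} as a direct diagrammatic verification. The paper's proof is only a few sentences and leaves those checks implicit; you have simply spelled out the reductions (the role of $FR$ intertwining $*$ with $(\cdot^\dagger)^\vee$, density of $\Gr_\infty\cap{}_0B_k$, and sphericity) that the paper calls ``straightforward.''
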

\begin{proof}
	Clearly each $\chi_n$ is a bounded
	 bi-adjointable bimodule isomorphism, since it is the composition of such maps. That $\{ \chi_n \}$ is a monoidal natural transformation for which Diagrams \eqref{eqn::Chi} 
	commute is a straightforward diagrammatic verification, and showing each $\chi_n$ is unitary follows from the properties of the map $\phi$ introduced in Notation
	 \ref{remark:changeX}  and simple diagrammatic considerations.
\end{proof}

We shall postpone the proof that this functor is full until the end of the next chapter, where we develop the tools necessary to prove it.

\section{Hilbertifying C*-Bimodules}\label{section:Hilbert}
	For the reminder of this article, we will further restrict our scope to the category $\Bim_{\mathsf{fgp}}^{\mathsf{tr}}(B_0) 
\subset \Bim_{\mathsf{fgp}}(B_0)$ \textbf{ consisting of those bimodules which are compatible with the trace} \cite[Definition 5.7]{MR1624182}; i.e., 
$\Y\in \Bim_{\mathsf{fgp}}^{\mathsf{tr}}(B_0)$ if and only if for each $\xi,\eta\in\Y$ the following identity holds:
\begin{equation}\label{eqn::ComTrace}
	\tr_{B_0}\left( \langle \eta \ |\  \xi\rangle_{B_0} \right) = \tr_{B_0}\left(\, _{B_0}\langle \xi, \eta \rangle\right).
\end{equation}
Notice this subcategory is still a RC*TC, since the fusion of bimodules compatible with the trace is again compatible.

The goal of this section is to construct a fully-faithful bi-involutive strong monoidal functor of the form:
$$_{M_0}(-\fuse{B_0}\Ltwo{B_0})_{M_0}: \Bim_{\mathsf{fgp}}^{\mathsf{tr}}(B_0)\longrightarrow \Bim_{\mathsf{bf}}^{\mathsf{sp}}(M_0)$$
\begin{align}
    	\Y\longmapsto\ _{M_0}(\Y\fuse{B_0}\Ltwo{B_0})_{M_0}\\
   \	\Bim_{\mathsf{fgp}}^{\mathsf{tr}}(B_0) (\Y\rightarrow \mathcal{Z})\owns f\mapsto f\boxtimes\id_{\Ltwo{B_0}}
\end{align}
(See Example \ref{BifiniteBimodules} for a description of $\Bim_{\mathsf{bf}}^{\mathsf{sp}}(M_0.)$)


We shall now explain how to obtain the $M_0$-bimodules. For a fixed but arbitrary $\Y\in\Bim_{\mathsf{fgp}}^{\mathsf{tr}}(B_0)$, consider the \textit{algebraic} $B_0$-balanced \textit{tensor product} $\Y\fuse{B_0}\Ltwo{B_0}$ endowed with the (densely-defined) \textbf{sesquilinear form}:
\begin{equation}
    	\langle \xi\boxtimes a\Omega,\ \eta\boxtimes b\Omega\rangle := \langle \langle \eta\ |\ \xi \rangle_{B_0}\rhd a\Omega,\ b\Omega\rangle_{\Ltwo{B_0}} := \tr_{B_0}\left(\ b^*\langle \eta\ |\ \xi\rangle_{B_0} a\ \right),  \hspace{2mm} \text{for all } a,b\in B_0, \text{and } \xi,\eta\in\Y.
\end{equation}
Notice that $\langle \xi\boxtimes a\Omega,\ \eta\boxtimes \Omega\rangle = \langle [\xi\lhd a]\boxtimes \Omega, [\eta\lhd b]\boxtimes \Omega\rangle.$ We can moreover endow this space with a right $B_0$-action given by $$(\xi\boxtimes a\Omega)\lhd b := (\xi\boxtimes ab\Omega).$$

This action is bounded. Indeed, for arbitrary $\xi\in\Y$ and $b\in B_0,$ we have 
$$||\xi\boxtimes b\Omega ||_2^2 = \tr_{B_0}( \langle \xi\lhd b\ |\ \xi\lhd \rangle_{B_0} ) = \tr_{B_0}(\, _{B_0}\langle \xi\lhd b, \xi\lhd b\rangle ) \leq ||b||^2\cdot \tr_{B_0} (\langle \xi\ |\ \xi\rangle_{B_0}) = ||b||^2\cdot ||\xi\boxtimes\Omega ||_2^2.$$
Here, we used the repeatedly used the compatibility with the trace condition, Equation \eqref{eqn::ComTrace}, together with  Lemma 1.26 in \cite{MR1624182}.

\begin{remark}
	The algebraic $B_0$-balanced tensor product space $\Y\fuse{B_0}\Ltwo{B_0},$ is a Hilbert space so it needs no further completion. Indeed, since $\Y$ is fgp, it then has a finite 
	right $B_0$-basis $\{u_i\}_{i=1}^{n}.$ By observing that $ p = \sum_{i} |u_i\rangle\langle u_i | $ is a self-adjoint idempotent, we can then realize $\Y\cong p[{B_0}] \subset {B_0}^{\oplus n}$
	as right $B_0$-modules via 
		$$y = \sum_i u_i \lhd \langle u_i\ |\ y\rangle_{B_0} \longmapsto \left(\sum_{j=1}^n \langle u_i\ |\ u_j \rangle_{B_0}\cdot\langle u_j\ |\ y\rangle_{B_0}\right)_{i=1}^n.$$ 
	We now observe that the orthogonal projection 
	$p\boxtimes\id_{\Ltwo{B_0}}\in \mathcal B^* \left({B_0}^{\oplus n}\fuse{B_0}\Ltwo{B_0}\right)$ realizes $\Y\fuse{B_0}\Ltwo{B_0}$ as a (necessarily closed) direct summand of the
	Hilbert space ${B_0}^{\oplus n}\fuse{B_0}\Ltwo{B_0} \cong \Ltwo{B_0}^{\oplus n}.$
\end{remark}

We can therefore extend this action from $\Y\boxtimes\Omega$ to the whole Hilbert space $\Y\fuse{B_0}\Ltwo{B_0}.$ To see this \textbf{right }$B_0$-\textbf{action extends to a (normal) right action of }$M_0$, we use Lemma \ref{lemma::Hilbertifying}, observing that for the dense subspace  $\Y\boxtimes\Ltwo{B_0}$ spanned by the vectors $\xi\otimes b\Omega$, the following identity $\langle\xi\boxtimes a\Omega\lhd b,\  \eta\boxtimes c\Omega \rangle = \tr(c^*\langle \eta\ |\ \xi\rangle_{B_0} ab) = \langle \left(\langle\eta\ | \ \xi\rangle_{B_0}\rhd a\Omega\right)\lhd b,\ c\Omega\rangle$ holds. 

There is a densely defined \textbf{left }$B_0$-\textbf{action} given by 
$$
	b\rhd(\xi\boxtimes c\Omega) := (b\rhd\xi)\boxtimes (c\Omega).
$$ 
In the following we will show that this action is bounded on $\Y\fuse{B_0}\Ltwo{B_0}$ and how to extend this to a normal left $M_0$-action. The distinctive elements that make this extension work are \textbf{the existence of a a positive definite trace on the \textit{right-adjointable operators}} and a \textbf{faithful, trace-preserving conditional expectation}, which we shall respectively denote by:
\begin{align*}
    	\Tr:\mathcal B^*(\Y)\longrightarrow \C \hspace{10 mm}  \mathcal E: \mathcal B^*(\Y)\longrightarrow B_0, 
\end{align*}

\begin{remark}
    	Recall that since $\Y$ is fgp, it then follows that $\mathcal B^*(\Y) = \mathcal K^*(\Y) = \text{FinRan} (\Y).$ We can therefore characterize right-adjointable operators as finite combinations of \textit{ket-bras}, $|\xi\rangle\langle\eta|,$ for $\xi,\eta\in\Y.$
\end{remark}

In lights of the above Remark, we can explicitly define these maps as the $\C$-linear extensions of
\begin{align}
    \Tr\left(\ |\xi\rangle\langle\eta|\ \right) :=  \frac{1}{\sqrt{\text{Ind}(\Y)}}\cdot \tr_{B_0}\left[\ \langle \eta\ |\ \xi\rangle_{B_0} \right]\hspace{3mm} \text{ and }\hspace{3mm}
    \mathcal E\left(\ |\xi\rangle\langle\eta|\ \right)  := \frac{1}{\sqrt{\text{Ind}(\Y)}}\cdot{_{B_0}\langle\xi,\eta \rangle}.
\end{align}

Recall that since $\Y\in\Bim_{\mathsf{fgp}}^{\mathsf{tr}}(B_0),$ it is already normalized (see Definition \ref{index} and Example \ref{HappyBims}), so that $\text{l-Ind}(\Y) =  \sqrt{\text{Ind}(\Y)} = \text{r-Ind}(\Y)\in Z(B_0)^+ = \R_{\geq0}.$

We provide a summary of the properties of $\Tr$ and $\mathcal E$ in the form of a proposition:
\begin{proposition}\label{prop::HilbTrace}
    The following statements hold on $\Bim_{\mathsf{fgp}}^{\mathsf{tr}}(B_0)$:
    \begin{enumerate}
        \item The map $\Tr$ is tracial;  
        \item The trace $\Tr$ is positive definite;
        \item The mapping $\mathcal E$ is  $B_0$-bilinear and unital;
        \item In fact, $\mathcal E$ is a fully faithful map; and
        \item The conditional expectation $\mathcal E$ preserves the trace, and therefore, for each $\xi,\eta\in \Y,$ and each $a,b\in B_0$ 
        \begin{equation}\label{eqn::MatchingStates}
            \langle \xi\boxtimes a\Omega,\ \eta\boxtimes b\Omega\rangle = \tr\left(\ _{B_0}\langle \xi\lhd a, \ \eta\lhd b \rangle\ \right) = \langle \Omega,\ _{B_0}\langle \eta\lhd b,\ \xi\lhd a \rangle\Omega \rangle.
        \end{equation}
    	\end{enumerate}
   	 Notice that by Lemma \ref{lemma::simplicity}, $B_0$ has a unique trace, so for some $\lambda > 0,$  we have that $\Tr|_{B_0} = \lambda\cdot\tr_{B_0}.$
	Since $\id_\Y =   \sum_j | v_j\rangle\langle v_j |$ for a left $B_0$-basis $\{v_j\}_{j =1}^{n}$ for $\Y$, $\lambda = \sqrt{\text{Ind}(\Y)}$.
\end{proposition}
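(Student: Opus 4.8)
The plan is to reduce every assertion to rank-one operators and then push the work onto the two defining hypotheses of $\Bim_{\mathsf{fgp}}^{\mathsf{tr}}(B_0)$: minimality and trace-compatibility. Since $\Y$ is fgp, the Remark above gives $\mathcal B^*(\Y)=\mathcal K^*(\Y)=\mathrm{FinRan}(\Y)$, which is the linear span of the ket-bras $|\xi\rangle\langle\eta|\colon\zeta\mapsto \xi\lhd\langle\eta\ |\ \zeta\rangle_{B_0}$. Throughout I would use the three algebraic identities $|\xi\rangle\langle\eta|\circ|\zeta\rangle\langle\omega|=|\xi\lhd\langle\eta\ |\ \zeta\rangle_{B_0}\rangle\langle\omega|$, $(|\xi\rangle\langle\eta|)^*=|\eta\rangle\langle\xi|$, and $a\rhd|\xi\rangle\langle\eta|\lhd b=|a\rhd\xi\rangle\langle b^*\rhd\eta|$, each of which is immediate from the defining properties of the two $B_0$-valued inner products (Definition \ref{dfn::bimodule}). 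Because $\Tr$ and $\mathcal E$ are defined by $\C$-linear extension, items (1) and (3) are then pure bookkeeping: for (1), evaluating $\Tr(ST)$ and $\Tr(TS)$ on ket-bras collapses both to $\tfrac{1}{\sqrt{\mathrm{Ind}(\Y)}}\,\tr_{B_0}$ of a product of two inner products, and these agree by traciality of $\tr_{B_0}$; for (3), $B_0$-bilinearity of $\mathcal E$ follows from the third identity together with module-linearity of ${_{B_0}\langle-,-\rangle}$, and unitality is the computation $\mathcal E(\id_\Y)=\tfrac{1}{\sqrt{\mathrm{Ind}(\Y)}}\sum_i {_{B_0}\langle u_i,u_i\rangle}=\tfrac{1}{\sqrt{\mathrm{Ind}(\Y)}}\,\text{r-Ind}(\Y)=1_{B_0}$, using a right basis $\{u_i\}$ and normality of $\Y$ (Definition \ref{index}).

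For positivity and faithfulness I would work with a fixed basis rather than arbitrary sums of ket-bras. From the right basis $\{u_i\}$ one has $\id_\Y=\sum_i|u_i\rangle\langle u_i|$, hence $T=\sum_i|Tu_i\rangle\langle u_i|$ for every $T$ and $\Tr(T^*T)=\tfrac{1}{\sqrt{\mathrm{Ind}(\Y)}}\sum_i\tr_{B_0}\!\big(\langle Tu_i\ |\ Tu_i\rangle_{B_0}\big)$, a sum of traces of positive elements, hence $\ge 0$; it vanishes exactly when each $\langle Tu_i\ |\ Tu_i\rangle_{B_0}=0$, i.e. each $Tu_i=0$, i.e. $T=0$ (the $u_i$ generate $\Y$). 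This gives (2). For the companion statement (4) about $\mathcal E$, the natural formula $\mathcal E(T)=\tfrac{1}{\sqrt{\mathrm{Ind}(\Y)}}\sum_i {_{B_0}\langle Tu_i,u_i\rangle}$ is not visibly positive, and here I would invoke minimality of $\Y$ (Definition \ref{minimal}, with ratio $1$ since $\Y$ is normalized) to rewrite it as $\tfrac{1}{\sqrt{\mathrm{Ind}(\Y)}}\sum_j\langle v_j\ |\ Tv_j\rangle_{B_0}$ for a left basis $\{v_j\}$; then $\mathcal E(T^*T)=\tfrac{1}{\sqrt{\mathrm{Ind}(\Y)}}\sum_j\langle Tv_j\ |\ Tv_j\rangle_{B_0}$ is a positive element of $B_0$ vanishing iff $T=0$. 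I expect this passage, converting the left-inner-product expression for $\mathcal E$ into the manifestly positive right-inner-product expression, to be the main technical point, and it is precisely where minimality is used.

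Statement (5) is where trace-compatibility enters. Comparing the two defining formulas on a ket-bra, $\tr_{B_0}\!\big(\mathcal E(|\xi\rangle\langle\eta|)\big)=\tfrac{1}{\sqrt{\mathrm{Ind}(\Y)}}\tr_{B_0}({_{B_0}\langle\xi,\eta\rangle})$ while $\Tr(|\xi\rangle\langle\eta|)=\tfrac{1}{\sqrt{\mathrm{Ind}(\Y)}}\tr_{B_0}(\langle\eta\ |\ \xi\rangle_{B_0})$, and these coincide exactly by the compatibility identity \eqref{eqn::ComTrace}; hence $\tr_{B_0}\circ\mathcal E=\Tr$. The chain \eqref{eqn::MatchingStates} then follows by unwinding the definition of the sesquilinear form on $\Y\fuse{B_0}\Ltwo{B_0}$, absorbing $a,b$ into the vectors via $\xi\boxtimes a\Omega=(\xi\lhd a)\boxtimes\Omega$, and using the GNS identity $\langle\Omega,\,c\,\Omega\rangle_{\Ltwo{B_0}}=\tr_{B_0}(c)$ together with \eqref{eqn::ComTrace} to move between the left and right $B_0$-valued inner products.

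Finally, for the scalar: the left action embeds $B_0$ as a unital $*$-subalgebra of $\mathcal B^*(\Y)$ (Definition \ref{dfn::bimodule}), so $\Tr|_{B_0}$ is a positive trace on the simple C*-algebra $B_0$ and, by uniqueness of the trace (Lemma \ref{lemma::simplicity}), equals $\lambda\cdot\tr_{B_0}$ for a single $\lambda>0$. I would determine $\lambda$ by evaluating at the identity through the resolution $\id_\Y=\sum_j|v_j\rangle\langle v_j|$ for a left basis and the definition $\sum_j\langle v_j\ |\ v_j\rangle_{B_0}=\text{l-Ind}(\Y)=\sqrt{\mathrm{Ind}(\Y)}$, which reads off the asserted value of $\lambda$.
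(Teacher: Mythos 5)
Your items (1), (3) and (5) track the paper's proof almost exactly, and your item (2) is in fact cleaner than the paper's: where the paper expands an arbitrary finite sum of ket-bras twice against a right basis and extracts a positive quadratic form over $B_0^m$, you use the canonical resolution $T=\sum_i|Tu_i\rangle\langle u_i|$ to get $\Tr(T^*T)=\tfrac{1}{\sqrt{\mathrm{Ind}(\Y)}}\sum_i\tr_{B_0}\bigl(\langle Tu_i\ |\ Tu_i\rangle_{B_0}\bigr)$ directly, and faithfulness of $\tr_{B_0}$ plus positive-definiteness of the inner product finish it. That is a valid simplification of the same idea.

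The genuine gap is in item (4). Minimality (Definition \ref{minimal}) is a statement about $T\in\End_{B_0-B_0}(\Y)$, i.e.\ about \emph{bimodule} endomorphisms, and its conclusion is that the two index-sums agree \emph{and lie in} $\C$. But $\mathcal E$ is defined on all of $\mathcal B^*(\Y)=\mathcal K(\Y_{B_0})$, the right-adjointable right-module maps, which is a much larger algebra: for $\Y={_0B_1}$ it is all of $B_1$ by Proposition \ref{prop::RightCompacts}, while $\End_{B_0-B_0}({_0B_1})=\Gr_{0,1}\cap B_1$ is finite dimensional. So you cannot invoke minimality to rewrite $\sum_i{_{B_0}\langle Tu_i,u_i\rangle}$ as $\sum_j\langle v_j\ |\ Tv_j\rangle_{B_0}$ for general $T\in\mathcal B^*(\Y)$; and if the identity did hold in the form minimality provides, $\mathcal E(T)$ would be a scalar for every $T$, contradicting the fact that $\mathcal E$ is a unital $B_0$-bimodule map onto $B_0$. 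The paper sidesteps this entirely by citing \cite[Lemma 1.26]{MR1624182}, which says that for a full fgp bi-Hilbertian bimodule the map $|\xi\rangle\langle\eta|\mapsto{_{B_0}\langle\xi,\eta\rangle}/\sqrt{\mathrm{Ind}(\Y)}$ is a (faithful) conditional expectation of index-finite type, fullness of $\Y$ being supplied by simplicity of $B_0$; a self-contained proof needs that input or some other argument for positivity of $\mathcal E$ that does not pass through minimality. One further caution on the scalar $\lambda$: with the normalization $\Tr(|\xi\rangle\langle\eta|)=\tr_{B_0}(\langle\eta\ |\ \xi\rangle_{B_0})/\sqrt{\mathrm{Ind}(\Y)}$, your evaluation at $\id_\Y$ yields $\Tr(\id_\Y)=\text{l-Ind}(\Y)/\sqrt{\mathrm{Ind}(\Y)}=1$, i.e.\ $\lambda=1$; the value $\lambda=\sqrt{\mathrm{Ind}(\Y)}$ claimed in the statement corresponds to the unnormalized trace, so your computation does not actually "read off the asserted value" under the stated conventions.
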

\begin{proof} 
	\begin{enumerate}
		\item This is a direct computation.\\
		
		\item An arbitrary element $x\in\mathcal B^*(\Y) = \text{FinRan}(\Y)$ can be written as $x = \sum_{r=1}^{t} |\xi_r\rangle\langle \eta_r|.$ 
		Suppose $\{u_i\}_{i=1}^m$ is a right basis for $\Y_{B_0}.$ Assuming $x \cdot x^*$ has zero trace and by first expanding the $\eta$'s with respect to this basis, and then expanding the $\xi$'s, we obtain that
		\begin{align*}
		        0 &=  \frac{1}{\sqrt{\text{Ind}(\Y)}}\cdot\Tr(x\cdot x^*) =  \frac{1}{\sqrt{\text{Ind}(\Y)}}\cdot\sum_{r,s=1}^t\Tr\left(|\xi_r\rangle\langle \eta_r|\circ |\eta_s\rangle\langle \xi_s| \right)\\
			&= \sum_{r,s=1}^t \tr_{B_0}\left( \langle\xi_s \ |\ \xi_r\rangle_{B_0} \cdot\sum_{i,j=1}^m \langle u_i \ |\ \eta_r\rangle_{B_0}^* \cdot\langle u_i\ |\ u_j \rangle_{B_0}\cdot \langle u_j\ |\ \eta_s \rangle_{B_0} \right)\\ 
		        &= \sum_{r,s=1}^t\tr_{B_0}\left( \sum_{k=1}^m \left[\sum_{j=1}^m \langle \eta_s\ |\ u_j\rangle_{B_0} \cdot \langle u_j\ |\ u_k \rangle_{B_0}\right]^* \cdot\langle \xi_s\ |\ \xi_r\rangle_{B_0} \cdot\left[\sum_{i=1}^m \langle \eta_r\ |\ u_i\rangle_{B_0} \cdot \langle u_i\ |\ u_k \rangle_{B_0}\right] \right)\\
		        &= \sum_{r,s=1}^t\sum_{k=1}^m\tr_{B_0}\left( b_{ks}^* \cdot\langle \xi_s\ |\ \xi_r\rangle_{B_0}\cdot b_{rk}\right) = \sum_{k=1}^m \tr_{B_0}\left(\sum_{l,l'}^m c_{lk}^*\cdot \langle u_i\ |\ u_j \rangle_{B_0}\cdot  c_{l'k} \right).
		\end{align*}
		Here, $\left(\sum_{i=1}^m \langle \eta_r\ |\ u_i\rangle_{B_0} \cdot \langle u_i\ |\ u_k \rangle_{B_0}\right) =: (b_{rk})\in\text{Mat}_{T\times m}({B_0}),$
		 and $ \sum_{r=1}^t \langle u_l\ |\ \xi_r \rangle_{B_0}\cdot b_{rk} =:  c_{lk} \in\text{Mat}_{m\times m}({B_0}).$ 
		Notice that in the third equality we used the fact that the matrix $(\langle u_i\ |\ u_j \rangle_{B_0})_{ij}$ is an idempotent.
		 The last equality is then a positive definite quadratic form in $B^m,$ and therefore conclude that each of the $k$ terms in the sum is zero, by the faithfulness of $\tr_{B_0}$. We then obtain that for each $k,i\in\{1,\hdots, m\}$
		    $$0 = \sum_{j=1}^m \langle u_i\ |\ u_j\rangle_{B_0}\cdot c_{jk} =  \sum_{r=1}^t \langle u_i\ |\ \xi_r \rangle_{B_0}\cdot\langle \eta_r\ |\ u_k \rangle_{B_0}.$$
		    This implies that $\sum_{r=1}^{t} |\xi_r\rangle\langle \eta_r| = 0,$ thus establishing that $\Tr$ is positive definite.
		    
		\item The left ${B_0}$-linearity is obvious.  The right bilinearity follows from
			\begin{align*}
				\mathcal{E}( |\xi \rangle \langle \eta | \cdot b) = \mathcal{E}( |\xi \rangle \langle b^{*}\rhd\eta |) &=  \,  _{B_{0}}\langle \xi, b^{*}\rhd\eta \rangle = \frac{1}{\sqrt{\Ind(\cY)}} \, _{B_{0}}\langle b^{*}\rhd\eta,\xi \rangle^{*} \\
				&=   \frac{1}{\sqrt{\Ind(\cY)}}  (b^{*} \cdot_{B_{0}}\langle \eta,\xi \rangle)^{*} =  \frac{1}{\sqrt{\Ind(\cY)}} \,  _{B_{0}}\langle \xi, \eta \rangle\cdot b = \mathcal{E}( |\xi \rangle \langle \eta |)\cdot b,
			\end{align*}
			for all $\xi, \eta \in \Y$ and $b \in B_{0}$.  Unitality follows from the fact that $\id_\Y = \sum_{i =1}^{m}|u_i\rangle\langle u_i|,$ where $\{u_i\}_{i = 1,...m}$ is a right $B_0$-basis for $\Y_B.$\\
		\item Applying Lemma 1.26 in \cite{MR1624182} to the full bimodule $_{B_0}\Y_{B_0}$ implies $\mathcal E$ is a conditional expectation (of index finite type).
			 (The bimodule $_{B_0}\Y_{B_0}$ is full since $B_0$ is simple.) This map is clearly fully faithful, from the definition of the inner products.\\
		\item By compatibility with the trace, Equation \eqref{eqn::ComTrace}, we have that
		    	     $$ \Tr \left(\ |\xi\rangle\langle\eta|   \ \right)
			    =  \frac{1}{\sqrt{\text{Ind}(\Y)}}\cdot\tr_{B_0} \left(\ \langle\eta\ |\ \xi\rangle_{B_0} \ \right)
			    =  \frac{1}{\sqrt{\text{Ind}(\Y)}}\cdot\tr_{B_0} \left(\ _{B_0}\langle \xi,\ \eta\rangle \ \right) 
			    = \tr_{B_0} \left(\ \mathcal E (|\xi\rangle\langle \eta|) \ \right).$$
	   \end{enumerate}
\end{proof}
 \begin{remark}
	Notice that all the bimodules $_lB_r$ introduced in Notation \ref{notation::copies} define objects in $\Bim_{\mathsf{fgp}}^{\mathsf{tr}}(B_0)$, in light of Proposition \ref{prop:traceinnerproduct}.
\end{remark}
 
\begin{proposition}
    	The left $B_0$-action on $\Y\boxtimes\Omega$ extends to a bounded left $B_0$-action on $\Y\fuse{B_0}\Ltwo{B_0}$.
\end{proposition}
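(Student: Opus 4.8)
The plan is to reduce boundedness to a single-variable estimate and then to the positivity of the right $B_0$-valued inner product. First I would note that every vector in the algebraic tensor product $\Y\fuse{B_0}\Ltwo{B_0}$ built from the dense subspace $B_0\Omega\subset\Ltwo{B_0}$ can be written in the absorbed form $\eta\boxtimes\Omega$ for a single $\eta\in\Y$: indeed $\xi\boxtimes a\Omega = (\xi\lhd a)\boxtimes\Omega$, and the span of these is dense. On such vectors the Hilbert-space norm is
\[
	\|\eta\boxtimes\Omega\|^2 = \tr_{B_0}\!\big(\langle\eta\ |\ \eta\rangle_{B_0}\big),
\]
and the left action is simply $L_c(\eta\boxtimes\Omega) = (c\rhd\eta)\boxtimes\Omega$, so it suffices to bound $\tr_{B_0}(\langle c\rhd\eta\ |\ c\rhd\eta\rangle_{B_0})$ by $\|c\|^2_{B_0}\cdot\tr_{B_0}(\langle\eta\ |\ \eta\rangle_{B_0})$.

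The key structural input, recorded in the last bullet of Definition \ref{dfn::bimodule}, is that the left action embeds into the right-adjointable operators: the map $\lambda\colon B_0\to\mathcal B^*(\Y_{B_0})$, $\lambda(c) := (c\rhd -)$, is a unital $*$-homomorphism with $\lambda(c)^* = \lambda(c^*)$ taken with respect to $\langle\cdot\ |\ \cdot\rangle_{B_0}$. Being a $*$-homomorphism of C*-algebras it is contractive, whence $\lambda(c)^*\lambda(c)\le\|c\|^2_{B_0}\,\id_\Y$ in $\mathcal B^*(\Y)$. I would then use the standard Hilbert-module inequality $\langle\eta\ |\ S\eta\rangle_{B_0}\le\|S\|\,\langle\eta\ |\ \eta\rangle_{B_0}$ valid for any positive $S\in\mathcal B^*(\Y)$ (apply positivity of $\langle\eta\ |\ (\|S\|\id_\Y - S)\eta\rangle_{B_0}$) to obtain, using right-adjointability,
\[
	\langle c\rhd\eta\ |\ c\rhd\eta\rangle_{B_0} = \langle\eta\ |\ \lambda(c)^*\lambda(c)\,\eta\rangle_{B_0} \le \|c\|^2_{B_0}\,\langle\eta\ |\ \eta\rangle_{B_0}
\]
in $B_0$. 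Since $\tr_{B_0}$ is a positive (hence order-preserving) functional, applying it yields $\|L_c(\eta\boxtimes\Omega)\|^2\le\|c\|^2_{B_0}\,\|\eta\boxtimes\Omega\|^2$, so $L_c$ is bounded and extends by continuity to all of $\Y\fuse{B_0}\Ltwo{B_0}$.

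Conceptually this just says $L_c = \lambda(c)\fuse{B_0}\id_{\Ltwo{B_0}}$ is the ampliation of an adjointable operator along the internal (Connes) tensor product, which is automatically bounded with $\|L_c\|\le\|\lambda(c)\|\le\|c\|_{B_0}$ (cf.\ \cite[Ch.~4]{MR2125398}); the explicit estimate above is merely an unwinding of that fact. The only points demanding care---and the sole real obstacle---are bookkeeping ones: matching the adjoint used for $\lambda$ to the right inner product $\langle\cdot\ |\ \cdot\rangle_{B_0}$ that fixes the definition of the fusion, and the absorption reduction to vectors $\eta\boxtimes\Omega$. I emphasize that, unlike the \emph{normality} of the eventual extension to $M_0$, this boundedness statement does not invoke compatibility with the trace (Equation \ref{eqn::ComTrace}); it holds for any fgp $B_0$-bimodule.
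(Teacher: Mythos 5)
Your proof is correct and follows essentially the same route as the paper's: both reduce to vectors $\eta\boxtimes\Omega$ and derive $\|b\rhd(\eta\boxtimes\Omega)\|_2^2\le\|b\|^2\|\eta\boxtimes\Omega\|_2^2$ from the operator inequality $\langle b\rhd\eta\ |\ b\rhd\eta\rangle_{B_0}\le\|b\|^2\langle\eta\ |\ \eta\rangle_{B_0}$ together with positivity of $\tr_{B_0}$. The only difference is cosmetic: the paper cites this inequality as \cite[Proposition 1.12(3)]{MR1624182}, while you rederive it from contractivity of the $*$-homomorphism $B_0\to\mathcal B^*(\Y_{B_0})$, which is a perfectly valid (and self-contained) substitute.
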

\begin{proof}
    	This follows from the fact that $\Y$ is fgp. Indeed, for arbitrary $b\in B_0$ and $y\in\Y$, we have that
	 $||b\rhd(y\boxtimes\Omega)||_2^2 = \tr (\langle b\rhd y\ |\ b\rhd y\rangle_{B_0})\leq \tr (||b||^2\cdot\langle y\ |\ y\rangle_{B_0}) = ||b||^2\cdot ||y\boxtimes\Omega||_2^2.$	
	The inequality is a direct application of (3) in \cite[Proposition 1.12]{MR1624182}, stating that $\langle b\rhd y\ |\ b\rhd y \rangle_{B_0}\leq ||b||^2\cdot \langle y\ |\ y\rangle_{B_0}.$ 
\end{proof}

We now extend the left $B_0$-action on $\Y\fuse{B_0} \Ltwo{B_0}$ to a normal left-$M_0$ action by using Lemma \ref{lemma::Hilbertifying}. 
 For the dense subspace $\Y\boxtimes\Omega$ spanned by the vectors $\xi\boxtimes a\Omega,$ since the conditional expectation preserves the trace, we obtain
$$\langle b\rhd(\xi\boxtimes a\Omega),\ \eta\boxtimes c\Omega  \rangle = \tr_{B_0}\left( _{B_0}\langle b\rhd\xi\lhd a,\ \eta\lhd c  \right)  
= \tr_{B_0} \left(b\cdot {_{B_0}\langle} \xi\lhd a,\ \eta\lhd c\rangle\right) = \langle b\Omega,\ _{B_0}\langle \eta\lhd c,\ \xi\lhd a\rangle\cdot\Omega\rangle$$
Hence \textbf{the left }$B_0$-\textbf{action on }$\Y\fuse{B_0}\Ltwo{B_0}$ \textbf{extends to a normal left} $M_0$-\text{action}.

\begin{remark}\label{rmk::BoundVects}
    	For the remainder of this chapter will need to further exploit the structure of $M_0$-bimodules of the form $\mathcal Z \fuse{B_0}\Ltwo{B_0},$ 
	and their tensor products. We say that \textbf{ a vector }$\xi\in \mathcal Z \fuse{B_0}\Ltwo{B_0}$ \textbf{is (left) bounded} if and only if the mappping 
	$$
		L_\xi: M_0\Omega\rightarrow \mathcal Z \fuse{B_0}\Ltwo{B_0}, \text{ given by }
			m\Omega\mapsto m\blacktriangleright\xi
	$$
	 extends to a bounded map defined on  $\Ltwo{B_0}\cong \Ltwo{M_0}.$ If $\xi,\eta$ are bounded vectors, then the composite map  $L_\eta^*\circ L_\xi$ makes
	 sense and takes values  in ${M_0}\Omega.$ This defines a \textbf{left }$M_0$-\textbf{valued inner product} on
	 the set bounded vectors, denoted $\langle\eta\ |\ \xi\rangle_{M_0}.$  (For a proof see \cite{Bisch} or \cite{JP17}.)
	 One can easily see that for an arbitrary $z\in \mathcal Z$, the vector $z\boxtimes \Omega\in \mathcal Z \fuse{B_0}\Ltwo{B_0}$ is bounded,
	 and hence $\langle z\ |\ z\rangle_{M_0} \in M_0$. A computation will reveal that $L_{z\boxtimes \Omega}^*: \mathcal Z \fuse{B_0}\Ltwo{B_0}\rightarrow \Ltwo{M_0}$ 
	is given by $y\boxtimes\Omega\mapsto\ _{B_0}\langle y, z\rangle\Omega .$ 
	 It then follows that $\langle z\boxtimes\Omega\ |\ z\boxtimes\Omega \rangle_{M_0} =\ _{B_0}\langle z, z\rangle$.
\end{remark}

We now show that $_{M_0}(-\fuse{B_0}\Ltwo{B_0})_{M_0}$ \textbf{defines a bi-involutive fully-faithful strong monoidal functor} into the claimed target category.
 Again, we need to be certain that the von Neumann closure of $B_0$ is independent of the space over which we represent it.

\begin{lemma}
	The $\sigma$-weak topologies on the left (resp. right) action of $B_0''$ coming from the left (resp. right) action on $\Ltwo{B_0}$ and the action on $\Y \fuse{B_0} \Ltwo{B_0}$ agree. 
\end{lemma}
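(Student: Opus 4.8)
The plan is to exhibit the canonical map extending the identity on $B_0$ as a normal $*$-isomorphism between the two copies of $B_0''$, and then to invoke the standard fact that a $*$-isomorphism of von Neumann algebras is automatically $\sigma$-weakly bicontinuous. I would argue the case of the left action in detail; the right action is entirely symmetric, its normal extension to $M_0$ having been produced by the same construction.

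First I would record that the left action of $B_0$ on $\Y \fuse{B_0} \Ltwo{B_0}$ already extends to a \emph{normal} representation $\pi \colon M_0 \to \mathcal B(\Y \fuse{B_0} \Ltwo{B_0})$: this is exactly the output of the construction carried out above via Lemma \ref{lemma::Hilbertifying}. Since $\pi$ is normal and $\pi(1_{M_0}) = \id$ (the unit of $B_0$ acts as the identity because $\Y$ is a unital bimodule), its image $\pi(M_0)$ is $\sigma$-weakly closed; being contained in the $\sigma$-weak closure of $\pi(B_0) = B_0$ by normality, while simultaneously containing $B_0$, it coincides with the von Neumann algebra generated by $B_0$ acting on $\Y \fuse{B_0} \Ltwo{B_0}$. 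That is, $\pi(M_0)$ equals $B_0''$ computed inside $\mathcal B(\Y \fuse{B_0} \Ltwo{B_0})$.

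Next I would establish faithfulness of $\pi$. The kernel of a normal $*$-homomorphism is a $\sigma$-weakly closed two-sided ideal of $M_0$, hence of the form $M_0 z$ for a central projection $z$. By Lemma \ref{lemma::simplicity} the algebra $M_0 = B_0''$ (acting on $\Ltwo{B_0}$) is an interpolated free group factor, so its center is trivial and $z \in \{0,1\}$; as $\pi(1) = \id \neq 0$ we have $z \neq 1$, forcing $z = 0$. Thus $\pi$ is a normal $*$-isomorphism of $M_0$ onto $B_0'' \subseteq \mathcal B(\Y \fuse{B_0} \Ltwo{B_0})$. Finally I would invoke that any $*$-isomorphism between von Neumann algebras is $\sigma$-weakly continuous in both directions (equivalently, it transports the unique predual, by Sakai's theorem), so $\pi$ is a homeomorphism for the respective $\sigma$-weak topologies. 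This is precisely the assertion that the $\sigma$-weak topology on $B_0''$ coming from $\Ltwo{B_0}$ coincides with the one coming from $\Y \fuse{B_0} \Ltwo{B_0}$, and the same reasoning settles the right-action case.

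I expect the only genuinely delicate points to be the identification of $\pi(M_0)$ with the full weak closure of $B_0$ acting on $\Y \fuse{B_0} \Ltwo{B_0}$ — which is what guarantees that we are comparing two topologies on the \emph{same} algebra rather than on a proper subalgebra — together with the clean appeal to factoriality of $M_0$ to obtain faithfulness without fullness computations. Once these are secured, the bicontinuity of $*$-isomorphisms of von Neumann algebras does all the remaining work automatically.
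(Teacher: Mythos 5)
Your proof is correct and follows essentially the same route as the paper's: both rely on the normal extension produced via Lemma \ref{lemma::Hilbertifying} and identify the image of $M_0$ with the double commutant of $B_0$ acting on $\Y \fuse{B_0} \Ltwo{B_0}$ (using $\sigma$-weak closedness of the image of a normal representation for one inclusion and $\sigma$-weak density of $B_0$ in $M_0$ for the other). You in fact carry the argument one step further than the paper, whose proof stops at the identification of the two algebras: your explicit appeal to factoriality of $M_0$ (Lemma \ref{lemma::simplicity}) for injectivity of $\pi$, and to the automatic $\sigma$-weak bicontinuity of $*$-isomorphisms of von Neumann algebras, is exactly what turns that identification into the stated agreement of topologies, so this is a tightening rather than a deviation.
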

\begin{proof}
    	Let's first show the statement corresponding to the left actions. Let $(B_0\rhd)$ denote the image of $B_0$ as a subset of $\mathcal B(\Ltwo{B_0}),$ and let $(B_0\blacktriangleright)$
	 that of $B_0$ considered as a subset of $\mathcal B(\Y \fuse{B_0} \Ltwo{B_0})$. By Lemma \ref{lemma::Hilbertifying}, we can consider the von Neumann algebra
	 $(M_0\blacktriangleright ) \subset \mathcal B(\Y \fuse{B_0} \Ltwo{B_0}).$ Moreover, this lemma tells us that for every $b\in B_0$ we have that $(b\blacktriangleright) = ((b\rhd)\blacktriangleright).$ 
	So we automatically obtain that $(B\blacktriangleright)''\subseteq (M\blacktriangleright),$ since both are von Neumann algebras containing the same copy of $B_0.$ Now, since $\blacktriangleright$ 
	defines a $\sigma$-weak continuous map $\blacktriangleright: M\rightarrow\mathcal B(\Y \tens{B_0} \Ltwo{B_0}),$ it then follows that $\blacktriangleright$ commutes with taking $\sigma$-weak-limits. 
	Thus, for an arbitrary $m\in M$ and a net $b_\lambda$ such that $b_\lambda\rhd\longrightarrow m$ $\sigma$-weak, we have the following chain of equalities:
    	$$
		\text{WOT}-\lim_{\lambda} b_\lambda\blacktriangleright = \text{WOT}-\lim_\lambda ((b_\lambda \rhd)\blacktriangleright) = (\text{WOT}-\lim_\lambda b_\lambda\rhd)\blacktriangleright =  m \blacktriangleright,
	$$
    	thus establishing that $(B_0\blacktriangleright)'' = (M\blacktriangleright)\subset \mathcal B(\Y \tens{B_0} \Ltwo{B_0}).$
    	The proof of the corresponding statement for the right actions is straightforward and thus we omit it.
\end{proof}

\begin{proposition}
    	The assignment $_{M_0}(-\fuse{B_0}\Ltwo{B_0})_{M_0}$ defines a faithful dagger functor out of $\Bim_{\mathsf{fgp}}^{\mathsf{tr}}(B_0)$.
\end{proposition}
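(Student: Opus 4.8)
The plan is to verify, one property at a time, that $f \mapsto f \boxtimes \id_{\Ltwo{B_0}}$ is a well-defined, bounded, $M_0$-bilinear, $\dagger$-preserving, functorial, and faithful assignment on morphisms. First I would check well-definedness over the balanced tensor product: given a bi-adjointable bounded $B_0$-bimodule map $f \in \Bim_{\mathsf{fgp}}^{\mathsf{tr}}(B_0)(\Y \to \mathcal Z)$, the rule $\xi \boxtimes a\Omega \mapsto f(\xi) \boxtimes a\Omega$ respects the $B_0$-balancing relation $\xi \lhd b \boxtimes a\Omega = \xi \boxtimes ba\Omega$ precisely because $f(\xi \lhd b) = f(\xi) \lhd b$. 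Hence $f \boxtimes \id_{\Ltwo{B_0}}$ is a well-defined linear map on the algebraic balanced tensor product, which by the Remark establishing that $\Y \fuse{B_0} \Ltwo{B_0}$ needs no completion is already the full Hilbert space.

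The principal technical point is \textbf{boundedness}. Since $f$ is right-adjointable with adjoint $f^*$, the operator $f^*f \in \mathcal B^*(\Y)$ satisfies $f^*f \leq \|f\|^2 \id_\Y$, so $g := (\|f\|^2 \id_\Y - f^*f)^{1/2} \in \mathcal B^*(\Y)$. For any finite sum $v = \sum_i \xi_i \boxtimes a_i \Omega$, using $\langle f\xi_j\ |\ f\xi_i \rangle_{B_0} = \langle \xi_j\ |\ f^*f\,\xi_i \rangle_{B_0}$ and $\langle \xi_j\ |\ (\|f\|^2\id_\Y - f^*f)\xi_i \rangle_{B_0} = \langle g\xi_j\ |\ g\xi_i \rangle_{B_0}$, one computes
\[
\|f\|^2\,\|v\|_2^2 - \|(f \boxtimes \id_{\Ltwo{B_0}})v\|_2^2
= \sum_{i,j} \tr_{B_0}\!\left(a_j^*\,\langle g\xi_j\ |\ g\xi_i \rangle_{B_0}\,a_i\right)
= \Big\|\sum_i g\xi_i \boxtimes a_i\Omega \Big\|_2^2 \ \geq\ 0,
\]
the last quantity being a genuine norm-square in $\Y \fuse{B_0} \Ltwo{B_0}$. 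Thus $\|f \boxtimes \id_{\Ltwo{B_0}}\| \leq \|f\|$, so $f \boxtimes \id_{\Ltwo{B_0}}$ is a bounded operator.

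Next I would establish the bimodule-map, functoriality, and dagger properties. On simple tensors one reads off directly that $f \boxtimes \id_{\Ltwo{B_0}}$ commutes with the left and right $B_0$-actions (using that $f$ is a $B_0$-bimodule map); since it is bounded and the left and right $M_0$-actions are the $\sigma$-weak extensions of the corresponding $B_0$-actions (the preceding lemma, via Lemma \ref{lemma::Hilbertifying}), it commutes with all of $M_0$ and hence is an $M_0$-bimodule map. Functoriality, $(g \circ f) \boxtimes \id_{\Ltwo{B_0}} = (g \boxtimes \id_{\Ltwo{B_0}}) \circ (f \boxtimes \id_{\Ltwo{B_0}})$ and $\id_\Y \boxtimes \id_{\Ltwo{B_0}} = \id$, holds on simple tensors and extends by continuity. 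For the dagger, right-adjointability $\langle \eta\ |\ f\xi \rangle_{B_0} = \langle f^*\eta\ |\ \xi \rangle_{B_0}$ gives $\langle (f \boxtimes \id_{\Ltwo{B_0}})(\xi \boxtimes a\Omega),\ \eta \boxtimes b\Omega \rangle = \tr_{B_0}(b^*\langle \eta\ |\ f\xi \rangle_{B_0} a) = \tr_{B_0}(b^*\langle f^*\eta\ |\ \xi \rangle_{B_0} a) = \langle \xi \boxtimes a\Omega,\ (f^* \boxtimes \id_{\Ltwo{B_0}})(\eta \boxtimes b\Omega) \rangle$, so the Hilbert-space adjoint of $f \boxtimes \id_{\Ltwo{B_0}}$ is $f^* \boxtimes \id_{\Ltwo{B_0}}$, i.e.\ the assignment is a dagger functor.

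Finally, \textbf{faithfulness}: if $f \boxtimes \id_{\Ltwo{B_0}} = 0$, then $f(\xi) \boxtimes \Omega = 0$ for all $\xi \in \Y$. Because $\|z \boxtimes \Omega\|_2^2 = \tr_{B_0}(\langle z\ |\ z \rangle_{B_0})$, with $\tr_{B_0}$ a faithful trace on $B_0$ (Lemma \ref{lemma::simplicity}) and $\langle z\ |\ z \rangle_{B_0} \geq 0$ positive-definite, the embedding $z \mapsto z \boxtimes \Omega$ of $\mathcal Z$ into $\mathcal Z \fuse{B_0} \Ltwo{B_0}$ is injective; applying this to $z = f(\xi)$ yields $f(\xi) = 0$ for all $\xi$, hence $f = 0$. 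I expect the boundedness estimate to be the main obstacle, since it is the only place where adjointability of $f$ and positivity of the $B_0$-valued inner product must be combined carefully; once boundedness is secured, the extension of bilinearity to $M_0$ (via the preceding lemma) and the remaining verifications are routine.
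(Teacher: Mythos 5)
Your proposal is correct and follows essentially the same route as the paper: define $\tilde f = f\boxtimes\id_{\Ltwo{B_0}}$, check it intertwines the dense $B_0$-actions, extend to $M_0$-bilinearity using that the $M_0$-actions are the normal extensions of the $B_0$-actions, and deduce faithfulness from faithfulness of $\tr_{B_0}$. You in fact supply more detail than the paper at two points it dismisses as clear --- the boundedness estimate via $g=(\|f\|^2\id_\Y-f^*f)^{1/2}$ and the injectivity of $z\mapsto z\boxtimes\Omega$ --- and your commutant-style argument for $M_0$-bilinearity is just a repackaging of the paper's explicit SOT-limit computation.
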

\begin{proof}
    	Let $\Y, \mathcal Z \in \Bim_{\mathsf{fgp}}^{\mathsf{tr}}(B_0)$ and $f: \Y\rightarrow \mathcal Z$ be a fixed but arbitrary $B_0$-bimodule map. We denote $\tilde f:= f\boxtimes\id_{\Ltwo{B_0}}$. 
	It is clear that $\tilde f$ is $B_0$-$M_0$ bilinear and bounded. It is also evident that $_{M_0}(-\fuse{B_0}\Ltwo{B_0})_{M_0}$ respects composition of maps and the dagger structures, 
	so it only remains to check that $\tilde f$ is in fact $M_0$-bilinear. 
	Let $m$ be an arbitrary element in $M_0$, and $(b_n)\subset B$ a sequence such that $\text{SOT}-\lim_{n} (b_n\blacktriangleright) = m\blacktriangleright.$ we then have that
	\begin{align*}
	        m\blacktriangleright\tilde f(y\boxtimes\Omega) &= {\|\cdot\|_2}-\lim_{n} b_{n}\blacktriangleright \tilde f(y\boxtimes\Omega) 
	                                                        = {\|\cdot\|_2}-\lim_{n} \tilde f ((b_n\rhd y)\boxtimes \Omega)\\
	                                                        &= \tilde{f} \left( {\|\cdot\|_2}-\lim_{n}((b_{n}\rhd y)\otimes \Omega) \right) 
	                                                        = \tilde{f} \left( {||\cdot||_2}-\lim_{n} b_n\blacktriangleright(y\otimes\Omega)\right)\\
	                                                        &= \tilde{f} (m\blacktriangleright (y\boxtimes\Omega)). 
     \end{align*}
	Faithfulness follows since the trace $\tr_{B_0}$ is faithful. Notice that we are arguing using sequences converging in the SOT, as opposed to nets. 
	Indeed, on $||\cdot||_\infty$-bounded sets, the $||\cdot||_2$-norm topology on $B_0\Omega$ is equivalent to the SOT on $B_0$. Thus, by Kaplansky Density Theorem,
	we can replace SOT converging nets by $||\cdot||_2$-converging sequences which are automatically bounded in $||\cdot||_\infty.$
\end{proof}

\begin{proposition}\label{prop::propertiesHilbertify}
    	The functor $_{M_0}(-\fuse{B_0}\Ltwo{B_0})_{M_0}$ is strong monoidal with tensorator given by the linear extension of:
    	$$
		\mu_{\Y,\mathcal Z}: (\Y\fuse{B_0}\Ltwo{B_0})\fuse{M_0} (\mathcal Z\fuse{B_0}\Ltwo{B_0}) \longrightarrow ((\Y\fuse{B_0}\mathcal Z)\fuse{B_0}\Ltwo{B_0})
	$$
    	mapping	$(\xi\boxtimes\Omega) \boxtimes (\eta\boxtimes\Omega) \mapsto ( \xi\boxtimes\eta) \boxtimes\Omega.$
\end{proposition}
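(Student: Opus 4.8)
The plan is to verify that the stated map $\mu_{\Y,\mathcal Z}$ is a well-defined unitary $M_0$-bimodule isomorphism, and that the family $\{\mu_{\Y,\mathcal Z}\}$ is natural and satisfies the associativity and unitality coherences, so that $_{M_0}(-\fuse{B_0}\Ltwo{B_0})_{M_0}$ is strong monoidal. First I would establish that $\mu_{\Y,\mathcal Z}$ is isometric on the dense subspace spanned by simple tensors of the form $(\xi\boxtimes\Omega)\boxtimes(\eta\boxtimes\Omega)$. The key computational input is the formula for the $M_0$-valued inner product on bounded vectors from Remark~\ref{rmk::BoundVects}, namely $\langle z\boxtimes\Omega\ |\ z\boxtimes\Omega\rangle_{M_0} = {_{B_0}\langle} z, z\rangle$. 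Using this, the inner product of $(\xi\boxtimes\Omega)\boxtimes(\eta\boxtimes\Omega)$ with itself in $(\Y\fuse{B_0}\Ltwo{B_0})\fuse{M_0}(\mathcal Z\fuse{B_0}\Ltwo{B_0})$ unfolds, via the definition of Connes fusion over $M_0$ and the $M_0$-valued inner product, into $\tr_{B_0}\big(\langle\eta\ |\ {_{B_0}\langle}\xi,\xi\rangle\rhd\eta\rangle_{B_0}\big)$-type expressions, which should match the inner product of $(\xi\boxtimes\eta)\boxtimes\Omega$ with itself computed in $(\Y\fuse{B_0}\mathcal Z)\fuse{B_0}\Ltwo{B_0}$ via the sesquilinear form defined at the start of this section. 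Here the compatibility-with-the-trace condition, Equation~\eqref{eqn::ComTrace}, is what allows the left and right $B_0$-valued inner products to be interchanged under $\tr_{B_0}$, and this is precisely where membership in $\Bim_{\mathsf{fgp}}^{\mathsf{tr}}(B_0)$ is essential.

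Having checked isometry, I would argue surjectivity by exhibiting a spanning set: since $\Y$ and $\mathcal Z$ are fgp, finite right $B_0$-bases allow one to write any element of $(\Y\fuse{B_0}\mathcal Z)\fuse{B_0}\Ltwo{B_0}$ as an $M_0$-combination of vectors of the form $(u_i\boxtimes w_k)\boxtimes\Omega$, each of which is hit by $(u_i\boxtimes\Omega)\boxtimes(w_k\boxtimes\Omega)$. Thus $\mu_{\Y,\mathcal Z}$ extends to a unitary. The $M_0$-bimodularity is then checked by noting that the right $M_0$-action on the rightmost tensor factor and the left $M_0$-action (extended via Lemma~\ref{lemma::Hilbertifying}) on the leftmost factor are carried across by $\mu$ in the obvious way; on the dense subspace this is immediate, and it passes to the completion by boundedness, using that the actions were shown to extend normally on bounded sets via Kaplansky density as in the preceding proposition.

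Next I would verify naturality: for $B_0$-bimodule maps $f:\Y\to\Y'$ and $g:\mathcal Z\to\mathcal Z'$, the square relating $\mu_{\Y,\mathcal Z}$ and $\mu_{\Y',\mathcal Z'}$ with $(f\boxtimes\id)\fuse{M_0}(g\boxtimes\id)$ and $(f\fuse{B_0}g)\boxtimes\id$ commutes; this is a direct diagram chase on simple tensors $(\xi\boxtimes\Omega)\boxtimes(\eta\boxtimes\Omega)$, since both routes send it to $(f\xi\boxtimes g\eta)\boxtimes\Omega$. The associativity coherence reduces, on simple tensors $(\xi\boxtimes\Omega)\boxtimes(\eta\boxtimes\Omega)\boxtimes(\zeta\boxtimes\Omega)$, to the associativity of $\fuse{B_0}$ in $\Bim_{\mathsf{fgp}}^{\mathsf{tr}}(B_0)$ and the strictness conventions already in force, and the unitality coherence uses the unit datum given by the identification $\one_{\Bim_{\mathsf{bf}}^{\mathsf{sp}}(M_0)} = \Ltwo{M_0} \cong B_0\fuse{B_0}\Ltwo{B_0}$.

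I expect the main obstacle to be the isometry computation: carefully tracking the three distinct inner products (the $M_0$-valued form on bounded vectors, the Connes fusion inner product over $M_0$, and the sesquilinear form defining $\Y\fuse{B_0}\Ltwo{B_0}$) and confirming that the bases can be inserted and removed so that everything collapses to a single $\tr_{B_0}$-expression. The trace-compatibility hypothesis~\eqref{eqn::ComTrace} and the identity in Remark~\ref{rmk::BoundVects} are the load-bearing facts; once the isometry on simple tensors with vacuum on the right is established, extending to general bounded vectors and then to the completion is routine by density and boundedness.
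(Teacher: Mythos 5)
Your proposal follows essentially the same route as the paper's proof: well-definedness and isometry on simple tensors via the identity $\langle z\boxtimes\Omega\,|\,z\boxtimes\Omega\rangle_{M_0}={_{B_0}\langle}z,z\rangle$ from Remark~\ref{rmk::BoundVects} together with the trace-compatibility condition~\eqref{eqn::ComTrace}, then extension to a unitary by density and a routine check of naturality on the dense subspace. The only point the paper flags that you omit is that invoking the Connes-fusion inner product over $M_0$ presupposes the targets lie in $\Bim_{\mathsf{bf}}^{\mathsf{sp}}(M_0)$, a fact the paper defers to Proposition~\ref{prop::CommTrian}; otherwise your argument matches theirs, with slightly more detail on surjectivity and the coherence axioms.
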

\begin{proof}
	Let $y\in\Y$ and $x\in\mathcal Z$ be fixed but arbitrary. We shall first show that this map is well-defined.
	For an arbitrary operator $m\blacktriangleright\in M_0$, consider a bounded net $(b_i)\subset B_0$ such that $b_i\blacktriangleright\rightarrow m\blacktriangleright$ SOT, as $i\rightarrow\infty$. 
	We then obtain that 
	$$
		|| ((y\lhd m)\boxtimes z))\boxtimes\Omega - y\boxtimes(b_i\rhd z)\boxtimes\Omega||_2^2 = \lim_i|| (y\boxtimes(b_i-m)\rhd z)\boxtimes\Omega||_2^2\leq ||y||_B^2\cdot ||(b_i-m)\rhd z\boxtimes\Omega||_2^2 = 0.
	$$
    	We shall next show that $\mu$ is an isometry. By Remark \ref{rmk::BoundVects} and Equation (1.1) in \cite{Bisch} defining the product 
	on the $M_0$-relative fusion of two bimodules in $\Bim_{\mathsf{bf}}^{\mathsf{sp}}(M_0)$, we have the following chain of equalities:
    	\begin{align*}
      		\langle (y\boxtimes\Omega)\boxtimes(z\boxtimes\Omega),\ (y\boxtimes\Omega)\boxtimes(z\boxtimes\Omega) \rangle  &=  \langle (y\boxtimes\Omega)\blacktriangleleft \langle z\boxtimes\Omega\ |\ z\boxtimes\Omega\rangle_{M_0},\ y\boxtimes\Omega\rangle\\ 
              &= \tr_{B_0} \left(  \langle y\ |\ y\lhd \,_{B_0}\langle z, z\rangle\rangle_{B_0} \right)
                          = \tr_{B_0} (\langle y\ |\ y \rangle_{B_0} \cdot\, _{B_0}\langle z,  z\rangle)\\
              &= \tr_{B_0} (\, _{B_0}\langle \langle y\ |\ y\rangle_{B_0}\rhd z\ |\ z\rangle ) = \tr_{B_0} ( \langle z\ |\ \langle y\ |\ y\rangle_{B_0}\rhd z\rangle_{B_0} )\\ 
              &= \tr_{B_0} (\langle y\boxtimes z\ |\ y\boxtimes z\rangle_{B_0} ) = \langle ( y\boxtimes z)\boxtimes\Omega, (y\boxtimes z)\boxtimes\Omega \rangle.
    	\end{align*}
    	Thus, this map defines an isometry. Since $\mu_{\Y,\mathcal Z}$ obviously has dense range, then it extends to a unitary $M_0$-bimodule map. 
	As a word of warning, to use the inner product on the $M_0$-fusion bimodule, one needs to know \textit{a priori} that $\Y\fuse{B_0}\Ltwo{B_0}$ and $\mathcal Z\fuse{B_0}\Ltwo{B_0}$
	define spherical/extremal bifinite $M_0$-bimodules; i.e. they are objects in $\Bim_{\mathsf{bf}}^{\mathsf{sp}}(M_0).$ This is independently demonstrated as part of the proof of Proposition \ref{prop::CommTrian}, but we
	postpone it not to slow down the exposition.

    	It remains to verify that $\mu$ defines a natural transformation. 
	A simple computation will show that naturality holds when restricted to the dense subspace $(\Y\fuse{B_0}\Omega)\fuse{B_0}(\mathcal Z\fuse{B_0}\Omega)$, 
	and this will suffice, since all the transformations involved are bounded. This completes the proof.
\end{proof}

\begin{proposition}\label{prop::HilbBiInv}
	The functor $-\fuse{B_0}\Ltwo{B_0}$ is bi-involutive with structure maps
	\begin{align*}
		\chi_\Y:  \overline{\Y}\fuse{B_0}\Ltwo{B_0}&\longrightarrow \overline{\Y\fuse{B_0}\Ltwo{B_0}} \text{ given on } \overline{\Y}\boxtimes\Omega  \text{ by}\\
			&\overline{y}\boxtimes\Omega\longmapsto \overline{y\boxtimes\Omega},
	\end{align*}
	for $\Y\in\Bim_{\mathsf{fgp}}^{\mathsf{tr}}(B_0).$
\end{proposition}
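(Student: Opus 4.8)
The plan is to define $\chi_\Y$ on the dense subspace spanned by the simple tensors $\overline{y}\boxtimes\Omega$, check it is a well-defined unitary $M_0$-bimodule map, verify naturality in $\Y$, and finally confirm the two coherence squares of \eqref{eqn::Chi}. The first step is to reduce everything to generators of the form $\overline{y}\boxtimes\Omega$: since the fusion is balanced over $B_0$ and $a\Omega = a\rhd\Omega$, one has $\overline{y}\boxtimes a\Omega = (\overline{y}\lhd a)\boxtimes\Omega = \overline{a^*\rhd y}\boxtimes\Omega$, so the vectors $\{\overline{y}\boxtimes\Omega : y\in\Y\}$ already span a dense subspace of $\overline{\Y}\fuse{B_0}\Ltwo{B_0}$. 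On such vectors I would compute both inner products directly. In the source, using the Hilbertification form together with $\langle\overline{y'}\ |\ \overline{y}\rangle_{B_0} = {_{B_0}\langle y',y\rangle}$ (Example \ref{HappyBims}), one gets $\langle\overline{y}\boxtimes\Omega,\ \overline{y'}\boxtimes\Omega\rangle = \tr_{B_0}({_{B_0}\langle y',y\rangle})$. In the target the conjugate Hilbert space reverses the order, so $\langle\overline{y\boxtimes\Omega},\ \overline{y'\boxtimes\Omega}\rangle = \langle y'\boxtimes\Omega,\ y\boxtimes\Omega\rangle = \tr_{B_0}(\langle y\ |\ y'\rangle_{B_0})$. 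Compatibility with the trace, Equation \eqref{eqn::ComTrace}, gives precisely $\tr_{B_0}(\langle y\ |\ y'\rangle_{B_0}) = \tr_{B_0}({_{B_0}\langle y',y\rangle})$, so $\chi_\Y$ is isometric on a dense subspace; as it visibly has dense range, it extends to a unitary. The same computation certifies that the assignment is well defined and complex linear.

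Next I would establish $M_0$-bilinearity and naturality. On generators, for $b\in B_0$ the conjugate-bimodule actions give $b\rhd(\overline{y}\boxtimes\Omega) = \overline{y\lhd b^*}\boxtimes\Omega$ and $(\overline{y}\boxtimes\Omega)\lhd b = \overline{b^*\rhd y}\boxtimes\Omega$, while on the target side $b\rhd\overline{y\boxtimes\Omega} = \overline{(y\lhd b^*)\boxtimes\Omega}$ and $\overline{y\boxtimes\Omega}\lhd b = \overline{(b^*\rhd y)\boxtimes\Omega}$; comparing shows $\chi_\Y$ intertwines the $B_0$-actions on both sides. Since $\chi_\Y$ is bounded and the actions extend normally to $M_0$ (by Lemma \ref{lemma::Hilbertifying} and the $\sigma$-weak/SOT continuity arguments already used above), bilinearity upgrades to $M_0$ by approximating $m\in M_0$ by bounded sequences $b_n\rhd\to m$ in the strong topology, exactly as in the preceding propositions. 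For naturality, given $f:\Y\to\mathcal Z$ I would chase $\overline{y}\boxtimes\Omega$ through the square: one route yields $\overline{f\boxtimes\id}(\chi_\Y(\overline{y}\boxtimes\Omega)) = \overline{f(y)\boxtimes\Omega}$, and the other yields $\chi_{\mathcal Z}((\overline{f}\boxtimes\id)(\overline{y}\boxtimes\Omega)) = \chi_{\mathcal Z}(\overline{f(y)}\boxtimes\Omega) = \overline{f(y)\boxtimes\Omega}$, using $\overline{f}(\overline{y}) = \overline{f(y)}$ from Example \ref{HappyBims}; the two agree, and as all maps are bounded the square commutes by continuity.

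Finally I would verify the two coherence squares of \eqref{eqn::Chi}. Both reduce to diagram chases on the generating vectors $\overline{y}\boxtimes\Omega$ (and on $(\overline{y}\boxtimes\Omega)\boxtimes(\overline{z}\boxtimes\Omega)$ for the monoidal square), feeding in the explicit tensorator $\mu$ of $_{M_0}(-\fuse{B_0}\Ltwo{B_0})_{M_0}$ from Proposition \ref{prop::propertiesHilbertify} together with the explicit monoidal and pivotal data $\nu,\varphi$ of the canonical involutions on $\Bim_{\mathsf{fgp}}^{\mathsf{tr}}(B_0)$ and $\Bim_{\mathsf{bf}}^{\mathsf{sp}}(M_0)$ described in Examples \ref{HappyBims} and \ref{BifiniteBimodules}. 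The involutive square is short, since the pivotal isomorphisms identifying a bimodule with its double conjugate are the canonical ones on both sides and $\chi$ is built directly from the conjugation map, so $\varphi$ is transported to $\varphi$ through $\chi$.

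I expect the monoidal square to be the main obstacle: it interleaves $\mu_{\overline{\Y},\overline{\mathcal Z}}$, the image $\ff(\nu_{\Y,\mathcal Z})$ of the involution's tensorator on the $B_0$-side, and $\nu_{\ff(\Y),\ff(\mathcal Z)}$ on the $M_0$-side, so the bookkeeping of which bar, which fusion slot, and which $\Omega$-factor each vector sits in must be tracked with care. The strategy there is to evaluate every edge of the square on $(\overline{y}\boxtimes\Omega)\boxtimes(\overline{z}\boxtimes\Omega)$, repeatedly using the balancing identity $\xi\boxtimes a\Omega = (\xi\lhd a)\boxtimes\Omega$ to collapse the $\Omega$-factors, and to confirm that both composites land on the single vector $\overline{(z\boxtimes y)\boxtimes\Omega}$ of $\overline{(\mathcal Z\fuse{B_0}\Y)\fuse{B_0}\Ltwo{B_0}}$; they therefore agree on a dense set, and hence everywhere by boundedness.
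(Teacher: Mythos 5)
Your proposal is correct and follows essentially the same route as the paper: the key step in both is the chain of equalities using the trace-compatibility condition \eqref{eqn::ComTrace} together with the conjugate-bimodule inner products to establish that $\chi_\Y$ is unitary on the dense span of the vectors $\overline{y}\boxtimes\Omega$, after which naturality, $M_0$-bilinearity, and the coherence squares of \eqref{eqn::Chi} are direct computations on generators. The only difference is one of presentation (you verify an isometry with dense range where the paper exhibits the adjoint as the inverse, and you spell out the coherence checks that the paper dismisses as a simple diagrammatic computation), which does not change the argument.
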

\begin{proof}
	We need to show this is a family of unitary isomorphisms satisfying the conditions in Definition \eqref{dfn::BiInvolutive}. 
	By compatibility with the trace condition (Equation \ref{eqn::ComTrace}) we have the following chain of equalities:
	\begin{align*}
	        \langle\chi_{\Y}(\overline{y}\boxtimes\Omega)\ |\ \overline{x\boxtimes\Omega} \rangle &= \langle \overline{y\boxtimes\Omega}\ |\ \overline{x\boxtimes\Omega}\rangle = \langle x\boxtimes\Omega\ |\ y\boxtimes\Omega\rangle\\
	        &= \tr_{B_0}\left(\langle x\ |\ y\rangle_{B_0}\right) = \tr_{B_0}\left( _{B_0}\langle y,\ x \rangle\right)     = \tr_{B_0}\left( \langle \overline y \ |\ \overline x \rangle_{B_0}\right)\\
	        &= \langle \overline y \boxtimes\Omega \ |\ \overline x\boxtimes \Omega \rangle = \langle \overline{y}\boxtimes\Omega \ |\ \chi_{\Y}^{-1}(\overline{x\boxtimes\Omega})\rangle,
	\end{align*}
	proving that $\chi_\Y$ is right-adjointable and that it extends to a unitary. 
	To show that that this family of unitaries $\{\chi_\Y\}$ is natural, monoidal and that Diagrams \eqref{eqn::Chi} commute is a matter of simple diagrammatic computation. This completes the proof.
\end{proof}

\begin{proposition}\label{prop::CommTrian}
    	The following 2-cell commutes:
       \begin{equation}\label{eqn::triangle}
           	\begin{tikzcd}[column sep=large, row sep=large]
        	& \cat_x \arrow[dl, "\ff"', hook] \arrow[dr,hook, two heads, "\mathbb G"{name=G}]  & \\
        	|[alias=B]|\Bim_{\mathsf{fgp}}^{\mathsf{tr}}(B_0) \arrow["-\fuse{B_0}\Ltwo{B_0}"', hook]{rr}\arrow[shorten >=7mm, shorten <=7mm, Rightarrow, from = B, to = G, "T", "\cong" below]  & & \Bim_{\mathsf{bf}}^{\mathsf{sp}}(M_0). 
           	\end{tikzcd}
      \end{equation} 
     	Here, $T = \{T_n\}_{n\geq 0}$ is a monoidal unitary natural isomorphism whose components are
	\begin{align*}
		T_n: ({_0B_n})&\fuse{B_0}\Ltwo{B_0}\longrightarrow  \mathbb G(x^{\otimes n}) = p_0 \rhd\Ltwo{{_0M_n}} \lhd p_n \text{ given on } {_0B_n}\boxtimes\Omega \text{ by } \\
			&(p_0\wedge b\wedge p_n)\boxtimes\Omega \longmapsto p_0\rhd b\Omega \lhd p_n =(p_0\wedge b\wedge p_n)\Omega,
	\end{align*}
 	and $\mathbb G$ is the fully faithful bi-involutive strong-monoidal functor constructed on \cite{BHP12}. 
	In in our language, $\mathbb G$ is given on objects by
    \begin{equation*}
        x^{\otimes n}\longmapsto {_{M_0}\left(p_0\rhd \Ltwo{_0M_n} \lhd p_n \right)} _{M_0}
    \end{equation*}
    and on morphisms
	    \begin{equation}
	        \begin{tikzpicture}[scale=1/10 pt, thick,baseline={([yshift=-\the\dimexpr\fontdimen22\textfont2\relax] current bounding box.center)}] 	        
		            \node at(-21,4){$(f: x^{\otimes n}\longrightarrow x^{\otimes m}) \longmapsto \ \ \mathbb G(f):$};
		            
		            \draw[rounded corners, ultra thick] (5,-1) rectangle (15,9);
		            \node at (10,9){$\bullet$};
		                \node at (10,4){$\xi$};
		                \draw[ultra thick] (14,9) --(14,16);
		                    \node at(16,14){$n$};
		                \draw[densely dashed] (6,9) --(6,16);
		                \draw[ultra thick] (10,-1) -- (10,-8);
		                    \node at(12,-6){$b$};
		                
		            \node at (20,4){$\longmapsto$};
		            
		            \draw[rounded corners, ultra thick] (26,-4) rectangle (34,4);
		            \node at (30,4){$\bullet$};
		                \node at (30,0){$\xi$};
		                \draw[ultra thick] (32,4) --(32,8);
		                    \node at(34,6){$n$};
		                \draw[densely dashed] (28,4) --(28,19);
		                \draw[ultra thick] (30,-4) -- (30,-10);
		                    \node at(31.5,-8){$b$};
		                		            
		            \draw[rounded corners, ultra thick] (29.5,8) rectangle (34.5,14);
		            \node at (32,11){$f$};
		                \draw[ultra thick] (32,14) -- (32,19);
		                    \node at(34.5,17.5){$m$};
	        \end{tikzpicture}.
    	\end{equation}
	It then follows that $\ff$ is full; i.e., $$\ff[\cat(x^{\otimes n}\rightarrow x^{\otimes m})] = \Bim_{\mathsf{fgp}}^{\mathsf{tr}}(B_0) (\X^{\fuse{B_0} n}\rightarrow \X^{\fuse{B_0} m}).$$
\end{proposition}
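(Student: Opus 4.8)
The plan is to establish the three ingredients of the statement in sequence: that each $T_n$ is a well-defined unitary $M_0$-bimodule map, that the collection $T=\{T_n\}$ is a monoidal natural isomorphism (equivalently, that the $2$-cell commutes), and finally to extract fullness of $\ff$ by a dimension count. First I would verify that $T_n$ is isometric and surjective on the dense subspace $({_0B_n})\boxtimes\Omega$. For $\xi,\eta\in{_0B_n}$ the inner product on $({_0B_n})\fuse{B_0}\Ltwo{B_0}$ is $\tr_{B_0}(\langle\eta\ |\ \xi\rangle_{B_0})=\delta_x^{n}\,\tr_{B_0}(\mathbb{E}_n(\eta^*\wedge\xi))$, while the inner product of $T_n(\xi\boxtimes\Omega)=\xi\Omega$ and $T_n(\eta\boxtimes\Omega)=\eta\Omega$ in $\Ltwo{{_0M_n}}$ is $\Tr(\eta^*\wedge\xi)$. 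Using that $\eta^*\wedge\xi\in B_n$, that $\mathbb{E}_n$ is $\tr_n$-preserving, and the normalizations $\tr_0=\tr_n\circ i_n$ and $\Tr_n=\delta_x^{n}\tr_n$ from Notation \ref{notation::corners}, these two scalars coincide; this is precisely the trace-compatibility recorded in Proposition \ref{prop:traceinnerproduct}. Since $({_0B_n})\Omega$ is dense in $\Ltwo{{_0M_n}}$, $T_n$ extends to a unitary.

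Next I would check bilinearity and naturality. Over $B_0$ the maps $b\rhd(\xi\boxtimes\Omega)\mapsto(b\wedge\xi)\Omega$ and $(\xi\boxtimes\Omega)\lhd b=(\xi\wedge i_n(b))\boxtimes\Omega\mapsto(\xi\wedge i_n(b))\Omega$ show $T_n$ intertwines the $B_0$-actions, and this promotes to $M_0$-bilinearity by the same SOT-approximation/normality argument already used to extend the two $M_0$-actions on $\Y\fuse{B_0}\Ltwo{B_0}$. Consequently $T_n$ is a unitary $M_0$-bimodule isomorphism onto $\mathbb{G}(x^{\otimes n})\in\Bim_{\mathsf{bf}}^{\mathsf{sp}}(M_0)$; in particular $({_0B_n})\fuse{B_0}\Ltwo{B_0}$ inherits bifiniteness and sphericity, which closes the gap flagged in the proof of Proposition \ref{prop::propertiesHilbertify}. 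Naturality, i.e. commutativity of the triangle, is then immediate on the dense subspace: since $\ff(f)$ is right multiplication by $FR(f)$, both $T_m\circ(\ff(f)\boxtimes\id_{\Ltwo{B_0}})\circ T_n^{-1}$ and $\mathbb{G}(f)$ send $\xi\Omega\mapsto(\xi\wedge f)\Omega$, and they agree everywhere by boundedness.

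I would then verify monoidality of $T$, namely that $T_{n+m}$ intertwines the composite tensorator of $((-\fuse{B_0}\Ltwo{B_0})\circ\ff)$ (built from $\mu^{\ff}_{n,m}$ and the Hilbertification tensorator of Proposition \ref{prop::propertiesHilbertify}) with the tensorator of $\mathbb{G}$. Tracing $(\xi\boxtimes\Omega)\boxtimes(\eta\boxtimes\Omega)$ through both routes reduces, via Lemma \ref{lemma::isomorphism} (which realizes the fusion as the graded product) and the defining formula for $\mu^{\ff}_{n,m}$, to the tautology $(\xi\wedge\eta)\Omega=(\xi\wedge\eta)\Omega$; this is a short diagram chase and establishes that $T$ is a monoidal unitary natural isomorphism.

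Finally I would deduce fullness by comparing dimensions of the finite-dimensional hom-spaces. Faithfulness of $\ff$ (Remark \ref{rmk::faithful}) gives $\dim\ff[\cat(x^{\otimes n}\to x^{\otimes m})]=\dim\cat(x^{\otimes n}\to x^{\otimes m})$; faithfulness of $-\fuse{B_0}\Ltwo{B_0}$ embeds $\Bim_{\mathsf{fgp}}^{\mathsf{tr}}(B_0)({_0B_n}\to{_0B_m})$ into $\Bim_{\mathsf{bf}}^{\mathsf{sp}}(M_0)(({_0B_n})\fuse{B_0}\Ltwo{B_0}\to({_0B_m})\fuse{B_0}\Ltwo{B_0})$; and conjugation by the unitaries $T_n,T_m$ identifies this last space with $\Bim_{\mathsf{bf}}^{\mathsf{sp}}(M_0)(\mathbb{G}(x^{\otimes n})\to\mathbb{G}(x^{\otimes m}))$, whose dimension equals $\dim\cat(x^{\otimes n}\to x^{\otimes m})$ since $\mathbb{G}$ is fully faithful. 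Chaining these yields
\[
\dim\cat(x^{\otimes n}\to x^{\otimes m})=\dim\ff[\cat(x^{\otimes n}\to x^{\otimes m})]\le\dim\Bim_{\mathsf{fgp}}^{\mathsf{tr}}(B_0)({_0B_n}\to{_0B_m})\le\dim\cat(x^{\otimes n}\to x^{\otimes m}),
\]
so all inequalities are equalities and $\ff[\cat(x^{\otimes n}\to x^{\otimes m})]$ exhausts $\Bim_{\mathsf{fgp}}^{\mathsf{tr}}(B_0)({_0B_n}\to{_0B_m})$, which is fullness. I expect the main obstacle to lie in the first step: keeping the $\delta_x^{n}$ normalizations and the $\tr_n$-preservation of $\mathbb{E}_n$ straight so that the two inner products match exactly, after which the monoidality and dimension-count steps are essentially formal.
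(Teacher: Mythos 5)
Your proposal is correct and follows essentially the same route as the paper's proof: verifying that each $T_n$ is an isometric $B_0$-bilinear map with dense range (hence a unitary, promoted to $M_0$-bilinearity by normality), checking naturality and monoidality on diagrams, and then deducing fullness by the identical dimension-counting chain $D = \dim\ff[\cat(x^{\otimes n}\to x^{\otimes m})] \le \dim\Bim_{\mathsf{fgp}}^{\mathsf{tr}}(B_0)({_0B_n}\to{_0B_m}) \le D$ using full faithfulness of $\mathbb G$ and faithfulness of $\ff$ and $-\fuse{B_0}\Ltwo{B_0}$. You simply supply more detail than the paper does at the isometry step (via Proposition \ref{prop:traceinnerproduct} and the $\delta_x^n$ normalizations), which the paper dismisses as "clearly" true.
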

\begin{proof}
	Each of the maps $T_n$ is densely defined and is clearly $B_0$-bilinear and isometric with dense range. Thus, each $T_n$ extends to a $B_0$-unitary. 
	 For an arbitrary map 
	$f\in\cat(x^{\otimes n}\rightarrow x^{\otimes m}),$ checking that $(T_m \circ (-\fuse{B_0}\Ltwo{B_0}) \circ \ff)(f) = (\mathbb G \circ T_n)(f)$ is an easy computation. 
	The tensorator $\mu^T$ is given by extending juxtaposition of diagrams, similarly as was previously done for $\mu^\ff.$
	Therefore, establishing Diagram \ref{eqn::triangle} commutes, as well as proving that $\ff$ does take values in the bifinite spherical bimodules over $M_0$ up to a monoidal
	 unitary natural isomorphism.
    
    	To show that $\ff$ is full we implement a finite-dimensional linear algebraic trick: Since $\mathbb G$ is full and faithful between rigid categories, we have that 
	$$
		\text{dim}[\cat(x^{\otimes n}\rightarrow x^{\otimes m})] = \text{dim}[\Bim_{\mathsf{bf}}^{\mathsf{sp}}(M_0) (\mathbb G(x^{\otimes n})\rightarrow \mathbb G(x^{\otimes m}))] =: D <\infty.
	$$
	Since $\ff$ and $(-\fuse{B_0}\Ltwo{B_0})$ are faithful, we obtain 
	$$
 		D = \text{dim}[\ff[\cat(x^{\otimes n}\rightarrow x^{\otimes m})])] \leq \text{dim} [\Bim_{\mathsf{fgp}}^{\mathsf{tr}}({_0B_n}\rightarrow{_0B_m})]
		= \text{dim} [\Bim_{\mathsf{fgp}}^{\mathsf{tr}}({_0B_n}\rightarrow{_0B_m})\fuse{B_0}\Ltwo{B_0}] \leq D.
	$$
	Thus all inequalities must be simultaneously satified, proving $\ff$ is full. This completes the proof.
\end{proof}
 
Now we \textbf{unitarily Cauchy-complete} $\ff$ on $\cat_x$: \textbf{on subobjects} of tensor powers of $x$:  for a given $n\in \N$, a projection
 $p\in \cat(x^{\otimes n}\rightarrow x^{\otimes n})$ determines a sub-object of $x^{\otimes n},$ denoted $p[x^{\otimes n}]\in\cat_x,$ we then define
$$\ff(p[x^{\otimes n}]) := \ff(p)[{ _0B_n}] \in \Bim_{\mathsf{fgp}}^{\mathsf{tr}}(B_0),$$
and \textbf{linearly extend} the definition of $\ff$ \textbf{over direct sums}. Here, $\ff(p)[\ _0B_n] \in \Bim_{\mathsf{fgp}}^{\mathsf{tr}}(B_0)$ denotes the range of $\ff(p).$
We shall now define $\ff(f),$ where $f:p[x^{\otimes n}]\rightarrow q[x^{\otimes m}]$ is a morphism between projections $p\in \cat(x^{\otimes n})$ and $q\in\cat(x^{\otimes m}).$ We define
$$\ff(f): \ff(p)[{_0B_n}] \longrightarrow \ff(q)[{_0B_m}]$$
as the extension of the mapping acting on a diagram $\xi\in \Gr_{\infty}\cap\ {_0B_n}$ by:
\begin{equation}
    \begin{tikzpicture}[scale=1/13 pt, thick,baseline={([yshift=-\the\dimexpr\fontdimen22\textfont2\relax] current bounding box.center)}]
        \draw[rounded corners, densely dashed, blue](-42,-9) rectangle (-23,24);
        \node at (-35,24){$\bullet$};
            \draw[rounded corners, ultra thick] (-40,-5) rectangle (-30,5);
            \node at (-35, 5){$\bullet$};
                \node at(-35, 0){$\xi$};
                \draw[densely dashed, thick] (-38,5) -- (-38, 28);
                \draw[ultra thick] (-35,-5) -- (-35,-15);
                    \node at (-33,-12){$b$};
            \draw[ultra thick] (-33,5) ..controls (-33, 8) and (-30,9) .. (-30,12);
                    
            \draw[rounded corners, ultra thick](-35,12) rectangle(-25,18);
            \node at(-30,15){$p$};
                \draw[ultra thick] (-30,18) -- (-30,28);
                    \node at(-27,26){$n$};
    \end{tikzpicture}
            \longmapsto
    \begin{tikzpicture}[scale=1/16 pt, thick,baseline={([yshift=-\the\dimexpr\fontdimen22\textfont2\relax] current bounding box.center)}] 
        \draw[rounded corners, densely dashed, blue](-42,-9) rectangle (-23,48);
        \node at (-35,48){$\bullet$};
            \draw[rounded corners, ultra thick] (-40,-5) rectangle (-30,5);
            \node at (-35, 5){$\bullet$};
                \node at(-35, 0){$\xi$};
                \draw[densely dashed, thick] (-38,5) -- (-38, 54);
                \draw[ultra thick] (-35,-5) -- (-35,-15);
                    \node at (-33,-12){$b$};
            \draw[ultra thick] (-33,5) ..controls (-33, 8) and (-30,9) .. (-30,12);
                    
            \draw[rounded corners, ultra thick](-35,12) rectangle(-25,18);
            \node at(-30,15){$p$};
                \draw[ultra thick] (-30,18) -- (-30,24);
                    
            \draw[rounded corners, ultra thick](-34,24) rectangle(-26,32);
            \node at(-30,28){$f$};
                \draw[ultra thick] (-30,32) -- (-30,38);    
                    
            \draw[rounded corners, ultra thick](-35,38) rectangle (-25,44);
            \node at(-30,41){$q$};
                \draw[ultra thick] (-30,44) -- (-30,54);
                    \node at (-26,51){$m$};
    \end{tikzpicture}.
\end{equation}

We now extend the \textbf{tensorator data} for $\ff$. For arbitrary $n,\ m\geq 0$, and projections $p\in \cat(x^{\otimes n} \rightarrow x^{\otimes n})$ and $q\in\cat(x^{\otimes m}\rightarrow x^{\otimes m})$ we define
$$\mu^{\ff}_{p,q} :\ff(p)[\ {_0B_n}]\boxtimes_{B_0}\ff(q)[\ {_0B_m}]\longrightarrow \ff(p\otimes  q)[\ {_0B_{n+m}}],$$
by extending the following diagramatic composition (and omitting the unitors):
\begin{equation}
    	\begin{tikzpicture}[scale=1/13 pt, thick,baseline={([yshift=-\the\dimexpr\fontdimen22\textfont2\relax] current bounding box.center)}] 
    	\draw[densely dashed, rounded corners, blue] (13,-27) rectangle (40,24);
	        \node at (23,24){$\bullet$};
	        \draw[ultra thick] (33,18) .. controls (33,18) and (27,27).. (27,30);
	        \draw[rounded corners, ultra thick] (28,12) rectangle (38,18);
	        \node at(33,15){$p$};
	        \node at (32,27){$n$};
	        
	        \draw[ultra thick] (22,-5) .. controls (22,0) and (33,6) .. (33,12);
	        \node at (27,0){$n$};
	        
	        \draw[rounded corners, ultra thick] (15,-5) rectangle (25,-15);
	        \node at (20,-10){$\xi$};
	        \draw[ultra thick] (20,-15) .. controls (20,-18) and (26,-20) .. (27,-31);
	        \draw[densely dashed] (17,-5) -- (17,30);
	        \node at (29,-30){$b$};
	        \node at (45,0) {$\boxtimes$};
	        \draw[densely dashed, rounded corners, blue] (50,-27) rectangle (77,24);
	        \node at (60,24){$\bullet$};
	        \draw[ultra thick] (70,18) .. controls (70,23) and (64,27).. (64,30);
	        \draw[rounded corners, ultra thick] (65,12) rectangle (75,18);
	            \node at(70,15){$q$};
	            \node at (69,27){$m$};
	        
	        \draw[ultra thick] (58,-5) .. controls (58,0) and (70,5) .. (70,12);
	        \node at (65,0){$m$};
	        
	        \draw[rounded corners, ultra thick] (52,-5) rectangle (62,-15);
	        \node at (57,-10){$\eta$};
	        \draw[ultra thick] (57,-15) .. controls (58,-24) and (63,-26) .. (64,-32);
	        \draw[densely dashed] (54,-5) -- (54,30);
	        \node at (67,-30){$b'$};
		\end{tikzpicture}
             	\longmapsto
    		\begin{tikzpicture}[scale=1/13 pt, thick,baseline={([yshift=-\the\dimexpr\fontdimen22\textfont2\relax] current bounding box.center)}] 
		            \draw[densely dashed, rounded corners, blue] (-24,-27) rectangle (20,24);
		            \node at (-11,24){$\bullet$};
		                \draw[densely dashed, rounded corners, blue] (-22,-22) rectangle (7,-8);
		                \node at (-17,-8){$\bullet$};
		                        \draw[rounded corners, ultra thick] (-20,-20) rectangle (-10,-10); 
		                        \node at(-15,-15){$\xi$};
		                        \node at (0,-15){$\eta$};
		                        \draw[rounded corners, ultra thick] (-5,-20) rectangle (5,-10); 
		                        
		                \draw[densely dashed] (-19,-10) -- (-19,30);
		                \draw[ultra thick] (-15,-20) .. controls (-15,-25) and (0,-26) .. (0,-33);
		                \draw[ultra thick] (0,-20) .. controls (0,-24) and (3,-29) .. (3,-33);
		                \node at (12,-31){$(b+b')$};
		                \draw[ultra thick] (-15,-10) ..controls (-15,-4) and (-5,-3).. (-5,2);
		                \draw[ultra thick] (0,-10) ..controls (0,-4) and (11,-3).. (11,2);
		                
		                \draw[densely dashed, rounded corners, blue] (-12,-2) rectangle (18,12);
		                    \draw [rounded corners, ultra thick] (-10,2) rectangle (0,8);
		                      \node at (-5,5){$p$};
		                    \draw[rounded corners, ultra thick] (6,2) rectangle (16,8);
		                        \node at (11,5){$q$};
		                    \draw[ultra thick] (-5,8) ..controls (-5,17) and (-2,19).. (-2,30);
		                    \draw[ultra thick] (11,8) ..controls (11,17) and (2,19).. (2,30);
		                    \node at (12,28){($n+m)$};
	    \end{tikzpicture}
\end{equation}
Notice that if $p'$ and $q'$ are projections in the Cauchy completion of $\cat_x,$ and if $f:p\rightarrow p'$ and $g: q\rightarrow q'$ are morphisms in $\cat_x,$ then 
$\mu^{\ff}(f,g) = {\id_{\one}} \otimes f \otimes g$ on diagrams. By Lemma \ref{lemma::bound}, $\mu^{\ff}_{p, q}$
 extends to a unitary isomorphism $\ff(p)[{_0B_n}]\boxtimes_{B_0}\ff(q)[{_0B_m}] \xrightarrow[]{\sim} \ff(p\otimes  q)[{_0B_{(n+m)}}],$ and is clearly natural in $p$ and $q.$ 
 We can also extend $\mu^{\ff}(f,g)$ from diagrams to all of $\ff(p)[{_0B_n}]\boxtimes_{B_0} \ff(q)[{_0B_m}],$ since this map is just a series of vertical and horizontal compositions in the category.
It is clear that bi-adjointability for $\mu^{\ff}(f,g)$ holds on diagrams, so the extension of $\mu^\ff$ is also bi-adjointable.

We are now ready to state the final form of the most important result of this manuscript, which summarizes this whole article: 

\begin{theorem}\label{thm::punchline}
    	Given a singly-generated small RC*TC, $\cat_x$ with simple unit with generator $x\in\cat_x$,  there exists a unital, simple, separable, exact C*-algebra $B_0$ with unique trace, 
	and a fully-faithful bi-involutive strong monoidal functor 
	$$
		\ff:\cat_x\hookrightarrow \Bim_{\mathsf{fgp}}^{\mathsf{tr}}(B_0),
	$$ into a full subcategory of fgp minimal and normalized $B_0$-bimodules.  Moreover, the $K_{0}$ group of $B_0$ is the free abelian group on the classes
	 of simple objects in $\cat_x$, and the image of the simple objects of $\cat_x$ are precisely the canonical generators of $K_{0}(B_0).$
\end{theorem}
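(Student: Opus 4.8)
The plan is to assemble the theorem as the capstone of the machinery developed throughout the paper; essentially every clause of the statement has already been isolated as a separate result, so the proof amounts to citing them in the correct order and checking that the constructed functor is defined — and retains its structural properties — on the full Cauchy completion of $\cat_x$.

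First I would fix the ground algebra $B_0$ produced by the GJS construction of Section \ref{sec::diagAlgs} from the chosen ssd object $x = \overline x$ with $\delta_x > 1$ (Lemma \ref{lemma::ssdObject}). The relevant C*-algebraic properties are recorded in Lemma \ref{lemma::simplicity} together with the construction of Section \ref{sec::diagAlgs}: $B_0 = p_0 \wedge B_\infty \wedge p_0$ is unital (with $1_{B_0} = \emptyset$), simple, separable (as a corner of the separably generated $B_\infty$), exact, and carries the unique trace $\Tr_0$. The $K_0$ clause is the last bullet of Lemma \ref{lemma::simplicity}, which identifies $K_0(B_0) \cong \Z[\cS]$ for $\cS$ a set of representatives of the simple objects of $\cat_x$, sending $\one_\cat \mapsto [B_0]$ and, more generally, each simple class to its canonical generator.

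Next I would invoke the functor $\ff \colon \cat_x \to \Bim_{\mathsf{fgp}}(B_0)$ built in Section \ref{Sec::Representing}, with $x^{\otimes n} \mapsto {_{B_0}}({_0B_n})_{B_0}$, together with its established features: it is a dagger functor with $\ff(f)^* = \ff(f^\dagger)$ and $\|\ff(f)\| = \|f\|_\cat$ (Lemma \ref{lemma::bound}), it is strong monoidal via $\mu^\ff$ and $\iota^\ff$, faithful (Remark \ref{rmk::faithful}), and bi-involutive via the structure maps $\chi_n$. The objects in its image are fgp (Proposition \ref{prop:bases}), normalized (Proposition \ref{prop::normalized}), and minimal (Proposition \ref{prop::Minimality}), so they genuinely lie in $\Bim_{\mathsf{fgp}}(B_0)$; and by Proposition \ref{prop:traceinnerproduct} each $_lB_r$ is compatible with $\tr_{B_0}$, so in fact $\ff$ takes values in the trace-compatible subcategory $\Bim_{\mathsf{fgp}}^{\mathsf{tr}}(B_0)$.

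The substantive step — and the one I expect to be the main obstacle — is \textbf{fullness}, which is invisible from the C*-side alone. Here I would use Proposition \ref{prop::CommTrian}: the Hilbertification functor $-\fuse{B_0}\Ltwo{B_0}$ fits into a commuting $2$-cell with the known fully faithful functor $\mathbb G$ of \cite{BHP12} into $\Bim_{\mathsf{bf}}^{\mathsf{sp}}(M_0)$, through the monoidal unitary natural isomorphism $T$. Since $\mathbb G$ is full and faithful between rigid categories, $\dim \cat_x(x^{\otimes n} \to x^{\otimes m})$ equals $\dim \Bim_{\mathsf{bf}}^{\mathsf{sp}}(M_0)(\mathbb G(x^{\otimes n}) \to \mathbb G(x^{\otimes m}))$; sandwiching $\dim \ff[\cat_x(x^{\otimes n}\to x^{\otimes m})]$ between these equal finite dimensions using the faithfulness of $\ff$ and of $-\fuse{B_0}\Ltwo{B_0}$ forces equality, hence fullness on tensor powers. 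Finally I would pass to the full Cauchy-complete $\cat_x$: extend $\ff$ to subobjects $p[x^{\otimes n}]$ as the ranges $\ff(p)[{_0B_n}]$ and additively over direct sums, as set up at the end of Section \ref{section:Hilbert}. Full faithfulness transports to subobjects through the projection picture, while monoidality, the dagger, and the bi-involutive data extend by the additive/idempotent-completion formalism, completing the proof. The delicate point underlying the $2$-cell is that the a priori different von Neumann closures of $B_0$ (on $\Ltwo{B_0}$, on $\Ltwo{_0B_n}$, and inside $M_\infty$) must be identified, which is exactly what Corollary \ref{corollary::representations}, via Lemma \ref{lemma::Hilbertifying}, guarantees.
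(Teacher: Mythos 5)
Your proposal is correct and follows essentially the same route as the paper: Theorem \ref{thm::punchline} is stated there as a summary whose proof is exactly the assembly you describe — the properties of $B_0$ from Lemma \ref{lemma::simplicity}, the functor $\ff$ with its fgp/normalized/minimal/trace-compatible image and its dagger, monoidal, bi-involutive structure, fullness via the dimension-count in Proposition \ref{prop::CommTrian}, and the extension to subobjects and direct sums set up immediately before the theorem. You correctly identify both the fullness step and the identification of von Neumann closures (Corollary \ref{corollary::representations}) as the genuinely load-bearing ingredients.
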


\begin{remark}\label{remark:countable}
	One can extend the result of Theorem \ref{thm::punchline} by allowing $\cat$ to be countably generated. 
	 Indeed if $\cat$ is countably generated, then it is generated by a countable set of symmetric self-dual objects $x_{1}, \, x_{2}, \, \cdots$.  
	Given a word $w$ a word in $x_{1}, x_{2}, \cdots$ we define $x_{w}$ by
	$$
		x_{w} = x_{i_{1}}\otimes \cdots \otimes x_{i_{k}} \text{ provided } w = x_{i_{1}}\cdots x_{i_{j}}.
	$$
	Given words $u, v, $ and $w$, one now defines $V_{u, v, w}$ by $\cat(x_{u} \rightarrow x_{w} \otimes x_{w})$.  If we define $W$ to be the set of all words in $x_{1}, \, x_{2}, \, \cdots$, one can form
	$$
		\Gr_{\infty} = \bigoplus_{u, v, w \in W} V_{u, v, w},
	$$ 
	and endow it with the $*$-structure and multiplication as described in Sections \ref{section:GJS} and \ref{section:weight}, where we now color the strings according to the labels $x_{1}, x_{2}, \cdots$. 
 	Let $B_{\infty}$ be the C*-algebra generated by $\Gr_{\infty}$, and let $B_{0} = p_{0}\wedge B_{\infty}\wedge p_{0},$ where $p_{0}$ is the empty diagram.  It is also true
	that $B_{0}$ is a unital, separable, simple, exact C*-algebra.
 
 	If $w \in W$, we define
 	$$
 		X_{w} = \overline{ \bigoplus_{v \in W} V_{v, w, \emptyset}}^{\|\cdot\|}
 	$$  
 	then $X_{w} \in \Bim_{\mathsf{fgp}}^{\mathsf{tr}}(B_{0})$ via the actions and inner products as described in Section \ref{section:bimodule}.  
	Moreover, the mapping $x_w \longmapsto  X_{w}$ induces a fully-faithful bi-involutive strong monoidal functor 
	$$
		\ff:\cat\hookrightarrow \Bim_{\mathsf{fgp}}^{\mathsf{tr}}(B_0).
	$$
\end{remark}

We note that there is no separable C*-algebra, $B$ with unique trace, over which we can fully realize every RC*TC as a full subcategory of Hilbert C*-bimodules.  
This stems from the fact that the $K_{0}$ group of any such $B$ must be countable, and thus there are only countably many values for the traces of projections in 
$B \otimes K(\mathcal{H})$. On the other hand, amongst all of the countably generated fusion categories, any $\delta \geq 2$ can be the dimension of a generating object.
However, our main theorem does allow us to establish this result, if we restrict our scope to \textbf{unitary fusion categories}:

\begin{corollary}\label{cor::universalFusion}
	There exists a unital simple exact separable C*-algebra $B$ with unique trace over which we can represent every unitary
	 fusion category $\cat$ in the spirit of Theorem \ref{thm::punchline}.
\end{corollary}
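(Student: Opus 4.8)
The plan is to exploit that, in sharp contrast with the continuum of admissible dimensions $\delta \geq 2$ for generators of arbitrary countably generated categories, there are only \emph{countably many} unitary fusion categories up to unitary monoidal equivalence, and then to fold all of them into a single countably generated RC*TC to which Remark \ref{remark:countable} applies. This converts the problem of combining infinitely many C*-algebras (which would destroy simplicity) into a single application of our main construction, where simplicity, exactness and unique trace come for free.

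First I would record the countability. Every unitary fusion category categorifies a fusion ring, which is the finite datum of a based ring with finitely many basis elements and non-negative integer structure constants; there are only countably many such rings. By Ocneanu rigidity \cite{EGNO15}, each fusion ring admits at most finitely many categorifications, and hence only finitely many unitary ones up to equivalence. Therefore the collection of equivalence classes of unitary fusion categories is countable, and I fix a set of representatives $\{\cat^{(i)}\}_{i\in\N}$. Next I would assemble these into one category by forming the free product (amalgamated over $\vect$) $\cat := \ast_{i\in\N}\,\cat^{(i)}$ of rigid C*-tensor categories. Since each $\cat^{(i)}$ is generated by its finitely many simple objects and there are countably many factors, $\cat$ is a countably generated RC*TC; its unit is simple because the unit of each factor is, and because the free product imposes no relations across distinct factors each inclusion $\cat^{(i)}\hookrightarrow\cat$ is a fully faithful bi-involutive strong monoidal functor onto a full, replete tensor subcategory.

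Then I would apply the countably generated version of the main theorem (Remark \ref{remark:countable}) to $\cat$: this produces a unital, simple, separable, exact C*-algebra $B := B_0$ which, exactly as in the finitely generated case (Lemma \ref{lemma::simplicity}), carries a unique trace, together with a fully faithful bi-involutive strong monoidal functor $\ff\colon\cat\hookrightarrow\Bim_{\mathsf{fgp}}^{\mathsf{tr}}(B)$. Finally, given any unitary fusion category $\cat'$, I would pick $i$ with $\cat'\simeq\cat^{(i)}$ and form the composite
\[
\cat' \;\simeq\; \cat^{(i)} \;\hookrightarrow\; \cat \;\xrightarrow{\ \ff\ }\; \Bim_{\mathsf{fgp}}^{\mathsf{tr}}(B).
\]
As a composite of fully faithful bi-involutive strong monoidal functors it is again fully faithful, bi-involutive and strong monoidal, realizing $\cat'$ in the sense of Theorem \ref{thm::punchline}; and since $B$ is independent of $\cat'$, this is the desired universal algebra. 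Consistency with the $K_0$ obstruction discussed before the statement is automatic, as $K_0(B)$ is the free abelian group on the (countably many) simple objects of $\cat$.

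The step I expect to be the main obstacle is the free-product construction: one must confirm that the free product of countably many rigid C*-tensor categories over $\vect$ exists as a Cauchy-complete semisimple RC*TC with simple unit, and that the canonical functors $\cat^{(i)}\hookrightarrow\cat$ are genuinely \emph{full}, i.e. that no new morphisms appear between objects of a single factor. Concretely, I would realize $\cat$ through the diagrammatic machinery of Remark \ref{remark:countable}, coloring strings by the disjoint union of the generating systems of all the $\cat^{(i)}$ and never allowing the diagrammatic relations of distinct factors to interact; the delicate points are then verifying positivity of the $A_\infty$-valued form and the C*-norm axioms for this combined diagrammatic category, and checking that the resulting $B$ still has a unique trace. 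Everything else — monoidality, bi-involutivity, and faithfulness of the final composite — is a formal consequence of the functoriality established above.
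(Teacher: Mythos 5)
Your proposal is correct and follows essentially the same route as the paper: observe that there are only countably many unitary fusion categories up to equivalence, take their free product to obtain a single countably generated RC*TC with simple unit, and apply the countably generated version of the main theorem (Remark \ref{remark:countable}) to produce one universal algebra $B$. The extra justifications you supply (Ocneanu rigidity for countability, fullness of the embeddings into the free product) are points the paper leaves implicit, but they do not change the argument.
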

\begin{proof}
	Up to equivalence, there are countably many fusion categories.  Let $\cat_{1}, \cat_{2}, \cdots$ be a set of representatives for these fusion categories, 
	let $\cat$ be their free product.  Then $\cat$ is a small RC*TC, $\cat$ with simple unit, and each $\cat_{k}$ embeds into $\cat$ in the obvious way.  
	We can therefore apply the construction in Remark \ref{remark:countable} to $\cat$ to obtain $B$.
\end{proof}


\bibliographystyle{amsalpha}
\bibliography{bibliography}

\end{document}